\newtheorem{theorem}{Theorem}[section]
\newtheorem{corollary}[theorem]{Corollary}
\newtheorem{lemma}[theorem]{Lemma}
\newtheorem{proposition}[theorem]{Proposition}
\theoremstyle{definition}
\newtheorem{remark}[theorem]{Remark}
\numberwithin{equation}{section}
\renewcommand{\leq}{\leqslant}
\renewcommand{\geq}{\geqslant}
\newcommand{\qbin}[2]{\begin{bmatrix}{#1}\\{#2}\end{bmatrix}_q}
\begin{document}
\title[Flag-transitive symmetric $2$-designs]{Flag-transitive, point-imprimitive symmetric $2$-$(v,k,\lambda )$ designs with $k>\lambda \left(\lambda-3 \right)/2$}

\author[]{ Alessandro Montinaro}

%
%

\address{Alessandro Montinaro, Dipartimento di Matematica e Fisica “E. De Giorgi”, University of Salento, Lecce, Italy}
\email{alessandro.montinaro@unisalento.it}

\subjclass[MSC 2020:]{05B05; 05B25; 20B25}%
\keywords{ Symmetric design; automorphism group; flag-transitive design}
\date{\today}%
\dedicatory{Dedicated to Mauro Biliotti, professor emeritus at the University of Salento}

\begin{abstract}
Let $\mathcal{D}=\left(\mathcal{P},\mathcal{B} \right)$ be a symmetric $2$-$(v,k,\lambda )$ design admitting a flag-transitive, point-imprimitive automorphism group $G$ that leaves invariant a non-trivial partition $\Sigma$ of $\mathcal{P}$. Praeger and Zhou \cite{PZ} have shown that, there is a constant $k_{0}$ such that, for each $B \in \mathcal{B}$ and $\Delta \in \Sigma$, the size of $\left\vert B \cap \Delta \right \vert$ is either $0$ or $k_{0}$. In the present paper we show that, if $k>\lambda \left(\lambda-3 \right)/2$ and $k_{0} \geq 3$, $\mathcal{D}$ is isomorphic to one of the known flag-transitive, point-imprimitive symmetric $2$-designs with parameters $(45,12,3)$ or $(96,20,4)$.




\end{abstract}

\maketitle

\bigskip
\section{Introduction and Main Result}
A $2$-$(v,k,\lambda )$ \emph{design} $\mathcal{D}$ is a pair $(\mathcal{P},%
\mathcal{B})$ with a set $\mathcal{P}$ of $v$ points and a set $\mathcal{B}$
of blocks such that each block is a $k$-subset of $\mathcal{P}$ and each two
distinct points are contained in $\lambda $ blocks. We say $\mathcal{D}$ is 
\emph{non-trivial} if $2<k<v$, and symmetric if $v=b$. All $2$-$(v,k,\lambda )$ designs in this paper
are assumed to be non-trivial. An automorphism of $\mathcal{D}$ is a
permutation of the point set which preserves the block set. The set of all
automorphisms of $\mathcal{D}$ with the composition of permutations forms a
group, denoted by $\mathrm{Aut(\mathcal{D})}$. For a subgroup $G$ of $%
\mathrm{Aut(\mathcal{D})}$, $G$ is said to be \emph{point-primitive} if $G$
acts primitively on $\mathcal{P}$, and said to be \emph{point-imprimitive}
otherwise. In this setting we also say that $\mathcal{D}$ is either \emph{point-primitive} or \emph{point-imprimitive} respectively. A \emph{flag} of $\mathcal{D}$ is a pair $(x,B)$ where $x$ is a point and $B$
is a block containing $x$. If $G\leq \mathrm{Aut(\mathcal{D})}$ acts
transitively on the set of flags of $\mathcal{D}$, then we say that $G$ is 
\emph{flag-transitive} and that $\mathcal{D}$ is a \emph{flag-transitive
design}.\

Flag-transitive symmetric designs are widely studied. If $\lambda =1$, that is, $\mathcal{D}$ is a projective plane of order $n$, Kantor \cite{Ka87} proved that either $\mathcal{D}$ is Desarguesian and $PSL_{3}(n) \unlhd G$, or $G$ is a sharply flag-transitive Frobenius group of order $(n^{2} + n + 1)(n + 1)$, and $n^{2} + n + 1$ is a prime. In both cases the action of G is point-primitive. For $\lambda > 1$, flag-transitive, point-imprimitive symmetric designs do exist. In 1945 Hussain \cite{H} and, independently, in 1946 Nandi \cite{N} discovered that there are exactly three symmetric 2-(16, 6, 2)-designs. In 2006 O’Reilly Regueiro \cite{ORR} showed that, if $\lambda \leq 4$ then the parameters of $\mathcal{D}$ are $(16,2,2),(45,12,3),(15,8,4),(96,20,4)$ and that exactly two of the three $2$-designs discovered by Hussain and Nandi are flag-transitive and point-imprimitive. In 2006 Praeger and Zhou \cite{PZ} proved that there is exactly one flag-transitive, point-imprimitive symmetric $2$-$(15, 8,3)$ design, in 2007 Praeger \cite{P} showed that there is exactly one flag-transitive, point-imprimitive symmetric $2$-$(45, 12, 3)$ design, and in 2009, Law, Praeger and Reichard \cite{LPR} proved that there are exactly four transitive, point-imprimitive symmetric $2$-$(96, 40, 4)$ designs. Apart from two possible numerical exceptions, the classification of the flag-transitive point-imprimitive symmetric $2$-designs has been recently extended to $\lambda \leq 10$ by Mandi\'{c} and \v{S}ubasi\'{c} \cite{MS}. 

It is worth noting that one of the four $2$-$(96, 40, 4)$ designs is a special case of a beautiful, general construction of transitive, point-imprimitive symmetric $2$-designs due to Cameron and Praeger \cite{CP} based on a previous work of Sane \cite{S}. It is an open problem whether the remaining three $2$-designs arise or not from the Cameron-Praeger construction.\\
An upper bound on $k$, when $\mathcal{D}$ is flag-transitive and point-imprimitive, was given by O’Reilly Regueiro in \cite{ORR} and subsequently refined by Praeger and Zhou in \cite{PZ}. Among the other results, the authors determined the parameters of $\mathcal{D}$ as functions of $\lambda$ when $k>\lambda \left(\lambda -3 \right)/2$. Recently, the flag-transitive $2$-designs with $%
\lambda =2$ have been investigated by Devillers, Liang, Praeger and Xia in 
\cite{DLPX}, where, it is shown that, apart from the two known symmetric $2$-$%
(16,6,2)$ designs, $G$ is primitive of affine or almost simple type. \\
The present paper is a contribution to the problem of classifying flag-transitive, point-imprimitive symmetric $2$-$(v,k,\lambda)$ designs. We classify those with $k>\lambda \left(\lambda -3 \right)/2$ and such that a block of the $2$-design intersects a block of imprimitivity in at least $3$ points. More precisely, our result is the following.
\bigskip 
\begin{theorem}
\label{main} Let $\mathcal{D}=\left(\mathcal{P},\mathcal{B} \right)$ be a symmetric $2$-$(v,k,\lambda )$ design admitting a flag-transitive, point-imprimitive automorphism group $G$ that leaves invariant a non-trivial partition $\Sigma$ of $\mathcal{P}$. If $k> \lambda (\lambda-3)/2$ and there is block of $\mathcal{D}$ intersecting an element of $\Sigma$ in at least $3$ points, then one of the following holds:

\begin{enumerate}

\item $\mathcal{D}$ is isomorphic to the $2$-$(45,12,3)$ design of \cite[Construction 4.2]{P}.

\item $\mathcal{D}$ is isomorphic to one of the four $2$-$(96,20,4)$ designs constructed in \cite{LPR}.
\end{enumerate}
\end{theorem}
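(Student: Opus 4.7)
The plan is to combine the parametric restrictions of Praeger and Zhou with the hypothesis $k_{0}\geq 3$ to reduce to a short explicit list of candidate parameter quadruples, and then to analyse each candidate group-theoretically. First I would set $c=|\Delta|$ and $d=|\Sigma|$, so that $v=cd$, and record the two fundamental identities that govern the set-up: the symmetric design relation $\lambda(v-1)=k(k-1)$, and the counting identity $k(k_{0}-1)=\lambda(c-1)$ obtained by pairing each intra-class point pair with the $\lambda$ blocks through it. These express $c$ and $d$ as explicit rational functions of $(k,\lambda,k_{0})$, and the problem becomes Diophantine in $(k,\lambda,k_{0})$.

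Next I would invoke the parametric classification of \cite{PZ} under $k>\lambda(\lambda-3)/2$. That result cuts the candidates into one infinite family characterised by $k_{0}=2$ (excluded at once by the hypothesis $k_{0}\geq 3$) together with a finite list of sporadic tuples. Among the sporadic tuples one finds $(v,k,\lambda,k_{0})=(45,12,3,3)$ with $(c,d)=(9,5)$, and $(96,20,4,4)$ with $(c,d)=(16,6)$. The first step of real work is to enumerate all the remaining sporadic tuples permitted by $k_{0}\geq 3$ and to prepare each one for a group-theoretic attack.

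For each surviving candidate I would then exploit that $G$, preserving $\Sigma$ non-trivially, embeds in $S_{c}\wr S_{d}$, and that flag-transitivity forces $vk$ to divide $|G|$. Writing $K$ for the kernel of the action $G\to S_{d}$, the induced group $G^{\Sigma}$ is a transitive subgroup of $S_{d}$ and the stabiliser of a class in $G$ acts transitively on that class of size $c$; combining the classifications of transitive groups of degrees $d$ and $c$ (both small in the surviving tuples) with the divisibility $vk\mid|G|$ pins down $G$ up to a short list. For the two target tuples this should recover the construction of \cite{P} and the four designs of \cite{LPR}; for every other sporadic tuple I would aim to derive a contradiction, typically by checking that the symmetric design constraint that any two distinct blocks meet in exactly $\lambda$ points cannot coexist with $|B\cap\Delta|\in\{0,k_{0}\}$ once a flag-transitive group of the prescribed shape is imposed. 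The main obstacle is exactly this last elimination: each phantom tuple needs a tailor-made mixture of divisibility, orbit-counting on pairs of blocks, and intersection combinatorics within $\Sigma$, and some cases may require the full classification of transitive groups of small degree (and occasionally of the primitive groups acting on $\Sigma$).
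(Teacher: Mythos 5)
There is a genuine gap, and it lies at the very first load-bearing step of your plan. You assert that, under $k>\lambda(\lambda-3)/2$, the Praeger--Zhou restriction splits the candidates into one infinite family with $k_{0}=2$ plus ``a finite list of sporadic tuples''. That is not what \cite{PZ} gives. As recorded in Theorem \ref{PZM} of the paper, when $k_{0}\geq 3$ one is left with two \emph{infinite} families parameterised by $\lambda$: the symmetric $2$-$(\lambda^{2}(\lambda+2),\lambda(\lambda+1),\lambda)$ designs with $(c,d)=(\lambda^{2},\lambda+2)$, and the symmetric $2$-$\bigl((\lambda+6)\frac{\lambda^{2}+4\lambda-1}{4},\lambda\frac{\lambda+5}{2},\lambda\bigr)$ designs with $\lambda\equiv 1,3\pmod 6$. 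The two target designs are merely the $\lambda=3$ and $\lambda=4$ members of the first family (and $\lambda=3$ of the second). Consequently the entire second half of your plan --- enumerating transitive groups of the ``small'' degrees $c$ and $d$, imposing $vk\mid|G|$, and eliminating phantom tuples one at a time --- cannot even start, because $c$ and $d$ are unbounded and there is no finite list to sweep. Your two counting identities ($\lambda(v-1)=k(k-1)$ and $k(k_{0}-1)=\lambda(c-1)$) are correct but they are already subsumed in the derivation of Theorem \ref{PZM}; they do not bound $\lambda$.

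The missing content is precisely what occupies the paper: a mechanism for bounding $\lambda$ inside the two infinite families. The paper does this by introducing the overlap number $\theta$ (Lemma \ref{Rid1}), showing each class $\Delta_{i}$ carries a flag-transitive \emph{and point-primitive} $2$-$(\lambda^{2},\lambda,\lambda/\theta)$ or $2$-$(\lambda+6,3,\lambda/\theta)$ design (Theorem \ref{des}, Lemma \ref{prim}), invoking classifications of such designs (\cite{MF,Mo1,Mo2} and the Appendix Theorem \ref{tre}), and then running a long structural analysis of $G$: $2$-transitivity of $G^{\Sigma}$, minimal normal subgroups and the case division of Lemma \ref{Ordine}, Aschbacher-class arguments with Zsigmondy primitive divisors of $p^{2m}-1$ via \cite{BP}, and the exponential Diophantine equation $u^{h}=p^{m}+2$ (a Pillai equation). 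None of this is replaceable by the wreath-product embedding $G\leq S_{c}\wr S_{d}$ together with divisibility and orbit counting, which for fixed $\lambda$ would at best handle a single parameter tuple. So the proposal is not a proof outline of the stated theorem but only of the (comparatively easy) final verification for $\lambda\in\{3,4\}$, which in the paper is delegated to \cite{P} and \cite{LPR} anyway.
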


\bigskip

The outline of the proof is as follows. The group $G$ preserves a set of imprimitivity $\Sigma$ on the point set of $\mathcal{D}$ consisting of $d$ blocks of imprimitivity each of size $c$. By \cite{PZ} each block $B$ of $\mathcal{D}$ intersects any block of imprimitivity either in $0$ or in a constant number $k_{0}$ of points. In Lemma \ref{Rid1} we show that the number of blocks intersecting a block of imprimitivity in the same $k_{0}$-set of points is constant and is independent on the choice of the block of $\mathcal{D}$ and of the element of $\Sigma$. We call such a number \emph{the overlap number of $\mathcal{D}$} and we denote it by $\theta$. \\ If $k_{0} \geq 3$, in Theorems \ref{des} and \ref{PZM} we show that the blocks of imprimitivity have the structure of flag-transitive $2$-$(c,k_{0},\lambda/\theta)$ designs, where $(c,k_{0})$ is either $(\lambda^{2},\lambda)$, or $(\lambda +6, 3)$ with $\lambda \equiv 1,3 \pmod{6}$. Moreover, in Lemma \ref{prim} we prove that, such $2$-designs are also point-primitive. Flag-transitive, point-primitive $2$-$(\lambda^{2},\lambda,\lambda/\theta)$ designs are classified  in \cite{MF,Mo1,Mo2}, whereas flag-transitive, point-primitive $2$-$(\lambda +6 ,3,\lambda/\theta)$ designs, with $\lambda \equiv 1,3 \pmod{6}$ are classified in the Appendix of this paper (Theorem \ref{tre}). Finally, we complete the proof of Theorem \ref{main} by combining the previous information on the structure of the blocks of imprimitivity with the constraints on the action of $G$ on $\mathcal{D}$ provided in \cite{La} and on the structure of $G$ essentially provided in \cite{BP}.

\section{The overlap number of $\mathcal{D}$}

Let $\mathcal{D}=\left(\mathcal{P},\mathcal{B} \right)$ be a symmetric $2$-$(v,k,\lambda )$ design admitting a flag-transitive, point-imprimitive automorphism group $G$ that leaves invariant a non-trivial partition $\Sigma$ of $\mathcal{P}$ with $d$ classes of size $c$. Then there is a constant $k_{0}$ such that, for each $B \in \mathcal{B}$ and $\Delta_{i} \in \Sigma$, $i=1,...,d$, the size $\left\vert B \cap \Delta_{i} \right \vert$ is either $0$ or $k_{0}$ by \cite[Theorem 1.1]{PZ}. If we pick two distinct points $x,y$ in a block of imprimitivity, then there are exactly $\lambda $ blocks of $\mathcal{D}$ incident with them. Thus $k_{0} \geq 2$. Moreover, since $\mathcal{D}$ is  non-trivial, $v>k$ and hence $k_{0}<c$ by \cite[(4) and (7)]{PZ}. Therefore, $2 \leq k_{0}<c$. If $k_{0}=2$ the flag-transitivity of $G$ on $\mathcal{D}$ implies the $2$-transitivity of $G_{\Delta _{i}}^{\Delta _{i}}$ on $\Delta_{i}$ for each $i=1,...,d$. 

\bigskip

Let $\mathcal{B}_{i}=\left\{ B\in \mathcal{B}:B\cap \Delta _{i}\neq
\varnothing \right\} $, where $i=1,...,d$. For any $B\in \mathcal{B}_{i}$ define
\bigskip
\begin{equation*}
\mathcal{B}_{i}(B)=\left\{ B^{\prime }\in \mathcal{B}_{i}:B^{\prime }\cap
\Delta _{i}=B\cap \Delta _{i}\right\} \; \text{ and } \; \theta (i,B)=\left\vert 
\mathcal{B}_{i}(B)\right\vert .
\end{equation*}
\text{} \\
\bigskip
Clearly, $ 1 \leq \theta (i,B)\leq \lambda $.

\begin{lemma}
\label{Rid1}$\theta (i,B)=\theta (j,B^{\prime })$ for each $i,j\in \left\{
1,...,d\right\} $ and for each $B\in \mathcal{B}_{i}$ and $B^{\prime }\in 
\mathcal{B}_{j}$.
\end{lemma}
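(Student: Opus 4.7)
The plan is to exploit the flag-transitivity of $G$ to show that $G$ acts transitively on the set of incident pairs $(\Delta,B)$ with $\Delta\in\Sigma$ and $B\in\mathcal{B}$ meeting $\Delta$ non-trivially, and then transport one such pair to any other by a group element, which automatically transports the set $\mathcal{B}_{i}(B)$ to $\mathcal{B}_{j}(B')$.

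Concretely, I would first observe that, since $G$ is transitive on $\mathcal{P}$ and preserves $\Sigma$, the induced action of $G$ on $\Sigma$ is transitive. Next I would establish the key transitivity claim: given $(\Delta_{i},B)$ and $(\Delta_{j},B')$ with $B\in\mathcal{B}_{i}$ and $B'\in\mathcal{B}_{j}$, pick any $x\in B\cap\Delta_{i}$ and any $x'\in B'\cap\Delta_{j}$ (both non-empty by definition of $\mathcal{B}_{i}$ and $\mathcal{B}_{j}$). Then $(x,B)$ and $(x',B')$ are flags of $\mathcal{D}$, so by flag-transitivity there exists $g\in G$ with $(x,B)^{g}=(x',B')$. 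Because $x^{g}=x'$ and $g$ preserves $\Sigma$, the unique element of $\Sigma$ containing $x^{g}=x'$ is $\Delta_{i}^{g}$, and this must equal $\Delta_{j}$.

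Finally I would check that $g$ induces a bijection $\mathcal{B}_{i}(B)\to\mathcal{B}_{j}(B')$. For any $B''\in\mathcal{B}_{i}(B)$ we have $B''\cap\Delta_{i}=B\cap\Delta_{i}$, hence
\begin{equation*}
(B'')^{g}\cap\Delta_{j}=(B''\cap\Delta_{i})^{g}=(B\cap\Delta_{i})^{g}=B^{g}\cap\Delta_{i}^{g}=B'\cap\Delta_{j},
\end{equation*}
so $(B'')^{g}\in\mathcal{B}_{j}(B')$. The inverse of this map is induced by $g^{-1}$, so it is a bijection, which yields $\theta(i,B)=|\mathcal{B}_{i}(B)|=|\mathcal{B}_{j}(B')|=\theta(j,B')$.

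No serious obstacle is expected: the only subtle point is remembering that flag-transitivity together with the preservation of $\Sigma$ forces $g$ to map $\Delta_{i}$ to $\Delta_{j}$, rather than needing a separate argument for the action on $\Sigma$. The bound $1\le\theta(i,B)\le\lambda$ mentioned just before the lemma is not used in the argument; it is only a sanity check that the quantity being compared is well-defined.
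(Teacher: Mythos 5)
Your proposal is correct and follows essentially the same route as the paper: flag-transitivity supplies $g$ with $(x,B)^{g}=(x',B')$, forcing $\Delta_{i}^{g}=\Delta_{j}$, and then $C\mapsto C^{g}$ carries $\mathcal{B}_{i}(B)$ into $\mathcal{B}_{j}(B')$. The only cosmetic difference is that you conclude via the bijection induced by $g$ and $g^{-1}$, whereas the paper derives the two inequalities $\theta(i,B)\leq\theta(j,B')$ and $\theta(j,B')\leq\theta(i,B)$ by symmetry; your explicit verification that $g$ maps $\Delta_{i}$ to $\Delta_{j}$ is a point the paper leaves implicit.
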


\begin{proof}
Let $B\in \mathcal{B}_{i}$ and $B^{\prime }\in \mathcal{B}_{j}$, where $i,j\in \left\{ 1,...,d\right\} $, and let $%
x\in B\cap \Delta _{i}$ and $x^{\prime }\in B^{\prime }\cap \Delta _{j}$. Then there is $\psi \in G$ such
that $(x,B)^{\psi }=(x^{\prime },B^{\prime })$ since $G$ is flag-transitive. Hence $\left( B\cap \Delta _{i}\right) ^{\gamma }=B^{\prime }\cap \Delta _{j}$. \\ Let $C\in \mathcal{B}_{i}(B)$, then $C\cap
\Delta _{i}=B\cap \Delta _{i}$ and hence 
\begin{equation*}
C^{\gamma }\cap \Delta _{j}=\left( C\cap \Delta _{i}\right) ^{\gamma
}=\left( B\cap \Delta _{i}\right) ^{\gamma }=B^{\prime }\cap \Delta _{j}\text{.}
\end{equation*}%
Thus $C^{\gamma }\in \mathcal{B}_{j}(B^{\prime })$ and hence $\theta
(i,B)\leq \theta (j,B^{\prime })$. Now, switching the role of $B$ and $%
B^{\prime }$ in the previous argument we get $\theta (j,B^{\prime })\leq
\theta (i,B)$. Thus $\theta (i,B)=\theta (j,B^{\prime })$, which is the assertion.
\end{proof}

\bigskip

In view of the previous lemma, we may denote $\theta (i,B)$ simply by $%
\theta $ and call it \emph{the overlap number of $\mathcal{D}$}.

\bigskip

\begin{corollary}
\label{ermo}Let $B\in \mathcal{B}_{i}$ and let $x\in B\cap \Delta _{i}$, then $\theta =\left[ G_{x,B\cap \Delta _{i}}:G_{x,B}\right] $.
\end{corollary}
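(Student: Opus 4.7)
The plan is to identify $\theta$ as the size of a transitive orbit of $G_{x,B\cap\Delta_i}$ and then invoke the orbit-stabilizer theorem, so the real content lies in checking transitivity via flag-transitivity of $G$ on $\mathcal{D}$.

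First I would observe that, since $G$ preserves $\Sigma$, the unique block of imprimitivity containing $x$ is $\Delta_i$, and so any element of $G_x$ fixes $\Delta_i$ setwise. In particular, if $g\in G_{x,B}$ then $(B\cap \Delta_i)^g=B^g\cap\Delta_i^g=B\cap\Delta_i$, which gives the inclusion $G_{x,B}\leq G_{x,B\cap\Delta_i}$. Thus it makes sense to compare these two stabilizers by a single index.

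Next I would let $G_{x,B\cap\Delta_i}$ act on $\mathcal{B}_i(B)$. The action is well defined: if $g\in G_{x,B\cap\Delta_i}$ and $C\in \mathcal{B}_i(B)$, then $\Delta_i$ is fixed setwise by $g$ (since $x$ is), and
\[
C^g\cap\Delta_i = C^g\cap\Delta_i^g = (C\cap\Delta_i)^g = (B\cap\Delta_i)^g = B\cap\Delta_i,
\]
so $C^g\in \mathcal{B}_i(B)$. The stabilizer of $B$ under this action is $G_{x,B\cap\Delta_i}\cap G_{x,B}=G_{x,B}$ by the inclusion established above.

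The key step is transitivity. Given $B'\in\mathcal{B}_i(B)$, the point $x$ lies in $B\cap\Delta_i=B'\cap\Delta_i\subseteq B'$, so $(x,B')$ is a flag. By the flag-transitivity of $G$, there is $g\in G$ with $(x,B)^g=(x,B')$. Such a $g$ fixes $x$, hence fixes $\Delta_i$ setwise, and therefore sends $B\cap\Delta_i$ to $B'\cap\Delta_i=B\cap\Delta_i$; that is, $g\in G_{x,B\cap\Delta_i}$ and it carries $B$ to $B'$. Applying orbit-stabilizer, I obtain $\theta=\theta(i,B)=|\mathcal{B}_i(B)|=[G_{x,B\cap\Delta_i}:G_{x,B}]$, as required. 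The only subtlety in the argument is recognising that flag-transitivity automatically places the transporter $g$ inside $G_{x,B\cap\Delta_i}$, and this is what makes the proof essentially immediate once the action is set up correctly.
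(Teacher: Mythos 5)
Your proof is correct and follows essentially the same route as the paper: both establish $G_{x,B}\leq G_{x,B\cap\Delta_i}$ and then use flag-transitivity to produce, for each $B'\in\mathcal{B}_i(B)$, a transporter in $G_x$ that automatically lies in $G_{x,B\cap\Delta_i}$. Your packaging of the two inequalities in the paper's proof as a single orbit-stabilizer computation for the action of $G_{x,B\cap\Delta_i}$ on $\mathcal{B}_i(B)$ is merely a cleaner presentation of the identical argument.
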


\begin{proof}
Let $B\in \mathcal{B}_{i}$ and let $x\in B\cap \Delta _{i}$. Then $%
G_{x,B}\leq G_{x,B\cap \Delta _{i}}$. Thus $\left[ G_{x,B\cap \Delta
_{i}}:G_{x,B}\right] \leq \theta $.

Let $B^{\prime }\in \mathcal{B}_{i}(B)$. Then there is $\varphi \in G_{x}$
such that $B^{\varphi }=B^{\prime }$. Thus $(B\cap \Delta _{i})^{\varphi
}=B^{\prime }\cap \Delta _{i}=B\cap \Delta _{i}$ and hence $\varphi \in
G_{x,B\cap \Delta _{i}}$ and $G_{x,B}\varphi \subseteq G_{x,B\cap \Delta _{i}}$. Therefore $\left[ G_{x,B\cap \Delta _{i}}:G_{x,B}\right]
\geq \theta $ and hence $\left[ G_{x,B\cap \Delta _{i}}:G_{x,B}\right]
=\theta $.
\end{proof}

\begin{theorem}\label{des}
If $k_{0} \geq 3$, then $\mathcal{D}_{i}=(\Delta _{i},\mathcal{B}_{i}^{\ast})$, where $\mathcal{B}_{i}^{\ast}=\{B\cap \Delta _{i}:B \in \mathcal{B}_{i} \}$, is a non-trivial $2$-$%
\left( c,k_{0},\lambda /\theta \right) $ design, with $\theta \mid \lambda $,
admitting $G_{\Delta _{i}}^{\Delta _{i}}$ as a flag-transitive automorphism
group.
\end{theorem}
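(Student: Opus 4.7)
The plan is to verify the four properties demanded by the statement, namely that $\mathcal{D}_i$ has the block size $k_0$, admits each $2$-subset of $\Delta_i$ in exactly $\lambda/\theta$ blocks (which forces $\theta\mid\lambda$), is non-trivial, and is acted on flag-transitively by the induced group.

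First, I would observe that by the Praeger--Zhou dichotomy every $B\in\mathcal{B}_i$ meets $\Delta_i$ in exactly $k_0$ points, so every element of $\mathcal{B}_i^{\ast}$ is a $k_0$-subset of $\Delta_i$; non-triviality then follows because $k_0\geq 3>2$ by hypothesis, while $k_0<c$ was already recorded before the theorem. The heart of the proof is a standard double count: pick two distinct points $x,y\in\Delta_i$ and let $\mathcal{F}$ be the set of blocks of $\mathcal{D}$ through $\{x,y\}$; by the symmetric $2$-design property $|\mathcal{F}|=\lambda$, and since each $B\in\mathcal{F}$ meets $\Delta_i$ in a set containing $\{x,y\}$, every such $B$ lies in $\mathcal{B}_i$. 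Group the blocks of $\mathcal{F}$ by their trace on $\Delta_i$: two blocks fall into the same class precisely when they lie in some $\mathcal{B}_i(B)$, and by Lemma \ref{Rid1} every class has size exactly $\theta$. Consequently the number of distinct traces of $\mathcal{B}_i^{\ast}$ through $\{x,y\}$ equals $\lambda/\theta$, and this is an integer, proving simultaneously that $\theta\mid\lambda$ and that $\mathcal{D}_i$ is a $2$-$(c,k_0,\lambda/\theta)$ design.

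For flag-transitivity, I would take two flags $(x,\,B\cap\Delta_i)$ and $(x',\,B'\cap\Delta_i)$ of $\mathcal{D}_i$, with $B,B'\in\mathcal{B}_i$. Since $G$ is flag-transitive on $\mathcal{D}$, choose $\psi\in G$ with $(x,B)^{\psi}=(x',B')$; because $G$ preserves $\Sigma$ and both $x$ and $x^{\psi}=x'$ lie in $\Delta_i$, the element $\psi$ stabilises $\Delta_i$ setwise, so its induced action $\bar\psi\in G_{\Delta_i}^{\Delta_i}$ maps $(x,B\cap\Delta_i)$ to $(x',B'\cap\Delta_i)$. This yields the desired flag-transitive action.

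The main obstacle, and the only point requiring real care, is the bookkeeping distinction between the (multi-)set of traces and the set $\mathcal{B}_i^{\ast}$: one must remember that $\mathcal{B}_i^{\ast}$ is a set (no repetition), so that $\theta$ collapses the $\lambda$ blocks through $\{x,y\}$ in $\mathcal{D}$ to $\lambda/\theta$ blocks of $\mathcal{D}_i$. Once this is properly accounted for via Lemma \ref{Rid1}, the divisibility $\theta\mid\lambda$ and the exact replication number are immediate, and the rest of the verification is routine.
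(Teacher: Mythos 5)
Your proof is correct and follows essentially the same route as the paper: the same grouping of the $\lambda$ blocks of $\mathcal{D}$ through a point pair of $\Delta_i$ into classes of size exactly $\theta$ via Lemma \ref{Rid1}, yielding the replication $\lambda/\theta$ and the divisibility $\theta\mid\lambda$, with non-triviality from $3\leq k_0<c$. Your explicit verification that $\psi$ stabilises $\Delta_i$ setwise merely spells out the flag-transitivity step the paper states in one line.
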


\begin{proof}
Clearly the number of points in $\mathcal{D}_{i}$ is $c$ and each
element of $\mathcal{B}_{i}^{\ast}$ contains $k_{0}$ points of $\Delta _{i}$. Let $x_{1},x_{2}\in \Delta _{i}$, with $x_{1}\neq
x_{2}$, then there are precisely $\lambda$ blocks of $%
\mathcal{D}$ incident with them, say $B_{1},...,B_{\lambda }$. For each $B_{j}$ there are
precisely $\theta $ blocks among $B_{1},...,B_{\lambda }$ whose intersection set with $\Delta_{i}$ is $B_{j} \cap \Delta_{i}$, hence there are exactly $\lambda /\theta $ distinct elements of $\mathcal{B}_{i}^{\ast}$ incident
with $x_{1}\neq x_{2}$. Thus $\mathcal{D}_{i}$ is a $2$-$\left(
c,k_{0},\lambda /\theta \right) $ design. Also, $\mathcal{D}_{i}$ is non-trivial since $k_{0}<c$ by \cite[(4) and (7)]{PZ}, and since $k_{0} \geq 3$ by our assumption. Finally, the flag-transitivity of 
$G$ on $\mathcal{D}$ implies the flag-transitivity of $G_{\Delta
_{i}}^{\Delta _{i}}$ on $\mathcal{D}_{i}$.
\end{proof}

\bigskip

The following theorem is an improvement of \cite[Theorem 1.1]{PZ} on the basis of Theorem \ref{des}.

\bigskip

\begin{theorem}
\label{PZM}Let $\mathcal{D}=(\mathcal{P} ,\mathcal{B})$ be a symmetric $2$-design admitting a flag-transitive, point-imprimitive automorphism group $G$ that leaves invariant a non-trivial partition $\Sigma =\left\lbrace \Delta_{1},...,\Delta_{d} \right\rbrace$
of $\mathcal{P} $ such that $\left\vert \Delta_{i} \right \vert =c$ for each $i=1,...,d$. Then the following hold:
\begin{enumerate}
\item[I.] There is a constant $k_{0}$ such that, for each $B\in \mathcal{B}$ and $\Delta _{i}\in \Sigma $, the size $\left\vert B\cap \Delta _{i}\right\vert $ is either $0$ or $k_{0}$.
\item[II.] There is a constant $\theta$ such that, for each $B\in \mathcal{B}$ and $\Delta _{i}\in \Sigma $ with $\left\vert B\cap \Delta _{i}\right\vert >0$, the number of blocks of $\mathcal{D}$ whose intersection set with $\Delta _{i}$ coincides with $B\cap \Delta _{i}$ is $\theta$.
\item[III.] If $k_{0}=2$ then $G_{\Delta _{i}}^{\Delta _{i}}$ acts $2$-transitively on $\Delta_{i}$ for each $i=1,...,d$.
\item[IV.] If $k_{0}\geq 3$ then $\mathcal{D}_{i}=\left( \Delta_{i}, \left(B \cap \Delta_{i} \right)^{G_{\Delta _{i}}^{\Delta _{i}}}\right)$ is a flag-transitive non-trivial $2$-$\left(c,k_{0},\lambda/\theta\right)$ design for each $i=1,...,d$.
\end{enumerate}
Moreover, if $k>\lambda (\lambda -3)/2$ then one of the following holds: 
\begin{enumerate}
\item[V.] $k_{0}=2$ and one of the following holds:
\begin{enumerate}
\item[1.] $\mathcal{D}$ is a symmetric $2$-$(\lambda ^{2}(\lambda +2),\lambda (\lambda
+1),\lambda )$ design and $\left(c,d \right)=\left(\lambda+2,\lambda^{2}\right)$.
\item[2.] $\mathcal{D}$ is a symmetric $2$-$\left( \left(\frac{\lambda+2}{2}\right) \left(\frac{\lambda^2-2\lambda+2}{2}\right),\frac{\lambda^2}{2},\lambda \right)$ design, $\left(c,d \right)=\left(\frac{\lambda+2}{2},\frac{\lambda^2-2\lambda+2}{2}\right)$, and either $\lambda \equiv 0 \pmod{4}$, or $\lambda=2u^{2}$, where $u$ is odd, $u \geq 3$, and $2(u^2-1)$ is a square. 
\end{enumerate}
\item[VI.] $k_{0}\geq 3$ and one of the following holds:
\begin{enumerate}
\item[1.] $\mathcal{D}$ is a symmetric $2$-$(\lambda ^{2}(\lambda +2),\lambda (\lambda
+1),\lambda )$ design, $d=\lambda +2$, and $\mathcal{D}_{i}$ is a $%
2 $-$(\lambda ^{2},\lambda ,\lambda /\theta )$ design, with $\theta \mid
\lambda $, for each $i=1,...,d$.

\item[2.] $\mathcal{D}$ is a symmetric $2$-$((\lambda +6)\frac{\lambda ^{2}+4\lambda -1}{4}%
,\lambda \frac{\lambda +5}{2},\lambda )$ design, with $\lambda
\equiv 1,3\pmod{6}$, $d=\frac{\lambda ^{2}+4\lambda -1}{4}$, and $\mathcal{%
D}_{i}$ is a $2$-$(\lambda +6,3,\lambda /\theta )$ design, with $\theta \mid
\lambda $, for each $i=1,...,d$.
\end{enumerate}
\end{enumerate}
\end{theorem}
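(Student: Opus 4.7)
First I would dispatch Parts I--IV as essentially restatements of material already in place. Part I is \cite[Theorem 1.1]{PZ} verbatim; Part II is Lemma \ref{Rid1}, which introduced the overlap number $\theta$ and showed its independence of both the block $B$ and the class $\Delta_i$; Part IV is exactly Theorem \ref{des}. For Part III, when $k_0 = 2$ any pair $\{x,y\} \subseteq \Delta_i$ is realised as some $B \cap \Delta_i$, because each of the $\lambda \geq 1$ blocks through $x$ and $y$ meets $\Delta_i$ in exactly $k_0 = 2$ points by Part I, and those two points must be $\{x,y\}$. Given pairs $\{x_1,y_1\},\{x_2,y_2\}\subset\Delta_i$ with corresponding blocks $B_1,B_2$, applying flag-transitivity to $(x_1,B_1)\mapsto(x_2,B_2)$ produces $g\in G$ with $x_1^g=x_2$ and $B_1^g=B_2$; since $x_2\in\Delta_i$ we have $\Delta_i^g=\Delta_i$, and $(B_1\cap\Delta_i)^g=B_2\cap\Delta_i$ forces $y_1^g=y_2$, establishing the $2$-transitivity of $G_{\Delta_i}^{\Delta_i}$ on $\Delta_i$.

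For Parts V and VI the substantive input is the parameter enumeration in \cite[Theorem 1.1]{PZ} under the hypothesis $k > \lambda(\lambda - 3)/2$. Together with the symmetric-design identity $\lambda(v-1)=k(k-1)$, the partition identity $\lambda(c-1)=k(k_0-1)$ (obtained by double-counting incidences of same-class point pairs with blocks), the relation $v=cd$, the integrality of $k/k_0$, and Fisher-type restrictions, this cuts the admissible data $(v,k,\lambda,c,d,k_0)$ down precisely to the families listed. When $k_0=2$ one recovers V.1 and V.2, the numerical side-conditions on $\lambda$ in V.2 arising from the integrality of $v$ and $k$ together with the further structural constraints of \cite{PZ}. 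When $k_0\geq 3$ the enumeration yields the global parameters of $\mathcal{D}$ shown in VI.1 and VI.2; substituting these into $\lambda(c-1)=k(k_0-1)$ immediately gives $(c,k_0)=(\lambda^2,\lambda)$ in VI.1 and $(c,k_0)=(\lambda+6,3)$ in VI.2. Theorem \ref{des} then upgrades each $\Delta_i$ to a flag-transitive non-trivial $2$-$(c,k_0,\lambda/\theta)$ design with $\theta\mid\lambda$, producing the stated $2$-$(\lambda^2,\lambda,\lambda/\theta)$ and $2$-$(\lambda+6,3,\lambda/\theta)$ structures. The congruence $\lambda\equiv 1,3\pmod{6}$ in VI.2 is the standard arithmetic necessary condition for a $2$-design of block size $3$ to exist, namely that the replication number $(\lambda/\theta)(\lambda+5)/2$ and the block count $(\lambda/\theta)(\lambda+6)(\lambda+5)/6$ be integers.

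The main obstacle is really nothing more than the parameter enumeration in \cite[Theorem 1.1]{PZ}, which I would cite rather than reprove. The genuinely new content of Theorem \ref{PZM} over and above \cite{PZ} is the clean split by $k_0$ and, crucially, the promotion of each $\Delta_i$ to a \emph{flag-transitive $2$-design structure} $\mathcal{D}_i$ in Part VI via Theorem \ref{des}. This upgrade is precisely what enables the remainder of the paper to invoke the existing classifications of flag-transitive, point-primitive $2$-$(\lambda^2,\lambda,\lambda/\theta)$ designs and of $2$-$(\lambda+6,3,\lambda/\theta)$ designs to reach the main theorem.
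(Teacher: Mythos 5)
Your proposal is correct and takes essentially the same route as the paper, whose proof of Theorem \ref{PZM} is exactly the assembly you describe: Parts I, III, V and VI are imported from \cite[Theorem 1.1]{PZ}, Part II is Lemma \ref{Rid1}, and Part IV together with the inner $2$-$(\lambda^{2},\lambda,\lambda/\theta)$ and $2$-$(\lambda+6,3,\lambda/\theta)$ structures in VI is Theorem \ref{des}. Your explicit flag-transitivity argument for Part III and the consistency check via $\lambda(c-1)=k(k_{0}-1)$ simply spell out details the paper leaves implicit (the Part III claim is asserted in its Section 2 preamble).
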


\begin{proof}
The assertion follows from \cite[Theorem 1.1]{PZ} and from Theorem \ref{des}. 
\end{proof}

\bigskip
\textbf{From now on we assume that $k>\lambda (\lambda -3)/2$ and $k_{0}\geq 3$.} Hence, we will focus on the symmetric $2$-designs in (VI.1) and (VI.2) of Theorem \ref{PZM}, and we will refer to them as \emph{$2$-designs of type 1 and 2} respectively.
\bigskip

\begin{lemma}
\label{c=d} $\lambda \geq 3$.
\end{lemma}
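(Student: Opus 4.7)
The plan is to exploit the dichotomy provided by Theorem \ref{PZM}(VI), which under the standing assumptions $k > \lambda(\lambda-3)/2$ and $k_{0} \geq 3$ forces $\mathcal{D}$ to be either of type 1 or of type 2. In either sub-case the bound $\lambda \geq 3$ comes out almost immediately by inspecting the parameters of the internal designs $\mathcal{D}_{i}$, so I do not expect a genuine obstacle here; the lemma is essentially a sanity check that rules out the degenerate small-$\lambda$ regime before moving on to the substantive analysis of the two types.

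First I would dispose of type 1. Here, by Theorem \ref{PZM}(VI.1), $\mathcal{D}_{i}$ is a $2$-$(\lambda^{2},\lambda,\lambda/\theta)$ design, so its block size equals $k_{0} = \lambda$. Since $k_{0}\geq 3$ by standing assumption, this forces $\lambda \geq 3$ at once.

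For type 2 the parameters from Theorem \ref{PZM}(VI.2) impose $\lambda \equiv 1,3 \pmod{6}$, so the candidate values are $\lambda \in \{1,3,7,9,13,\ldots\}$ and the task reduces to excluding $\lambda = 1$. The cleanest route I see is to substitute $\lambda = 1$ into the formula $d = (\lambda^{2}+4\lambda - 1)/4$: this yields $d = 1$, so $\Sigma$ would consist of a single class of size $c = \lambda + 6 = 7$, directly contradicting the non-triviality of the $G$-invariant partition $\Sigma$, which by assumption requires $d\geq 2$ and $c \geq 2$. Hence $\lambda \neq 1$, and $\lambda \geq 3$ in type 2 as well.

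The only subtle point is to spell out the non-triviality of $\Sigma$ explicitly; as an alternative route one could appeal to Kantor's theorem on flag-transitive projective planes mentioned in the introduction, which shows that any flag-transitive $2$-$(v,k,1)$ design is point-primitive, again contradicting the hypothesis that $G$ acts point-imprimitively. Either way the argument is short and the conclusion $\lambda \geq 3$ is immediate from the parameter tables in Theorem \ref{PZM}.
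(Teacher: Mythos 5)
Your proof is correct, but it takes a genuinely different route from the paper's. The paper disposes of the lemma in a single line: if $\lambda=2$, then $k_{0}=2$ by \cite[Corollary 1.3 and Table 1]{PZ} (Praeger and Zhou's classification of the $\lambda=2$ case), contradicting the standing assumption $k_{0}\geq 3$; the case $\lambda=1$ is left tacit, being excluded by Kantor's theorem \cite{Ka87} recalled in the introduction (a flag-transitive projective plane is point-primitive, against the point-imprimitivity of $G$). You instead read the bound directly off the parameter dichotomy of Theorem \ref{PZM}(VI), which is legitimate and non-circular since that theorem precedes the lemma and does not depend on it: in type 1 the inner designs $\mathcal{D}_{i}$ have block size $k_{0}=\lambda$, so $k_{0}\geq 3$ forces $\lambda\geq 3$ immediately, while in type 2 the congruence $\lambda \equiv 1,3 \pmod{6}$ already excludes $\lambda=2$ and leaves only $\lambda=1$, which you rule out by observing that $d=(\lambda^{2}+4\lambda-1)/4=1$ would make $\Sigma$ the trivial partition $\{\mathcal{P}\}$ (your alternative via Kantor also works). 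Comparing the two: the paper's argument is shorter but leans on the external classification of flag-transitive, point-imprimitive symmetric designs with $\lambda=2$ and leaves the $\lambda=1$ case implicit; yours is self-contained modulo Theorem \ref{PZM}, avoids invoking the $\lambda=2$ classification altogether, and has the merit of handling $\lambda=1$ explicitly. Both are complete proofs of the statement.
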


\begin{proof}
If $\lambda=2$, then $k_{0}=2$ by \cite[Corollary 1.3 and Table 1]{PZ}, which is contrary to our assumption. Thus $\lambda \geq 3$.
\end{proof}

\bigskip

\begin{lemma}
\label{prim}If $k>\lambda (\lambda -3)/2$ and $k_{0}\geq 3$, then $G_{\Delta _{i}}^{\Delta _{i}}$ acts point-primitively on $\mathcal{D}_{i}$.

\end{lemma}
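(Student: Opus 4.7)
The plan is to argue by contradiction, assuming that $G_{\Delta_i}^{\Delta_i}$ is imprimitive on $\Delta_i$ and producing a proper $G$-invariant refinement of $\Sigma$ that violates Theorem \ref{PZM}. Specifically, let $\Sigma'_i$ be a non-trivial $G_{\Delta_i}^{\Delta_i}$-invariant partition of $\Delta_i$ into classes of common size $c'$ with $1<c'<c$ and $c'\mid c$. Using the transitivity of $G$ on $\Sigma$, I would transport $\Sigma'_i$ via coset representatives of $G_{\Delta_i}$ in $G$ to obtain compatible partitions $\Sigma'_j$ on each $\Delta_j\in\Sigma$, and then set $\tilde\Sigma=\bigcup_{j=1}^{d}\Sigma'_j$. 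The ambiguity in the choice of transporter lies in $G_{\Delta_i}$, which preserves $\Sigma'_i$ by construction; hence $\tilde\Sigma$ is well-defined, $G$-invariant, and strictly refines $\Sigma$. In particular $\tilde\Sigma$ is a non-trivial $G$-invariant partition of $\mathcal{P}$, and $G$ remains flag-transitive and point-imprimitive on $\mathcal{D}$ with respect to it.

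Next I would reapply Theorem \ref{PZM} to the pair $(\mathcal{D},\tilde\Sigma)$. The intersection constant $k'_0$ associated to $\tilde\Sigma$ is at least $2$ by the same two-points-in-a-class argument used at the beginning of Section~2, and the inequality $k>\lambda(\lambda-3)/2$ is intrinsic to $\mathcal{D}$; hence the parameters of $\mathcal{D}$ must fit one of the four cases (V.1), (V.2), (VI.1), (VI.2) of Theorem \ref{PZM} with part size $c'$. Since by our standing hypothesis $\mathcal{D}$ is already of type~1 or type~2, a direct polynomial comparison of the $v$-formulas eliminates (V.2) outright, identifies (V.1) and (VI.1) as sharing the common value $v=\lambda^2(\lambda+2)$, and shows that (VI.2) matches type~1 or type~2 only at $\lambda=3$.

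A short arithmetic check then closes the argument. In type~1 we have $c=\lambda^2$, so any matching refinement satisfies $c'\in\{\lambda+2,\lambda^2\}$ (with the additional possibility $c'=\lambda+6$ only at $\lambda=3$); the choice $c'=\lambda+2$ forces $(\lambda+2)\mid\lambda^2$, i.e., $(\lambda+2)\mid 4$, which is impossible for $\lambda\geq 3$ by Lemma \ref{c=d}, while the other two options yield $c'=c$ and so are not proper refinements. In type~2 we have $c=\lambda+6$, and for $\lambda\geq 7$ only (VI.2) matches, giving $c'=c$; the residual $\lambda=3$ case shares the parameters $(45,12,3)$ with type~1 and gives $c=9$, for which no proper divisor matches any of the admissible part sizes. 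The main obstacle I anticipate is the $G$-equivariant gluing in the construction of $\tilde\Sigma$ and the verification that Theorem \ref{PZM} may indeed be reapplied to it with the same $(v,k,\lambda)$; once this bookkeeping is in place, the elimination reduces to the elementary divisibility $(\lambda+2)\mid 4$ together with the innocuous coincidence of the two parameter families at $\lambda=3$.
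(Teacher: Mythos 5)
Your proof is correct, but it takes a genuinely different route from the paper. The paper disposes of Lemma \ref{prim} in one line, citing the classical Higman--McLaughlin-type criterion (\cite[2.3.7.(c)]{Demb} or \cite[Theorem 4.8.(i)]{Ka69}): a flag-transitive automorphism group of a $2$-design whose replication number $r$ exceeds $\lambda(\lambda-3)/2$ is automatically point-primitive; for $\mathcal{D}_{i}$ this inequality is immediate, since $r_{i}=\frac{\lambda}{\theta}(\lambda+1)$ in type 1 and $r_{i}=\frac{\lambda}{\theta}\cdot\frac{\lambda+5}{2}$ in type 2, while the index of $\mathcal{D}_{i}$ is only $\lambda/\theta\leq\lambda$. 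You instead argue by contradiction: transporting a nontrivial block system of $G_{\Delta_{i}}^{\Delta_{i}}$ across $\Sigma$ to build a $G$-invariant proper refinement $\tilde\Sigma$ (your well-definedness check via $G_{\Delta_{i}}$-invariance is exactly right, and the same transport device underlies the partition $\Sigma''$ in the paper's Lemma \ref{Ordine} via \cite[Theorem 1.5A]{DM}), then re-applying Theorem \ref{PZM} to $(\mathcal{D},\tilde\Sigma)$ and comparing parameters. Your arithmetic is sound: $v$ and $k$ are intrinsic, so case (V.2) dies by parameter comparison, $c'\mid c$ reduces (V.1) in type 1 to $(\lambda+2)\mid\lambda^{2}$, i.e.\ $(\lambda+2)\mid 4$, impossible for $\lambda\geq 3$ by Lemma \ref{c=d}; the remaining options give $c'=c$, hence no proper refinement, and you correctly handle the coincidence of the two families at $\lambda=3$, where the only candidate sizes $5$ and $9$ miss the unique proper divisor $3$ of $c=9$. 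What your approach buys is self-containedness: it uses nothing beyond Theorem \ref{PZM}, already central to the paper, and in spirit it re-proves the classical criterion in this setting by the same mechanism (an invariant partition forces parameter constraints) that underlies \cite{PZ}. What it costs is generality and length: it leans on the explicit parameter lists available only under the standing hypotheses $k>\lambda(\lambda-3)/2$ and $k_{0}\geq 3$, and requires a page of case checking, whereas the cited result yields primitivity for any flag-transitive $2$-design satisfying the replication-number inequality with no case analysis at all.
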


\begin{proof}
The assertion follows from \cite[2.3.7.(c)]{Demb} or \cite[Theorem 4.8.(i)]{Ka69}.
\end{proof}

\bigskip 
\begin{lemma}
\label{Ordine}Let $N$ be a minimal normal subgroup of $G$. Then one of the
following holds:

\begin{enumerate}
\item $\Sigma $ is the $N$-orbit decomposition of the point set of $\mathcal{D}$;
\item $N$ acts point-transitively on $\mathcal{D}$;
\end{enumerate}
or, for $c=\lambda+2$ and $d=\lambda^2$ the following additional possibility arises:
\begin{enumerate} 
\item[(3)] The $N$-orbit decomposition of the point set of $\mathcal{D}$ is a further $G$-invariant partition $\Sigma^{\prime}=\left\lbrace \Delta_{1}^{\prime},...,\Delta_{\lambda^{2}}^{\prime} \right\rbrace$ such that the following hold:
\begin{enumerate}
\item[(a)] $\left\vert \Delta_{j}^{\prime} \right \vert =\lambda+2$ for each $j=1,...,\lambda^{2}$;
\item[(b)] For each $B\in \mathcal{B}$ and $\Delta _{j}^{\prime}\in \Sigma^{\prime} $, the size $\left\vert B\cap \Delta _{j}^{\prime}\right\vert $ is either $0$ or $2$;
\item[(c)] For each $\Delta _{i}\in \Sigma$ and $\Delta _{j}^{\prime}\in \Sigma^{\prime} $, $\left\vert \Delta _{i}\cap \Delta _{j}^{\prime}\right\vert =1$.
\item[(d)] $G_{\Delta _{j}^{\prime}}^{\Delta _{j}^{\prime}}$ acts $2$-transitively on $\Delta_{j}^{\prime}$ for each $j=1,...,\lambda^{2}$.
\end{enumerate}   
\end{enumerate}
\end{lemma}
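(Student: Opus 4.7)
The plan is to study the orbit partition $\Sigma'$ of a minimal normal subgroup $N\unlhd G$ and compare it with $\Sigma$. Because $G$ is transitive on $\mathcal{P}$ and $N$ is normal, $\Sigma'$ is a $G$-invariant partition of $\mathcal{P}$ into classes of a common size $n$. If $n=v$ then conclusion (2) holds; if $\Sigma'=\Sigma$ then (1) holds. I therefore assume $1<n<v$ and $\Sigma'\neq\Sigma$ and aim to derive the structure of (3).

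I would next examine the kernel $K$ of the action of $N$ on $\Sigma$. A routine conjugation argument shows $K\unlhd G$, so by minimality of $N$ either $K=N$ or $K=1$. The case $K=N$ means $N$ stabilizes each $\Delta_i$ setwise, whence $N^{\Delta_i}$ is a normal subgroup of the primitive group $G_{\Delta_i}^{\Delta_i}$ (Lemma \ref{prim}); thus $N^{\Delta_i}$ is either trivial (forcing $N=1$, since $G$ acts faithfully on $\mathcal{P}$) or transitive on $\Delta_i$ (forcing $\Sigma=\Sigma'$), both excluded. Therefore $K=1$ and $N$ acts faithfully on $\Sigma$, with $N$-orbits on $\Sigma$ of common size $t\geq 2$ by $G$-transitivity.

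The decisive step is the intersection analysis. I would apply the same primitivity principle to $(N\cap G_{\Delta_i})^{\Delta_i}$, which is normal in the primitive group $G_{\Delta_i}^{\Delta_i}$. If it were transitive on $\Delta_i$, the $N$-orbit of any $x\in\Delta_i$ would contain $\Delta_i$, so $\Sigma$ would refine $\Sigma'$; but then the restriction of $\Sigma$ to any $\Sigma'$-class $\Delta_j'$ would be a proper $G_{\Delta_j'}^{\Delta_j'}$-invariant partition, contradicting the primitivity of $G_{\Delta_j'}^{\Delta_j'}$ on $\Delta_j'$ (obtained by applying Theorem \ref{PZM} to $\Sigma'$: Lemma \ref{prim} when $k_0'\geq 3$, or part (III) yielding $2$-transitivity when $k_0'=2$). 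Hence $(N\cap G_{\Delta_i})^{\Delta_i}=1$, i.e.\ $N\cap G_{\Delta_i}$ fixes $\Delta_i$ pointwise. Choosing $x\in\Delta_i$ and using $N\cap G_{\Delta_i}\leq N_x$, orbit-stabilizer gives $n=[N:N_x]\leq[N:N\cap G_{\Delta_i}]=t$, while the double count across a single $\Sigma'$-class yields $n=tm$ with $m=|\Delta_i\cap\Delta_j'|$ constant (by $N$-transitivity on each $\Delta_j'$); together these force $m=1$ and $n=t$.

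To conclude, I would apply Theorem \ref{PZM} to $\Sigma'$ itself: under $k>\lambda(\lambda-3)/2$ the pair $(n,v/n)$ must occur in one of (V.1), (V.2), (VI.1), (VI.2). Combining the divisibility $n=t\mid d$ with $v$ being fixed by whether $\Sigma$ is of type 1 or of type 2, a short parameter check eliminates all candidates except (V.1), forcing $\mathcal{D}$ to be of type 1 (type 2 survives only in the degenerate case $\lambda=3$, where the two types coincide). Then $n=\lambda+2=d$, so $N$ is transitive on $\Sigma$ and every $\Sigma'$-class meets every $\Sigma$-class in exactly $m=1$ point, yielding (c); $|\Delta_j'|=n=\lambda+2$ is (a); $k_0'=2$ is (b); and Theorem \ref{PZM}(III) supplies the $2$-transitivity in (d). The principal obstacle is the intersection-counting step, in particular extracting $m=1$ from the orbit-stabilizer inequality; the concluding parameter check is mechanical but demands care with the divisibility constraints.
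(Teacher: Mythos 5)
Your proof is correct, and it reaches the trichotomy by a genuinely different route than the paper. The paper's primary dichotomy is whether $N$ is transitive on $\Sigma$ (phrased via point-transitivity of $G_{\Delta_i}N$): in the transitive case it splits on whether some $N_{\Delta_{j_0}}^{\Delta_{j_0}}$ is trivial, obtaining (3) by comparing the parameters of the orbit partition against the lists of Theorem \ref{PZM}, or (2) via Lemma \ref{prim}; in the intransitive case it introduces the \emph{coarser} partition $\Sigma''$ formed by the $N$-orbits of $\Sigma$-classes, observes $k_0''\geq k_0\geq 3$, and uses $c\mid c''$ with Theorem \ref{PZM} to force $\Sigma''=\Sigma$, giving (1). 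You instead split on the kernel $K=N\cap G(\Sigma)$; your $K=N$ branch recovers (1), and in the $K=1$ branch your refinement contradiction (primitivity or $2$-transitivity of $G_{\Delta_j'}^{\Delta_j'}$, from Theorem \ref{PZM}(III) and the argument of Lemma \ref{prim} applied to $\Sigma'$) forces $(N\cap G_{\Delta_i})^{\Delta_i}=1$, after which the counting $n=tm\leq t$ gives $m=1$ and $n=t\mid d$. Your concluding parameter check then does double duty: it pins $\Sigma'$ to the $(\lambda+2,\lambda^2)$ shape of (3) --- the eliminations do go through: $\lambda^2\nmid\lambda+2$ kills (VI.1), matching $k$ kills (V.2), and $\lambda+6\mid\frac{\lambda^2+4\lambda-1}{4}$ forces $\lambda+6\mid 11$, impossible for $\lambda\equiv 1,3\pmod 6$, killing (VI.2), with the $\lambda=3$ coincidence of the two types absorbed into (V.1) exactly as you note --- and it \emph{simultaneously} rules out intransitivity of $N$ on $\Sigma$ through $t\mid d$, so the paper's entire coarsening argument becomes unnecessary, while (2) falls out trivially as $n=v$. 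The price is modest but should be acknowledged: Lemma \ref{prim} is stated only for $\Sigma$, so you must remark that the cited primitivity criterion depends only on the induced design's parameters, which Theorem \ref{PZM}(VI) pins to the same two families for $\Sigma'$; and the uniformity in $i$ of the trivial-or-transitive alternative for $N^{\Delta_i}$ and $(N\cap G_{\Delta_i})^{\Delta_i}$ needs the routine observation (made explicitly in the paper) that $N\trianglelefteq G$ together with transitivity of $G$ on $\Sigma$ makes these groups conjugate.
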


\begin{proof}
Let $N$ be a minimal normal subgroup of $G$. 
Assume that $G_{\Delta _{i}}N$ acts point-transitively on $\mathcal{D}$.
Then $N$ acts transitively on $\Sigma $. If there is $j_{0}\in \left\{ 1,...,d\right\} $ such that $N_{\Delta_{j_{0}}}^{\Delta_{j_{0}}}=1$, then $N_{\Delta
_{j_{0}}}\leq G(\Delta _{j_{0}})$. Hence $N_{\Delta _{i}}\leq G(\Delta _{i})$
 for each $i$, since $G$ acts transitively on $\Sigma $ and $N \trianglelefteq G$. Thus the point set of $\mathcal{D}$ is split into $%
c^{\prime }$ orbits under $N$ each of length $d^{\prime }
$, where $(c^{\prime },d^{\prime })=(d,c)$, since $N$ acts transitively on $\Sigma $. Hence $\Sigma
^{\prime }=\left\{ \Delta _{1}^{\prime },...,\Delta
_{c}^{\prime }\right\} $, where $\Delta _{j}^{\prime 
}=x_{j}^{N}$ for each $j=1,...,c$, is a set of imprimitivity for $G$. Moreover, $N_{x_{i}}=N(\Delta_{i})$ for each $x_{i} \in \Delta_{i}$ and for each $i=1,...,c$. By Theorem \ref{PZM} there is a constant $k_{0}^{\prime }$ such that, for each $B\in \mathcal{B}$ and $\Delta _{i}^{\prime }\in \Sigma^{\prime } $, the size $\left\vert B\cap \Delta _{i}^{\prime }\right\vert $ is either $0$ or $k_{0}^{\prime }$.\\
If $k_{0}^{\prime }=2$, then either $\left(c^{\prime },d^{\prime } \right)=\left(\lambda+2,\lambda^{2}\right)$, or $\left(c^{\prime  },d^{\prime  } \right)=\left(\frac{\lambda+2}{2},\frac{\lambda^2-2\lambda+2}{2}\right)$ and either $\lambda \equiv 0 \pmod{4}$, or $\lambda=2u^{2}$, where $u$ is odd, $u \geq 3$, and $2(u^2-1)$ is a square by Theorem \ref{PZM}. On the other hand, we know that $(c^{ \prime },d^{\prime })=(d,c)$ and either $(d,c)=(\lambda+2, \lambda^2)$, or $\left( \frac{\lambda ^{2}+4\lambda -1}{4}, \lambda+6 \right)$ and $\lambda \equiv 1,3 \pmod{6}$ again by by Theorem \ref{PZM}, since $k_{0} \geq 3$. By comparing the values of $(c^{\prime },d^{\prime })$ we see that
the unique admissible value is $(c^{\prime },d^{\prime })=(d,c)=(\lambda+2, \lambda^2)$, and we obtain (3a) and (3b).\\
Let $\Delta _{i}\in \Sigma$ and $\Delta _{j}^{\prime}\in \Sigma^{\prime} $. Since $N_{x_{i}}=N(\Delta_{i})$ for each $x_{i} \in \Delta_{i}$ and for each $i=1,...,\lambda+2$, and since $\Delta _{j}^{\prime}$ is a $N$-orbit for each $j=1,...,\lambda^{2}$, it follows that $\left\vert \Delta _{i}\cap \Delta _{j}^{\prime}\right\vert =1$. Also $G_{\Delta _{j}^{\prime}}^{\Delta _{j}^{\prime}}$ acts $2$-transitively on $\Delta_{j}^{\prime} \in \Sigma^{\prime}$, since $k_{0}^{\prime}=2$. Thus we get (3c) and (3d).\\ 
If  $k_{0}^{\prime } \geq 3$, then either $\left(c^{\prime },d^{\prime } \right)=\left(\lambda^{2}, \lambda +2\right)$ or $\left(c^{\prime  },d^{\prime } \right)=\left(\lambda+6,\frac{\lambda ^{2}+4\lambda -1}{4} \right)$ and $\lambda \equiv 1,3 \pmod{6}$ by Theorem \ref{PZM}. On the other hand, we know that $(c^{\prime },d^{\prime  })=(d,c)$ and either $(d,c)=(\lambda+2, \lambda^2)$, or $\left( \frac{\lambda ^{2}+4\lambda -1}{4}, \lambda+6 \right)$ and $\lambda \equiv 1,3 \pmod{6}$. By comparing the values of $(c^{\prime },d^{\prime })$ no admissible $\lambda$'s arise, since $\lambda \ge 3$ by Lemma \ref{c=d}.

Assume that $%
N_{\Delta _{i}}^{\Delta _{i}} \neq 1$ for each $i=1,...,d$. Hence $%
N_{\Delta _{i}}^{\Delta _{i}}$ acts point-transitively on $\mathcal{D}_{i}$ for each $i= 1,...,d$, since $G_{\Delta
_{i}}^{\Delta _{i}}$ acts point-primitively on $\mathcal{D}_{i}$ by
Lemma \ref{prim}. Therefore $N$ acts point-transitively on $\mathcal{%
D}$, as $N$ acts transitively on $\Sigma $, which is (2).

Assume that $G_{\Delta _{i}}N$
acts point-intransitively on $\mathcal{D}$. Hence $G_{\Delta _{i}}N \neq G$. Then $\Delta _{i}\subseteq
\Delta _{i}^{\prime \prime}$, where $\Delta _{i}^{\prime \prime}=x^{G_{\Delta
_{i}}N}=\Delta _{i}^{N}$ and $x\in \Delta _{i}$. Also $\Sigma ^{\prime \prime
}=\left\{ \left( \Delta _{i}^{\prime \prime}\right) ^{g}:g\in G\right\} $ is a set
of imprimitivity for $G$ by \cite[Theorem 1.5A]{DM}. If $B\in \mathcal{B}$ is such that $B\cap \Delta
_{i}^{\prime \prime}\neq \varnothing $, then $k_{0}^{\prime \prime}=\left\vert B\cap \Delta _{i}^{\prime \prime
}\right\vert \geq \left\vert B\cap \Delta _{i}\right\vert=k_{0} \geq 3$ and hence
we may apply Theorem \ref{PZM} referred to the set of imprimitivity $\Sigma ^{\prime \prime}
$, and we obtain that $\mathcal{D}_{i}^{\prime \prime}=( \Delta _{i}^{\prime \prime
},\left(B \cap \Delta_{i}^{\prime \prime} \right)^{G_{\Delta _{i}^{\prime \prime}}^{\Delta _{i}^{\prime \prime}}}) $ is a flag-transitive non-trivial $2$-$(c^{\prime \prime},k_{0}^{\prime \prime},\lambda
/\theta ^{\prime \prime})$ design. Moreover, either $c^{\prime \prime}=\lambda ^{2}$ or $c^{\prime \prime
}=\lambda +6$ since $k>\lambda(\lambda-3)/2$.
It is easily seen that $c^{\prime \prime}=c$, since $c \mid c^{\prime \prime}$, being $\Delta _{i}^{\prime \prime}=\Delta _{i}^{N}$. Thus $\Delta _{i}=\Delta
_{i}^{\prime \prime}$ and hence $N \trianglelefteq G_{\Delta_{i}}$ for each $i=1,...,d$. If there $i_{0}$ such that $N$ fixes a point in $\Delta_{i_{0}}$, then $N$ fixes each point of $\mathcal{D}$, since $N \trianglelefteq G$ and $G$ acts point-transitively on $\mathcal{D}$, and we reach a contradiction. Thus $N^{\Delta_{i}} \neq 1$ for each $i=1,...,d$, and hence $N$ acts point-transitively on $\mathcal{D}_{i}$, since $N^{\Delta_{i}}\trianglelefteq G_{\Delta _{i}}^{\Delta _{i}}$ and since $G_{\Delta _{i}}^{\Delta _{i}}$ acts point-primitively on $\mathcal{D}_{i}$ by Lemma \ref{prim}. Therefore, $\Sigma $ is the orbit decomposition of the point set of $\mathcal{D}$ under $N$, which is (1).
\end{proof}

\bigskip

Let $\Delta \in \Sigma $ and $x \in \Delta $. Since $G(\Sigma) \trianglelefteq G_{\Delta}$ and $G(\Delta) \trianglelefteq G_{x}$ it is immediate to verify that $(G^{\Sigma})_{\Delta}=(G_{\Delta})^{\Sigma}$ and that $\left( G_{\Delta}^{\Delta} \right)_{x}=(G_{x})^{\Delta}$. Hence, in the sequel, $(G^{\Sigma})_{\Delta}$ and $\left( G_{\Delta}^{\Delta} \right)_{x}$ will simply be denoted $G^{\Sigma}_{\Delta}$ and $G_{x}^{\Delta}$ respectively.

\bigskip

\section{The case where $\mathcal{D}$ is of type 1}
In this section we assume that $\mathcal{D}$ is of type 1. Hence
$\mathcal{D}$ is a symmetric $2$-$(\lambda ^{2}(\lambda +2),\lambda (\lambda
+1),\lambda )$ design with $d=\lambda +2$. Moreover, $\mathcal{D}_{i}$ is a $%
2 $-$(\lambda ^{2},\lambda ,\lambda /\theta )$ design, with $\theta \mid
\lambda $, admitting $G_{\Delta _{i}}^{\Delta _{i}}$ as a flag-transitive, point-primitive
automorphism group for each $i=1,...,d$. Our aim is to prove the following result.

\bigskip

\begin{theorem}
\label{t1}If $\mathcal{D}$ is of type 1, one of the following holds:
\begin{enumerate}

\item $\mathcal{D}$ is isomorphic to the $2$-$(45,12,3)$ design of \cite[Construction 4.2]{P}.

\item $\mathcal{D}$ is isomorphic to one of the four $2$-$(96,20,4)$ designs constructed in \cite{LPR}.
\end{enumerate}
\end{theorem}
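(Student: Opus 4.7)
The approach is to combine the classification of the induced designs $\mathcal{D}_{i}$ on the classes of $\Sigma$ with a structural analysis of the global group $G$. Since $c=\lambda^{2}$, $d=\lambda+2$ and $\lambda\geq 3$ by Lemma \ref{c=d}, in type 1 we have $c\neq \lambda+2$, so case (3) of Lemma \ref{Ordine} cannot arise. Hence for any minimal normal subgroup $N$ of $G$, either $\Sigma$ coincides with the $N$-orbit decomposition of $\mathcal{P}$, or $N$ acts point-transitively on $\mathcal{D}$.

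First, I would invoke the classification of flag-transitive, point-primitive $2$-$(\lambda^{2},\lambda,\lambda/\theta)$ designs from \cite{MF,Mo1,Mo2}, which applies to each $\mathcal{D}_{i}$ by Theorem \ref{des} and Lemma \ref{prim}. This yields a finite list of admissible triples $(\lambda,\theta,\mathcal{D}_{i})$ and of candidates for the permutation group $H=G_{\Delta_{i}}^{\Delta_{i}}$.

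Next, for each such $H$, I would determine the possibilities for $G$ itself. Using the embedding $G\hookrightarrow H\wr G^{\Sigma}$, the dichotomy on $N$ above, and the information on the socle of $G$ provided by \cite{BP}, one should obtain a finite list of candidate pairs $(G,\lambda)$. The arithmetic and divisibility constraints of \cite{La} on flag-transitive symmetric designs, together with the divisibility $\theta\mid\lambda$ from Lemma \ref{Rid1} and the fact that $\mathcal{D}$ is symmetric of order $\lambda^{2}(\lambda+2)$ with block size $\lambda(\lambda+1)$, would eliminate all candidates except $\lambda\in\{3,4\}$. Uniqueness for these parameter sets then follows from \cite{P} for the $(45,12,3)$ design and from \cite{LPR} for the four $(96,20,4)$ designs.

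The main difficulty I expect is the bookkeeping in the second step: while the block-level classification severely restricts $H$, going from $H$ to $G$ requires enumerating subgroups of $H\wr G^{\Sigma}$ that are compatible with flag-transitivity on $\mathcal{D}$, with the prescribed block size $k=\lambda(\lambda+1)$, and with a constant overlap number across all class-block incidences. The subtle point will be to check, for each candidate with $\lambda\geq 5$, that these compatibility constraints actually rule it out, rather than leaving numerical ambiguities to resolve by ad hoc means.
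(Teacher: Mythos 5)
There is a genuine gap, and it starts with your very first reduction: you claim that case (3) of Lemma \ref{Ordine} cannot arise in type 1 because $c=\lambda^{2}\neq\lambda+2$. This misreads the lemma. In the proof of Lemma \ref{Ordine}, possibility (3) occurs when a minimal normal subgroup $N$ acts trivially on each class, producing a \emph{second} $G$-invariant partition $\Sigma^{\prime}$ with $(c^{\prime},d^{\prime})=(d,c)$; comparing with Theorem \ref{PZM}, the unique admissible value is $(c^{\prime},d^{\prime})=(\lambda+2,\lambda^{2})$, i.e.\ precisely the type 1 situation $(c,d)=(\lambda^{2},\lambda+2)$. So in type 1 the third possibility is exactly the live one, and eliminating it is a substantial part of the paper: it reappears as alternative (2) of Proposition \ref{happens} (the subgroup $U$ with $u^{h}=p^{m}+2$ whose orbits form $\Sigma^{\prime}$ with classes of size $p^{m}+2$ met by blocks in $0$ or $2$ points), and the whole of Section 4 (Lemmas \ref{quotient}, \ref{mamma}, Theorem \ref{centraliz}) is devoted to proving $C_{G}(V)\leq G(\Sigma)$, which is what actually kills it. Your plan, by discarding this branch at the outset, skips an essential and nontrivial configuration.

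Beyond that, the remainder of your proposal is a program rather than a proof, and it omits the steps that make the program feasible. The paper first proves that $G^{\Sigma}$ is $2$-transitive (Proposition \ref{boom}, using that each block meets exactly $\lambda+1$ of the $\lambda+2$ classes, so $G_{B}$ fixes the missed class), then that $G_{\Delta_{i}}^{\Delta_{i}}$ must be of \emph{affine} type with $\lambda=p^{m}$ (Proposition \ref{affine}, ruling out the almost simple entries of Theorem \ref{Monty} by playing the $2$-transitive degrees $8$ and $14$ against fixed-block counts via \cite{La} and Lemma \ref{NoFTR}). Only then does the linear framework you gesture at become available: $G(\Sigma)\neq 1$, a minimal normal $V\leq G(\Sigma)$ elementary abelian of order $p^{2m+t}$, and $H=G/C_{G}(V)$ irreducible in $GL_{2m+t}(p)$, analyzed through Aschbacher classes via \cite{BP} — not through an enumeration of subgroups of $H\wr G^{\Sigma}$, which as you yourself note would be unmanageable bookkeeping. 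Finally, your assertion that divisibility constraints "eliminate all candidates except $\lambda\in\{3,4\}$" is exactly where the real work lies: the paper needs primitive-divisor arguments (Lemma \ref{div}, $\Phi_{2m}^{\ast}(p)$), a seven-step ad hoc elimination of $(p^{m},u^{h})=(7,9)$, and, in the almost simple case, a GAP-assisted analysis of $PSL_{2}(17)$ on $V_{16}(2)$ to exclude $p^{m}=16$. None of these eliminations follows from the general constraints you cite, so the plan as stated would stall precisely at the point you flag as the "main difficulty."
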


\bigskip

\begin{proposition}
\label{boom}$G$ induces a $2$-transitive group on $\Sigma $.
\end{proposition}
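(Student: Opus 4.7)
The plan is to exploit the numerical coincidence that arises in type 1, namely $k=\lambda(\lambda+1)$, $k_{0}=\lambda$, and $d=\lambda+2$, which yields $k/k_{0}=\lambda+1=d-1$. Since every block $B\in\mathcal{B}$ meets each element of $\Sigma$ in either $0$ or $k_{0}$ points, $B$ partitions into $d-1$ subsets of size $\lambda$ lying in $d-1$ distinct elements of $\Sigma$, and is therefore disjoint from exactly one block of imprimitivity. This lets me define a map
\[
\sigma\colon \mathcal{B}\longrightarrow \Sigma, \qquad \sigma(B)=\text{the unique }\Delta\in\Sigma\text{ with }B\cap\Delta=\varnothing,
\]
which is $G$-equivariant by construction. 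In particular $G_{B}\leq G_{\sigma(B)}$.

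Since $\Sigma$ is a $G$-invariant partition of $\mathcal{P}$, $G$ acts transitively on $\Sigma$. Hence, to conclude $2$-transitivity it is enough to show that $G_{\Delta}$ is transitive on $\Sigma\setminus\{\Delta\}$ for one (equivalently, every) $\Delta\in\Sigma$. I would fix such a $\Delta$ and choose any block $B$ with $\sigma(B)=\Delta$; then $G_{B}\leq G_{\Delta}$. Flag-transitivity of $G$ on $\mathcal{D}$ implies block-transitivity together with transitivity of $G_{B}$ on the point set of $B$. But the decomposition
\[
B=\bigsqcup_{\Delta'\in\Sigma\setminus\{\Delta\}}(B\cap\Delta')
\]
is a partition of $B$ into $d-1$ parts of the same size $\lambda$, and it is preserved by $G_{B}$ because $G$ stabilises $\Sigma$. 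A group which is transitive on a set and which preserves a partition into equal-size parts must act transitively on the parts, so $G_{B}$ permutes $\Sigma\setminus\{\Delta\}$ transitively; a fortiori, so does $G_{\Delta}$, and the proposition follows.

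I do not expect a real obstacle: the whole argument rests on the identity $k/k_{0}=d-1$, which is the feature that distinguishes type 1 and which makes the assignment $B\mapsto\sigma(B)$ well-defined. The only point requiring a line of justification is the transitivity of $G_{B}$ on $B$, which is just block-transitivity combined with the fact that a flag-transitive group is transitive on the point set of every single block.
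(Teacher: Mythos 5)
Your proof is correct and is essentially the paper's own argument: both rest on the identity $k=k_{0}(d-1)=\lambda(\lambda+1)$, so that each block is disjoint from exactly one class of $\Sigma$, and on the observation that $G_{B}$, being transitive on $B$ (by flag-transitivity) and preserving $\Sigma$, permutes the $\lambda+1$ classes meeting $B$ transitively, hence fixes the missed class $\Delta$, giving $G_{B}\leq G_{\Delta}$ and transitivity of $G_{\Delta}$ on $\Sigma\setminus\{\Delta\}$. The paper phrases this with the set $\Sigma(B)=\{\Delta_{i}\in\Sigma:\Delta_{i}\cap B\neq\varnothing\}$ instead of your equivariant map $\sigma$, but the content is identical.
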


\begin{proof}
It is clear that $G$ acts transitively on $\Sigma $. Let $B$ be any block of $\mathcal{D}$ and define $\Sigma (B)=\left\{ \Delta_{i}\in \Sigma : \allowbreak  \Delta _{i}\cap B\neq \varnothing \right\}$. Then $%
\left\vert \Delta _{i}\cap B\right\vert =\lambda $ for each $\Delta _{i} \in \Sigma
(B)$, $\left\vert \Sigma
(B)\right\vert =\lambda +1$ and $\Sigma \setminus \Sigma (B)=\left\{ \Delta _{i_{0}}\right\} $ for some $i_{0} \in \left\lbrace 1,...,\lambda+2 \right\rbrace$, since $k=\lambda (\lambda +1)$ and $%
\left\vert \Sigma \right\vert =\lambda +2$. Since $G_{B}$ acts transitively
on $B$ and preserves $\Sigma $, it follows that $G_{B}$ acts transitively on 
$\Sigma (B)$. Thus $G_{B}$ preserves $\Delta _{i_{0}}$ and hence $G_{B}\leq
G_{\Delta _{i_{0}}}$. Therefore $G_{\Delta _{i_{0}}}$ acts transitively on $\Sigma \setminus \left\{
\Delta _{i_{0}}\right\} $ and hence $G$ induces a $2$-transitive group on $%
\Sigma $.
\end{proof}

\bigskip

\begin{lemma}
\label{NoFTR}If $G(\Delta _{i})\neq 1$, then either the primes dividing the order
of $G(\Delta _{i})$ divide $\lambda $, or $\mathcal{D}_{i}$ is a translation
plane.
\end{lemma}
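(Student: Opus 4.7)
The plan is to prove the contrapositive: assuming that some prime $p \nmid \lambda$ divides $|G(\Delta_i)|$, I aim to deduce that $\mathcal{D}_i$ is a translation plane. Pick $\sigma \in G(\Delta_i)$ of order $p$.

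First I would exploit that $\sigma$ fixes $\Delta_i$ pointwise. For every $B \in \mathcal{B}_i$, $\sigma$ fixes the trace $B \cap \Delta_i$ pointwise and hence permutes the $\theta$ blocks of $\mathcal{D}$ sharing that trace. Since $\theta \mid \lambda$ (Theorem \ref{des}) and $p \nmid \lambda$ we have $p \nmid \theta$, so $\sigma$ must fix at least one block in each of the $|\mathcal{B}_i^*| = \lambda^2(\lambda+1)/\theta$ fibres of $\mathcal{B}_i \to \mathcal{B}_i^*$. The standard equality of fixed points and fixed blocks in a symmetric design then gives $|\mathrm{Fix}(\sigma)| \geq \lambda^2(\lambda+1)/\theta$, so at least $\lambda^2(\lambda+1-\theta)/\theta$ fixed points lie outside $\Delta_i$.

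Second, I would invoke Dembowski's theorem on the fixed configuration of a prime-order automorphism of a symmetric design (\cite[2.3.7]{Demb}): since $p$ is coprime to $n = k-\lambda = \lambda^2$, the pair $\mathcal{F} = (\mathrm{Fix}(\sigma), \mathrm{Fix}_{\mathcal{B}}(\sigma))$ carries the structure of a symmetric $2$-$(v^*,k^*,\lambda^*)$ subdesign with the same order $n$. I would then cross $\mathcal{F}$ with the partition $\Sigma$: the normality of $G(\Delta_i)$ in $G_{\Delta_i}$ together with the $2$-transitivity of $G^{\Sigma}$ (Proposition \ref{boom}) forces $\sigma$ to act with uniform orbit size on $\Sigma \setminus \{\Delta_i\}$, and the compatibility of this uniform structure with the parameters of $\mathcal{F}$ and with the per-fibre fixed-block count from the first step should pin down $\theta = \lambda$. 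Hence $\mathcal{D}_i$ becomes a flag-transitive, point-primitive affine plane of order $\lambda$.

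Finally, for the translation property, I would fix a second $\sigma$-invariant class $\Delta_{j_0}$ (whose existence follows from the orbit analysis on $\Sigma$), observe that the $\sigma$-fixed blocks of $\mathcal{D}$ set up a $G_{\Delta_i,\Delta_{j_0}}$-equivariant bijection between $\mathcal{B}_i^*$ and $\mathcal{B}_{j_0}^*$, and transport an appropriate subgroup of $G(\Delta_{j_0})$ through this bijection to produce a translation group on $\mathcal{D}_i$; the flag-transitivity and point-primitivity of $G_{\Delta_i}^{\Delta_i}$ (Lemma \ref{prim}) would then promote it to a transitive one. The main obstacle, and the one I expect to absorb most of the work, is this last step: because $\sigma$ is trivial on $\Delta_i$, the translations of $\mathcal{D}_i$ cannot arise from $\sigma$ directly and must be produced indirectly through the bijective correspondence just described, and verifying that the resulting automorphisms of $\mathcal{D}_i$ are genuine translations — i.e.\ they preserve each parallel class and act without non-trivial fixed points on $\Delta_i$ — is where the finest combinatorial bookkeeping will be needed.
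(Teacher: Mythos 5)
Your first step coincides with the paper's opening move: since $\theta \mid \lambda$ and $p \nmid \lambda$, the element $\sigma$ fixes at least one block in each of the $\lambda^{2}(\lambda+1)/\theta$ fibres, and the equality of fixed points and fixed blocks (\cite[Theorem 3.1]{La}) gives $\left\vert \mathrm{Fix}(\sigma)\right\vert \geq \lambda^{2}(\lambda+1)/\theta$. From there, however, your argument has a genuine gap. The structural theorem you invoke in the second step does not exist in the generality you need: the fixed points and fixed blocks of a prime-order automorphism of a symmetric design do not in general form a symmetric subdesign of the same order (that phenomenon is special to projective planes). For two fixed points, the hypothesis $p \nmid \lambda$ only yields that the number of fixed blocks through them is congruent to $\lambda$ modulo $p$, hence nonzero --- not that it is a constant $\lambda^{\ast}$. (Moreover, \cite[2.3.7]{Demb} is the Higman--McLaughlin primitivity statement used in Lemma \ref{prim}, not a fixed-structure theorem.) Even granting your configuration $\mathcal{F}$, the decisive deduction $\theta=\lambda$ is only asserted (``should pin down''), never derived. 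What actually closes the argument is a quantitative \emph{upper} bound on fixed points, which your proposal never uses: \cite[Corollary 3.7]{La} gives $\left\vert \mathrm{Fix}(\sigma)\right\vert \leq \frac{\lambda}{k-\sqrt{k-\lambda}}\,v$, and since $k=\lambda(\lambda+1)$, $v=\lambda^{2}(\lambda+2)$ and $k-\sqrt{k-\lambda}=\lambda^{2}$, this evaluates to $\lambda(\lambda+2)$. Comparing with your lower bound forces $\theta \geq \lambda(\lambda+1)/(\lambda+2) > \lambda-1$, hence $\theta=\lambda$ because $\theta \mid \lambda$.

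The final step is also wrongly routed. Once $\theta=\lambda$, the design $\mathcal{D}_{i}$ is a $2$-$(\lambda^{2},\lambda,1)$ design, that is, an affine plane, and the paper concludes in one line by Wagner's theorem \cite{Wa}: a finite affine plane admitting a flag-transitive collineation group is a translation plane, the flag-transitivity of $G_{\Delta_{i}}^{\Delta_{i}}$ on $\mathcal{D}_{i}$ being already available from Theorem \ref{des}. Your plan to manufacture translations by transporting a subgroup of $G(\Delta_{j_{0}})$ through a fixed-block bijection between $\mathcal{B}_{i}^{\ast}$ and $\mathcal{B}_{j_{0}}^{\ast}$ is both unnecessary and unsupported: you give no reason why the transported maps should be automorphisms of $\mathcal{D}_{i}$ at all, let alone preserve parallel classes and act without fixed points, and you yourself flag this as the unresolved core of the argument. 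In summary, the proposal is sound up to the fixed-point lower bound, but the two steps that actually carry the lemma --- forcing $\theta=\lambda$ via Lander's upper bound, and obtaining the translation property via Wagner's theorem --- are respectively missing and replaced by an incomplete construction.
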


\begin{proof}
Assume that $G(\Delta _{i})\neq 1$ and let $W$ be any Sylow $w$-subgroup of $%
G(\Delta _{i})$ where $w$ is a prime not dividing $\lambda $. Clearly $W$
fixes the $\frac{\lambda ^{2}}{\theta }(\lambda +1)$ blocks of $\mathcal{D}_{i}$%
. Let $B$ be any block of $\mathcal{D}$ such that $B \cap \Delta_{i}$ is a block of $\mathcal{D}_{i}$. Then $W$ preserves $B \cap \Delta_{i}$ and there are $\theta $ blocks of $\mathcal{D}$ whose intersection set with $\Delta _{i}$ is $B \cap \Delta_{i}$. Therefore $W$ fixes at least one of these $\theta $ blocks, as $w \nmid \lambda$ and $\theta \mid \lambda $, and hence $W$ fixes at least $\frac{%
\lambda ^{2}}{\theta }(\lambda +1)$ blocks of $\mathcal{D}$. Then any non-trivial element of $W$ fixes
at least $\frac{\lambda ^{2}}{\theta }(\lambda +1)$ points of $\mathcal{D}$
by \cite[Theorem 3.1]{La} and hence $\frac{\lambda ^{2}}{\theta }(\lambda
+1)\leq \frac{\lambda }{k-\sqrt{(k-\lambda )}}v$ by \cite[Corollary
3.7]{La}. Since $k=\lambda (\lambda +1)$ and $v=\lambda ^{2}(\lambda +2)$, it
follows that $\frac{\lambda ^{2}}{\theta }(\lambda +1)\leq \lambda (\lambda
+2)$. Thus $\theta =\lambda $ and hence $\mathcal{D}_{i}$ is a $2$-$(\lambda^2,\lambda,1)$ design, that is, an affine plane. Then $\mathcal{D}_{i}$ is a translation plane by \cite{Wa}, since $G_{\Delta _{i}}^{\Delta _{i}}$ acts flag-transitively on $\mathcal{D}_{i}$.
\end{proof}

\bigskip
The following theorem classifies the flag-transitive $2$-$(\lambda ^{2},\lambda ,\lambda
/\theta )$ designs $\mathcal{D}_{i}$.
\bigskip
\begin{theorem}
\label{Monty}If $\mathcal{D}_{i}$ is a $2$-$(\lambda ^{2},\lambda ,\lambda
/\theta )$ design admitting a flag-transitive automorphism group $G_{\Delta
_{i}}^{\Delta _{i}}$, then one of the following holds
\begin{enumerate}
\item $G_{\Delta _{i}}^{\Delta _{i}}$ is almost simple and one of
the following holds:

\begin{enumerate}
\item $\mathcal{D}_{i}$ is isomorphic to the $2$-$(6^{2},6,2)$ design
constructed in \cite{MF}, $\theta =3$ and $PSL_{2}(8)\trianglelefteq
G_{\Delta _{i}}^{\Delta _{i}}\leq P\Gamma L_{2}(8)$.

\item $\mathcal{D}_{i}$ is isomorphic to one of the three $2$-$(6^{2},6,6)$
designs constructed in \cite{MF}, $\theta =1$ and $G_{\Delta _{i}}^{\Delta
_{i}}\cong P\Gamma L_{2}(8)$.

\item $\mathcal{D}_{i}$ is isomorphic to the $2$-$(12^{2},12,3)$ design
constructed in \cite{Mo1}, $\theta =4$ and $G_{\Delta _{i}}^{\Delta
_{i}}\cong PSL_{3}(3)$.

\item $\mathcal{D}_{i}$ is isomorphic to the $2$-$(12^{2},12,6)$ design
constructed in \cite{Mo1}, $\theta =2$ and $G_{\Delta_{i}}^{\Delta _{i}}\cong PSL_{3}(3):Z_{2}$.
\end{enumerate}
\item  $G_{\Delta _{i}}^{\Delta _{i}}=T:G_{0}^{\Delta _{i}}$, $\lambda=p^{m}$, $p$ prime, $m \geq 1$, and one of the following holds:
\begin{enumerate}
\item $\mathcal{D}_{i}$ is a translation plane of order $p^{m}$, $\theta=p^{m}$, and one of the following holds:
\begin{enumerate}
\item $\mathcal{D}_{i} \cong AG_{2}(p^{m})$ and the possibilities $G_{0}^{\Delta _{i}}$ are given \cite{F1,LiebF}.
\item $\mathcal{D}_{i}$ is the L\"uneburg plane of order $2^{m}$, $m \equiv 2 \pmod{4}$, $m \geq 6$, and $Sz(2^{m/2}) \trianglelefteq G_{0}^{\Delta _{i}} \leq \left( Z_{2^{m/2}-1} \times Sz(2^{m/2}) \right).Z_{m/2}$;
\item $\mathcal{D}_{i}$ is the Hall plane of order $3^2$ and the possibilities for $G_{0}^{\Delta _{i}}$ are given \cite{F2};
\item $\mathcal{D}_{i}$ is the Hering plane of order $3^3$ and $G_{0}^{\Delta _{i}} \cong SL_{2}(13)$.
\end{enumerate}
\item $\mathcal{D}_{i}$ is a $2$-$(p^{2m},p^{m}, p^{m-t})$ design, $\theta=p^{t}$, where $0 \leq t \leq m$, the blocks are subspaces of $AG_{2m}(p)$ and $G_{0}^{\Delta _{i}} \leq \Gamma L_{1}(p^{2m})$;
\item $\mathcal{D}_{i}$ is isomorphic to one of the following $2$-designs constructed in \cite{Mo2}:
\begin{enumerate}
\item a $2$-$(p^{2m},p^{m},p^{m/2})$ design, $m$ even, $\theta=p^{m/2}$, and $SL_{2}(p^{m})\trianglelefteq G_{0}^{\Delta _{i}} \leq (Z_{p^{m/2}-1} \circ SL_{2}(p^{m})).Z_{m}$;
\item a $2$-$(p^{2m},p^{m},p^{2m/3})$ design, $p$
odd and $m \equiv 0 \pmod{3}$, $\theta=p^{m/3}$, and $SU_{3}(p^{m/3})\trianglelefteq G_{0}^{\Delta _{i}}\leq (Z_{p^{m/3}-1}\times
SU_{3}(p^{m/3})).Z_{2m/3}$.
\item $\mathcal{D}$ is a $2$-$(p^{2m},p^{m},p^{m})$ design, $m$ even, $\theta=1$, and $Sp_{4}(p^{m/2})%
\trianglelefteq G_{0}^{\Delta _{i}}\leq \Gamma Sp_{4}(p^{m/2})$.
\item a $2$-$(2^{2m},2^{m},2^{m}/\theta)$ design, $m \equiv 2 \pmod{4}$ and either $\theta=2^{m/2}$ and $Sz(2^{m/2})\trianglelefteq G_{0}^{\Delta _{i}} \leq \left( Z_{2^{m/2}-1} \times Sz(2^{m/2}) \right).Z_{m/2}$, or $\theta=1,2$ and $Sz(2^{m/2})\trianglelefteq G_{0}^{\Delta _{i}} \leq Sz(2^{m/2}).Z_{m/2}$;
\item a $2$-$(2^{2m},2^{m},2^{m})$ design, $m \equiv 0 \pmod{3}$, $\theta=1$ and $G_{2}(2^{m/3})\trianglelefteq G_{0}^{\Delta _{i}} \leq \left( Z_{2^{m/3}-1} \times G_{2}(2^{m/3}) \right).Z_{m/3}$;
\item a $2$-$(3^{4},3^{2},3)$ design, $\theta=3$, and $SL_{2}(5)\trianglelefteq G_{0}^{\Delta _{i}} \leq (Z_{2}.S_{5}^{-}):Z_{2}$;
\item one of the two $2$-$(2^{6},2^{3},2^{2})$-designs, $\theta =2$, and either $G_{0}$ is one of the groups $3^{1+2}:Q_{8}$, $%
3^{1+2}:Z_{8}$ or $3^{1+2}:SD_{16}$, or
 $3^{1+2}:Z_{8} \leq G_{0}^{\Delta _{i}} \leq PSU_{3}(3)$;
\item a $2$-$(2^{6},2^{3},2^{3})$-design, $\theta =1$, and $G_{0}^{\Delta _{i}}$ is one of the groups $3^{1+2}:Q_{8}$, $%
3^{1+2}:C_{8}$, $3^{1+2}:SD_{16}$, $\left( 3^{1+2}:Q_{8}\right) :3:2$.
\end{enumerate}
\end{enumerate}

\end{enumerate}
\end{theorem}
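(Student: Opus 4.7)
The plan is to exploit that, by Lemma \ref{prim}, $G_{\Delta_i}^{\Delta_i}$ is primitive on $\Delta_i$, and then to apply the O'Nan--Scott theorem to this primitive group of degree $v=\lambda^2$. Writing $\lambda'=\lambda/\theta$, the identities $r(k-1)=\lambda'(v-1)$ and $bk=vr$ yield $r=\lambda(\lambda+1)/\theta$ and $b=\lambda^2(\lambda+1)/\theta$; in particular $\theta\mid\lambda$. Flag-transitivity then supplies the divisibilities $r\bigm||(G_{\Delta_i}^{\Delta_i})_x|$ and $b\bigm||G_{\Delta_i}^{\Delta_i}|$, which, together with the fact that $v$ is a square, form the arithmetic skeleton of the classification.

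First I would discard the nonaffine, nonalmost-simple O'Nan--Scott types. Product action on $n^t=\lambda^2$ with $t\geq 2$, simple diagonal, twisted wreath, and holomorph-of-a-simple-group are each incompatible either with $v$ being a prime-power square or with the divisibility constraints above, leaving only the affine and almost simple cases.

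For the almost simple case, one runs through the primitive actions of degree $\lambda^2$ of each nonabelian simple socle $T$, and for each candidate tests whether there is a $G_{\Delta_i}^{\Delta_i}$-orbit on $\lambda$-subsets producing a $2$-$(\lambda^2,\lambda,\lambda/\theta)$ design on which $G_{\Delta_i}^{\Delta_i}$ acts flag-transitively; the surviving cases are exactly those of \cite{MF,Mo1}, yielding (1)(a)--(d). For the affine case, $G_{\Delta_i}^{\Delta_i}=T:G_0^{\Delta_i}$ with $T$ elementary abelian of order $p^{2m}$ and $\lambda=p^m$. If $\theta=\lambda$, then $\mathcal{D}_i$ is an affine plane of order $\lambda$, and by Wagner's theorem it is a translation plane; the Foulser--Liebeck classification \cite{F1,LiebF} together with the L\"uneburg, Hall, and Hering sporadic translation planes then yields (2)(a). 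If $\theta<\lambda$, I would follow the strategy of \cite{Mo2}, going through the Aschbacher classes for $G_0^{\Delta_i}\leq\Gamma L_{2m}(p)$: class $\mathcal{C}_3$ supplies the $\Gamma L_1(p^{2m})$ family (2)(b) and the semilinear families (2)(c)(i)--(iii); class $\mathcal{S}$ supplies (2)(c)(iv)--(v); and all other classes are eliminated by the order and orbit-length constraints.

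The main obstacle is precisely this affine subcase $\theta<\lambda$: separating the surviving Aschbacher classes from those that satisfy the arithmetic divisibilities but fail to produce an actual flag-transitive $2$-design requires delicate orbit analysis on pairs of vectors and on the $\lambda$-subsets that could serve as blocks, and the low-dimensional exceptional configurations (notably the $A_6\leq S_6$ example on $16$ points and the $SL_2(5)$ example on $81$ points) demand individual ad hoc verification. Once this is completed, the theorem statement is the concatenation of the outputs of the almost simple and affine analyses.
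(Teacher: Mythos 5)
You should first be aware that the paper contains no proof of Theorem \ref{Monty} at all: immediately after the statement it says ``See \cite{MF,Mo1,Mo2} for a proof,'' so the theorem is a compilation of the author's classification trilogy (reduction and examples in \cite{MF}, the almost simple case in \cite{Mo1}, the affine case in \cite{Mo2}). Your outline faithfully mirrors the architecture of that trilogy: flag-transitivity forces point-primitivity, an O'Nan--Scott reduction to affine and almost simple type, then an Aschbacher-class analysis in the affine case. But as a proof your proposal is circular at exactly the two decisive points. Asserting that ``the surviving cases are exactly those of \cite{MF,Mo1}'' in the almost simple case, and that in the affine case the classes $\mathcal{C}_{3}$ and $\mathcal{S}$ ``supply'' families (2b)--(2c) while ``all other classes are eliminated by the order and orbit-length constraints,'' is a restatement of the theorem, not an argument; those eliminations and the verification that the survivors really carry flag-transitive $2$-$(\lambda^{2},\lambda,\lambda/\theta)$ designs constitute the entire content of three substantial papers, and you yourself flag this as ``the main obstacle'' without resolving it.

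Moreover, the one concrete elimination you do offer fails. You discard the product-action O'Nan--Scott type as ``incompatible with $v$ being a prime-power square.'' But $v=\lambda^{2}$ is exactly of product-action shape $n^{t}$ with $t=2$ and $n=\lambda$, and $v$ is \emph{not} a prime power in the surviving almost simple cases (case (1) has $v=36$ for $PSL_{2}(8)$ and $v=144$ for $PSL_{3}(3)$), so no argument based on the shape or prime-power nature of the degree can dispose of product action; killing it (and the diagonal and twisted wreath types) genuinely requires the flag-transitivity divisibilities $r=\lambda(\lambda+1)/\theta\mid\left\vert \left(G_{\Delta_{i}}^{\Delta_{i}}\right)_{x}\right\vert$ combined with a real case analysis of the possible socles --- this reduction is a theorem of \cite{MF}, not a triviality. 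A minor further point: invoking Lemma \ref{prim} is slightly off, since that lemma is stated under the ambient hypotheses $k>\lambda(\lambda-3)/2$ and $k_{0}\geq 3$ on $\mathcal{D}$, whereas Theorem \ref{Monty} concerns $\mathcal{D}_{i}$ in isolation; the clean observation is that for the parameters $2$-$(\lambda^{2},\lambda,\lambda/\theta)$ one always has $r_{i}=(\lambda+1)\lambda/\theta>\frac{\lambda}{\theta}\left(\frac{\lambda}{\theta}-3\right)/2$, so flag-transitivity yields point-primitivity directly from \cite[2.3.7(c)]{Demb} or \cite[Theorem 4.8(i)]{Ka69}, with no appeal to the ambient symmetric design.
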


See \cite{MF,Mo1, Mo2} for a proof. 

\begin{proposition}
\label{affine}$G_{\Delta _{i}}^{\Delta _{i}}$ is of affine type and $\lambda
=p^{m}$.
\end{proposition}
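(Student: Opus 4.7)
I will prove Proposition~\ref{affine} by contradiction. Suppose $G_{\Delta_{i}}^{\Delta_{i}}$ is not of affine type. By Theorem~\ref{Monty}, one of the almost-simple cases (1a)--(1d) applies, which forces $\lambda\in\{6,12\}$. Combined with the type 1 parameter formulas, this gives either $(v,k,\lambda)=(288,42,6)$ with $|\Sigma|=8$, or $(v,k,\lambda)=(2016,156,12)$ with $|\Sigma|=14$. In none of these cases is $\mathcal{D}_{i}$ a translation plane, since in all four we have $\lambda/\theta\in\{2,3,6\}$; hence Lemma~\ref{NoFTR} forces $G(\Delta_{i})$ to be a $\{2,3\}$-group. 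Moreover, Proposition~\ref{boom} gives that $G^{\Sigma}$ is a $2$-transitive group of degree $8$ or $14$, which leaves only a short list of candidates.

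Next I analyze a minimal normal subgroup $N$ of $G$ via Lemma~\ref{Ordine}. Since in type 1 we have $c=\lambda^{2}\neq \lambda+2$ (as $\lambda\geq 3$ by Lemma~\ref{c=d}), case (3) of Lemma~\ref{Ordine} is excluded. In case (1), $N\leq G(\Sigma)$ with $\Sigma$ as its orbit decomposition, so $N^{\Delta_{i}}$ is a non-trivial normal subgroup of the almost-simple group $G_{\Delta_{i}}^{\Delta_{i}}$; hence $N^{\Delta_{i}}$ contains the non-abelian socle $T\in\{PSL_{2}(8),PSL_{3}(3)\}$, and minimality forces $N$ to be a direct product of copies of $T$. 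In case (2), $N$ is point-transitive on $\mathcal{D}$; then $N_{\Delta_{i}}^{\Delta_{i}}\trianglelefteq G_{\Delta_{i}}^{\Delta_{i}}$ is transitive on $\Delta_{i}$ and therefore again contains $T$, and since neither $v=288=2^{5}\cdot 3^{2}$ nor $v=2016=2^{5}\cdot 3^{2}\cdot 7$ is a prime power, $N$ cannot be elementary abelian and must be a direct product of non-abelian simple groups.

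In each case I combine the order formula $|G|=(\lambda+2)\,|G(\Delta_{i})|\,|G_{\Delta_{i}}^{\Delta_{i}}|$, the divisibility $vk\mid |G|$ coming from flag-transitivity, the $\{2,3\}$-restriction on $G(\Delta_{i})$, and the tight list of possibilities for $G^{\Sigma}$ and $G_{\Delta_{i}}^{\Delta_{i}}$ to derive an arithmetic or structural contradiction: either the orders fail to be compatible, or the prescribed socle $T$ cannot embed in the candidate $G^{\Sigma}$-extension while respecting the imprimitivity structure. The main obstacle will be case (2) of Lemma~\ref{Ordine}, where I must sift through the non-abelian finite simple groups admitting a faithful transitive action of degree $288$ or $2016$ compatible with both the $2$-transitive action on $\Sigma$ and the prescribed socle on each $\Delta_{i}$; this is a bounded but delicate check using ATLAS-style permutation and subgroup data.
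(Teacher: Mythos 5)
Your opening reductions are correct and match the paper: Theorem~\ref{Monty} forces $\lambda\in\{6,12\}$ with socle $T\in\{PSL_{2}(8),PSL_{3}(3)\}$, the type~1 parameters give $|\Sigma|=8$ or $14$, and since $\lambda/\theta\in\{2,3,6\}$ in all four almost-simple cases, Lemma~\ref{NoFTR} does make $G(\Delta_{i})$ a $\{2,3\}$-group. But from there your argument has a genuine gap, in two parts. First, your exclusion of case~(3) of Lemma~\ref{Ordine} is invalid: the prefatory clause ``for $c=\lambda+2$ and $d=\lambda^{2}$'' in that lemma has $c$ and $d$ transposed, as its own item~(3c) shows ($\left\vert \Delta_{i}\cap\Delta_{j}^{\prime}\right\vert=1$ for all $\lambda^{2}$ classes $\Delta_{j}^{\prime}$ forces $c=\lambda^{2}$), and the lemma's proof derives case~(3) exactly when $(c,d)=(\lambda^{2},\lambda+2)$, i.e.\ for designs of type~1. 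So case~(3) is precisely the live possibility here --- it is the configuration of Proposition~\ref{happens}(2), which the paper needs all of Section~4 to eliminate in the affine setting. Concretely, for $\lambda=6$ a minimal normal subgroup $N$ could a priori be an elementary abelian $2$-group with $N_{\Delta_{i}}\leq G(\Delta_{i})$ and with $36$ orbits of size $8$ transversal to $\Sigma$; your dichotomy ($N^{\Delta_{i}}\supseteq T$, or $N$ point-transitive) simply does not cover this situation, and nothing in your sketch rules it out.

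Second, even in cases~(1) and~(2) of Lemma~\ref{Ordine} you never actually reach a contradiction: the decisive step is deferred to ``an arithmetic or structural contradiction'' plus an ATLAS sift of simple groups with transitive actions of degree $288$ or $2016$. That sift is the whole mathematical content of your route, and it is not as tame as suggested (for instance $N\cong PSL_{2}(8)^{e}$ with $e\geq 2$ is not excluded at degree $288$ by order divisibility alone). The paper's proof is structurally different and complete: it never invokes Lemma~\ref{Ordine} at this stage, but instead lists the $2$-transitive candidates for $G^{\Sigma}$ of degree $8$ and $14$ from \cite{Ka85}, exploits $G(\Sigma)G(\Delta_{i})\trianglelefteq G_{\Delta_{i}}$ to show that otherwise a quotient of $G_{\Delta_{i}}^{\Delta_{i}}$ or of $G_{\Delta_{i}}^{\Sigma}$ would contain an impossible simple section, and in the surviving configurations produces a Sylow $7$-subgroup (resp.\ $13$-subgroup) $W$ of $G_{\Delta_{i}}$ with $7^{2}\mid\left\vert W\right\vert$, forcing $7\mid\left\vert W(\Delta_{i})\right\vert$ (resp.\ $13$), against Lemma~\ref{NoFTR}. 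To salvage your approach you would have to both confront case~(3) of Lemma~\ref{Ordine} honestly and execute the degree-$288/2016$ classification; the paper's direct Sylow argument bypasses both.
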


\begin{proof}
Assume that $G_{\Delta _{i}}^{\Delta _{i}}$ is almost simple. Then either $%
PSL_{2}(8)\trianglelefteq G_{\Delta _{i}}^{\Delta _{i}}\leq P\Gamma L_{2}(8)$
and $\lambda =6$, or $PSL_{3}(3)\trianglelefteq G_{\Delta _{i}}^{\Delta
_{i}}\leq PSL_{3}(3):Z_{2}$ and $\lambda =12$ by Theorem \ref{Monty}.\\
Assume that the former occurs. Since $G^{\Sigma }$ acts $2$-transitively on $\Sigma $ by Proposition \ref{boom}, and $\left\vert\Sigma \right\vert =8$, one of the following holds by \cite[Section 2, (A) and (B)]{Ka85}:

\begin{enumerate}
\item $AGL_{1}(8)\trianglelefteq G^{\Sigma }\leq A\Gamma L_{1}(8)$;

\item $G^{\Sigma }\cong E_{8}:SL_{3}(2)$;

\item $PSL_{2}(7)\trianglelefteq G^{\Sigma }\leq PGL_{2}(7)$;

\item $A_{8}\trianglelefteq G^{\Sigma }\leq S_{8}$.
\end{enumerate}

Assume that (4) holds. Since $G(\Sigma )G(\Delta_{i}) \unlhd G_{\Delta_{i}}$ and  $A_{7}\trianglelefteq
G_{\Delta_{i}}^{\Sigma} \leq S_{7}$, either $G(\Delta_{i}) \trianglelefteq G(\Sigma)$ or $G(\Delta_{i})/(G(\Delta_{i})\cap G(\Sigma ))$ contains a subgroup isomorphic to $A_{7}$. The latter is ruled out by Lemma \ref{NoFTR}, since $\lambda=6$, whereas the former implies that a quotient group of $G_{\Delta _{i}}^{\Delta_{i}}$ is isomorphic to $A_{7}$, which is impossible as $PSL_{2}(8)\trianglelefteq G_{\Delta _{i}}^{\Delta _{i}}\leq P\Gamma L_{2}(8)$. Thus (4) is ruled out.\\ 
Assume that one of (1)--(3) occurs. Since $G(\Sigma )G(\Delta
_{i})\trianglelefteq G_{\Delta _{i}}$ and $PSL_{2}(8)\trianglelefteq
G_{\Delta _{i}}^{\Delta _{i}}\leq P\Gamma L_{2}(8)$, either $G(\Sigma
)\trianglelefteq G(\Delta _{i})$ or $PSL_{2}(8)\trianglelefteq G(\Sigma
)/(G(\Sigma )\cap G(\Delta _{i}))$. The former implies $G_{\Delta_{i}}^{\Delta_{i}}\cong G_{\Delta_{i}}^{\Sigma}/G(\Delta_{i})^{\Sigma}$ and hence a quotient group of $G_{\Delta_{i}}^{\Sigma}$ has a subgroup isomorphic to $PSL_{2}(8)$,
but this is clearly impossible. So $%
PSL_{2}(8)\trianglelefteq G(\Sigma )/(G(\Sigma )\cap G(\Delta _{i}))$ and $A_{7}\unlhd
G_{\Delta _{i}}^{\Sigma}$. Hence, if $W$ is any Sylow $7$-subgroup of 
$G_{\Delta _{i}}$, $7^{2} \mid \left\vert W\right\vert$. Then $7 \mid \left\vert W(\Delta _{i})\right\vert$, since $PSL_{2}(8)\trianglelefteq
G_{\Delta _{i}}^{\Delta _{i}}\leq P\Gamma L_{2}(8)$, but this contradicts Lemma \ref{NoFTR}.

Assume that $PSL_{3}(3)\trianglelefteq G_{\Delta _{i}}^{\Delta _{i}}\leq
PSL_{3}(3):Z_{2}$ and $\lambda =12$. Since $G^{\Sigma }$ acts $2$%
-transitively on $\Sigma $, with $\left\vert\Sigma \right\vert =14$, one of the following holds by \cite[Section 2, (A) and (B)]{Ka85}:

\begin{enumerate}
\item $PSL_{2}(13)\trianglelefteq G^{\Sigma }\leq PGL_{2}(13)$;

\item $A_{14}\trianglelefteq G^{\Sigma }\leq S_{14}$.
\end{enumerate}

We may proceed as the $PSL_{2}(8)$-case to rule out (1) and (2),
this time $W$ is a Sylow $13$-subgroup of $G_{\Delta _{i}}$.
\end{proof}

\bigskip

\begin{lemma}
\label{if} The following hold:

\begin{enumerate}
\item $G(\Delta _{i})\trianglelefteq G(\Sigma )\leq G_{\Delta _{i}}$ for each $i=1,...,p^{m}+2$.

\item $G(\Delta _{i})\cap G(\Delta _{j})=1$ for each $i,j=1,...,p^{m}+2$ with $i \neq j$.

\end{enumerate}
\end{lemma}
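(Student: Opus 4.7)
My plan is to dispose of (2) first by a fixed-point count, then leverage it along with an orbit-length argument on $\Sigma$ to handle (1).

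For part (2), I would suppose for contradiction that there is a non-identity $g \in G(\Delta_i) \cap G(\Delta_j)$ with $i \neq j$. Such a $g$ fixes pointwise $\Delta_i \cup \Delta_j$ and therefore has at least $2\lambda^2$ fixed points on $\mathcal{D}$. Applying \cite[Corollary 3.7]{La}, any non-identity automorphism fixes at most $\frac{\lambda v}{k - \sqrt{k-\lambda}}$ points. With $v = \lambda^2(\lambda+2)$, $k = \lambda(\lambda+1)$, and $\sqrt{k-\lambda} = \lambda$, this bound simplifies to $\lambda(\lambda+2) = \lambda^2 + 2\lambda$. Since $\lambda \geq 3$ by Lemma \ref{c=d}, we have $2\lambda^2 > \lambda^2 + 2\lambda$, a contradiction that forces $g=1$.

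For part (1), the inclusion $G(\Sigma) \leq G_{\Delta_i}$ is immediate since $G(\Sigma)$ fixes each element of $\Sigma$ setwise, and $G(\Delta_i) \trianglelefteq G_{\Delta_i}$ because $G(\Delta_i)$ is the kernel of the natural action of $G_{\Delta_i}$ on $\Delta_i$. The substantive claim is $G(\Delta_i) \leq G(\Sigma)$, equivalently $G(\Delta_i)^\Sigma = 1$. The main leverage I would use is that $G(\Delta_i)^\Sigma$ is normal in $G_{\Delta_i}^\Sigma$, which acts transitively on $\Sigma \setminus \{\Delta_i\}$ of size $\lambda+1 = p^m+1$ by the $2$-transitivity of $G^\Sigma$ from Proposition \ref{boom}. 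Consequently all orbits of $G(\Delta_i)^\Sigma$ on $\Sigma \setminus \{\Delta_i\}$ share a common length $s$ dividing $p^m+1$, and the objective reduces to $s=1$.

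If $G(\Delta_i) = 1$ there is nothing to prove. Otherwise Lemma \ref{NoFTR} yields the dichotomy: either every prime divisor of $|G(\Delta_i)|$ divides $\lambda = p^m$, or $\mathcal{D}_i$ is a translation plane. In the first case, $G(\Delta_i)^\Sigma$ is a $p$-group acting on a set of size coprime to $p$ and so must admit a fixed point, and the common orbit length then forces $s=1$. I expect the translation plane case to be the main obstacle, since $G(\Delta_i)$ may then carry primes other than $p$. There I would combine part (2), which prevents any non-identity element of $G(\Delta_i)$ from centralizing a second $\Delta_j$ pointwise, with the Parker identity $f=F$ and a further application of \cite[Corollary 3.7]{La}: since every $g \in G(\Delta_i)$ fixes all blocks of $\mathcal{D}_i$, appropriate $g$'s fix enough blocks of $\mathcal{D}$ that Parker plus the fixed-point bound still constrain the induced action on $\Sigma$ enough to conclude $s=1$.
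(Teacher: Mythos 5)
Your proof of part (2) is exactly the paper's: the same count of $2\lambda^{2}=2p^{2m}$ fixed points against the bound $\frac{\lambda}{k-\sqrt{k-\lambda}}v=\lambda(\lambda+2)$ from \cite[Corollary 3.7]{La}, closed by $\lambda\geq 3$ (Lemma \ref{c=d}). Your setup for part (1) also coincides with the paper: equal orbit lengths $s\mid p^{m}+1$ for $G(\Delta_{i})^{\Sigma}$ on $\Sigma\setminus\{\Delta_{i}\}$ via normality and Proposition \ref{boom}, and the dichotomy of Lemma \ref{NoFTR}; your $p$-group case is correct (the paper does not even need to split it off, since $z>1$ already forces a prime divisor of $|G(\Delta_{i})|$ coprime to $\lambda$). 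The genuine gap is the translation-plane case, which you leave as a hope, and the specific tools you name cannot close it. There $\theta=\lambda=p^{m}$, so a nontrivial $u$-element $g\in G(\Delta_{i})$ with $u\mid s$ (hence $u\mid p^{m}+1$, $u\neq p$) fixes all $p^{m}(p^{m}+1)$ blocks of $\mathcal{D}_{i}$ and at least one of the $\theta=p^{m}$ blocks of $\mathcal{D}$ over each, hence fixes at least $p^{m}(p^{m}+1)$ blocks of $\mathcal{D}$ and, by the fixed-points-equal-fixed-blocks identity you call Parker, at least that many points. But the Lander bound is $p^{m}(p^{m}+2)$, and $p^{m}(p^{m}+1)<p^{m}(p^{m}+2)$: the counts are mutually consistent --- this is precisely the equality-compatible case $\theta=\lambda$ inside the proof of Lemma \ref{NoFTR} --- so no contradiction can be extracted from fixed-point/fixed-block counting alone, however the fixed points are distributed over $\Sigma$. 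Part (2) does not rescue this: it only forbids $g$ from fixing a second class \emph{pointwise}, while the $\geq p^{m}$ fixed points of $g$ outside $\Delta_{i}$ can lie in setwise-fixed classes without violating anything.

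The missing idea is the paper's Sylow/orbit-length step. Take $u\mid s$ and let $U$ be a full Sylow $u$-subgroup of $G(\Delta_{i})$. As above, $U$ fixes some block $B$ of $\mathcal{D}$ with $B\cap\Delta_{i}\neq\varnothing$, because it permutes the $\theta=p^{m}$ blocks of $\mathcal{D}$ over a fixed block of $\mathcal{D}_{i}$ and $u\nmid p^{m}$. Then $U$ permutes the $p^{m}$ classes of $\Sigma\setminus\{\Delta_{i}\}$ meeting $B$, and again $u\nmid p^{m}$ forces $U$ to fix some $\Delta_{j}$ setwise. Orbit--stabilizer now finishes: $U\leq G(\Delta_{i})_{\Delta_{j}}$ with $U$ Sylow in $G(\Delta_{i})$ makes $s=\left[G(\Delta_{i}):G(\Delta_{i})_{\Delta_{j}}\right]$ coprime to $u$, contradicting $u\mid s$. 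Note this argument must be run at the Sylow level: a single element $g$ stabilizing $\Delta_{j}$ only shows that $u$ divides the order of the stabilizer, not that $u\nmid s$. With this step substituted for your counting sketch, your outline becomes the paper's proof.
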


\begin{proof}
Since $G_{\Delta _{i}}$ acts transitively on $\Sigma \setminus \left\{ \Delta
_{i}\right\} $ by Proposition \ref{boom}, and since $G(\Delta _{i})\trianglelefteq G_{\Delta _{i}}$, it
follows that $\Sigma \setminus \left\{ \Delta _{i}\right\} $ is union of $G(\Delta
_{i})$-orbits of the equal length $z$, where $z$ is a divisor of $p^{m}+1$ by Proposition \ref{affine}. Assume that $z>1
$. Then $\mathcal{D}_{i}$ is a translation plane of order $p^{m}$ by Lemma %
\ref{NoFTR}. Let $U$ be a Sylow $u$-subgroup of $G(\Delta _{i})$, where $u$
is a prime divisor of $z$. Arguing as in Lemma \ref{NoFTR}, with $U$ in the
role of $W$, we see that $U$ fixes at least $p^{m}(p^{m}+1)$ blocks of $%
\mathcal{D}$ and each of these intersects $\Delta _{i}$ in $p^{m}$ points, since $\mathcal{D}$ is a translation plane and $\theta=p^{m}$. Let $B$ be any of
such blocks. Then $U$ preserves $\Delta _{i}$ and at least one the $%
p^{m}$ elements of $\Sigma \setminus \left\{ \Delta _{i}\right\} $ intersecting $B$,
say $\Delta _{j}$. Then $\left\vert \Delta _{j}^{G(\Delta _{i})}\right\vert $
is coprime to $u$, whereas $\left\vert \Delta _{j}^{G(\Delta
_{i})}\right\vert =z$ and $u\mid z$. Thus $G(\Delta _{i})$ preserves each
element of $\Sigma$ and hence $G(\Delta
_{i})\leq G(\Sigma )$. Actually, $G(\Delta _{i})\trianglelefteq G(\Sigma )$
as $G(\Sigma ) \leq G_{\Delta _{i}}$.

Let $\gamma \in G(\Delta _{i})\cap G(\Delta _{i})$, with $i \neq j$, then $\gamma $ fixes $%
2p^{2m}$ points of $\mathcal{D}$. If $\gamma \neq 1$ then $2p^{2m}\leq p^{m}(p^{m}+2)$ by 
\cite[Corollary, 3.7]{La}. So $\lambda=p^{m}\leq 2$, which is contrary to Lemma \ref{c=d}. Thus $\gamma=1$ and hence $G(\Delta _{i})\cap G(\Delta _{j})=1$ for $i \neq j$.
\end{proof}

\bigskip

\begin{corollary}\label{GTU}
$G(\Sigma) \neq 1$.
\end{corollary}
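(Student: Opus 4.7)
The plan is to argue by contradiction and suppose $G(\Sigma)=1$. The very first step exploits Lemma \ref{if}(1): since $G(\Delta_{i})\trianglelefteq G(\Sigma)$, the hypothesis forces $G(\Delta_{i})=1$ for every $i$. Consequently the restriction map $G_{\Delta_{i}}\to G_{\Delta_{i}}^{\Delta_{i}}$ has trivial kernel and is therefore an isomorphism. Combining Proposition \ref{affine} with Theorem \ref{Monty}(2), I would then write $G_{\Delta_{i}}^{\Delta_{i}}=T\rtimes G_{0}^{\Delta_{i}}$ with $T$ the translation subgroup, elementary abelian of order $\lambda^{2}=p^{2m}$ and normal in $G_{\Delta_{i}}^{\Delta_{i}}$. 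Pulling back through the isomorphism produces a normal subgroup $\widetilde{T}\trianglelefteq G_{\Delta_{i}}$ of order $p^{2m}$.

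The heart of the argument is then to track the action of $\widetilde{T}$ on $\Sigma$. By construction $\widetilde{T}$ fixes $\Delta_{i}$, while by Proposition \ref{boom} the group $G_{\Delta_{i}}$ is transitive on $\Sigma\setminus\{\Delta_{i}\}$, a set of cardinality $\lambda+1=p^{m}+1$. Because $\widetilde{T}$ is normal in $G_{\Delta_{i}}$, all its orbits on $\Sigma\setminus\{\Delta_{i}\}$ share a common length $z$; this $z$ must divide both $|\widetilde{T}|=p^{2m}$ and $p^{m}+1$. Since $\gcd(p^{2m},p^{m}+1)=1$, one concludes $z=1$, so $\widetilde{T}$ stabilises every element of $\Sigma$ setwise. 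Hence $\widetilde{T}\leq G(\Sigma)=1$, which contradicts $|\widetilde{T}|=\lambda^{2}\geq 9$ (using Lemma \ref{c=d} to guarantee $\lambda\geq 3$), and the corollary follows.

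I do not expect any serious obstacle beyond bookkeeping: the one moment demanding attention is the passage $G_{\Delta_{i}}\cong G_{\Delta_{i}}^{\Delta_{i}}$, which is indispensable to transport the normal translation subgroup of the quotient back to an honest normal subgroup of $G_{\Delta_{i}}$ on which to run the orbit-length analysis. Everything else---the $2$-transitivity of $G$ on $\Sigma$ supplying $G_{\Delta_{i}}$-transitivity on $\Sigma\setminus\{\Delta_{i}\}$, the affine description of $G_{\Delta_{i}}^{\Delta_{i}}$, and the numerical coprimality $\gcd(p^{2m},p^{m}+1)=1$---is either already in hand from the preceding results or is immediate arithmetic.
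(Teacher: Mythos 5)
Your proposal is correct and follows essentially the same route as the paper: both kill $G(\Delta_{i})$ via Lemma \ref{if}(1), invoke Proposition \ref{affine} to obtain the normal translation subgroup of order $p^{2m}$ inside $G_{\Delta_{i}}$ (the paper phrases this as $Soc(G_{\Delta_{i}})$), and then exploit normality together with the transitivity of $G_{\Delta_{i}}$ on $\Sigma\setminus\{\Delta_{i}\}$ to get equal orbit lengths incompatible with $|\Sigma\setminus\{\Delta_{i}\}|=p^{m}+1$. The only cosmetic difference is that you force the common orbit length to be $1$ by coprimality and then contradict $G(\Sigma)=1$, whereas the paper excludes length $1$ first and contradicts $p\mid p^{m}+1$; the two case analyses are mirror images of the same argument.
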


\begin{proof}
Suppose that $G(\Sigma) = 1$. Then $G(\Delta)=1$ for each $\Delta \in \Sigma$ by Lemma \ref{if}(1) and hence $Soc(G_{\Delta})$ is elementary abelian of order $p^{2m}$ by Proposition \ref{affine}. Then $\Sigma \setminus \{\Delta\}$ is partitioned in $Soc(G_{\Delta})$-orbits of equal length $p^{h}$, with $h>0$, since $G_{\Delta}$ acts transitively on $\Sigma \setminus \{\Delta\}$, $Soc(G_{\Delta}) \trianglelefteq G_{\Delta}$ and $G(\Sigma) = 1$. Then $p$ divides $\left\vert \Sigma \setminus \{\Delta\} \right\vert$, thus contradicting $\left\vert \Sigma \setminus \{\Delta\} \right\vert =p^{m}+1$. 
\end{proof}
\bigskip

\begin{proposition}
\label{qcDv} Let $V$ be a minimal normal subgroup of $G$ contained in $G(\Sigma)$.

\begin{enumerate}

\item $V^{\Delta_{i}} \cong Soc(G_{\Delta _{i}}^{\Delta _{i}})$ for each $i=1,...,p^{m}+2$.

\item $V$ is an elementary abelian $p$-group of order $p^{2m+t}$, where $0 \leq t \leq 2m$.

\item $C_{G}(V)\cap G(\Sigma)$ is an elementary abelian $p$-group order $p^{2m+y}$, where $t \leq y \leq 2m$, containing $V$.
\end{enumerate}
\end{proposition}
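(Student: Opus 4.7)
The plan is to analyse $V$ through its induced action on each $\Delta_i$, exploiting the fact that $G_{\Delta_i}^{\Delta_i}$ is an affine primitive group whose socle $T$ is elementary abelian of order $p^{2m}$ (Proposition~\ref{affine}), and in particular $C_{G_{\Delta_i}^{\Delta_i}}(T)=T$. For (1), I would first show that $V^{\Delta_i}\neq 1$ for every $i$: if $V^{\Delta_i}=1$ then $V\leq G(\Delta_i)$, and since $G$ is transitive on $\Sigma$ by Proposition~\ref{boom} and $V\trianglelefteq G$, conjugation yields $V\leq \bigcap_j G(\Delta_j)=1$, a contradiction. Hence $V^{\Delta_i}$ is a nontrivial normal subgroup of the affine primitive group $G_{\Delta_i}^{\Delta_i}$, and the standard fact that a nontrivial normal subgroup of such a group must contain its socle gives $T\leq V^{\Delta_i}$. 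I would then rule out the possibility that $V$ is a direct product of isomorphic non-abelian simple groups: in that case $V^{\Delta_i}$, being a quotient of $V$, would be a subproduct of the same factors and hence would admit no nontrivial abelian normal subgroup, contradicting the fact that the abelian group $T$ is normal in $V^{\Delta_i}$. Therefore $V$ is elementary abelian of some prime exponent $r$; then $V^{\Delta_i}$ is abelian and contains $T$, so $V^{\Delta_i}\leq C_{G_{\Delta_i}^{\Delta_i}}(T)=T$, forcing $V^{\Delta_i}=T=Soc(G_{\Delta_i}^{\Delta_i})$ and $r=p$.

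For (2), writing $|V\cap G(\Delta_i)|=p^{t}$ gives $|V|=p^{2m+t}$ by (1). To bound $t$, I would invoke Lemma~\ref{if}(2): for $i\neq j$ we have $V\cap G(\Delta_i)\cap G(\Delta_j)\leq G(\Delta_i)\cap G(\Delta_j)=1$, so the natural map $V\hookrightarrow V^{\Delta_i}\times V^{\Delta_j}\cong T\times T$ is injective and $|V|\leq p^{4m}$, whence $0\leq t\leq 2m$.

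For (3), set $W=C_G(V)\cap G(\Sigma)$. Since $V$ is abelian and contained in $G(\Sigma)$, we have $V\leq W$, and clearly $W\trianglelefteq G$. Because $W$ centralizes $V$, the image $W^{\Delta_i}$ centralizes $V^{\Delta_i}=T$ inside $G_{\Delta_i}^{\Delta_i}$, so $W^{\Delta_i}\leq C_{G_{\Delta_i}^{\Delta_i}}(T)=T$; combined with $T=V^{\Delta_i}\leq W^{\Delta_i}$, this gives $W^{\Delta_i}=T$. To show $W$ is elementary abelian I would argue that for any $g,h\in W$ the commutator $[g,h]$ is trivial on each $\Delta_i$ (as $W^{\Delta_i}=T$ is abelian), so $[g,h]\in\bigcap_i G(\Delta_i)=1$, and likewise $g^{p}\in\bigcap_i G(\Delta_i)=1$ since $T$ has exponent $p$. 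Finally, letting $L_i=W\cap G(\Delta_i)$, Lemma~\ref{if}(2) forces $L_i\cap G(\Delta_j)=1$ for $j\neq i$, so $L_i$ embeds in $W^{\Delta_j}=T$ and $|L_i|\leq p^{2m}$; together with $V\cap G(\Delta_i)\leq L_i$ we get $p^{t}\leq |L_i|\leq p^{2m}$, and hence $|W|=p^{2m}|L_i|=p^{2m+y}$ with $t\leq y\leq 2m$. The principal obstacle is the dichotomy in step (1): one must carefully exploit both the affine primitive structure of $G_{\Delta_i}^{\Delta_i}$ and the normal-subgroup structure of characteristically simple groups to conclude that $V$ is elementary abelian of characteristic exactly $p$, rather than a product of non-abelian simples or an elementary abelian $r$-group with $r\neq p$.
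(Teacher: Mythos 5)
Your proof is correct and follows essentially the same route as the paper: both rest on the affine primitive structure of $G_{\Delta_i}^{\Delta_i}$ (unique, self-centralizing, elementary abelian socle, via Lemma \ref{prim} and Proposition \ref{affine}), the characteristically-simple dichotomy for the minimal normal subgroup $V$, and Lemma \ref{if}(2) to control the kernels $V(\Delta_i)$ and bound the exponents $t,y$. The only cosmetic differences are that you replace the paper's appeal to Lemma \ref{Ordine} by a direct conjugation argument showing $V^{\Delta_i}\neq 1$, and in part (3) you carry out the componentwise centralizer computation by hand (commutators and $p$-th powers of elements of $C_G(V)\cap G(\Sigma)$ lie in $\bigcap_i G(\Delta_i)=1$) rather than citing the wreath-product embedding from \cite{PS}.
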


\begin{proof}

Let $V$ be a minimal normal subgroup of $G$ contained in $G(\Sigma)$. Then $V$ acts transitively on $\Delta_{i}$ for each $i=1,...,p^{m}+2$ by Lemma \ref{Ordine}. Moreover, $V^{\Delta_{i}} \trianglelefteq G_{\Delta _{i}}^{\Delta _{i}}$. If $R$ is a minimal normal subgroup of $G_{\Delta _{i}}^{\Delta _{i}}$ contained in $V^{\Delta_{i}}$, then $Soc(G_{\Delta
_{i}}^{\Delta _{i}})=R\trianglelefteq V^{\Delta_{i}}$ by \cite[Theorem 4.3B(i)]{DM}, since $G_{\Delta _{i}}^{\Delta _{i}}$ acts
primitively on $\Delta _{i}$ by Lemma \ref{prim} and since $Soc(G_{\Delta _{i}}^{\Delta _{i}})$
is an elementary abelian group of order $p^{m}$ by Proposition \ref{affine}. Thus $V^{\Delta _{i}}$ contains a normal subgroup isomorphic to $Soc(G_{\Delta _{i}}^{\Delta _{i}})$, and hence $V$ is an elementary abelian $p$-group, since $V$ is a minimal normal subgroup of $G$. Therefore $V^{\Delta _{i}} \cong Soc(G_{\Delta _{i}}^{\Delta _{i}})$, which is (1).\\
It follows from (1) that $\left\vert V^{\Delta _{i}}\right\vert =p^{2m}$. Let $p^{t}=\left\vert V(\Delta _{i_{0}})\right\vert $, with $t \geq 0$, for some $i_{0}$, then $\left\vert V(\Delta _{i})\right\vert =p^{t} $ for each $i=1,...,p^{m}+2$, since $%
V\trianglelefteq G$ and $G$ acts transitively on $\Sigma$. Moreover, it follows from Lemma \ref{if}(2) that, $V(\Delta _{s})\cap V(\Delta _{s^{\prime}})=1$ for each $s,s^{\prime}=1,...p^{m}+2$ with $s \neq s^{\prime}$. Thus $V(\Delta_{s})$ is isomorphic to a subgroup of $V^{\Delta_{s^{\prime}}}$. Therefore $p^{t} \leq p^{2m}$ and hence $0 \leq t \leq 2m$. So $\left\vert V\right\vert =p^{2m+t}$, with $0 \leq t \leq 2m$, and we obtain (2).\\
Set $C=C_{G}(V)$ and $K=G(\Sigma)$ and recall that $G$ is permutationally isomorphic to a subgroup of $G_{\Delta}^{\Delta} \wr G^{\Sigma}$ by \cite[Theorem 5.5]{PS}. Since $$C\cap K \leq \prod_{\Delta \in \Sigma}(C \cap K)^{\Delta} \leq  \prod_{\Delta \in \Sigma}C_{K^{\Delta}}(V^{\Delta}) = \prod_{\Delta \in \Sigma}V^{\Delta},$$ it follows $C\cap K$ is an elementary abelian $p$-group. By repeating the final part of the argument in (2) with $C \cap K$ in the role of $V$, we see that the order of $C \cap K$ is $p^{2m+y}$, where $t \leq y \leq 2m$, as $V \leq C$, and we obtain (3).

\end{proof}

\bigskip
\begin{remark}
The proof of (3) is a slight modification of an argument contained in the proof of \cite[Lemma 3.2]{LPR}: here we consider a minimal normal subgroup of $G$ instead of $O_{p}(G(\Sigma))$, the largest normal $p$-subgroup of $G(\Sigma)$. In this way $G/C_{G}(V)$ is an irreducible subgroup of $GL(V)$, and this will play an important role in completing the proof of Theorem \ref{t1}.
\end{remark}

\bigskip

\begin{corollary}\label{glue}
Let $\Delta \in \Sigma $ and $x \in \Delta $, then $G^{\Sigma}_{\Delta}$ is isomorphic to a quotient group of $G_{x}^{\Delta}$.
\end{corollary}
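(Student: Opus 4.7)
The plan is to exhibit a surjective homomorphism $G_{x}^{\Delta }\twoheadrightarrow G_{\Delta }^{\Sigma }$, which is tantamount to the stated assertion. The argument proceeds in three steps, each relying on a result already established above.

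First I would show that $G(\Sigma )$ acts transitively on $\Delta$. By Corollary \ref{GTU}, $G(\Sigma )\neq 1$, so $G(\Sigma )$ contains a minimal normal subgroup $V$ of $G$. Proposition \ref{qcDv}(1) gives $V^{\Delta }\cong Soc(G_{\Delta }^{\Delta })$, and Proposition \ref{affine} says that $G_{\Delta }^{\Delta }$ is of affine type, whence its socle is elementary abelian of order $\lambda ^{2}=|\Delta |$ and acts regularly on $\Delta$. Thus $V^{\Delta }$, and \emph{a fortiori} $G(\Sigma )^{\Delta }$, is already transitive on $\Delta$.

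Second, I would deduce the factorisation $G_{\Delta }=G_{x}\,G(\Sigma )$. Both $G_{x}$ and $G(\Sigma )$ sit inside $G_{\Delta }$, and the transitivity just established yields
$$[G_{x}G(\Sigma ):G_{x}]=[G(\Sigma ):G_{x}\cap G(\Sigma )]=|\Delta |=\lambda ^{2}=[G_{\Delta }:G_{x}],$$
forcing $G_{x}G(\Sigma )=G_{\Delta }$. The second isomorphism theorem then gives
$$G_{\Delta }^{\Sigma }=G_{\Delta }/G(\Sigma )=G_{x}G(\Sigma )/G(\Sigma )\cong G_{x}/\left( G_{x}\cap G(\Sigma )\right) .$$

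Third, I would combine this with Lemma \ref{if}(1). That lemma supplies $G(\Delta )\leq G(\Sigma )$, and since $x\in \Delta $ forces $G(\Delta )\leq G_{x}$, we obtain $G(\Delta )\leq G_{x}\cap G(\Sigma )$. Consequently the canonical epimorphism $G_{x}\twoheadrightarrow G_{x}/(G_{x}\cap G(\Sigma ))$ factors through the quotient $G_{x}/G(\Delta )=G_{x}^{\Delta }$, giving the desired surjection $G_{x}^{\Delta }\twoheadrightarrow G_{\Delta }^{\Sigma }$. I do not anticipate any serious obstacle: once the two key inputs---transitivity of $G(\Sigma )$ on $\Delta$ (which is the substantive step, extracted from Proposition \ref{qcDv} via Corollary \ref{GTU} and Proposition \ref{affine}) and the containment $G(\Delta )\leq G(\Sigma )$ (Lemma \ref{if}(1))---are in place, the rest is a formal diagram chase.
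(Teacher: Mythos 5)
Your proposal is correct and follows essentially the same route as the paper: the paper's proof likewise derives the factorisation $G_{\Delta}=G(\Sigma)G_{x}$ from Lemma \ref{if}(1) and Proposition \ref{qcDv}(1) (the transitivity of $V^{\Delta}\cong Soc(G_{\Delta}^{\Delta})$ on $\Delta$, with $V$ existing inside $G(\Sigma)$ by Corollary \ref{GTU}), and then obtains $G^{\Sigma}_{\Delta}\cong G_{x}/G(\Sigma)_{x}\cong G_{x}^{\Delta}/G(\Sigma)_{x}^{\Delta}$ using $G(\Delta)\trianglelefteq G(\Sigma)_{x}$. Your write-up merely makes explicit the index count behind the factorisation and the factoring of the canonical epimorphism through $G_{x}^{\Delta}=G_{x}/G(\Delta)$, both of which the paper leaves implicit.
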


\begin{proof}
It follows from Lemma \ref{if}(1) and Proposition \ref{qcDv}(1) that $G_{\Delta}=G(\Sigma)G_{x}$ with $G(\Delta) \trianglelefteq G(\Sigma)_{x}$. Thus 
\begin{equation}\label{isom}
G^{\Sigma}_{\Delta} \cong G_{x}/G(\Sigma)_{x} \cong G_{x}^{\Delta}/G(\Sigma)_{x}^{\Delta}
\end{equation}
which is the assertion. 
\end{proof}

\begin{lemma}
\label{div}If $u=p^{m}+2$, with $u$ prime and $u\geq 5$, divides $p^{z}-1$, where $0<z\leq 4m$,
then either $(p^{m},u,z)=(3,5,4)$ or $(p^{m},u,z)=(9,11,5)$.
\end{lemma}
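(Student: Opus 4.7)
The plan is to exploit the congruence $p^m \equiv -2 \pmod{u}$ (immediate from $u = p^m + 2$) to reduce the divisibility $u \mid p^z - 1$ to finitely many integer equations. First, I would rule out $p = 2$: in that case $u = 2(2^{m-1} + 1)$ is even, contradicting the hypothesis that $u$ is an odd prime at least $5$; hence $p \geq 3$. Then, writing $z = qm + r$ with $0 \leq r < m$, the constraint $0 < z \leq 4m$ forces $q \in \{0, 1, 2, 3, 4\}$ (with $r = 0$ when $q = 4$), and the congruence $p^z \equiv 1 \pmod{u}$ becomes
\[
u \; \mid \; (-2)^q p^r - 1.
\]

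For $q \in \{0, 1\}$ the bound $|(-2)^q p^r - 1| \leq 2 p^{m-1} + 1 < p^m + 2 = u$ (using $p \geq 3$) forces the left-hand side to vanish, which is impossible for $z > 0$. For $q = 4$, one has $r = 0$, so $u \mid 15$, and the primality of $u$ with $u \geq 5$ yields $u = 5$, $p^m = 3$, producing the first solution $(p^m, u, z) = (3, 5, 4)$.

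The core work is the treatment of $q \in \{2, 3\}$. Writing $|(-2)^q p^r - 1| = tu$ with $t \geq 1$, the bound $2^q p^r \leq 2^q p^{m-1}$ yields $t \leq 2^q / p$; combined with $p \geq 3$, this restricts the candidate pairs $(p, t)$ to $(3, 1)$ for $q = 2$ and to $\{(3, 1),\, (3, 2),\, (5, 1),\, (7, 1)\}$ for $q = 3$. The sub-case $r = 0$ gives $u$ dividing one of $3$ or $9$, both excluded by primality and $u \geq 5$. For $r \geq 1$, reducing the equation modulo $p$ (noting that $p \mid 2^q p^r$) collapses the congruence to $p \mid 2t \pm 1$, which eliminates every candidate except $(q, p, t) = (2, 3, 1)$ and $(q, p, t) = (3, 3, 2)$. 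A direct substitution then shows that the former forces $r = 1$, $m = 2$, producing the second solution $(9, 11, 5)$, while the latter would require $2 \cdot 3^{m - 1} = 7$, a contradiction.

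I expect the main obstacle to be the bookkeeping in $q \in \{2, 3\}$: both the bound on $t$ and the modular reduction must be executed with care for each of the handful of small primes that survive, so as to be certain no sporadic case is missed --- in particular, the $p = 3$, $q = 2$ analysis is precisely where the exceptional solution $(9, 11, 5)$ emerges, and it would be easy to overlook it without the mod-$p$ step.
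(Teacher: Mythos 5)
Your proposal is correct and takes essentially the same route as the paper: both arguments exploit $p^{m}\equiv -2\pmod{u}$ to reduce the hypothesis to the congruences $u\mid 2p^{\bullet}+1$, $u\mid 4p^{\bullet}-1$, $u\mid 8p^{\bullet}+1$ (your cases $q=1,2,3$; the paper's successive substitutions $z=m+w$, $w=m+y$, $y=m+x$ justified by size estimates) and then solve the resulting small Diophantine equations, recovering $(9,11,5)$ from the $4p^{r}-1$ case and $(3,5,4)$ from the extreme case $z=4m$ (your $u\mid 15$; the paper's $x=m$, $(8-a)p^{m}=2a-1$). Your single Euclidean division $z=qm+r$, the bound $t\leq 2^{q}/p$, and the mod-$p$ elimination of candidate pairs are just a tidier bookkeeping of the paper's iterative descent and its rearrangement $(ap^{m-x}-8)p^{x}+2a-1=0$, so the two proofs are the same in substance.
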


\begin{proof}
Clearly $z>m$. Set $z=m+w$, where $w > 0$. Then $u$ divides $p^{w}(p^{m}+2)-2p^{w}-1$ and
hence $2p^{w}+1$. Then $p^{w}\geq p^{m}/2+1/2$ and hence $w=m+y$ for some $y\geq 0$, as $p$
is odd. Thus $u$ divides $%
2p^{y}(p^{m}+2)-(4p^{y}-1)$ and hence $4p^{y}-1$. Then either $p^{m}+2=4p^{y}-1$ or $2(p^{m}+2)\leq 4p^{y}-1$. The former yields $p^{y}=3$, $%
p^{m}=3^{2}$ and hence $(p^{m},u,z)=(9,11,5)$, the latter implies $p^{y}\geq p^{m}/2+5/4$ and hence $y=m+x$ for some $0\leq x \leq 2m$, since $z \leq 4m$. Therefore $u$ divides $(4p^{m}+8)p^{x}-(8p^{x}+1)$ and hence $8p^{x}+1$. Thus $%
a(p^{m}+2)=8p^{x}+1$ for some $a\geq 1$. If $x<m$, then $%
(ap^{m-x}-8)p^{x}+2a-1=0$. Therefore $ap^{m-x}<8$ and hence
either $a=1$ and $p^{m-x}=3,5$ or $7$, or $a=2$ and $p^{m-x}=3$, since $p$
is odd. It is easy to verify that no solutions arise in these cases, since $u$ is a prime. Thus $x=m$ and hence $%
(8-a)p^{m}=2a-1$, with $1\leq a<8$. Thus either $a=3$ and $p^{m}=3$, or $a=7$
and $p^{m}=13$. However the latter cannot occur since $u=p^{m}+2$ and $u$ is a prime, hence $(p^{m},u,z)=(3,5,4)$.    
\end{proof}

\bigskip

\begin{proposition}\label{happens} One of the following holds:
\begin{enumerate}
\item $C_{G}(V) \leq G(\Sigma)$.
\item $C_{G}(V)=V \times U$, where $U$ is a minimal normal subgroup of $G$ of order $u^{h}$, with $u$ prime and $h \geq 1$, satisfying the following properties:
\begin{enumerate}
\item[(a)] $u^{h}=p^{m}+2$;
\item[(b)] The $U$-orbit decomposition of the point set of $\mathcal{D}$ is a $G$-invariant partition $\Sigma^{\prime}=\left\lbrace \Delta_{1}^{\prime},...,\Delta_{p^{2m}}^{\prime} \right\rbrace$ such that $\left\vert \Delta_{j}^{\prime} \right \vert =p^{m}+2$ for each $j=1,...,p^{2m}$ and the following hold:
\begin{enumerate}
\item [(i)] For each $B\in \mathcal{B}$ and $\Delta _{j}^{\prime}\in \Sigma^{\prime} $, the size $\left\vert B\cap \Delta _{j}^{\prime}\right\vert $ is either $0$ or $2$.
\item [(ii)] $\left\vert \Delta _{i}\cap \Delta _{j}^{\prime}\right\vert =1$ for each $\Delta _{i}\in \Sigma$ and $\Delta _{j}^{\prime}\in \Sigma^{\prime} $;
\item [(iii)] $G_{\Delta _{j}^{\prime}}^{\Delta _{j}^{\prime}}$ acts $2$-transitively on $\Delta_{j}^{\prime}$ for each $\Delta _{j}^{\prime}\in \Sigma^{\prime} $;
\item[(iv)] $U^{\Delta _{j}^{\prime}}=Soc(G_{\Delta _{j}^{\prime}}^{\Delta _{j}^{\prime}})$ for each $\Delta _{j}^{\prime}\in \Sigma^{\prime} $.
\end{enumerate} 
\end{enumerate}
\end{enumerate}
\end{proposition}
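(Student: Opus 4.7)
Set $C=C_{G}(V)$ and $K=G(\Sigma)$. Recall $C\trianglelefteq G$ with $V\leq C$, that $C\cap K$ is an elementary abelian $p$-group containing $V$ by Proposition \ref{qcDv}(3), and that $G^{\Sigma}$ acts $2$-transitively on $\Sigma$ of degree $p^{m}+2$ by Proposition \ref{boom}. I will repeatedly use the general observation that any minimal normal subgroup $W$ of $G$ distinct from $V$ automatically satisfies $W\leq C$: indeed, by the minimality of $V$, $W\cap V=1$, so $[W,V]\leq W\cap V=1$. If $C\leq K$, conclusion (1) holds, so from now on assume $C\not\leq K$; then $C^{\Sigma}$ is a non-trivial normal subgroup of $G^{\Sigma}$, hence transitive on $\Sigma$ and containing $Soc(G^{\Sigma})$.

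The first, and decisive, step is to extract a minimal normal subgroup $U$ of $G$ with $U\leq C$, $U\not\leq K$, and $|U|=u^{h}=p^{m}+2$. Letting $\pi\colon G\to G^{\Sigma}$, set $\tilde U=\pi^{-1}(Soc(G^{\Sigma}))\cap C$; this is $G$-normal with $\tilde U\cap K=C\cap K$ (a $p$-group) and $\tilde U/(C\cap K)\cong Soc(G^{\Sigma})$. I would first rule out the possibility that $Soc(G^{\Sigma})$ is non-abelian simple, using the faithful irreducible action of $G/C$ on $V=(\mathbb{F}_{p})^{2m+t}$ together with the arithmetic restrictions of Lemma \ref{div} applied to the primes dividing $|Soc(G^{\Sigma})|$; this forces $Soc(G^{\Sigma})$ to be elementary abelian of order $u^{h}=p^{m}+2$ with $u$ prime. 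Since $u=p$ would give $p\mid p^{m}+2$, hence $p=2$, $m=1$, $\lambda=2$, contrary to Lemma \ref{c=d}, we have $u\neq p$ and $\gcd(|C\cap K|,u^{h})=1$. Schur--Zassenhaus then yields $\tilde U=(C\cap K)\rtimes U$ with $|U|=u^{h}$, and $U$ is characteristic in $\tilde U$ as its unique Hall $u$-subgroup, hence $U\trianglelefteq G$. The projection $\pi|_{U}\colon U\to U^{\Sigma}\leq Soc(G^{\Sigma})$ is injective (because $U\cap K\leq C\cap K$ is simultaneously a $u$-group and a $p$-group, hence trivial), and a comparison of orders gives $U^{\Sigma}=Soc(G^{\Sigma})$; the irreducible $G^{\Sigma}$-action on $Soc(G^{\Sigma})$ therefore identifies $U$ as a minimal normal subgroup of $G$.

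With $U$ in hand I apply Lemma \ref{Ordine}. Case (1) is excluded because $U\not\leq K$. Case (2), $U$ point-transitive on $\mathcal{P}$, is excluded as follows: since $U\leq C$ centralizes $V$ elementwise, $U_{\Delta_{i}}^{\Delta_{i}}\leq C_{\mathrm{Sym}(\Delta_{i})}(V^{\Delta_{i}})=V^{\Delta_{i}}$ (the centralizer of the abelian regular group $V^{\Delta_{i}}$ is itself); point-transitivity of $U$ would then force $U_{\Delta_{i}}^{\Delta_{i}}=V^{\Delta_{i}}$, hence $p^{2m}\mid|U|$, contradicting $\gcd(|U|,p)=1$. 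Thus case (3) of Lemma \ref{Ordine} applies, delivering directly the partition $\Sigma^{\prime}=\{\Delta_{1}^{\prime},\dots,\Delta_{p^{2m}}^{\prime}\}$ and items (2)(a) and (2)(b)(i)--(iii). For item (2)(b)(iv), $U$ acts transitively on each $\Delta_{j}^{\prime}$ and, being abelian, it acts regularly, so $U^{\Delta_{j}^{\prime}}$ is an abelian regular normal subgroup of the $2$-transitive group $G_{\Delta_{j}^{\prime}}^{\Delta_{j}^{\prime}}$ and must coincide with its elementary abelian socle. Finally, $V\times U\leq C$, and the equality $C=V\times U$ reduces to showing $C\cap K=V$ and $C^{\Sigma}=Soc(G^{\Sigma})$; both follow from the irreducibility of the $G/C$-action on $V$ together with the centralizer chain $(C\cap K)^{\Delta_{i}}\leq C_{\mathrm{Sym}(\Delta_{i})}(V^{\Delta_{i}})=V^{\Delta_{i}}$ and Lemma \ref{if}(2).

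The main obstacle is the construction of $U$ in the second step, and specifically the exclusion of a non-abelian simple $Soc(G^{\Sigma})$: this is where the classification of $2$-transitive groups of degree $p^{m}+2$, the faithful irreducible embedding $G/C\hookrightarrow GL_{2m+t}(p)$, and the divisibility estimate of Lemma \ref{div} must be combined to eliminate every almost-simple candidate. Once the affine shape of $Soc(G^{\Sigma})$ is secured, the remaining verifications rest on the direct application of Lemma \ref{Ordine}(3) and on the standard centralizer arithmetic recorded in Proposition \ref{qcDv} and Lemma \ref{if}.
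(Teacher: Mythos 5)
Your proposal has genuine gaps, and the most serious one sits exactly at the step you yourself flag as decisive. Having assumed $C=C_{G}(V)\nleq G(\Sigma)$, you propose to force $Soc(G^{\Sigma})$ to be elementary abelian by ``ruling out'' a non-abelian simple socle via the embedding $G/C\hookrightarrow GL_{2m+t}(p)$ and the arithmetic of Lemma \ref{div} --- but no argument is given, and the tool cannot do the job: Lemma \ref{div} constrains only the single prime $u=p^{m}+2$ dividing numbers $p^{z}-1$ and has no purchase on, say, an alternating socle $A_{p^{m}+2}$. Note also that $G^{\Sigma}$ genuinely can be almost simple in this setting (e.g.\ $p^{m}=4$ for the $2$-$(96,20,4)$ designs), so any correct elimination must use the hypothesis $C\nleq G(\Sigma)$ in an essential way, which your sketch never does. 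The paper's mechanism is entirely different and elementary: since $V\leq C\cap G(\Sigma)$ acts transitively on $\Delta_{1}$ and $C$ centralizes $V$, the point stabilizer $C_{x}$ acts trivially on $\Delta_{1}$, so $C_{x}\leq G(\Delta_{1})\leq G(\Sigma)$ by Lemma \ref{if}(1), whence $C_{\Delta_{1}}=C_{x}V\leq G(\Sigma)$ and $(C^{\Sigma})_{\Delta_{1}}=1$; thus $C^{\Sigma}$ is \emph{regular}, forcing $Soc(G^{\Sigma})=C^{\Sigma}$ to be a regular normal subgroup of a $2$-transitive group, hence elementary abelian by Burnside. Your proposal contains no substitute for this semiregularity argument.

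Two further gaps occur downstream. First, you claim $U\trianglelefteq G$ because $U$ is ``the unique Hall $u$-subgroup'' of $\tilde{U}$: this is false as stated --- Schur--Zassenhaus yields a single conjugacy class of complements, not a unique one, and normality of the complement would require $U$ to centralize $C\cap G(\Sigma)$, which is not known at that stage. In the paper, normality of $U$ in $G$ is obtained only \emph{after} proving $C\cap G(\Sigma)=V$, when $C=V\times U$ is abelian and $U$ is characteristic in $C$ as its $u$-torsion; since Lemma \ref{Ordine} is stated for a minimal normal subgroup, your application of it to $U$ is premature. Second, the equality $C\cap G(\Sigma)=V$ is precisely the technical core of the proof, and you dismiss it in one line as following from ``irreducibility of the $G/C$-action, the centralizer chain, and Lemma \ref{if}(2).'' Irreducibility of $G/C$ on $V$ gives nothing here: Proposition \ref{qcDv}(3) explicitly allows $\left\vert C\cap G(\Sigma)\right\vert=p^{2m+y}$ with $y$ as large as $2m$. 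The paper's argument decomposes $X=C\cap G(\Sigma)$ into $U$-irreducible summands, produces for $h>1$ (or for a fixed point of $U$ in $X\setminus V$) a non-trivial element $\alpha\in G(\Delta_{1})\cap G(\Delta_{e})$ contradicting Lemma \ref{if}(2), and for $h=1$ derives $u\mid p^{y-t}-1$ with $0<y-t\leq 2m$, excluded by Lemma \ref{div}; none of this is recoverable from the facts you cite. The portions you do execute correctly --- the exclusion of point-transitivity of $U$, and the identification $U^{\Delta_{j}^{\prime}}=Soc(G_{\Delta_{j}^{\prime}}^{\Delta_{j}^{\prime}})$ via the regular abelian normal subgroup of a $2$-transitive group --- match the paper, but they rest on the unproven construction and normality of $U$.
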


\begin{proof}
Let $C=C_{G}(V)$ and assume that $C\nleq G(\Sigma )$. Then $1\neq C^{\Sigma }\trianglelefteq G^{\Sigma }$. Thus 
$Soc(G^{\Sigma })\leq C^{\Sigma }$, as $G^{\Sigma }$ acts $2$-transitively on $\Sigma $. Therefore, $C^{\Sigma }$ acts transitively on $\Sigma$.  

Assume that $(C^{\Sigma })_{\Delta _{1}}\neq 1$, where $\Delta _{1} \in \Sigma$. Then $V<C_{\Delta _{1}}$ as $%
V \leq C\cap G(\Sigma )$ by Proposition \ref{qcDv}(3). So $C_{\Delta _{1}}=C_{x}V$, and hence $C_{x}\leq
G(\Delta _{1})$, since $V$ acts transitively on $\Delta _{1}$ and $C=C_{G}(V)$. Therefore, $C_{x} \leq G(\Sigma )$ by Lemma \ref{if}(1). So $C_{\Delta_{1}} \leq G(\Sigma)$, and hence $(C^{\Sigma })_{\Delta _{1}}= 1$, a contradiction.

Assume that $(C^{\Sigma })_{\Delta _{1}}=1$. Thus $C^{\Sigma }$ acts regularly on $\Sigma $ and hence $C^{\Sigma }=Soc(G^{\Sigma })$, since $Soc(G^{\Sigma })\leq C^{\Sigma }$. Therefore $C/(C \cap G(\Sigma))\cong Soc(G^{\Sigma })$, where $\left\vert Soc(G^{\Sigma })\right\vert =p^{m}+2$, with $p^{m}+2\equiv 1,2\pmod{4}$ according to whether $p$ is odd or even respectively, since $p^{m} \geq 3$ by Lemma \ref{c=d}. Thus $Soc(G^{\Sigma })$ is an elementary abelian group of order $u^{h}$, for some prime  $u$ and some integer $h \geq 1$, since $G^{\Sigma }$ acts $2$-transitively on 
$\Sigma $. Therefore $u^{h}=p^{m}+2$ and hence $%
u\neq p$. Then $C=X: U$, where $X=C \cap G(\Sigma)$ and $U$ is an elementary abelian of order $u^{h}$, by \cite[Theorem 6.2.1]{Go}
since $C/(C \cap G(\Sigma))\cong Soc(G^{\Sigma })$ and since $X$ is an elementary abelian $p$-group by Proposition \ref{qcDv}(3). In particular $U \leq GL(X)$. Then $X=X_{1} \oplus \cdots \oplus X_{\ell}$, with $\ell \geq 1$, where the $X_{s}$'s, $s=1,...,\ell$, are irreducible $U$-invariant subspaces of $X$ by \cite[Theorem 3.3.1]{Go}. Moreover, for each $s=1,...,\ell$ there is a subgroup of $U_{s}$ of $U$ of index at most $u$, fixing $X_{s}$ pointwise by \cite[Theorem 3.2.3]{Go}, since $U$ is elementary abelian. \\
Assume that $V<X$. Then there is $X_{s_{0}}$ containing an element, say $x$, such that $x \notin V$. If $h>1$, let $\psi$ an element of $U_{s_{0}}$, $\psi \neq 1$. Then $\psi$ centralizes $V$, $\langle x \rangle$ and hence $V\oplus \langle x \rangle$. Now $V\oplus \langle x \rangle$ acts on $\Delta_{1}$, and since it contains $p^{2m+1}$ elements, there is an element $\alpha \in V\oplus \langle x \rangle$, $\alpha \neq 1$, such that $\alpha \in G(\Delta_{1})$, and $\alpha$ and $\psi$ commute. On the other hand $U$ acts regularly on $\Sigma$, since $U^{\Sigma}=C/(C \cap G(\Sigma))\cong Soc(G^{\Sigma })$, with $C \cap G(\Sigma)$ a $p$-group by Proposition \ref{qcDv}(3). Hence, $\psi$ maps $\Delta_{1}$ onto $\Delta_{e}$ for some $e>1$. So $\alpha \in G(\Delta_{1}) \cap G(\Delta_{e})$, with $\alpha \neq 1$, since $\alpha$ and $\psi$ commute, and this contradicts Lemma \ref{if}(2). Thus $h=1$ and hence $U \cong Z_{u}$. If $U$ fixes an element in $X \setminus V$, we reach a contradiction by using the previous argument. Thus $U$ does not fix points in $X \setminus V$ and hence $u\mid p^{2m+y}-p^{2m+t}$, with $0\leq t < y \leq 2m$ by Lemma \ref{qcDv}(2),(3), since $V<X$. Then $u \mid p^{y-t}-1$, with $0 <y-t \leq 2m$, which is impossible for Lemma \ref{div}. Thus $X=V$ and hence $C=V \times U$, where $U$ is elementary abelian of order $u^{h}$, with $u^{h}=p^{m}+2$. Moreover $U\trianglelefteq G$ as $C\trianglelefteq G$.\\
Let $U^{\ast }$ be a minimal normal subgroup of $G$
contained in $U$. The set $\Sigma^{\prime}$ of all point-$U^{\ast}$-orbits is $G$-invariant partition of the point set of $\mathcal{D}$. If either $\Sigma^{\prime}=\Sigma$ or $U^{\ast}$ acts point-transitively on $\mathcal{D}$, then $c$ or $v$ is a power of $u$ respectively. However both these cases lead to a contradiction, since $c=p^{2m}$, $v=p^{2m}(p^{m}+2)$ and $u\neq p$. Thus, $\Sigma^{\prime}=\left\lbrace \Delta_{1}^{\prime},...,\Delta_{p^{2m}}^{\prime} \right\rbrace$, with $\left\vert \Delta_{j}^{\prime} \right \vert =p^{m}+2=u^{h}$ for each $j=1,...,p^{2m}$, by Lemma \ref{Ordine}. Hence $U^{\ast}=U$ and we obtain (2a). Moreover, for each $B\in \mathcal{B}$ and $\Delta _{j}^{\prime}\in \Sigma^{\prime} $, the size $\left\vert B\cap \Delta _{j}^{\prime}\right\vert $ is either $0$ or $2$, $G_{\Delta _{j}^{\prime}}^{\Delta _{j}^{\prime}}$ acts $2$-transitively on $\Delta_{j}^{\prime}$ and $U^{\Delta _{j}^{\prime}}=Soc(G_{\Delta _{j}^{\prime}}^{\Delta _{j}^{\prime}})$. Finally, $\left\vert \Delta _{i}\cap \Delta _{j}^{\prime}\right\vert =1$ for each $\Delta _{i}\in \Sigma$ and $\Delta _{j}^{\prime}\in \Sigma^{\prime} $. Thus (2b.i)--(2b.iv) follow.
\end{proof}

\bigskip

The Diophantine equation in Proposition \ref{happens}(2.a) is a special case of the Pillai Equation (e.g see \cite{SS}). It has at most one solution in positive integers $(m,h)$ by \cite[Theorem 6]{SS}. Moreover, $p>3$ for $h>1$, and $(p^{m},u^{h})=(5^{2},3^{3})$ for $h>1$ and $m$ even by \cite[Lemmas 2 and 4]{SS}. 

\bigskip

\section{Reduction to the case $C_{G}(V) \leq G(\Sigma)$}

In this section we show that only (1) of Proposition \ref{happens} occurs. Hence, assume that $C_{G}(V)=V \times U$, where $U$ is a minimal normal elementary abelian $u$-subgroup of $G$ of order $u^{h}$, where $u^{h}=p^{m}+2$, satisfying properties (2b.i)--(2b.iv) of Proposition \ref{happens}. 

\bigskip

\begin{lemma}\label{quotient}
Let $\Delta \in \Sigma$ and $\Delta^{\prime} \in \Sigma^{\prime}$ and let $x$ be their intersection point. Then the following hold:

\begin{enumerate} 
\item $G_{x}$ acts faithfully on $\Delta$;

\item $G_{x}^{\Delta^{\prime }}$ is isomorphic to a quotient group of $G_{x}$.
\end{enumerate}
\end{lemma}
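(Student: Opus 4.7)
The plan is to reduce part (1) to the statement $G(\Delta)=1$ and to get part (2) almost for free. Since $G(\Delta)\leq G_x$ is precisely the kernel of $G_x$ acting on $\Delta$, the claim in (1) is equivalent to $G(\Delta)=1$. For (2): $G_x$ fixes $x$ and normalises $U\trianglelefteq G$, so it preserves the $U$-orbit $\Delta'=U(x)$, giving $G_x\leq G_{\Delta'}$ and $G_x^{\Delta'}=G_x/(G_x\cap G(\Delta'))$ as a quotient of $G_x$; combined with (1), this will also identify $G_x^{\Delta'}$ with a quotient of $G_x^{\Delta}$.

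The main step toward (1) will be to establish $G(\Sigma)\leq C_{G}(U)$. Given $\gamma\in G(\Sigma)$ and $u\in U$, I would analyse the commutator $[\gamma,u]=\gamma u\gamma^{-1}u^{-1}$: since $U\trianglelefteq G$ it lies in $U$, and since $G(\Sigma)\trianglelefteq G$ the factorisation $[\gamma,u]=\gamma\cdot(u\gamma^{-1}u^{-1})$ shows it lies in $G(\Sigma)$ as well. By Proposition \ref{happens}, $U^{\Sigma}=Soc(G^{\Sigma})$ is regular on $\Sigma$ of order $p^{m}+2=|U|$, so $U\cap G(\Sigma)=1$, forcing $[\gamma,u]=1$. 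Hence $G(\Sigma)\leq C_{G}(U)$, and in particular $G(\Delta)\leq G(\Sigma)$ (by Lemma \ref{if}(1)) centralises $U$.

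Now (1) follows quickly: for $\gamma\in G(\Delta)$, $u\in U$, and $z\in\Delta$ we obtain
\[
\gamma(u(z))=u(\gamma(z))=u(z),
\]
so $\gamma$ fixes the $U$-orbit $U(z)$ pointwise. By Proposition \ref{happens}(2b.ii) every $\Delta_{j}'\in\Sigma'$ meets $\Delta$ in a unique point; as $z$ ranges over $\Delta$ the orbits $U(z)$ therefore run through all of $\Sigma'$, whose union is the entire point set $\mathcal{P}$. Hence $\gamma$ fixes every point of $\mathcal{D}$ and $\gamma=1$, giving $G(\Delta)=1$. The main obstacle I expect is precisely this commutator step: earlier results such as Lemma \ref{NoFTR} only constrain $G(\Delta)$ to be a $p$-group outside the translation-plane case, so annihilating $G(\Delta)$ outright requires exploiting the coprime direct summand $U$ in $C_{G}(V)=V\times U$ that is specific to Section 4.
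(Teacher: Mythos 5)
Your proof is correct, but it reaches $G(\Delta)=1$ by a genuinely different mechanism than the paper. The paper's argument is purely combinatorial and never invokes the group structure of $U$ beyond the partition $\Sigma'$ it defines: since $G(\Delta)\leq G(\Sigma)$ by Lemma \ref{if}(1), any $\gamma\in G(\Delta)$ preserves every $\Delta_{i}\in\Sigma$, and it also preserves every $\Delta_{j}'\in\Sigma'$ because $\Sigma'$ is $G$-invariant and $\gamma$ fixes the unique point of $\Delta\cap\Delta_{j}'$; hence $\gamma$ fixes the unique intersection point $x_{ij}$ of each pair $(\Delta_{i},\Delta_{j}')$, and since every point of $\mathcal{D}$ is such an intersection point, $\gamma=1$. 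You instead first establish the group-theoretic statement $G(\Sigma)\leq C_{G}(U)$ via the standard fact that two normal subgroups with trivial intersection commute elementwise, and then transport the fixed points of $\Delta$ along the $U$-orbits; this is valid, and your intermediate step $[G(\Sigma),U]=1$ is strictly stronger information than the lemma requires, at the cost of a slightly longer route. One citation point deserves care: Proposition \ref{happens} as stated does not record $U^{\Sigma}=Soc(G^{\Sigma})$ or its regularity on $\Sigma$ (that occurs only inside its proof), so to obtain $U\cap G(\Sigma)=1$ you should argue directly — either note that $U\cap G(\Sigma)\trianglelefteq G$ and that $U\leq G(\Sigma)$ would place a $U$-orbit of size $p^{m}+2$ inside a single class of $\Sigma$, contradicting Proposition \ref{happens}(2b.ii); or note that $U\cap G(\Sigma)\leq C_{G}(V)\cap G(\Sigma)$ is a $p$-group by Proposition \ref{qcDv}(3), while $U$ is a $u$-group with $u\neq p$. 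With that patch your part (1) is complete, and your part (2) coincides with the paper's one-line argument, namely that $G(\Delta')\trianglelefteq G_{x}$ so $G_{x}^{\Delta'}\cong G_{x}/G(\Delta')$.
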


\begin{proof}
Let $\Delta _{i} \in \Sigma $ and $\Delta
_{j}^{\prime }\in \Sigma ^{\prime }$ and let $x_{ij}$ be their (unique)
intersection point.
Since $G(\Delta) \leq G(\Sigma)$ by Lemma \ref{if}(1), $G(\Delta)$ preserves each $\Delta _{i}$. On the other hand, $G(\Delta)$ preserves each $\Delta _{j}^{\prime}$ since these ones intersect $\Delta$ in a unique point and since $\Sigma^{\prime}$ is a $G$-invariant partition of the point set of $\mathcal{D}$. Thus $G(\Delta)$ fixes each $x_{ij}$, that is, $G(\Delta)$ fixes $\mathcal{D}$ pointwise. Hence $G(\Delta)=1$, which is (1). Finally, assertion (2) holds since $G(\Delta^{\prime}) \trianglelefteq G_{x}$.
\end{proof}

\begin{lemma}\label{mamma}
The following hold:

\begin{enumerate}
\item $\frac{p^{m}}{\theta }(p^{m}+1)$ divides $\left\vert G_{x}\right\vert $;

\item $\theta (p^{m}+2)(p^{m}+1)$ divides $\left\vert G_{\Delta^{\prime
}}^{\Delta ^{\prime }}\right\vert $.
\end{enumerate}
\end{lemma}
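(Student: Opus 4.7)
The plan for (1) is orbit--stabilizer applied to the quotient design. By Theorem \ref{PZM}(VI.1), $\mathcal{D}_{i}$ is a $2$-$(p^{2m},p^{m},p^{m}/\theta)$ design whose replication number is
\[
r_{i} \;=\; \frac{(p^{m}/\theta)(p^{2m}-1)}{p^{m}-1} \;=\; \frac{p^{m}(p^{m}+1)}{\theta}.
\]
Flag--transitivity of $G_{\Delta_{i}}^{\Delta_{i}}$ on $\mathcal{D}_{i}$ forces $G_{x}^{\Delta_{i}}$ to permute these $r_{i}$ blocks through $x$ transitively, so $r_{i} \mid |G_{x}^{\Delta_{i}}|$; since $G_{x}^{\Delta_{i}}$ is a homomorphic image of $G_{x}$, the divisibility is inherited by $|G_{x}|$.

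For (2), the $2$--transitivity of $G_{\Delta'}^{\Delta'}$ on $\Delta'$ of degree $p^{m}+2$ from Proposition \ref{happens}(2b.iii) already provides $(p^{m}+1)(p^{m}+2) \mid |G_{\Delta'}^{\Delta'}|$. The additional factor $\theta$ will be extracted by a global index computation. Flag--transitivity of $G$ on $\mathcal{D}$ yields $|G| = vk\,|G_{x,B}| = p^{3m}(p^{m}+1)(p^{m}+2)\,|G_{x,B}|$, while transitivity on $\Sigma'$ gives $|G_{\Delta'}| = |G|/p^{2m}$, so
\[
|G_{\Delta'}^{\Delta'}| \;=\; \frac{|G_{\Delta'}|}{|G(\Delta')|} \;=\; \frac{p^{m}(p^{m}+1)(p^{m}+2)\,|G_{x,B}|}{|G(\Delta')|}.
\]
Thus the desired divisibility is equivalent to $\theta\,|G(\Delta')| \mid p^{m}|G_{x,B}|$, or after passing to the $2$--point stabilizer in $G_{\Delta'}^{\Delta'}$, to $\theta \mid |G_{x,y}^{\Delta'}| = |G_{x,y}|/|G(\Delta')|$ for any $y \in \Delta' \setminus \{x\}$.

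The $\theta$--factor will be supplied by Corollary \ref{ermo} on the partition $\Sigma$: the chain $G_{x,B} \leq G_{x, B \cap \Delta} \leq G_{x}$ has index $[G_{x, B \cap \Delta}:G_{x,B}] = \theta$, and $G_{x, B \cap \Delta} \leq G_{x} \leq G_{\Delta'}$ projects to $(G_{\Delta'}^{\Delta'})_{x}$. The identity $|G_{x,y}| = p^{m}|G_{x,B}|$ needed along the way follows from flag--transitivity together with the observation that any $g \in G_{x}$ sending one block through $\{x,y\}$ to another such block must fix $y$, since $\{x, y\} = B \cap \Delta' = g(B) \cap \Delta'$; thus $G_{x,y}$ is transitive on the $\lambda = p^{m}$ blocks through $\{x,y\}$. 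The decisive step is to analyze the $G_{x, B \cap \Delta}$--equivariant map sending each of the $\theta$ lifts $B'$ of $B \cap \Delta$ to the unique $y_{B'} \in \Delta' \setminus \{x\}$ determined by $B' \cap \Delta' = \{x, y_{B'}\}$, and to show that the resulting $G_{x, B \cap \Delta}$--orbit on $\Delta' \setminus \{x\}$ forces the claimed $\theta$--divisibility in $|G_{x,y}^{\Delta'}|$.

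The main obstacle is the bookkeeping at this last step: one has to verify that the $\theta$--fold enhancement of $G_{x, B \cap \Delta}$ over $G_{x,B}$ is not absorbed by $G(\Delta')$ when passing to $G_{\Delta'}^{\Delta'}$. This reduces to an orbit/stabilizer equation combining Corollary \ref{ermo} on both $\Sigma$ and $\Sigma'$ with the rectangular intersection $|\Delta_{i} \cap \Delta'| = 1$ (Proposition \ref{happens}(2b.ii)); precisely matching the numerics yields $\theta\,|G(\Delta')| \mid p^{m}|G_{x,B}|$ and closes the proof of (2).
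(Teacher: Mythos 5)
Your part (1) is correct and is exactly the paper's argument: the replication number $r_{i}=\frac{p^{m}}{\theta}(p^{m}+1)$ divides $\left\vert G_{x}^{\Delta_{i}}\right\vert$ by flag-transitivity of $G_{\Delta_{i}}^{\Delta_{i}}$, hence divides $\left\vert G_{x}\right\vert$. For part (2), your arithmetic reductions are all valid: $\left\vert G\right\vert =vk\left\vert G_{x,B}\right\vert$, $\left\vert G_{\Delta^{\prime}}\right\vert =\left\vert G\right\vert /p^{2m}$, and $\left\vert G_{x,y}\right\vert =p^{m}\left\vert G_{x,B}\right\vert$ (your verification that an element of $G_{x}$ mapping one block through $\{x,y\}$ to another must fix $y$ correctly uses $\left\vert B\cap \Delta^{\prime}\right\vert =2$ from Proposition \ref{happens}(2.b.i)), so the lemma is indeed equivalent to $\theta \mid \left\vert G_{x,y}^{\Delta^{\prime}}\right\vert$. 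But the proposal then stops at precisely the point where the content of the lemma lies: you announce that the equivariant map $B^{\prime}\mapsto y_{B^{\prime}}$ ``forces the claimed divisibility'' and that ``precisely matching the numerics'' closes the argument, without supplying the step. Nothing established so far rules out that the entire index-$\theta$ enhancement of $G_{x,B\cap \Delta}$ over $G_{x,B}$ is absorbed by $G(\Delta^{\prime})$; and the tools you cite for the finish --- Corollary \ref{ermo} applied ``on both $\Sigma$ and $\Sigma^{\prime}$'' together with $\left\vert \Delta_{i}\cap \Delta_{j}^{\prime}\right\vert =1$ --- are purely combinatorial bookkeeping and do not by themselves yield it. A genuine geometric input is missing.

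The missing input is the intersection property of symmetric designs: two distinct blocks of $\mathcal{D}$ meet in exactly $\lambda =p^{m}$ points. This is the engine of the paper's proof: if $\gamma \in G_{x,B\cap \Delta}\cap G(\Delta^{\prime})$, then $\gamma$ fixes $y$ (as $y\in \Delta^{\prime}$), so $B^{\gamma}$ contains $\left( B\cap \Delta \right) \cup \{y\}$, which consists of $\lambda +1$ points of $B$ since $y\notin \Delta$; hence $B^{\gamma}=B$ and $\gamma \in G_{x,B}$. Thus the kernel of the restriction map $G_{x,B\cap \Delta}\rightarrow G_{\Delta^{\prime}}^{\Delta^{\prime}}$ lies inside $G_{x,B}$, so $\theta =\left[ G_{x,B\cap \Delta}:G_{x,B}\right]$ divides the order of the image, whence $\theta \mid \left\vert G_{\Delta^{\prime}}^{\Delta^{\prime}}\right\vert$; since $\theta \mid p^{m}$ is coprime to $(p^{m}+1)(p^{m}+2)$, the product divides $\left\vert G_{\Delta^{\prime}}^{\Delta^{\prime}}\right\vert$. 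The same fact is what would rescue your own plan: your map $B^{\prime}\mapsto y_{B^{\prime}}$ is injective precisely because two lifts with the same image would share the $\lambda +1$ points $\left( B\cap \Delta \right) \cup \{y_{B^{\prime}}\}$, and injectivity combined with the transitivity of $G_{x,B\cap \Delta}$ on the $\theta$ lifts (implicit in the proof of Corollary \ref{ermo}) makes $\{y_{B^{\prime}}\}$ an orbit of length $\theta$ for the image group on $\Delta^{\prime}\setminus \{x\}$, which is the desired divisibility. As written, however, the proposal never states or uses the $\lambda$-intersection property, so the decisive step remains unproved.
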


\begin{proof}
Let $\Delta \in \Sigma$ and $\Delta^{\prime} \in \Sigma^{\prime}$ and let $x$ be their intersection point. The replication number $r=\frac{p^{m}}{\theta }(p^{m}+1)$ divides $%
\left\vert G_{x}\right\vert $, since $\mathcal{D}_{i}
$ is a flag-transitive $2$-$(p^{2m},p^{m},\frac{p^{m}}{\theta })$ design, hence (1) holds.

Since $G_{\Delta^{\prime }}^{\Delta^{\prime }}$ acts $2$%
-transitively on $\Delta^{\prime}$ and $U^{{\Delta}^{\prime}}=Soc(G_{\Delta^{\prime }}^{\Delta^{\prime }})$, with $U^{\Delta^{\prime}}$ elementary abelian of order $p^{m}+2$, by Proposition \ref%
{happens}(2a), (2.b.iii) and (2.b.iv), it follows that $(p^{m}+2)(p^{m}+1)$ divides $%
\left\vert G_{\Delta^{\prime }}^{\Delta^{\prime }}\right\vert $.

Let $B$ be any block of $\mathcal{D}$ such that $x\in B$. Then $B\cap \Delta
^{\prime }=\left\{ x,y\right\} $, for some $y\neq x$, by Proposition \ref{happens}(2.b.i). Let $\gamma \in G_{x,B\cap \Delta}\cap G(\Delta
^{\prime })$. Then $B^{\gamma }\cap \Delta
=B\cap \Delta$, and $y\in B^{\gamma }$ since $B\cap \Delta
^{\prime }=\left\{ x,y\right\} $. Thus $\left( B\cap \Delta\right) \cup \left\{
y\right\} \subseteq B^{\gamma }\cap B$, where $y\notin B\cap \Delta$ as 
$y\in \Delta^{\prime }$, $y\neq x$, and $\Delta\cap \Delta
^{\prime }=\left\{ x\right\} $. Therefore $\left\vert B^{\gamma }\cap
B\right\vert \geq \lambda +1$ and hence $B^{\gamma }=B$. So $\gamma \in
G_{x,B}$, as $x^{\gamma }=x$. Thus $G_{x,B\cap
\Delta}\cap G(\Delta^{\prime })\leq G_{x,B}$ and hence $\theta 
$ divides $\left\vert \left( G_{x,B\cap \Delta}\right) _{\Delta
^{\prime }}^{\Delta^{\prime }}\right\vert $, since $\left[
G_{x,B\cap \Delta}:G_{x,B}\right] =\theta $ by Corollary \ref{ermo}.
So, $\theta $ divides $\left\vert G_{\Delta^{\prime }}^{\Delta
^{\prime }}\right\vert $. Hence, $\theta (p^{m}+2)(p^{m}+1)\mid \left\vert G_{\Delta^{\prime
}}^{\Delta^{\prime }}\right\vert $, which is (2), since $\theta \mid p^{m}$ and $(p^{m}+2)(p^{m}+1)$ divides $\left\vert G_{\Delta^{\prime
}}^{\Delta^{\prime }}\right\vert $.
\end{proof}

\begin{theorem}\label{centraliz}
$C_{G}(V) \leq G(\Sigma)$.
\end{theorem}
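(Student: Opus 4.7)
The plan is to derive a contradiction from the assumption that case (2) of Proposition \ref{happens} occurs, proceeding in three steps. First, I would sharpen Proposition \ref{qcDv}(2) as follows. By Lemma \ref{quotient}(1), $G_{x}$ acts faithfully on $\Delta$; since every element of $G(\Delta)$ lies in $G_{x}$ yet fixes $\Delta$ pointwise, $G(\Delta)=1$. Consequently $V(\Delta)=V\cap G(\Delta)=1$, so $t=0$, $|V|=p^{2m}$, and $V$ acts regularly on each $\Delta\in\Sigma$. Symmetrically, $U$ acts regularly on each $\Delta'\in\Sigma'$.

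Second, I would show $\theta=1$. Lemma \ref{mamma}(2) yields $\theta(p^{m}+1)(p^{m}+2)\mid|G_{\Delta'}^{\Delta'}|$, and since $G_{\Delta'}^{\Delta'}$ is a $2$-transitive affine group with socle $U^{\Delta'}\cong\Zbb_{u}^{h}$, its order divides $|AGL_{h}(u)|=u^{h}\cdot|GL_{h}(u)|$; combined with $\gcd(\theta,u)=1$ this gives $\theta\mid\prod_{i=1}^{h}(u^{i}-1)$. For $h=1$ the product is $u-1=p^{m}+1$, coprime to $p^{m}$, so $\theta=1$. For $h\geq 2$ I would combine the congruence $u^{h}\equiv 2\pmod{p}$ with a size estimate: if $\mathrm{ord}_{p}(u)=d\leq h$, then $p\leq u^{d}-1\leq u^{h-1}-1\leq(p^{m}+2)/u-1$, i.e.\ $u(p+1)\leq p^{m}+2$, which is impossible for $m=1$ since $u\geq 2$. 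For $m\geq 2$ the Pillai restrictions cited after Proposition \ref{happens} leave only $(p^{m},u^{h})=(25,27)$, where $\mathrm{ord}_{5}(3)=4>3=h$ rules out the hypothesis. Hence $\theta=1$ in every case.

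Third, I would extract the contradiction via Theorem \ref{Monty}. With $\theta=1$, $\mathcal{D}_{i}$ is a flag-transitive, point-primitive $2$-$(p^{2m},p^{m},p^{m})$ design, and a case check of Theorem \ref{Monty} excludes every possibility except (2b) with $t=0$: the almost simple designs have $v_{i}\in\{36,144\}$, not of the form $p^{2m}$ for a prime $p$; case (2a) forces $\theta=p^{m}\neq 1$; and each subcase of (2c) either gives $\theta>1$, mismatches $\lambda_{i}=k_{i}=p^{m}$, or yields $p=2$ with $u^{h}=2^{m}+2=2(2^{m-1}+1)$ not a prime power for $m\geq 2$, while $m=1$ gives $\lambda=2$, excluded by Lemma \ref{c=d}. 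In the surviving case $G_{0}^{\Delta}\leq\Gamma L_{1}(p^{2m})$, so $|G_{x}|\mid 2m(p^{2m}-1)$; combined with $p^{m}(p^{m}+1)\mid|G_{x}|$ from Lemma \ref{mamma}(1) this gives $p^{m}\mid 2m$, restricting to $(p,m)\in\{(2,1),(2,2),(3,1)\}$. Each of these contradicts Lemma \ref{c=d}, the requirement that $p^{m}+2$ be a prime power ($u^{h}=6$), or the divisibility $3\mid 2$, respectively.

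The main obstacle is Step~2 for $h\geq 2$: Pillai's equation $u^{h}-p^{m}=2$ admits many solutions in $(p,u,m,h)$, so the congruence argument for ruling out $\mathrm{ord}_{p}(u)\leq h$ must be handled in two separate sub-cases, namely $m=1$ (by a direct size bound on $u^{d}-1$) and $m\geq 2$ (where the Pillai restrictions reduce to the single sporadic configuration $(p^{m},u^{h})=(25,27)$ that can be checked by hand).
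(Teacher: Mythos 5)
Your overall architecture is legitimate and, where it works, more elementary than the paper's: Step 1 is correct (Lemma \ref{quotient}(1) does give $G(\Delta)=1$, hence $t=0$ and $V$ regular on each $\Delta$), and Step 2 is sound for $h=1$ (where $\left\vert AGL_{1}(u)\right\vert=(p^{m}+2)(p^{m}+1)$ together with Lemma \ref{mamma}(2) forces $\theta=1$ at once, which is exactly the paper's own move for $p^{m}=3^{2},3^{3}$) and for $m=1$ (your size bound, with the implicit point that $u^{h}\equiv 2\pmod{p}$ gives $\mathrm{ord}_{p}(u)\leq h-1$, is fine). The genuine gap is Step 2 for $h\geq 2$ and \emph{odd} $m\geq 3$: the Pillai restrictions quoted after Proposition \ref{happens} pin the solution of $u^{h}=p^{m}+2$ down to $(p^{m},u^{h})=(5^{2},3^{3})$ only when $h>1$ and $m$ is \emph{even} (\cite[Lemmas 2 and 4]{SS}); for odd $m$ they give only $p>3$ and the at-most-one-solution statement, which excludes nothing. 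So your claim that ``for $m\geq 2$ only $(25,27)$ survives'' is unsupported, you cannot conclude $\theta=1$ in that regime, and Step 3 then collapses exactly where it matters: Theorem \ref{Monty}(2a) with $\mathcal{D}_{i}\cong AG_{2}(p^{m})$ and $\theta=p^{m}$, and (2b) with $\theta=p^{s}$, $s\geq 1$, remain alive.

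The (2b) remnant is easy to patch along the paper's lines without any appeal to $\theta=1$: Lemma \ref{mamma}(1) gives $p^{m-s}\mid\left\vert G_{x_{i}}\right\vert$, while $\theta=p^{s}$ divides $\left\vert G_{\Delta^{\prime}}^{\Delta^{\prime}}\right\vert$ and the index $[G_{\Delta^{\prime}}^{\Delta^{\prime}}:G_{x_{i}}^{\Delta^{\prime}}]=p^{m}+2$ is prime to $p$, so $p^{\max\{s,m-s\}}\geq p^{m/2}$ divides $\left\vert \Gamma L_{1}(p^{2m})\right\vert=2m(p^{2m}-1)$, whence $p^{m/2}\mid 2m$, impossible for odd $p$. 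But the $AG_{2}(p^{m})$ case with $\theta=p^{m}$ is precisely where the paper does work that your route skips: there $p^{m}\mid\left\vert G_{x_{i}}^{\Delta^{\prime}}\right\vert$ combined with Lemma \ref{mamma}(1) forces $SL_{2}(p^{m})\trianglelefteq G_{x_{i}}\leq\Gamma L_{2}(p^{m})$, so $G_{x_{i}}^{\Delta^{\prime}}$ contains $PSL_{2}(p^{m})$ or $SL_{2}(p^{m})$ normally inside an affine $2$-transitive group of degree $p^{m}+2$, and Kantor's classification \cite[Section 2, (B)]{Ka85} restricts $p^{m}$ to $3,5,13$, each then eliminated separately. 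Without either this group-theoretic argument or a genuinely stronger Diophantine input for odd $m$ (which the cited \cite{SS} does not provide), your proof of Theorem \ref{centraliz} is incomplete.
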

\begin{proof}
Let $\Delta_{i} \in \Sigma$ and $\Delta^{\prime} \in \Sigma^{\prime}$ and let $x_{i}$ be their intersection point. Recall that $\mathcal{D}_{i}$ is isomorphic to one of the $2$-designs listed in Theorem \ref{Monty}. Since $p^{m}+2=u^{h}$, with $u$ prime, by Proposition \ref{happens}(2.a), and since $p^{m} \geq 3$ by Lemma \ref{c=d}, it follows that $p>2$. Thus (2.a.ii), (2.c.iv)--(2.c.v) and (2.c.vii)--(2.c.viii) are ruled out. Also, if $p^{m}=3^{2}$ or $3^{3}$ then $h=1$ and either $u=11$ or $u=29$ respectively. Hence $G_{\Delta^{\prime}}^{\Delta^{\prime}} \cong AGL_{1}(u)$ by Proposition \ref{happens}(2.b.iii). Then $\theta=1$ by Lemma \ref{mamma}(2), and hence (2.a.iii)--(2.a.iv) and (2.c.vi) of Theorem \ref{Monty} are ruled out. 

Assume that (2.c.i) occurs. Then $p^{m}+2 \mid (p^{m/2}+1)(p^{m/2}-1)^{2}m$ by Lemma \ref{mamma}(2). Since $p^{m}+2$ is the power of an odd prime $u$, it follows that either $p^{m}+2 \mid (p^{m/2}+1)m/2$ or $p^{m}+2 \mid(p^{m/2}-1)^{2}m/2$. In both case we get $p^{m}+2 \leq (p^{m/2}-1)^{2}p^{m/2}$, since $m/2 \leq p^{m/2}$, and we reach a contradiction.

Assume that (2.c.ii) occurs. Arguing as above, we see that either $p^{m}+2 \mid (p^{m/3}+1)m/3$ or $p^{m}+2 \mid(p^{m/3}-1)^{2}m/3$, which is impossible since $m/3 \leq p^{m/3}$.

Assume that (2.c.iii) occurs. Then either $p^{m}+2 \mid (p^{m/2}+1)^{2}m/2$ or $p^{m}+2 \mid(p^{m/2}-1)^{3}m/2$. Since $p>2$, it follows that $m/2 \leq p^{m/2}-2$ and hence the former is ruled out. Since $(p^{m}+2,p^{m/2}-1) \mid 3$ forces $p^{m}+2=3^{h}$, with $h>1$, and since $m$ is even, it results $m=2$ and $p^{m/2}=5$ by \cite[Lemmas 4]{SS}. However, such values do not fulfill $p^{m}+2 \mid(p^{m/2}-1)^{3}m/2$. So this case is excluded. Therefore only cases (2.a.i) or (2.b) are admissible, and bearing in mind Lemma \ref{quotient}(1), one of the following holds:
\begin{enumerate}
\item[(i).] $\mathcal{D}_{i} \cong AG_{2}(p^{m})$, $p>2$ and $p^{m} \neq 3^{2},3^{3}$, $\lambda=\theta=p^m$ and $G_{x_{i}} \leq \Gamma L_{2}(p^{m})$.
\item[(ii).] $\mathcal{D}_{i}$ is a $2$-$(p^{2m},p^{m}, p^{m-s})$ design, $p>2$ and $p^{m} \neq 3^{2},3^{3}$, $\theta=p^{s}$ with $0 \leq s \leq m$, the blocks are subspaces of $AG_{2m}(p)$ and $G_{x_{i}} \leq \Gamma L_{1}(p^{2m})$.
\end{enumerate}
Assume that (i) occurs. Then $p^{m}$ divides the order of $G_{x_{i}}^{\Delta^{\prime}}$, and hence that of $G_{x_{i}}$ by Lemmas \ref{quotient}(2) and \ref{mamma}(2), since $\theta=p^{m}$ and $[G_{\Delta^{\prime}}^{\Delta^{\prime}}:G_{x_{i}}^{\Delta^{\prime}}]=p^{m}+2$, with $p>2$. This fact, together with Lemma \ref{mamma}(1), implies $SL_{2}(p^{m})\trianglelefteq G_{x_{i}} \leq \Gamma L_{2}(p^{m})$. Then $G_{x_{i}}^{\Delta^{\prime}}$ contains either $PSL_{2}(p^{m})$ or $SL_{2}(p^{m})$ as a normal subgroup by Lemma \ref{quotient}(2), since $p^{m}$ divides the order of $G_{x_{i}}^{\Delta^{\prime}}$. Then $p^{m}=3,5,13$ by \cite[Section 2, (B)]{Ka85}, since $p>2$ and $p^{m} \neq 3^{2}$, since $G_{\Delta^{\prime}}^{\Delta^{\prime}}$ is an affine $2$-transitive group by Proposition \ref{happens}(2b.iii)--(2b.iv). Actually $p^{m}\neq 13$ and $u^{h}=5,7$ for $p^{m}=3,5$ respectively, since $u^{h}=p^{m}+2$ and $u$ is a prime. The case $p^{m}=3$ cannot occur by \cite[Proposition 5.1]{P}, the case $p^{m}=5$ is excluded since $G_{\Delta^{\prime}}^{\Delta^{\prime}}\cong AGL_{1}(5)$ in these cases, whereas $G_{x_{i}}^{\Delta^{\prime}}$ is non-solvable. Thus case (i) is ruled out.\\ 
Assume that (ii) occurs. Then $p^{m-s}$ divides the order of $G_{x_{i}}$ by Lemma \ref{mamma}(1). On the other hand, $p^{s}$ divides the order of $G_{x_{i}}^{\Delta^{\prime}}$, and hence that of $G_{x_{i}}$, by Lemmas \ref{quotient}(2) and \ref{mamma}(2). Hence $p^{a}$ divides the order of $G_{x_{i}}$ and hence that $\Gamma L_{1}(p^{2m})$, where $a=\max\{s,m-s\} \geq m/2$. So $p^{m/2} \leq m$, which is a contradiction as $p$ is odd. Thus (2) of Proposition \ref{happens} is ruled out and hence the assertion follows.
\end{proof}

\section{Proof of Theorem \ref{t1}}
In this section we complete the proof of Theorem \ref{t1}. In the sequel, $C_{G}(V)$ and $G/C_{G}(V)$ will simply be denoted by $C$ and $H$ respectively.
\bigskip

\begin{proposition}\label{irre}
$H$ is an irreducible subgroup of $GL_{2m+t}(p)$, where $0 \leq t \leq 2m$.
\end{proposition}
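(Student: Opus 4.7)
The plan is to exploit the fact that $V$ is a minimal normal subgroup of $G$. By Proposition \ref{qcDv}(2), $V$ is elementary abelian of order $p^{2m+t}$ with $0\leq t \leq 2m$, so I may regard $V$ as an $\mathbb{F}_{p}$-vector space of dimension $2m+t$. The conjugation action of $G$ on $V$ induces a homomorphism $G \to \mathrm{GL}(V) \cong \mathrm{GL}_{2m+t}(p)$ whose kernel is, by definition, $C=C_{G}(V)$. Hence $H=G/C$ embeds in $\mathrm{GL}_{2m+t}(p)$, and it only remains to check irreducibility.

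To check irreducibility, I would take an arbitrary $H$-invariant $\mathbb{F}_{p}$-subspace $W$ of $V$. Since $V$ is elementary abelian, additive subspaces of $V$ coincide with subgroups of $V$, so $W\leq V$ as groups. Moreover, the $H$-invariance of $W$ is exactly the statement that $gWg^{-1}=W$ for every $g\in G$, since every element of $H$ is represented by a conjugation-by-$g$ action. Thus $W\trianglelefteq G$.

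Now invoking the minimality of $V$ as a normal subgroup of $G$, any normal subgroup of $G$ contained in $V$ is either trivial or equal to $V$. Therefore $W=0$ or $W=V$, which proves that $H$ acts irreducibly on $V$ and hence is an irreducible subgroup of $\mathrm{GL}_{2m+t}(p)$.

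There is really no genuine obstacle here: the statement is almost tautological once one records that the $H$-invariant subspaces of $V$ are precisely the $G$-normal subgroups of $V$ (using that $V$ is abelian so that $V$ itself centralizes any such subgroup), and that by hypothesis $V$ admits no proper nontrivial such subgroup. The content of the proposition lies not in its proof but in its later use, namely that the irreducibility of $H$ on $V$ will constrain the structure of $G/C_{G}(V)$ in the completion of the proof of Theorem \ref{t1}.
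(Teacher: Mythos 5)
Your proof is correct and is exactly the standard argument the paper compresses into its one-line proof: since $V$ is a minimal normal elementary abelian subgroup of $G$ (Proposition \ref{qcDv}(2)), conjugation embeds $H=G/C_{G}(V)$ in $GL_{2m+t}(p)$, and any $H$-invariant subspace is a normal subgroup of $G$ inside $V$, hence trivial or all of $V$. Note only that $V$ is minimal among normal subgroups of $G$ contained in $G(\Sigma)$, but this changes nothing, since any $G$-normal subgroup of $V$ is again contained in $G(\Sigma)$ and minimality applies.
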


\begin{proof}
Since $V$ is a minimal normal elementary abelian subgroup of $G$ of order $p^{2m+t}$, with $0 \leq t \leq 2m$, by Proposition \ref{qcDv}(2), the assertion follows.   
\end{proof}
\bigskip

For each divisor $n$ of $2m+t$ the group $\Gamma L_{n}(p^{(2m+t)/n})$ has a
natural irreducible action on $V$. By Proposition \ref{irre} we may choose $n$ to be minimal such that 
$H\leq \Gamma L_{n}(p^{(2m+t)/n})$ in this action and write $%
q=p^{(2m+t)/n}$. 

\bigskip

Let $a,e$ be integers. A divisor $w$ of $a^{e}-1$ that is coprime to each $a^{i}-1$ for $1 \leq i<e$ is
said to be a \emph{primitive divisor}, and we call the largest primitive
divisor $\Phi _{e}^{\ast }(a)$ of $a^{e}-1$ the \emph{primitive part} of $%
a^{e}-1$. One should note that $\Phi _{e}^{\ast }(a)$ is strongly related to
cyclotomy in that it is equal to the quotient of the cyclotomic number $\Phi
_{e}(a)$ and $(n,\Phi _{e}(a))$ when $e>2$. Also $\Phi _{e}^{\ast }(a)>1$
for $a \geq 2$, $e>2$ and $(q,e)\neq (2,6)$ by Zsigmondy's Theorem (for instance, see 
\cite[P1.7]{Rib}).

\bigskip
Since $G^{\Sigma }$ is a $2$-transitive almost simple group by Proposition \ref{boom}, either $G^{\Sigma}$ is of affine type or an almost simple group. We analyze the two cases separately.
\bigskip

\subsection{$G^{\Sigma}$ is of affine type}
In this subsection we assume that $G^{\Sigma }$ is of affine type. Hence $Soc(G^{\Sigma })$ is an elementary abelian $u$-group for some prime $u$. Let $u^{h}$ be the order of $Soc(G^{\Sigma })$, then $u^{h}=p^{m}+2$ as $\left \vert \Sigma \right \vert =p^{m}+2$. In the sequel $U$ will denote the pre-image of $Soc(G^{\Sigma })$ in $G$.

\begin{lemma} \label{Phistar}
The following hold:

\begin{enumerate}
\item A quotient group of $H$ has a $2$-transitive permutation
representation of degree $q^{n/2}+2$.

\item $(q^{n/2}+1)\left( q^{n/2}+2\right) \mid \left\vert H\right\vert $. In
particular $\Phi _{2m}^{\ast }(p)\mid \left\vert H\right\vert $.

\item Either $\Phi _{2m}^{\ast }(p)>1$, or $(p^{m},u^{h})=(3,5)$ or $%
(7,9)$.
\end{enumerate}
\end{lemma}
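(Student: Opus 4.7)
The plan is to produce a $2$-transitive quotient of $H$ from the action of $G$ on $\Sigma$, deduce the two divisibility statements from this, and conclude part (3) by Zsigmondy's theorem together with a brief arithmetic analysis.

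For (1), Theorem \ref{centraliz} gives $C=C_{G}(V)\leq G(\Sigma)$, so the canonical map $G\twoheadrightarrow G^{\Sigma}=G/G(\Sigma)$ factors through $H=G/C$, exhibiting $G^{\Sigma}$ as a quotient of $H$. By Proposition \ref{boom} this quotient acts $2$-transitively on $\Sigma$, a set of size $p^{m}+2$. The identification $p^{m}=q^{n/2}$ rests on Proposition \ref{qcDv}(1), which forces $V^{\Delta_{i}}$ to be elementary abelian of order $p^{2m}$, together with the minimality of $n$ in the embedding $H\leq\Gamma L_{n}(q)$; this is the step I expect to require the most care, as one must argue that the minimal $n$ with $H\leq \Gamma L_{n}(q)$ is even and that the resulting $q^{n/2}$ matches $p^{m}$.

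For (2), the $2$-transitivity of $G^{\Sigma}$ of degree $p^{m}+2$ forces $(p^{m}+2)(p^{m}+1)$ to divide $|G^{\Sigma}|$, hence $|H|$. For the primitive part, any primitive prime divisor of $p^{2m}-1$ divides the factorization $(p^{m}-1)(p^{m}+1)$ but cannot divide $p^{m}-1$, and therefore divides $p^{m}+1$; consequently $\Phi_{2m}^{\ast}(p)\mid p^{m}+1\mid |H|$.

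For (3), Zsigmondy's theorem ensures $\Phi_{2m}^{\ast}(p)>1$ outside a short list of exceptional pairs $(p,2m)$. Since $p^{m}\geq 3$ by Lemma \ref{c=d}, and since $p^{m}+2=u^{h}$ with $u$ prime forces $p$ odd (otherwise $p^{m}+2=2(2^{m-1}+1)$ fails to be a prime power for $m\geq 2$), the only surviving exception is $m=1$ with $p$ a Mersenne prime. Imposing $p+2=u^{h}$ then admits only $(p^{m},u^{h})=(3,5)$ and $(7,9)$: for $h=1$ the simultaneous primality of $2^{k}-1$ and $2^{k}+1$ with $k$ prime forces $k=2$; for $h\geq 2$, writing $(u-1)(u+1)=2^{k}$ in the case $h=2$ immediately gives $u=3$, $k=3$, while the general case $h\geq 2$ can be disposed of by an appeal to Mihailescu's theorem.
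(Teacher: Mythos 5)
Your proposal is correct and, in substance, the same as the paper's proof. Parts (1) and (2) are exactly the paper's argument: $C=C_{G}(V)\leq G(\Sigma)$ by Theorem \ref{centraliz}, so $G^{\Sigma}$ is a quotient of $H$, and by Proposition \ref{boom} it is $2$-transitive of degree $\left\vert \Sigma\right\vert=p^{m}+2$, whence $(p^{m}+1)(p^{m}+2)$ divides $\left\vert H\right\vert$; your observation that $\Phi_{2m}^{\ast}(p)$ is coprime to $p^{m}-1$ and therefore divides $p^{m}+1$ is the right way to get the primitive part. For part (3), your elimination of $p=2$ via $p^{m}+2=2(2^{m-1}+1)$ subsumes the paper's treatment of the Zsigmondy exception $(2,6)$ (where the paper notes $q^{n/2}+2=10$ is not a prime power), and your analysis of the Mersenne case $m=1$, $u^{h}=2^{k}+1$ reaches $(3,5)$ and $(7,9)$ just as the paper does; the only difference is sourcing — the paper cites Ribenboim [B1.1] where you use a mod-$3$ argument plus Mihailescu's theorem, which is an equivalent (indeed slightly heavier than necessary) substitute, since the classical fact that $2^{k}+1$ is a proper power only for $k=3$ suffices.

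The one step you flagged as delicate — deriving $p^{m}=q^{n/2}$ from Proposition \ref{qcDv}(1) and the minimality of $n$ — cannot be carried out, and your suspicion was well founded: since $q^{n}=\left\vert V\right\vert=p^{2m+t}$ by Proposition \ref{qcDv}(2), one has $q^{n/2}=p^{(2m+t)/2}$, which equals $p^{m}$ only when $t=0$, and nothing at this stage forces $t=0$ (indeed in the surviving cases of Proposition \ref{dec} one has $n=4m$ and $q=p$, i.e. $t=2m$, so that $q^{n/2}=p^{2m}$). This is a notational slip in the lemma's statement rather than a gap you must fill: the paper's own proof simply asserts that (1) and (2) "follow" and in fact establishes the conclusions with $p^{m}$ in place of $q^{n/2}$, and every later use of the lemma invokes only $u^{h}=p^{m}+2$ and $\Phi_{2m}^{\ast}(p)\mid\left\vert H\right\vert$. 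So your argument becomes complete (and matches the intended content) once you drop the attempted identification and state the degree as $p^{m}+2$; in particular, do not try to prove that the minimal $n$ is even — that is not true in general here.
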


\begin{proof}
Since $C \leq G(\Sigma)$ by Theorem \ref{centraliz} and since $G^{\Sigma }$ acts $2$-transitively on $\Sigma $ by Proposition \ref{boom}, a quotient of $H$
is isomorphic to $G^{\Sigma }$. Thus (1) and (2) follow.

Suppose that $\Phi _{2m}^{\ast }(p)=1$, then either $n=2$ and $q$ is a
Mersenne prime, or $(q,n)=(2,6)$ by \cite[ P1.7]{Rib}. The latter is
ruled out since $q^{n/2}+2=10$ in this case, the former yields $u^{h}=q^{n/2}+2=2^{y}+1$,
for some prime $y\geq 0$. Then either $(p^{m},u^{h},y)=(7,9,3)$, or $h=1$ by \cite[
B1.1]{Rib}. If $h=1$, then $u$ is a Fermat prime and hence $y=2$, as $y$
is a prime and we obtain $(u^{h},y)=(5,2)$. Thus either $(p^{m},u^{h})=(3,5)$ or $(p^{m},u^{h})=(7,9)$.
\end{proof}

\bigskip

From now on, we assume that $\Phi
_{2m}^{\ast }(p)>1$. The cases $(p^{m},u^{h})= (3,5),(7,9)$ are tackled at the end of this subsection.

\bigskip

\begin{lemma}
\label{n>1}$n>1$.
\end{lemma}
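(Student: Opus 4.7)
My plan is to suppose $n=1$ for contradiction and to exploit the fact that subgroups of $\Gamma L_{1}(p^{2m})$ are metacyclic, a structure that is too rigid to accommodate a quotient with a $2$-transitive affine action of degree $p^{m}+2$. The starting point is Lemma \ref{Phistar}(1): the $2$-transitive quotient of $H$ of degree $q^{n/2}+2$ must coincide with the action of $G^{\Sigma}$ on $\Sigma$ of degree $p^{m}+2$, so $q^{n/2}=p^{m}$, which for $n=1$ forces $t=0$ and $q=p^{2m}$. Thus $H \leq \Gamma L_{1}(p^{2m})$; the subgroup $H \cap \mathbb{F}_{p^{2m}}^{\times}$ is cyclic and normal in $H$, with cyclic quotient of order dividing $2m$, so $H$ and every one of its quotients is metacyclic.

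Next I would show that metacyclicity forces $h=1$, where $G^{\Sigma}=U^{h}:G_{0}^{\Sigma}$ is the affine $2$-transitive quotient with socle $U^{h}=(\mathbb{Z}/u)^{h}$. Indeed, $G_{0}^{\Sigma}$ acts irreducibly on $U^{h}$, transitively on its non-zero vectors, so the $G_{0}^{\Sigma}$-invariant subgroup $[G_{0}^{\Sigma},U^{h}]$ is non-trivial and therefore coincides with $U^{h}$. Hence $U^{h} \leq [G^{\Sigma},G^{\Sigma}]$. Since $G^{\Sigma}$ is metacyclic its derived subgroup is contained in a cyclic normal subgroup and is itself cyclic; therefore $U^{h}$ is cyclic, $h=1$ and $u=p^{m}+2$ is a prime. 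Because a $2$-transitive subgroup of $AGL_{1}(u)$ on $u$ points is the whole group, $G^{\Sigma}=AGL_{1}(u)$ of order $(p^{m}+2)(p^{m}+1)$.

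The final step is an elementary divisibility argument. Since $u=p^{m}+2$ divides $|G^{\Sigma}|$ hence $|H|$, and $|H|$ divides $|\Gamma L_{1}(p^{2m})|=(p^{2m}-1)\cdot 2m$, either $u \mid p^{2m}-1$ or $u \mid 2m$. From $p^{m}\equiv -2 \pmod{u}$ one computes $p^{2m}-1 \equiv 3 \pmod{u}$, so the first alternative forces $u=3$ and hence $p^{m}=1$, against Lemma \ref{c=d}. The second alternative gives $p^{m}+2 \leq 2m$, which fails for every $p\geq 2$ and $m\geq 1$, yielding the required contradiction. The main subtlety is Step 2: one must carefully verify, via the irreducibility of the $G_{0}^{\Sigma}$-action on the socle, that $[G_{0}^{\Sigma},U^{h}]=U^{h}$ before extracting cyclicity of $U^{h}$ from the metacyclic structure of $G^{\Sigma}$; the divisibility step is then completely transparent.
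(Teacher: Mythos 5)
There is a genuine gap at the very first step, and it is the step that carries all the weight. You read Lemma \ref{Phistar}(1) literally and conclude that the $2$-transitive quotient of degree $q^{n/2}+2$ ``must coincide'' with $G^{\Sigma}$ acting on $\Sigma$, forcing $q^{n/2}=p^{m}$ and hence $t=0$ when $n=1$. But the quotient exhibited in the proof of Lemma \ref{Phistar} \emph{is} $G^{\Sigma}$, whose degree is $p^{m}+2$ whatever the value of $t$: the parameter $t$ records the order $p^{2m+t}$ of the minimal normal subgroup $V\leq G(\Sigma)$ and is invisible to the action on $\Sigma$. The expression $q^{n/2}+2$ in Lemma \ref{Phistar} is loose notation for $p^{m}+2$ (the two agree only when $t=0$), so no constraint on $t$ can be extracted from it; if your reading were correct it would prove $t=0$ for \emph{every} $n$, contradicting Proposition \ref{dec}, where the configuration $n=4m$, $q=p$ --- that is, $t=2m$ --- actually survives. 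Consequently your argument only disposes of the subcase $t=0$, and the subcases $0<t\leq 2m$ are untreated. The paper's proof first forces $t\in\{0,2m\}$: by the standing hypothesis $\Phi_{2m}^{\ast}(p)>1$ there is a primitive prime divisor of $p^{2m}-1$ dividing $\left\vert H\right\vert$, hence dividing $(p^{2m+t}-1)(2m+t)$, and \cite[Proposition 5.2.15]{KL} then gives $2m\mid 2m+t$. It then kills $t=2m$ via $p^{4m}-1\equiv 15\pmod{p^{m}+2}$, so that $p^{m}+2\mid 15m$ (using $p$ odd), leaving only $p^{m}=3$ (excluded by the standing assumption $\Phi_{2m}^{\ast}(p)>1$) and $p^{m}=13$ (excluded because $p^{m}+2=15$ is not a prime power $u^{h}$). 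None of this is recoverable from your outline.

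For the record, your steps 2 and 3 are sound and would settle the case $t=0$: subgroups and quotients of $\Gamma L_{1}(p^{2m})$ are indeed metacyclic, this does force $h=1$ and $G^{\Sigma}\cong AGL_{1}(u)$, and your computation $p^{2m}-1\equiv 3\pmod{u}$ is exactly the gcd estimate the paper uses in its $t=0$ branch. In fact step 2 is dispensable: $u\mid u^{h}=p^{m}+2$ gives $p^{m}\equiv -2\pmod{u}$ without knowing $h=1$, and $u\mid \left\vert G^{\Sigma}\right\vert$, which divides $\left\vert H\right\vert$, is immediate, so step 3 runs as soon as $t=0$ is known. But since $t=0$ is precisely what is not established, the proposal does not prove the lemma.
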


\begin{proof}
Suppose that $n=1$. Then $H\leq \Gamma L_{1}(q)$ and hence $%
(p^{m}+2)\Phi _{2m}^{\ast }(p)\mid (p^{2m+t}-1)\cdot (2m+t)$. Then $2m\mid
2m+t$ by \cite[Proposition 5.2.15(i)]{KL} and hence either $t=0$ or $t=2m$, since $0\leq t\leq 2m$. If $t=0$,
then $p^{m}+2\mid (p^{2m}-1)\cdot 2m$ and hence $p^{m}+2\mid 3m$, which is
impossible since $p^{m}> 3$ by our assumption. Thus $t=2m$ and hence $(p^{m}+2)\mid (p^{4m}-1)\cdot 4m$. So $%
p^{m}+2\mid 15m$ and hence $p^{m}=3$, but this contradicts our assumption.
\end{proof}

\begin{proposition}
\label{primpart} Let $H^{\ast}=H\cap GL_{n}(q)$, then $\Phi _{2m}^{\ast }(p)\frac{p^{m}+2}{(p^{m}+2,(2m+t)/n)}%
\mid \left\vert H^{\ast}\right\vert $.
\end{proposition}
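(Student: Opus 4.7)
The plan is to combine Lemma \ref{Phistar}(2) with the short exact sequence
\[
1 \longrightarrow GL_n(q) \longrightarrow \Gamma L_n(q) \longrightarrow \mathrm{Gal}(\mathbb{F}_q/\mathbb{F}_p) \longrightarrow 1,
\]
together with standard arithmetic of primitive prime divisors of $p^{2m}-1$. Since $\mathrm{Gal}(\mathbb{F}_q/\mathbb{F}_p)$ is cyclic of order $e:=(2m+t)/n$, intersecting this sequence with $H$ shows that $H^{\ast}=H\cap GL_n(q)$ is normal in $H$, with $d:=[H:H^{\ast}]$ a divisor of $e$.

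The key arithmetic input is that every prime divisor $w$ of $\Phi_{2m}^{\ast}(p)$ satisfies $\mathrm{ord}_w(p)=2m$. This forces $w \mid p^m+1$ (as $w \mid p^{2m}-1$ but $w \nmid p^m-1$) and $w \equiv 1 \pmod{2m}$, so $w \geq 2m+1$. From the first property, $(\Phi_{2m}^{\ast}(p),\,p^m+2)=1$, since any common prime would divide $(p^m+2)-(p^m+1)=1$. Lemma \ref{Phistar}(2) gives both $(p^m+2) \mid |H|$ (via the $2$-transitive action of the quotient $G^{\Sigma}$ on $\Sigma$) and $\Phi_{2m}^{\ast}(p) \mid |H|$, so by the coprimality just noted, $K:=\Phi_{2m}^{\ast}(p)(p^m+2)$ divides $|H|$.

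From $K \mid |H| = d\,|H^{\ast}|$ one obtains $K/(K,d) \mid |H^{\ast}|$. To estimate $(K,d)$, observe that $d \mid e \leq 2m$ by Lemma \ref{n>1} (which gives $n\geq 2$) together with the bound $t\leq 2m$, whereas each prime divisor of $\Phi_{2m}^{\ast}(p)$ is $\geq 2m+1$. Hence $(\Phi_{2m}^{\ast}(p),d)=1$, so $(K,d)=(p^m+2,d)$, which in turn divides $(p^m+2,e)$. Therefore
\[
\Phi_{2m}^{\ast}(p)\cdot\frac{p^m+2}{(p^m+2,\,(2m+t)/n)} \mid \frac{K}{(K,d)} \mid |H^{\ast}|,
\]
which is the desired divisibility. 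The only (minor) subtlety is ensuring that the primitive part of $p^{2m}-1$ does not slip into the field-automorphism quotient $H/H^{\ast}$; this is precisely the inequality $w \geq 2m+1 > e \geq d$ for every prime divisor $w$ of $\Phi_{2m}^{\ast}(p)$.
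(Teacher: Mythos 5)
Your proof is correct, and its skeleton matches the paper's: both start from Lemma \ref{Phistar}(2), both use that $[H:H^{\ast}]$ divides $(2m+t)/n$ (the order of the field-automorphism quotient $\Gamma L_{n}(q)/GL_{n}(q)$), both apply the elementary fact that $K\mid d\,N$ implies $K/(K,d)\mid N$ (valid — check $p$-adic valuations), and both reduce the claim to the two coprimalities $(\Phi_{2m}^{\ast}(p),p^{m}+2)=1$ and $(\Phi_{2m}^{\ast}(p),(2m+t)/n)=1$. The genuine difference is in how the second coprimality is proved. The paper runs a Diophantine case analysis: assuming a primitive prime divisor $w$ of $p^{2m}-1$ divides $(2m+t)/n$, it deduces $w\mid 2m+t$, writes $w=2ma+1$ and $(2ma+1)s=2m+t$, forces $t=s=a=1$ using $t\leq 2m$, and then derives a contradiction from $n\mid 2m+1$ together with $n>1$ (Lemma \ref{n>1}). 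You replace all of this with a single uniform bound: every prime divisor $w$ of $\Phi_{2m}^{\ast}(p)$ satisfies $w\equiv 1\pmod{2m}$ by \cite[Proposition 5.2.15(ii)]{KL}, hence $w\geq 2m+1$, while $(2m+t)/n\leq (2m+2m)/2=2m$ by Lemma \ref{n>1} and $t\leq 2m$, so no such $w$ can divide $(2m+t)/n$. This is strictly simpler, uses exactly the same inputs, and subsumes the paper's case analysis; what the paper's longer route buys is nothing extra here, since your inequality already rules out every case it enumerates. Two further small points in your favour: you supply the short proof of $(\Phi_{2m}^{\ast}(p),p^{m}+2)=1$ (via $w\mid p^{m}+1$), which the paper only asserts, and your assembly of $\Phi_{2m}^{\ast}(p)(p^{m}+2)\mid\left\vert H\right\vert$ from the two separate divisibilities is consistent with what Lemma \ref{Phistar}(2) delivers in one piece, namely $(p^{m}+1)(p^{m}+2)\mid\left\vert H\right\vert$ with $\Phi_{2m}^{\ast}(p)\mid p^{m}+1$.
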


\begin{proof}
By Lemma \ref{Phistar}(2) $ \frac{\Phi _{2m}^{\ast }(p)(p^{m}+2)}{(\Phi _{2m}^{\ast }(p)(p^{m}+2),(2m+t)/n)}\mid \left\vert H^{\ast}\right\vert $. Assume that $(\Phi _{2m}^{\ast }(p),(2m+t)/n))>1$. Then
there is a primitive prime divisor $w$ of $p^{2m}-1$ dividing $(2m+t)/n$. So $w\mid
2m+t$. On the other hand, $w=2ma+1$ for some $a\geq 1$ by \cite[Proposition 5.2.15(ii)]{KL}. Hence $(2ma+1)s=2m+t$ for some $s\geq 1$ and hence $t=s=a=1$. So $w=n=2m+1$ and $q=p$, whereas $w \mid (2m+1)/n$, a contradiction. Thus $(\Phi _{2m}^{\ast }(p),(2m+t)/n))=1$ and the assertion follows, since $(\Phi _{2m}^{\ast }(p),p^{m}+2)=1$.
\end{proof}

\begin{proposition}
\label{dec}$n=4m$, $q=p$ and one of the following holds:

\begin{enumerate}
\item $H\leq GL_{2m}(p)\wr Z_{2}$ and $H$ preserves a sum decomposition $%
V=V_{1}\oplus V_{2}$ with $\dim V_{1}=\dim V_{2}=2m$.

\item $H\leq GL_{2}(p)\circ GL_{2m}(p)$, $m>1$, and $H$ preserves a tensor
decomposition $V=V_{1}\otimes V_{2}$ with $\dim V_{1}=2$ and $\dim V_{2}=2m$.
\end{enumerate}
\end{proposition}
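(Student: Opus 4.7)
The plan is to apply Aschbacher's theorem to $H \leq \Gamma L_n(q)$ (with $n$ minimal and $q = p^{(2m+t)/n}$) and eliminate every geometric class except the imprimitive class $\mathcal{C}_2$ and the tensor class $\mathcal{C}_4$. The leverage comes from three facts already in hand: Proposition \ref{primpart}, which forces $\Phi_{2m}^{\ast}(p) \cdot (p^m+2)/((p^m+2,(2m+t)/n))$ to divide $|H \cap GL_n(q)|$ (with $\Phi_{2m}^{\ast}(p) > 1$ by our standing assumption); Lemma \ref{Phistar}(2), which forces $(p^m+1)(p^m+2) \mid |H|$; and the standing assumption in this subsection that $G^{\Sigma}$ is affine $2$-transitive of degree $u^h = p^m+2$, so that $H$ has a quotient with elementary abelian socle of order $u^h$.

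Fix a primitive prime divisor $w$ of $p^{2m}-1$; then $w \geq 2m+1$ and $w \mid |H \cap GL_n(q)|$. Viewing $V$ as the natural $\mathbb{F}_q$-module of dimension $n$, an element of order $w$ in $H$ satisfies $w \mid q^j - 1$ for some minimal $1 \leq j \leq n$. Since the multiplicative order of $p$ modulo $w$ is exactly $2m$, one deduces $2m \mid j(2m+t)/n$, an integrality constraint (with the further inequality $j \leq n$) that drives the case analysis. The elimination then runs as follows: $\mathcal{C}_1$ is excluded by irreducibility of $H$; $\mathcal{C}_3$ (field extension) and $\mathcal{C}_5$ (subfield) by the minimality of $n$ and of $q$, respectively; $\mathcal{C}_7$ (tensor-induced) by an analogous minimality argument applied to the tensor factors; $\mathcal{C}_6$ (extraspecial normaliser) by an order estimate that is incompatible with the presence of an element of order $w \geq 2m+1$ together with the divisibility $(p^m+1)(p^m+2) \mid |H|$; and the almost quasisimple class $\mathcal{S}$ by combining a GPPS-type classification of irreducible linear groups containing a prime-order Zsigmondy element with the requirement of a $2$-transitive affine quotient of degree $u^h = p^m+2$.

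Only $\mathcal{C}_2$ and $\mathcal{C}_4$ survive. In $\mathcal{C}_2$, $H$ permutes a decomposition $V = V_1 \oplus \cdots \oplus V_s$ with $s \geq 2$; the Zsigmondy element, combined with the integrality constraint on $j$, forces $s = 2$ and $\dim_{\mathbb{F}_q} V_i = n/2$, and the minimality of $n$ then forces $q = p$, whence $n = 2m+t$. The imprimitive action on $\{V_1, V_2\}$ combined with the structure of the affine quotient $G^{\Sigma}$ then pins $t = 2m$, giving $n = 4m$ and $\dim V_1 = \dim V_2 = 2m$, which is case (1). In $\mathcal{C}_4$, the analogous analysis forces $V = V_1 \otimes V_2$ with $\dim V_1 = 2$, $\dim V_2 = 2m$, $q = p$, and $m > 1$ (since $m = 1$ collapses the tensor decomposition into a sum decomposition already covered), which is case (2). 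The main obstacle is the handling of the $\mathcal{S}$-class: navigating the GPPS list against the precise $2$-transitive affine quotient of degree $p^m+2 = u^h$ requires a careful candidate-by-candidate check that no almost quasisimple configuration survives.
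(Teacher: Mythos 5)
Your overall strategy (Aschbacher's theorem plus a Zsigmondy element of order $w\mid p^{2m}-1$, fed by Proposition \ref{primpart} and Lemma \ref{Phistar}(2)) is the same as the paper's, but the proposal has a genuine gap at its very first step: you never treat the case where $H$ normalizes a classical group in its natural representation, i.e.\ the class $\mathcal{C}_{8}$ together with the possibility $SL_{n}(q)\trianglelefteq H$ (or $Sp_{n}(q)$, $SU_{n}(q^{1/2})$, $\Omega_{n}^{\varepsilon}(q)$). Such an $H$ is irreducible, contains Zsigmondy elements, and has order divisible by everything your divisibility conditions demand, so none of your stated leverage excludes it; it is also not in $\mathcal{C}_{2}$ or $\mathcal{C}_{4}$, so your conclusion would be false without eliminating it. The paper spends the first third of the proof on exactly this: passing to the pre-image $X$ of $Y$ in $G$, using $C_{G}(V)\leq G(\Sigma)$ (Theorem \ref{centraliz}) and the $2$-transitivity of $G^{\Sigma}$ to force either $u^{h}\mid\left\vert Z(Y)\right\vert\mid q-1$ (impossible as $u^{h}=q^{n/2}+2\geq 5$) or $\left\vert G^{\Sigma}\right\vert\mid (q-1)\left\vert \mathrm{Out}(\bar{Y})\right\vert$, and then killing the surviving arithmetic possibilities with primitive prime divisors of $p^{m}+1$. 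Your $\mathcal{S}$-class sketch would need the same pre-image machinery (it is not a pure GPPS check: the paper's dichotomy there — either $u^{h}$ embeds in a quotient of the Schur multiplier of $S$, whence $h=1$ and Lemma \ref{div} bites, or $M\trianglelefteq G(\Sigma)$ and the $\mathrm{Out}(S)$ bound applies — is where the real work happens).

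Two further steps, as stated, would fail. First, minimality of $n$ rules out only $\mathcal{C}_{3}$: a subfield subgroup sits inside $\Gamma L_{n'}(q_{0})$ with $n'\geq n$, so it does not contradict the minimal choice of $n$, and tensor-induced subgroups are likewise untouched by minimality; the paper excludes $\mathcal{C}_{5}$ and $\mathcal{C}_{7}$ (and the $n<4m$ cases of $\mathcal{C}_{2}$, $\mathcal{C}_{4}$, $\mathcal{C}_{6}$) by invoking \cite[Theorem 3.1]{BP} with $\Phi_{2m}^{\ast}(p)\mid\left\vert H^{\ast}\right\vert$ from Proposition \ref{primpart}, plus case-specific arithmetic — this classification input is where $n=4m$ and $q=p$ actually come from, not from the structure of $G^{\Sigma}$ as you suggest. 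Second, in $\mathcal{C}_{2}$ your claim that the Zsigmondy element forces $s=2$ with blocks of dimension $n/2$ overlooks the branch in which an element of order $w$ permutes $n_{0}=4m$ one-dimensional blocks in a $w$-cycle (possible since $w=2m+1\leq 4m$ can occur); the paper eliminates $(m_{0},n_{0})=(1,4m)$ separately, via $\Phi_{2m}^{\ast}(p)=(2m+1)^{s}$ with $s=1$, then \cite[Lemma 6.1]{BP} giving $p^{m}=3^{2},3^{3},5^{3}$, and finally the failure of $p^{m}+2=11,29,127$ to divide $\left\vert GL_{1}(p)\wr S_{4m}\right\vert$. A minor point: your reason for $m>1$ in case (2) is off — it is not that $m=1$ "collapses into a sum decomposition", but that $\mathcal{C}_{4}$ requires tensor factors of distinct dimensions (equal factors belong to $\mathcal{C}_{7}$), so $\dim V_{1}=2<\dim V_{2}=2m$ forces $m>1$.
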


\begin{proof}
Assume that $Y\trianglelefteq H$, where $Y$ is isomorphic to one of the
groups $SL_{n}(q)$, $Sp_{n}(q)$ for $n$ even, $SU_{n}(q^{1/2})$ for $q$ square, or $%
\Omega _{n}^{\varepsilon }(q)$ with $\varepsilon =\pm $ for $n$ even, and $%
\varepsilon =\circ $ for $nq$ odd. Then $H \leq (Z \circ Y).Out(\bar{Y})$, where $Z=Z(GL_{n}(q))$ and $\bar{Y}=Y/(Y \cap Z)$, and hence $\left\vert H/Y \right \vert \mid (q-1)\left\vert Out(\bar{Y}) \right \vert$. \\ 
Let $X$ be the pre-image of $Y$ in $G$. Then $X \trianglelefteq G $ and hence $X^{\Sigma} \trianglelefteq G^{\Sigma}$. Either $X^{\Sigma}=1$ or $U^{\Sigma} \trianglelefteq X^{\Sigma}$, since $G^{\Sigma}$ acts $2$-transitively on $\Sigma$ and since $U^{\Sigma}=Soc(G^{\Sigma})$. The latter implies that $U^{\Sigma}$ is isomorphic to a normal subgroup of $X/(X \cap G(\Sigma))$, with $C \leq X \cap G(\Sigma)$ by Theorem \ref{centraliz}. So $X/(X \cap G(\Sigma))$ is a quotient group of the classical group $Y$ and contains a normal elementary abelian group of order $u^{h}$. Then $C \leq X \cap G(\Sigma) \leq W$, where $W$ is the pre-image of $Z(Y)$ in $G$, and hence the normal elementary abelian subgroup of order $u^{h}$ of $X/(X \cap G(\Sigma))$ is contained in $W/(X \cap G(\Sigma))$. Thus $u^{h}$ divides $\left\vert Z(Y) \right\vert$ and hence $q-1$, which is impossible since $u^{h}=q^{n/2}+2 \geq 5$ by Lemma \ref{c=d}. Therefore $X^{\Sigma}=1$ and hence $X \leq G(\Sigma)$. Thus $Y \leq G(\Sigma)/C$ by Theorem \ref{centraliz}. On the other hand, we have $G^{\Sigma} \cong H/(G(\Sigma)/C)$, where $H/(G(\Sigma)/C)$ is a quotient group of $H/Y$. Thus $\left\vert
G^{\Sigma }\right\vert$ divides the order of $H/Y$ and hence $\left\vert
G^{\Sigma }\right\vert \mid (q-1)\left\vert Out(\bar{Y}) \right \vert$.\\
Note that $\left\vert
\bar{Y}\right\vert \mid 4\cdot n \cdot \mu (2m+t)/n$, where $\mu =2$ or $3$ according to
whether $X$ is isomorphic or not to $\Omega _{8}^{+}(q)$
respectively. Then 
\begin{equation}
\Phi _{2m}^{\ast }(p)(p^{m}+2)\mid \mu \left( p^{\frac{2m+t}{n}}-1\right)
(2m+t)  \label{C8}
\end{equation}%
since $\Phi _{2m}^{\ast }(p)(p^{m}+2)$ divides the order $G^{\Sigma}$ by Proposition \ref{boom} and since $\Phi _{2m}^{\ast }(p)$ and $p$ are odd. Let $w$ be a primitive prime divisor of $p^{m}+1$.
If $w\mid p^{\frac{2m+t}{n}}-1$, then $2m\mid \frac{2m+t}{n}$ by \cite[Proposition 5.2.15(i)]{KL}, and hence $n=2$ and $t=2m$ as $t\leq 2m$ and as $n>1$ by Lemma \ref{n>1}.
Thus $\mu =2$ and $p^{m}+2\mid m\left( p^{m}-1\right) $ and hence $p^{m}+2\mid 3m$, since $p$ is odd, which is impossible by $p^{m} \geq 3$. Thus $w\mid \mu \left( 2m+t\right) $.

If $\mu =2$, then $w\mid 2m+t$. If $\mu =3$, then $n=8$ and hence $m\geq 2$
since $n\mid 2m+t$ with $t\leq 2m$. Moreover, $w\equiv 1\pmod{2m}$ by \cite%
[Proposition 5.2.15(ii)]{KL} and hence $w\neq 3$. Thus $w\mid 2m+t$ also in this
case. Again from \cite[Proposition 5.2.15(ii)]{KL} it results that $w=2m+1$ and $t=1$. Moreover $n=2m+1$, and hence $q=p$, since $n\mid 2m+1$, with $2m+1$ prime, and since $n>1$ by Lemma \ref{n>1}. Then $\Phi _{2m}^{\ast }(p)(p^{m}+2)\mid (p-1)\mu n$ by (\ref{C8}), with $n$ dividing $\Phi _{2m}^{\ast }(p)$, hence $p^{m}+2 \mid 3 \mu$, which is impossible since $p$ is odd and since $p^{m} \neq 7$ by our assumption. Thus $Y\ntrianglelefteq H$ and hence $H$ lies either in a member of $\mathcal{C}%
_{i}(\Gamma)$ for some $i$ such that $1\leq i\leq 7$, or is a member of $\mathcal{S}(\Gamma)$, where $\Gamma$ denotes $\Gamma L_{n}(q)$,  by the Aschbacher's
Theorem (see \cite{KL}).

Assume that $H$ lies in $\mathcal{S}(\Gamma)$. Then $S\trianglelefteq
H/(H\cap Z)\leq Aut(S)$, where $S$ is a \ non-abelian simple group and $Z$ is
the centre of $GL_{n}(q)$. Then the pre-image $N$ of $S$ in $H$ is
absolutely irreducible and $N$ is not a classical group over a subfield of $%
GF(q)$ in its natural representation. Let $M$ and $Q$ be the pre-images in $G$ of $N$ and of $Z(N)$ respectively. Since $M,U\trianglelefteq G$, we may use the above argument with $M$, $Q$, $N$ and $Z(N)$ in the role of $X$, $W$, $Y$ and $Z$, respectively, to obtain that either a quotient of $Z(N)$ contains a normal elementary subgroup of order $u^{h}$, or $M \trianglelefteq G(\Sigma)$ and $\left\vert
G^{\Sigma }\right\vert \mid (q-1)\left\vert Out(S) \right \vert$. The former implies that there is a subgroup of the Schur multiplier of $S$ containing a normal elementary subgroup of order $u^{h}$. Then $h=1$ by \cite[Theorem 5.1.4]{KL}. Let $\psi $ be an element of $Z(N)$ of order $u$. Then $\psi $ does not fix non-zero vectors of $V$ and so $u\mid
p^{2m+t}-1$, with $0<2m+t \leq 4m$. Thus $p^{m}=9$ by Lemma \ref{div}, since $p^{m}>3$ by our assumption, and hence $u=11$ divides $\left\vert Z(N) \right \vert$ and $n \leq 4m= 8$. Also, $H$ is an irreducible subgroup of $\Gamma L_{n}(3^{(4+t)/n})$ of order divisible by $55$ by Lemma \ref{Phistar}(2). Then either $t=1$, $n=5$, $q=3$ and $H=N$, where $N$ is isomorphic to $PSL_{2}(11)$ or $M_{11}$, or $t=2$, $n=6$, $q=3$ and $H=N \cong Z_{2}.M_{12}$  by \cite[Tables 8.2, 8.4, 8.9, 8.19. 8.25, 8.36 and 8.43]{BHRD}. However, $11$ does not divide the order $Z(N)$ in any of these groups and hence they are ruled out. Thus $M \trianglelefteq G(\Sigma)$. Moreover, $1 \neq M^{\Delta_{i}}\trianglelefteq G_{\Delta_{i}}^{\Delta_{i}}$ for each $\Delta_{i} \in \Sigma$, since $C \leq M$, and $M \cap G(\Delta_{i}) \leq Q$ for each $\Delta_{i} \in \Sigma$, since $M/Q \cong N/Z(N) \cong S$, with $S$ non-abelian simple. Hence $T \triangleleft M^{\Delta_{i}}$ and a quotient group of $M^{\Delta_{i}}$ is non-abelian simple. 

Since $M^{\Delta_{i}}$ is non-solvable and since $p^{m}+2=u^{h}$ with $u$ prime and $p^{m}>3$, by Theorem \ref{Monty}, $M^{\Delta_{i}}=T:(M^{\Delta_{i}})_{0}$ and one of the following holds:
\begin{enumerate}
\item $SL_{2}(p^{m}) \trianglelefteq (M^{\Delta_{i}})_{0}\leq \Gamma L_{2}(p^{m})$;
\item $SU_{3}(p^{m/3})\trianglelefteq (M^{\Delta_{i}})_{0}\leq (Z_{p^{m/3}-1}\times SU_{3}(p^{m/3})).Z_{2m/3}$, $m \equiv 0 \pmod{3}$;
\item $Sp_{4}(p^{m/2})%
\trianglelefteq (M^{\Delta_{i}})_{0}\leq \Gamma Sp_{4}(p^{m/2})$, $m$ even;
\item $(M^{\Delta_{i}})_{0}\leq (Q_{8} \circ D_{8}).S_{5}$ and $p^{m}=9$.
\end{enumerate}
Note that, in the previous list some automorphism groups of non-isomorphic $2$-designs listed in Theorem \ref{Monty} are brought together. Indeed, the group in (2.c.i) is a subgroup of $\Gamma L_{2}(p^{m})$ as well as that is (2.c.vi) of Theorem \ref{Monty} is a subgroup of the full translation complement of Hall plane of order, which is $(Q_{8} \circ D_{8}).S_{5}$ by \cite[Theorem II.8.3]{Lu}. Either $S \cong PSL_{2}(p^{m})$ or $S \cong PSU_{3}(p^{m/3})$ or $S \cong PSp_{4}(p^{m/2})$, or $S \cong PSL_{2}(5)$ for $p^{m}=9$, since $M^{\Delta_{i}} \cong M/(M \cap G(\Delta_{i}))$, $M \cap G(\Delta_{i}) \leq Q$ and $M/Q \cong N/Z(N) \cong S$, with $S$ non-abelian simple. However, both cases are ruled out since they violates $\left\vert G^{\Sigma }\right\vert \mid (q-1)\left\vert Out(S) \right \vert$, being $\left \vert G^{\Sigma }\right\vert$ divisible by $p^{m}+2$ with $p^{m}>3$. Thus $H$ lies in a member of $\mathcal{C}%
_{i}(\Gamma)$ for some $i$ such that $1\leq i\leq 7$.

The group $H$ does not lie in a member of $\mathcal{C}_{1}(\Gamma)$, since $H$ is
irreducible subgroup of $\Gamma$ by Proposition \ref{irre} and subsequent remark, and does not lie in a member of $\mathcal{C}_{3}(\Gamma)$ by the definition of $%
q $. Also, $H$ does not lies in a member of $\mathcal{C}_{5}(\Gamma)$. Indeed, if not so, then $n<4m$ since $q=p$ for $n=4m$. Then $H^{\ast}$ lies in a member of $\mathcal{C}_{5}(GL_{n}(q))$, but this is impossible by \cite[Theorem 3.1]{BP}, since $\Phi _{2m}^{\ast }(p)\mid \left\vert H^{\ast}\right\vert $ by Proposition \ref{primpart}. 
Assume that $H$ lies in a member of $\mathcal{C}_{2}(\Gamma)$. Then $H$ stabilizes a
decomposition of $V=V_{1}\oplus \cdots \oplus V_{n_{0}}$, $n_{0}>1$, where $\dim
V_{1}=\cdots =\dim V_{n_{0}}=m_{0}\geq 1$. Thus $n=m_{0}n_{0}$ and hence $%
H^{\ast}\leq GL_{m_{0}}(q)\wr S_{n_{0}}$.

If $n<4m$, then $m_{0}=1$, $n_{0}=n$, $q=p$, and either $p^{m}=3^{2}$ and $n \leq 7$, or $p^{m}=3^{3},5^{3}$,
and $n\leq 11$ by \cite[Theorem 3.1]{BP}, since $\Phi _{2m}^{\ast }(p)\mid \left\vert H^{\ast}\right\vert $ and since $p$ is odd. Then $u=11,29$
or $127$ respectively. Since $u$ does not divide the order of the corresponding $%
GL_{1}(q)\wr S_{n}$, then $u$ does not divide the order of $H^{\ast}$, whereas $u$ must divide it by Proposition \ref{primpart}. Indeed, in each of these cases $(u,(2m+t)/n)=1$ since $u$ a prime such that $u>4m \leq(2m+t)/n$. Hence, these cases are ruled out.

If $n=4m$, then $q=p$ and hence $H=H^{\ast}\leq GL_{m_{0}}(p)\wr S_{n_{0}}$. Let $w$
be a prime divisor of $p^{2m}-1$. Then $w$ divides either the order of $%
GL_{m_{0}}(p)$ or that of $S_{n_{0}}$. The former yields $%
2m\leq m_{0}$ by \cite[Proposition 5.2.15(i)]{KL}. Therefore $\left(
2m\right) n_{0}\leq m_{0}n_{0}=n= 4m$ and hence $n_{0}=2$ and $m_{0}=2m$, since $n_{0}>1$.

The case where $w$ divides the order of $S_{n_{0}}$ yields $w=2m+1$, since $%
w\equiv 1\pmod{2m}$ by \cite[Proposition 5.2.15(ii)]{KL}, and $n_{0}=4m$ and $m_{0}=1$, since $n_{0} \mid 4m$ and $n_{0}\geq w=2m+1>2$. Note that $n_{0}=2$ and $m_{0}=2m$ is clearly not compatible with $n_{0}=4m$ and $m_{0}=1$. Hence, in the latter case $\Phi_{2m}^{\ast}(p)=(2m+1)^{s}$ for some $s \geq 1$ and it is a divisor of $n_{0}!$. Then $s<\frac{n_{0}-1}{2m}< 4m/2m=2$ by \cite[Exercise 2.6.8]{DM} and hence $\Phi_{2m}^{\ast}(p)=2m+1$. Thus $p^{m}=3^{2},3^{3},5^{3}$ by \cite[Lemma 6.1.(i)]{BP}, and hence $p^{m}+2=11,29,127$ respectively. Then $p^{m}+2$ does not divide the order of the corresponding $H$, since $H \leq GL_{1}(p)\wr S_{4m}$, and hence these cases are ruled out by Lemma \ref{Phistar}(2). Therefore $n_{0}=2$, $m_{0}=2m$ and hence $H\leq GL_{2m}(p)\wr S_{2}$ preserves a decomposition $V=V_{1}\oplus V_{2}$, which is (1).

Assume that $H$ lies in a member of $\mathcal{C}_{4}(\Gamma)$. Then $H$ preserves a
tensor decomposition $V=V_{1}\otimes V_{2}$, with $\dim V_{i}=n_{i}$ and $%
1\leq n_{1}<n_{2}$. Therefore, $H^{\ast}\leq GL_{n_{1}}(q)\circ GL_{n_{2}}(q)$. No
cases arise for $n<4m$ by \cite[Theorem 3.1]{BP}, since $\Phi _{2m}^{\ast }(p)\mid \left\vert H^{\ast}\right\vert $. Thus $n=4m$ and $%
q=p$, hence $H=H^{\ast}\leq GL_{2}(p)\circ GL_{2m}(p)$ and we obtain (2).

Assume that $H$ lies in a member of $\mathcal{C}_{6}(\Gamma)$. Then $H$ lies in the
normalizer in $\Gamma L_{n}(q)$ of an absolutely irreducible symplectic type 
$s$-group $R$, with $s\neq p$. Hence $n=s^{y}$ for some $y \geq 1$ by \cite[Definition (c)
at p. 150.]{KL}

If $n<4m$, then $(q,n)=(3,4)$ or $(3,8)$ by \cite[Theorem 3.1]{BP}, since $\Phi _{2m}^{\ast }(p)\mid \left\vert H^{\ast}\right\vert $ and $%
u^{h}=q^{n/2}+2$ with $q$ odd. Thus $h=1$. Moreover, $n=2m+t$ since $q=p$,
and therefore $(m,t)=(2,2)$ for $(q,n)=(3,4)$, and $(m,t)=(3,2),(4,0)$ for $%
(q,n)=(3,8)$, since $n<4m$. A similar argument to that of the $\mathcal{S}$-case yields $%
u\mid 3^{2m+t-f}-1$, with $0<2m+t-f\leq 4m$, where $3^{f}$ is the number of fixed points of an element of order $u$ of $H$. Either $m=1$ and$\ 2+t-f=4$, or $m=2$ and $4+t-f=5$ by Lemma \ref{div}, whereas $m=2$ or $3,4$
respectively. Thus $H$ does not lie in a member of $\mathcal{C}_{6}(\Gamma)$ for $n<4m$.\\
If $n=4m$, then $q=p$ and hence $s=2$ and $y \geq 2$ as $n=s^{y}$. Therefore, $m=2^{y-2}$.
Let $w$ be a primitive prime divisor of $p^{2m}-1$. Then $w=2^{y-1}j+1=2mj+1$ for
some $j\geq 1$ by \cite[Proposition 5.2.15(ii)]{KL}. On the other hand, $w$ divides he order of $H$, where $H/(H \cap Z)$ is a subgroup of one of the groups given in \cite[Table 4.6.A]{KL} for $s=2$, and $Z$ is the center of $GL_{4m}(p)$. It follows that $w$ divides the order of $Sp_{2y}(2)$ and hence it divides either $2^{i}-1$ or $2^{i}+1$ for some $1 \leq i\leq y$. Then either $w=2^{y-1}+1=2m+1$ or $%
w=2^{y}+1=2(2m)+1$, respectively, and hence $\Phi _{2m}^{\ast }(p)$ is either $2m+1$
or $2(2m)+1$ or $\left( 2m+1\right) \left( 2(2m)+1\right) $. Then $%
p^{m}=3^{2},3^{3},5^{3},3^{9}$ or $17^{3}$ by \cite[Lemma 6.1]{BP}, since $%
q=p$ and $p$ is odd. Actually $p^{m}\neq 3^{9},17^{3}$, since they do not
fulfill $u^{h}=p^{m}+2$, whereas $h=1$ in the remaining cases. Then $u\mid
p^{4m-f}-1$, and arguing as above we obtain $p^{m}=3^{2}$, $u=11$ and $f=3$
and $n=8$. Since $q=p=3$, it follows that $Z(R)\cong Z_{2}$, $R\cong D_{8}\circ Q_{8}$ and hence $n=2$
by \cite[Definition (c) and Table 4.6.B at p. 150]{KL}. However, it contradicts $n=8$ and hence it is ruled out.

Assume that $H$ lies in a member of $\mathcal{C}_{7}(\Gamma)$. Then $H$ stabilizes a
decomposition of $V=V_{1}\otimes \cdots \otimes V_{n_{0}}$, where $\dim
V_{1}=\cdots =\dim V_{n_{0}}=m_{0}$. Hence $n=m_{0}^{n_{0}}$, with $%
m_{0}\geq 1$ and $n_{0}\geq 2$, and $H^{\ast}\leq (GL_{m_{0}}(p)\circ \cdots \circ
GL_{m_{0}}(p)).S_{n_{0}}$. If $m_{0}<3$, then $H$ lies in a member of $%
\mathcal{C}_{8}(\Gamma)$ (see remark before Proposition 4.7.3 in \cite{KL}),
which is not the case. Hence $m_{0}\geq 3$. No cases arise for $n<4m$ by 
\cite[Theorem 3.1]{BP}, since $\Phi _{2m}^{\ast }(p)\mid \left\vert H^{\ast}\right\vert $ by Proposition \ref{primpart}. Thus $n=4m$ and $q=p$ and hence $H=H^{\ast}$. A prime divisor $w$ of $%
p^{2m}-1$ divides either the order of $GL_{m_{0}}(p)$ or that of $S_{n_{0}}$%
. Since $w\equiv 1\pmod{2m}$ by \cite[Proposition 5.2.15(ii)]{KL}, the
latter implies $n_{0}\geq 2m+1$ and hence $3^{2m+1}\leq m_{0}^{2m+1}\leq 4m$%
, a contradiction. Thus $w$ divides the order of $GL_{m_{0}}(p)$. Then $%
2m\leq n_{0}$ by \cite[Proposition 5.2.15(i)]{KL}, and hence $%
(2m)^{n_{0}}\leq m_{0}^{n_{0}}=n=4m$. So $n_{0}=m_{0}=2$, and we reach a contradiction as $m_{0} \geq 3$. This completes the proof.
\end{proof}

\bigskip

\begin{lemma}
\label{dec1}The following hold in case (1) of Proposition \ref{dec}:

\begin{enumerate}
\item $H_{V_{j}}^{V_{j}}$ acts irreducibly on $V_{j}$ for $j=1,2$; 
\item $\Phi _{2m}^{\ast }(p)\frac{p^{m}+2}{(p^{m}+2,3)}\mid \left\vert
H_{V_{j}}^{V_{j}}\right\vert $ for $j=1,2$;
\item $H_{V_{j}}^{V_{j}}$ contains a normal subgroup $Q_{j}$ of order divisible by $\frac{p^{m}+2}{(p^{m}+2,3)}$ and such that $\Phi _{2m}^{\ast}(p) \mid \left[ H_{V_{j}}^{V_{j}}:Q_{j}\right]$ for $j=1,2$.
\end{enumerate}
\end{lemma}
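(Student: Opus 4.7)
The proof proceeds in three stages, each relying on the sum decomposition $V=V_{1}\oplus V_{2}$ preserved by $H\leq GL_{2m}(p)\wr Z_{2}$, the irreducibility of $H$ on $V$ from Proposition \ref{irre}, and the divisibility $\Phi_{2m}^{\ast}(p)(p^{m}+2)\mid |H|$ that follows from Proposition \ref{primpart} after specializing to $n=4m$, $q=p$, $t=2m$ (so $(2m+t)/n=1$ and the gcd-factor in the proposition is trivial).

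For (1), I would use the fact that $H$ permutes $\{V_{1},V_{2}\}$: fixing both contradicts irreducibility on $V$, so $H$ swaps them via some $\sigma\in H$, giving $[H:H_{V_{j}}]=2$. If $W\subsetneq V_{j}$ were a proper nonzero $H_{V_{j}}^{V_{j}}$-invariant subspace, then $W\oplus W^{\sigma}$ would be a proper nonzero $H$-invariant subspace of $V$, contradicting Proposition \ref{irre}.

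For (2), since $p$ is odd both $\Phi_{2m}^{\ast}(p)$ and $p^{m}+2$ are odd, so the divisor $\Phi_{2m}^{\ast}(p)(p^{m}+2)$ of $|H|$ descends to $|H_{V_{j}}|=|H|/2$. The main task is then to bound the kernel $K_{j}$ of the restriction $H_{V_{j}}\to H_{V_{j}}^{V_{j}}$: $K_{j}$ acts trivially on $V_{j}$ and embeds into $GL(V_{3-j})$, while $\sigma$-conjugation furnishes $K_{1}\cong K_{2}$. An element realizing the primitive divisor $\Phi_{2m}^{\ast}(p)$ has minimal polynomial of degree $2m$ on every summand it touches, and by $\sigma$-symmetry it touches both, hence cannot lie in $K_{j}$; similarly the socle-lift $\bar U=UC/C\trianglelefteq H$ (of order divisible by $p^{m}+2$) loses at most a factor $(p^{m}+2,3)$ to $K_{j}$. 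Combining these bounds yields the stated divisibility.

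For (3), I would set $Q_{j}$ to be the image of $\bar U\cap H_{V_{j}}$ in $H_{V_{j}}^{V_{j}}$. Normality $Q_{j}\trianglelefteq H_{V_{j}}^{V_{j}}$ is immediate from $\bar U\trianglelefteq H$ and $H_{V_{j}}\trianglelefteq H$. Running the kernel-bounding argument of (2) applied to $\bar U$ alone gives $(p^{m}+2)/(p^{m}+2,3)\mid |Q_{j}|$. Since $\Phi_{2m}^{\ast}(p)$ is coprime to $p^{m}+2$ by primitivity and disjoint from the kernel-contribution to $\bar U$, its full power inside $|H_{V_{j}}^{V_{j}}|$ (from (2)) must live in the quotient $H_{V_{j}}^{V_{j}}/Q_{j}$, yielding $\Phi_{2m}^{\ast}(p)\mid [H_{V_{j}}^{V_{j}}:Q_{j}]$. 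The main obstacle I anticipate is the exact tracking of the $(p^{m}+2,3)$-loss in (2)--(3): this requires a careful case analysis of how odd-order elements of $Soc(G^{\Sigma})$ can embed diagonally versus antidiagonally across $V_{1}\oplus V_{2}$, ruling out any loss beyond the possible interaction of a Sylow $3$-subgroup of the socle with the swapping involution $\sigma$. Once this bound is in place, the remaining steps are essentially bookkeeping.
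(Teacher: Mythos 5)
Your part (1) is correct, and it is a genuinely different and more elementary argument than the paper's: since $H$ preserves the decomposition, $H_{V_1}=H_{V_2}$ has index $2$ in $H$ and $\sigma H_{V_j}=H_{V_j}\sigma$, so a proper nonzero $H_{V_j}^{V_j}$-invariant subspace $W\leq V_j$ would give the proper nonzero $H$-invariant subspace $W\oplus W^{\sigma}$, against Proposition \ref{irre}; your specialization $t=2m$, $q=p$, making the gcd in Proposition \ref{primpart} trivial, is also right. But the paper proves (1) by a much heavier route for a reason: it shows that a Sylow $w$-subgroup of $H$ ($w$ a primitive prime divisor of $p^{2m}-1$) acts \emph{faithfully} on each $V_j$, and this faithfulness --- not mere irreducibility --- is what drives (2) and (3). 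Here your proposal has a genuine gap. The claim that an element realizing $\Phi_{2m}^{\ast}(p)$ ``touches both summands by $\sigma$-symmetry, hence cannot lie in $K_j$'' is not a valid deduction: inside $GL_{2m}(p)\wr Z_2$ nothing prevents a $w$-element from acting trivially on $V_1$ and irreducibly on $V_2$, and conjugating by $\sigma$ merely produces a companion element of $K_2$, no contradiction. The exclusion cannot be abstract; it needs the geometric data the paper feeds in, namely that $C$ is a $p$-group (Proposition \ref{qcDv}(3) and Theorem \ref{centraliz}) and that the $2m$-dimensional subspaces $V(\Delta_i)$ form an $H$-orbit of length $p^{m}+2$ with pairwise trivial intersections (Lemma \ref{if}(2)): the paper shows that $W_i(V_1)\neq 1$ forces $V(\Delta_i)\in\{V_1,V_2\}$, contradicting the orbit length $p^{m}+2>2$. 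The same applies to the $(p^{m}+2)/(p^{m}+2,3)$-part: the paper's key bound $\left[S(V_j):S(V_j)\cap G(\Sigma)\right]\leq (3,u)$ is obtained by producing nonzero vectors $v_i\in V(\Delta_i)$ fixed by a subgroup $Y\leq S(V_1)$ of index at most $(3,u)$ and invoking $V(\Delta_{i_0})\cap V(\Delta_{i_0}^{\eta})=0$. Your proposed ``diagonal versus antidiagonal'' case analysis, which you yourself defer, has no access to these subspaces and so cannot get off the ground --- and that estimate is precisely the mathematical content of the lemma, not bookkeeping.

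In (3) your $Q_j$ agrees with the paper's $(U/C)^{V_j}$ and normality is fine, but the divisibility argument has a second gap: you need $\gcd\left(\Phi_{2m}^{\ast}(p),\left\vert Q_j\right\vert\right)=1$, and coprimality of $\Phi_{2m}^{\ast}(p)$ with $p^{m}+2$ does not give this, because $\bar{U}=U/C$ contains $G(\Sigma)/C$, whose order may well be divisible by primitive prime divisors of $p^{2m}-1$ (elements of such order occur in groups like $\Gamma L_1(p^{2m})$ acting on $\Delta_i$, so nothing excludes them from $G(\Sigma)$). The paper avoids this entirely: by $2$-transitivity of $G^{\Sigma}$ (Proposition \ref{boom}) the index $[G^{\Sigma}:U^{\Sigma}]$ is divisible by $p^{m}+1$, hence by $\Phi_{2m}^{\ast}(p)$, so $\Phi_{2m}^{\ast}(p)\mid [H:U/C]$, and this is transferred to $\left[H_{V_j}^{V_j}:Q_j\right]$ using the faithfulness of the Sylow $w$-subgroups on $V_j$ established in the proof of (1). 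So while your (1) stands (and, taken in isolation, is nicer than the paper's), parts (2) and (3) as proposed would fail: to repair them you must bring in the $V(\Delta_i)$-machinery rather than argue within the wreath product.
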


\begin{proof}
The length of $V(\Delta_{i})^{G}$ is $p^{m}+2$ since $G$ acts transitively on $\Sigma$ and $V \trianglelefteq G$. Then the length of $V(\Delta_{i})^{H}$ is $p^{m}+2$, since $C=C_{G}(V)$. Let $w$ be a primitive prime divisor of $p^{2m}-1$, and since $G_{\Delta_{i}}^{\Delta_{i}}$ acts flag-transitively on $\mathcal{D}_{i}$, let $W_{i}$ be a Sylow $w$-subgroup of $G$ preserving $\Delta _{i}$. Then $W_{i}$ normalizes $V(\Delta _{i})$. Moreover, $W_{i}$ acts faithfully on $V$ inducing a Sylow $w $-subgroup of $H$, since $C$ is a $p$-group by Proposition \ref{qcDv}(3) and Theorem \ref{centraliz}.\\ Assume that $%
W_{i}(V_{1})\neq 1$. Then $W_{i}(V_{1})$ acts irreducibly on $V_{2}$. If it is not so, there is $\zeta \in W_{i}(V_{1})$, $\zeta \neq 1$, fixing $V_{2}$ pointwise, since $\dim V_{2}=2m$ and $\zeta$ is a $w$-element with $w$ a primitive prime divisor of $p^{2m}-1$. Then $\zeta
\in C$ and hence $\zeta$ is a $p$-element, since $C$ is a $p$-group, whereas $\zeta$ is a non-trivial $w$-element with $w\neq p$. Clearly $W_{i}(V_{1})$ preserves $\left\langle
x_{1}\right\rangle \oplus V_{2}$ for each $x_{1}\in V_{1}$, $x_{1}\neq 0$. Since $\dim \left\langle x_{1}\right\rangle \oplus
V_{2}=2m+1$ and $\dim V(\Delta_{i})=2m$, it follows that $V(\Delta _{i})\cap \left( \left\langle
x_{1}\right\rangle \oplus V_{2}\right) \neq 0$. Let $\lambda \in GF(p)$ and $x_{2} \in V_{2}$ such that $\lambda x_{1}+ x_{2} \in V(\Delta _{i})\cap \left( \left\langle x_{1}\right\rangle \oplus V_{2}\right)$. If $x_{2} \neq 0$, there is a non-trivial $\alpha \in W_{i}(V_{1})$ such that $x_{2}^{\alpha}\neq x_{2}$. Then $\lambda x_{1}+ x_{2}^{\alpha} \in V(\Delta _{i})\cap \left( \left\langle x_{1}\right\rangle \oplus V_{2}\right)$, since $W_{i}(V_{1})$, and hence $\alpha$, preserves $V(\Delta _{i})\cap \left( \left\langle x_{1}\right\rangle \oplus V_{2}\right)$. Hence $x_{2}^{\alpha}- x_{2}$ is a non-zero element of $V(\Delta _{i})\cap V_{2}$, since 
$x_{2}^{\alpha}- x_{2}=(\lambda x_{1}+ x_{2}^{\alpha})-(\lambda x_{1}+ x_{2})$ and $x_{2}^{\alpha} \neq x_{2}$. Thus $V(\Delta _{i})=V_{2}$ since $W_{i}(V_{1})$ acts irreducibly on $V_{2}$ and preserves $V(\Delta _{i})$. So $p^{m}+2=2$, since the length of $V(\Delta_{i})^{H}$ is $p^{m}+2$ and since $H$ switches $V_{1}$ and $V_{2}$, and we reach a contradiction. Thus $x_{2} = 0$ and hence $\lambda x_{1} \in V(\Delta_{i}) \cap V_{1}$. Then $V(\Delta_{i})=V_{1}$ and hence $\left \vert V(\Delta_{i})^{H} \right \vert = p^{m}+2=2$, which is a contradiction. Thus $W_{i}$ acts
faithfully and irreducibly on $V_{1}$. Similarly, we prove that $W_{i}$ acts
faithfully and irreducibly on $V_{2}$. Thus $H_{V_{j}}^{V_{j}}$ acts irreducibly on $V_{j}$, which is (1), and $\Phi _{2m}^{\ast }(p)\mid
\left\vert H_{V_{j}}^{V_{j}}\right\vert$.   

Recall that $U$ is the pre-image of $Soc(G^{\Sigma})$. Let $S$ be a Sylow $u$-subgroup of $U$, where $u^{h}=p^{m}+2$. Then $S$ acts faithfully on $V$ inducing a Sylow $u $-subgroup of $H$, since $C$ is a $p$-group. Assume that $S(V_{1})\neq 1$.
Since $(p^{m}+2,p^{2m}-1)\mid 3$, there is a subgroup of $Y$ of $S(V_{1})$
such that $\left[ S(V_{1}):Y\right] \leq (3,u)$ fixing an non-zero element $%
z_{2}$ of $V_{2}$. Then $Y$ fixes $V_{1}\oplus \left\langle
z_{2}\right\rangle $ pointwise. Since $V(\Delta _{i})\cap \left( V_{1}\oplus
\left\langle z_{2}\right\rangle \right) \neq 0$ for each $i=1,...,p^{m}$, there is an element of $%
v_{i}\in V(\Delta _{i})$, $v_{i}\neq 0$, fixed by $Y$ for each $i=1,...,p^{m}$.

Assume that $Y\nleq G(\Sigma )$. Then there are $\eta \in Y$ and $i_{0}\in
\left\{ 1,...,p^{m}+2\right\} $ such that $\left[ v_{i_{0}},\eta \right] =1$
and $\Delta _{i_{0}}^{\eta }\neq \Delta _{i_{0}}$. Then $v_{i_{0}}\in
V\left( \Delta _{i_{0}}\right) \cap V\left( \Delta _{i_{0}}^{\eta }\right) $%
, with $v_{i}\neq 0$, whereas $V\left( \Delta _{i_{0}}\right) \cap V\left(
\Delta _{i_{0}}^{\eta }\right) =0$ by Lemma \ref{if}(2), since $\Delta _{i_{0}}^{\eta
}\neq \Delta _{i_{0}}$. Thus $Y\leq G(\Sigma )$ and hence $\left[
S(V_{1}):S(V_{1})\cap G(\Sigma )\right] \leq (3,u)$. Similarly, we have $%
\left[ S(V_{2}):S(V_{2})\cap G(\Sigma )\right] \leq (3,u)$. Thus 
\[
\left\vert S^{V_{j}}\right\vert =\frac{\left\vert S\right\vert }{\left\vert
S(V_{j})\right\vert }\geq \frac{\left\vert S\right\vert }{(3,u)\left\vert
S(V_{2})\cap G(\Sigma )\right\vert }\geq \frac{\left\vert S\right\vert }{%
(3,u)\left\vert S\cap G(\Sigma )\right\vert }= \frac{\left\vert U^{\Sigma
}\right\vert }{(3,u)}=\frac{p^{m}+2}{(p^{m}+2,3)}\text{,} 
\]%
since $U=G(\Sigma )S$. Thus $\frac{p^{m}+2}{(p^{m}+2,3)}\mid \left\vert
S^{V_{j}}\right\vert $, since $u^{h}=p^{m}+2$ and $S^{V_{j}}$ is a $u$-group, and hence $\Phi _{2m}^{\ast }(p)\frac{p^{m}+2%
}{(p^{m}+2,3)}\mid \left\vert H_{V_{j}}^{V_{j}}\right\vert $, since we have already proven that $\Phi _{2m}^{\ast }(p)\mid
\left\vert H_{V_{j}}^{V_{j}}\right\vert $. Therefore, we get (2). Moreover $S^{V_{j}}\leq Q_{j}\trianglelefteq H^{V_{j}}$, where $Q_{j}=(U/C)^{V_{j}}$, since $C\trianglelefteq G(\Sigma)\trianglelefteq U$ by Theorem \ref{centraliz}. Hence the order of $Q_{j}$ is divisible by $\frac{p^{m}+2}{(p^{m}+2,3)}$. Also, since $G^{\Sigma}$ acts $2$-transitively on $\Sigma$, it follows that $\Phi _{2m}^{\ast }(p)$ divides $[G^{\Sigma}:U^{\Sigma}]$ and hence $[G:U]$. Then $\Phi _{2m}^{\ast }(p)$ divides $[H:U/C]$, since $C\trianglelefteq U$ and $C$ is a $p$-group, and hence $[H_{V_{j}}^{V_{j}}:Q_{j}]$ for each $j=1,2$, which is (3).
\end{proof}

\begin{lemma}
\label{dec2}The following hold in case (2) of Proposition \ref{dec}:

\begin{enumerate}

\item $H^{V_{2}}$ acts irreducibly on $V_{2}$;

\item $\Phi _{2m}^{\ast }(p)\frac{p^{m}+2}{(p^{m}+2,3)}\mid \left\vert
H^{V_{2}}\right\vert $;
\item $H^{V_{2}}$ contains a normal subgroup $Q_{2}$ of order divisible by $\frac{p^{m}+2}{(p^{m}+2,3)}$ and such that $\Phi _{2m}^{\ast}(p) \mid \left[ H^{V_{2}}:Q_{2}\right]$.
\end{enumerate}
\end{lemma}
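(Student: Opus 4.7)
The strategy is to mimic the argument of Lemma~\ref{dec1}, adapted to the central-product structure arising from the tensor decomposition $V=V_{1}\otimes V_{2}$ with $\dim V_{1}=2$ and $\dim V_{2}=2m$. Because $V_{1}$ is too small to support a primitive prime divisor of $p^{2m}-1$, one obtains information only about the $V_{2}$-component, which is why the present statement contains a single item per entry rather than two.

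For (1), I would let $w$ be a primitive prime divisor of $p^{2m}-1$ (which exists by Lemma~\ref{Phistar}(3)), and, using the flag-transitivity of $G_{\Delta_{i}}^{\Delta_{i}}$ on $\mathcal{D}_{i}$, take a Sylow $w$-subgroup $W_{i}$ of $G_{\Delta_{i}}$; as in Lemma~\ref{dec1}, $W_{i}$ acts faithfully on $V$. The crucial numerical input is that $w\geq 2m+1$ and $w\nmid p^{j}-1$ for $j<2m$, so $w\nmid |GL_{2}(p)|=p(p-1)^{2}(p+1)$. Consequently the natural projection $H\to H^{V_{2}}$ (whose kernel is the image of $GL(V_{1})$ inside $GL_{2}(p)\circ GL_{2m}(p)$) is injective on $W_{i}$, and each nontrivial element acts on $V_{2}$ as an order-$w$ operator. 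Such an operator must act irreducibly on $V_{2}$: otherwise a proper invariant subspace would yield $w\mid p^{d}-1$ for some $d<2m$, contradicting primitivity. This gives (1), and incidentally also shows $\Phi_{2m}^{\ast}(p)\mid |H^{V_{2}}|$.

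For (2), I would take a Sylow $u$-subgroup $S$ of $U$, of order $u^{h}=p^{m}+2$, which embeds in $H$ by the same reasoning used in Lemma~\ref{dec1}. The main novelty relative to the sum-decomposition case is the treatment of $S(V_{2}):=\ker(S\to H^{V_{2}})$: each such element lifts to a pair $(\mu a,I_{V_{2}})\in GL(V_{1})\times GL(V_{2})$ in the central product, and this correspondence yields an injective homomorphism $S(V_{2})\hookrightarrow GL_{2}(p)$. Hence $|S(V_{2})|$ divides $\gcd(p^{m}+2,|GL_{2}(p)|)$; a short reduction modulo $p\pm 1$ shows $\gcd(p^{m}+2,p^{2}-1)\mid 3$, and $\gcd(p^{m}+2,p)=1$ since $p$ is odd by Theorem~\ref{centraliz}. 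Thus $|S(V_{2})|\leq (p^{m}+2,3)$ and $|S^{V_{2}}|\geq (p^{m}+2)/(p^{m}+2,3)$. Combining with (1), and noting that the primitive prime divisors of $p^{2m}-1$ divide $p^{m}+1$ and are therefore coprime to $p^{m}+2$, gives (2). For (3), I set $Q_{2}=(U/C)^{V_{2}}$, which is normal in $H^{V_{2}}$ since $U,C\trianglelefteq G$. The inclusion $S^{V_{2}}\leq Q_{2}$ (a consequence of $S\leq U$ and $S\cap C=1$) gives $(p^{m}+2)/(p^{m}+2,3)\mid |Q_{2}|$; for the complementary divisibility I would invoke the $2$-transitivity of $G^{\Sigma}$ on $\Sigma$ with socle $U^{\Sigma}$ of order $p^{m}+2$, yielding $\Phi_{2m}^{\ast}(p)\mid [G^{\Sigma}:U^{\Sigma}]=[H:U/C]$, and then argue that the prime divisors of $|Q_{2}|$ are disjoint from those of $\Phi_{2m}^{\ast}(p)$, so the divisibility passes to $[H^{V_{2}}:Q_{2}]$.

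I expect the main obstacle to be the bookkeeping inside the central product $GL_{2}(p)\circ GL_{2m}(p)$: verifying that the map $S(V_{2})\to GL_{2}(p)$ is genuinely injective rather than merely well-defined up to scalars, and correctly translating ``acts trivially on $V_{2}$'' into a statement about lifts $(a,b)$ modulo the identification $(a,b)\sim(\lambda a,\lambda^{-1}b)$. A secondary subtlety, arising in (3), is ensuring that the primes dividing $|Q_{2}|$ (a priori $\{p,u\}$ together with any primes of $[G(\Sigma):C]$) remain coprime to $\Phi_{2m}^{\ast}(p)$, so that $\Phi_{2m}^{\ast}(p)\mid |H^{V_{2}}|$ descends correctly to the quotient $H^{V_{2}}/Q_{2}$.
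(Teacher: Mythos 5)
Your proposal is, in substance, the paper's own proof: part (1) via a primitive prime divisor $w$ of $p^{2m}-1$ with $w\nmid\left\vert GL_{2}(p)\right\vert$ forcing the Sylow $w$-subgroup to embed in the $V_{2}$-side and act irreducibly there (the paper takes a Sylow $w$-subgroup $W$ of $H$ and notes $W^{V_{1}}=1$, $W\cap Z(M)=1$ since $Z(M)\leq Z_{p-1}$ and $m>1$); part (2) via a Sylow $u$-subgroup $S$ of $U$ embedding in $H$ because $C$ is a $p$-group, with the same gcd arithmetic; and part (3) via $Q_{2}=(U/C)^{V_{2}}$ together with $\Phi_{2m}^{\ast}(p)\mid[G^{\Sigma}:U^{\Sigma}]=[H:U/C]$ and the coprimality of the kernel to $\Phi_{2m}^{\ast}(p)$, exactly as in Lemma \ref{dec1}(3). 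The one step to tighten is your bound $\left\vert S(V_{2})\right\vert\leq(p^{m}+2,3)$: since $(p-1)^{2}$ divides $\left\vert GL_{2}(p)\right\vert$, the raw divisibility $\left\vert S(V_{2})\right\vert\mid\gcd\bigl(p^{m}+2,\left\vert GL_{2}(p)\right\vert\bigr)$ only yields $\left\vert S(V_{2})\right\vert\mid 9$ when $u=3$, so you should, as the paper does, split $S(V_{2})$ over $S(V_{1})\cap S(V_{2})\leq Z(M)\leq Z_{p-1}$, where $(p^{m}+2,p-1)\mid 3$, to reach the stated divisor $\frac{p^{m}+2}{(p^{m}+2,3)}$ of $\left\vert S^{V_{2}}\right\vert$.
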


\begin{proof}
Set $M=GL_{2}(p)\circ GL_{2m}(p)$. Let $w$ be primitive prime divisor of $%
p^{2m}-1$ and let $W$ be a Sylow $w$-subgroup of $H$. Then $W^{V_{1}}\leq
H^{V_{1}}$ and hence $W^{V_{1}}=1$, since $m>1$ by Proposition \ref{dec}(2). Therefore, $W^{V_{2}}\cong W$,
since $V=V_{1}\otimes V_{2}$ and $W\cap Z(M)=1$, being $Z(M)\leq Z_{p-1}$.
Thus $H^{V_{2}}$ acts irreducibly on $V_{2}$, which is (1), and $\Phi _{2m}^{\ast }(p)\mid \left\vert H^{V_{2}}\right\vert $.

Let $S$ be a Sylow $u$-subgroup of $U$, where $U\cong Soc(G^{\Sigma})$. Then $S$ acts faithfully on $V$ inducing a Sylow $u $-subgroup of $H$, where $u^{h}=p^{m}+2$ and $p$ is odd, since $C$ is a $p$-group by Proposition \ref{qcDv}(3) and Theorem \ref{centraliz}. Note that $S^{V_{1}}$ is a subgroup of $ GL_{2}(p)$ of order at most $9$, since $(p^{m}+2,(p-1)^{2})\mid 9$. Hence $S=S(V_{1})$ or $\left[ S:S(V_{1})\right] \leq 9$ according to whether $3$ does not divide or
does divide the order of $Z(M)$. Thus $\left\vert S^{V_{2}}\right\vert \geq \frac{%
\left\vert S\right\vert }{(p^{m}+2,3)}$ since $S(V_{1})\cap S(V_{2})\leq
Z(M)\leq Z_{p-1}$. Then $\frac{p^{m}+2}{(p^{m}+2,3)}\mid \left\vert
S^{V_{2}}\right\vert $, since $u^{h}=p^{m}+2$ and $S^{V_{2}}$ is a $u$-group, hence $\Phi _{2m}^{\ast }(p)\frac{p^{m}+2}{%
(p^{m}+2,3)}\mid \left\vert H^{V_{2}}\right\vert $, which is (2).\\ Since $C\trianglelefteq G(\Sigma)\trianglelefteq U$ by Theorem \ref{centraliz}, let $Q_{2}=(U/C)^{V_{2}}$. Then the order of $Q_{2}$ is divisible by $\frac{p^{m}+2}{(p^{m}+2,3)}$, since $S^{V_{2}}\leq Q_{2}\trianglelefteq H^{V_{2}}$. Now a similar argument to that of the proof of Lemma \ref{dec1}(3) can be used to obtain that $\Phi _{2m}^{\ast }(p)$ divides $[H^{V_{2}}:Q_{2}]$. Thus we obtain (3).
\end{proof}

\bigskip

\begin{remark}
\label{rem}In view of Lemmas \ref{dec1} and \ref{dec2}, in (1) and in (2) of Proposition \ref{dec} there is a quotient group $X$ of a subgroup of $H$ with the following properties:
\begin{enumerate}
\item $X$ is an irreducible subgroup of $GL(V_{2})$ of order divisible by $\Phi _{2m}^{\ast }(p)\frac{p^{m}+2}{(p^{m}+2,3)}$; 
\item $X$ contains a normal subgroup $Q$ of order divisible by $\frac{p^{m}+2}{(p^{m}+2,3)}$  and such that $\Phi _{2m}^{\ast}(p) \mid \left[ X:Q\right]$;
\item $p^{m}+2=u^{h}$, where $u$ is a prime and $h \geq 1$.
\end{enumerate}
We are going to show that a quotient group of $H$ with such constraints does not exist. We derive from this fact that $\Phi _{2m}^{\ast }(p)=1$, hence $(p^{m},u^{h})= (3,5),(7,9)$ by Lemma \ref{Phistar}(3).
\end{remark}

\bigskip

For each divisor $\ell$ of $2m$ the group $\Gamma L_{\ell}(p^{2m/\ell})$ has a
natural irreducible action on $V_{2}$. By Proposition \ref{irre} we may choose $\ell$ to be minimal such that 
$X\leq \Gamma L_{\ell}(p^{2m/\ell})$ in this action and write $%
a=p^{2m/\ell}$.

\bigskip

\begin{lemma}
\label{Uredu} The following hold
\begin{enumerate}
\item $\ell>1$;
\item $Q \nleq Z(GL_{\ell}(a))$.
\end{enumerate}
\end{lemma}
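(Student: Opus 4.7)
The plan is to assume in each part that the conclusion fails and derive a contradiction from the divisibility constraints on $X$ and $Q$ recorded in Remark \ref{rem}, together with the standing hypothesis $\Phi_{2m}^{\ast}(p)>1$.

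For (1), I would suppose $\ell=1$, so that $X\leq \Gamma L_{1}(p^{2m})$ and hence $|X|$ divides $(p^{2m}-1)\cdot 2m$. Combining this with Remark \ref{rem}(1), which forces $\Phi_{2m}^{\ast}(p)\cdot \tfrac{p^{m}+2}{(p^{m}+2,3)}$ into $|X|$, the key observation would be that $\gcd(p^{m}+2,p^{m}-1)\mid 3$ and $\gcd(p^{m}+2,p^{m}+1)=1$, so $\tfrac{p^{m}+2}{(p^{m}+2,3)}$ is coprime to $p^{2m}-1$ and must therefore divide $2m$. Since $p^{m}+2=u^{h}$, this forces $\tfrac{p^{m}+2}{3}\leq 2m$, i.e.\ $p^{m}\leq 6m-2$. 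With $p$ odd and Lemma \ref{Phistar}(3) ruling out $(p^{m},u^{h})\in\{(3,5),(7,9)\}$, a finite arithmetic check in the few remaining small values of $(p,m)$ would close (1).

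For (2) I would suppose $Q\leq Z(GL_{\ell}(a))$ and set $k=2m/\ell$. Since part (1) gives $\ell\geq 2$, one has $k\leq m$, and since $Z(GL_{\ell}(a))$ is cyclic of order $p^{k}-1$, Remark \ref{rem}(2) would yield $\tfrac{p^{m}+2}{(p^{m}+2,3)}\mid p^{k}-1$ with $k\mid 2m$. The clean sub-case is $u\neq 3$: then $u\mid p^{k}-1$, so the multiplicative order $d$ of $p$ modulo $u$ divides $k$, and since $\ell k=2m$ also $d\mid 2m$, giving $p^{2m}\equiv 1\pmod{u}$. Combined with $p^{2m}\equiv 4\pmod{u}$ coming from $u\mid p^{m}+2$, this would force $u\mid 3$, contradicting $u\neq 3$.

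The hard part is the remaining case $u=3$, which forces $h\geq 2$ (since $u^{h}=p^{m}+2\geq 5$) and the constraint $3^{h-1}\mid p^{k}-1$ with $1\leq k\leq m$. I would handle it by invoking the Pillai equation restrictions recorded after Proposition \ref{happens}: $p>3$, and $(p^{m},u^{h})=(25,27)$ whenever $m$ is even. The even subcase is then closed by observing that $9\nmid 5^{k}-1$ for $k\in\{1,2\}$; in the odd subcase all admissible solutions have $m=1$, whence $3^{h-1}\mid p-1=3^{h}-3$ immediately forces $h=2$ and $p=7$, which is excluded since $\Phi_{2}^{\ast}(7)=1$ contradicts the standing hypothesis, completing (2).
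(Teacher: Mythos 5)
Your part (2) has a genuine gap in the $u=3$ branch, and part (1) contains a false intermediate claim in the same branch. For (2): your sub-case $u\neq 3$ is correct and is essentially the paper's argument in congruence form (the paper simply notes that $\tfrac{p^{m}+2}{(p^{m}+2,3)}$ divides both $p^{2m/\ell}-1\mid p^{2m}-1$ and $p^{m}+2$, and $\gcd(p^{m}+2,p^{2m}-1)\mid 3$, whence $p^{m}+2\mid 9$, i.e.\ $(p^{m},u^{h})=(7,9)$, excluded by $\Phi_{2m}^{\ast}(p)>1$). But for $u=3$ your assertion that \emph{all admissible solutions with $m$ odd have $m=1$} is not justified: the facts recorded after Proposition \ref{happens} give only (i) at most one solution $(m,h)$ for fixed bases, (ii) $p>3$ when $h>1$, and (iii) $(p^{m},u^{h})=(5^{2},3^{3})$ when $h>1$ and $m$ is \emph{even}. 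Nothing there excludes, say, $3^{h}-2=p^{3}$, and that is a nontrivial open-ended Diophantine claim the paper never proves. The repair is immediate and needs no Pillai input: your own order argument works modulo $3^{h-1}$ instead of modulo $u$. From $3^{h-1}\mid p^{k}-1$ and $k\mid 2m$ one gets $p^{2m}\equiv 1 \pmod{3^{h-1}}$, while $p^{m}=3^{h}-2$ gives $p^{2m}\equiv 4\pmod{3^{h-1}}$; hence $3^{h-1}\mid 3$, $h=2$, and $(p^{m},u^{h})=(7,9)$ is excluded as before. This uniform computation is exactly the paper's one-line proof of (2).

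For (1): the claim that $\tfrac{p^{m}+2}{(p^{m}+2,3)}$ is \emph{coprime} to $p^{2m}-1$ fails precisely when $u=3$. Indeed, if $p^{m}+2=3^{h}$ then $h\geq 2$ and $p^{m}\equiv 1\pmod 3$, so $p^{m}-1=3(3^{h-1}-1)$ and $\gcd\left(3^{h-1},p^{2m}-1\right)=3$. The correct statement is only $\gcd(p^{m}+2,p^{2m}-1)\mid 3$, which yields $\tfrac{p^{m}+2}{(p^{m}+2,3)}\mid 6m$ (the paper, rerunning Lemma \ref{n>1} with $t=0$, uses oddness to get the divisor $3m$), hence the weaker bound $p^{m}\leq 18m-2$ rather than your $p^{m}\leq 6m-2$. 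Your finite check must therefore cover more pairs, namely $(p^{m},u^{h})\in\{(5,7),(11,13),(9,11),(25,27),(27,29)\}$ besides the excluded $(3,5),(7,9)$; all of these do die on the exact divisibility — e.g.\ for $(25,27)$ one needs $9\mid (5^{4}-1)\cdot 4=2496$, false, and for $u\neq 3$ one needs $u^{h}\mid 2m$, absurd — so (1) is repairable, but as written the coprimality step would let you discard $(25,27)$ and $(27,29)$ without justification. In short: both defects sit in the case $9\mid p^{m}+2$, and both disappear once you replace "coprime" by "$\gcd$ divides $3$", which is how the paper argues.
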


\begin{proof}
Suppose that $\ell=1$. Then $X\leq \Gamma L_{1}(a)$ and hence $%
\Phi _{2m}^{\ast }(p)\frac{p^{m}+2}{(p^{m}+2,3)}\mid (p^{2m}-1)\cdot (2m)$. Then we obtain (1) by proceeding as in Lemma \ref{n>1} (for $t=0$) with $\frac{p^{m}+2}{(p^{m}+2,3)}$ in the role of $p^{m}+2$ and bearing in mind that $p^{m}\neq 7$.
 
Suppose the contrary. Then $\frac{p^{m}+2}{(p^{m}+2,3)} \mid p^{2m/\ell}-1$ and hence $p^{m}+2 \mid 9$. So $(p^{m},u^{h})=(7,9)$, since $p^{m} \geq 3$ by Lemma \ref{c=d}, but his case is ruled out since it contradicts the assumption $\Phi _{2m}^{\ast }(p)>1$. Thus we obtain (2).
\end{proof}

\bigskip

Let $X^{\ast }=X\cap GL_{\ell}(a)$. Then $\Phi _{2m}^{\ast }(p)$ divides the order of $X^{\ast}$, since $(\Phi _{2m}^{\ast }(p),\ell)=1$ by \cite[Proposition 5.2.15.(ii)]{KL}, being $\ell \mid 2m$. Then one of the following holds by \cite[Theorem 3.1]{BP} and by the minimality of $\ell$:

\begin{enumerate}
\item[(i).] $X^{\ast }$ contains as a normal subgroup $Y$ isomorphic to one of the
groups $SL_{\ell}(a)$ with $\ell \geq2$, $Sp_{\ell}(a)$ or $\Omega _{\ell}^{-}(a)$ with $\ell$ even and $ \ell \geq 2$, or $%
SU_{\ell}(a^{1/2})\trianglelefteq X$ with $a$ square, $\ell$ odd and $\ell\geq 3$.
\item[(ii).] $X^{\ast }\leq (D_{8}\circ Q_{8}).S_{5}$ and $(\ell,a)=(4,3)$.

\item[(iii).] $X^{\ast }$ is \emph{nearly simple}, that is, $S\trianglelefteq
X^{\ast }/\left( X^{\ast }\cap Z\right) \leq Aut(S)$, where $Z$ is the center of $GL_{\ell}(a)$
and $S$ is a non-abelian simple group. Moreover, if $Y$ is the pre-image of $S$ in $X^{\ast }$, then $Y$ is absolutely irreducible on $V$ and $Y$ is not a
classical group defined over a subfield of $GF(a)$ in its natural
representation.
\end{enumerate}

\begin{theorem}\label{sevdah}
$(p^{m},u^{h})= (3,5),(7,9)$.
\end{theorem}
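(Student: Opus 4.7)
The plan is to derive a contradiction from the standing assumption $\Phi_{2m}^{\ast}(p) > 1$ by showing that a quotient group $X$ of $H$ satisfying all the constraints of Remark \ref{rem} cannot exist. Once this is done, Lemma \ref{Phistar}(3) forces $(p^{m},u^{h}) \in \{(3,5),(7,9)\}$, which is the conclusion.

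First I would invoke the trichotomy just set up before the theorem: since $\ell > 1$ by Lemma \ref{Uredu}(1) and $\Phi_{2m}^{\ast}(p)$ divides $|X^{\ast}|$, the group $X^{\ast} = X \cap GL_{\ell}(a)$ must fall into one of the cases (i), (ii), (iii) coming from \cite[Theorem 3.1]{BP}. Case (ii) is ruled out at once: $(\ell,a) = (4,3)$ forces $p = 3$ and $2m = \ell = 4$, hence $p^{m} = 9$ and $u^{h} = 11$, while $|(D_{8} \circ Q_{8}).S_{5}| = 2^{8} \cdot 3 \cdot 5$ is not divisible by $11 = (p^{m}+2)/(p^{m}+2,3)$, contradicting Remark \ref{rem}(1)--(2). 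For case (i), with $Y \trianglelefteq X^{\ast}$ a classical group acting naturally on $V_{2}$, the plan is to analyse the interaction of the normal subgroup $Q$ of Remark \ref{rem}(2) with $Y$: since $Q \cap Y \trianglelefteq Y$, simplicity of $\bar{Y} = Y/(Y \cap Z(GL_{\ell}(a)))$ gives a dichotomy. If $\bar{Y} \leq \overline{Q}$, then $\Phi_{2m}^{\ast}(p)$ divides $|Q|$ (because it divides $|Y|$) as well as $[X:Q]$; combining with the minimality of $\ell$ and \cite[Proposition 5.2.15]{KL} yields a divisibility that is incompatible with $X \leq \Gamma L_{\ell}(a)$. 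If instead $Q \cap Y \leq Z(GL_{\ell}(a))$, then the $u$-part of $|Q|$, namely $u^{h}/(u^{h},3)$, must divide the order of $X^{\ast}/Y$, which is bounded above by $(a-1) \cdot |\mathrm{Out}(\bar{Y})|$; the Diophantine constraint $u^{h} = p^{m}+2$ together with $(u^{h}, p^{j} - 1)$ analyses of the kind already carried out in the proof of Proposition \ref{dec} (via Lemma \ref{div} and Zsigmondy) then rule out every classical possibility for $Y$.

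The main obstacle is the nearly simple case (iii). There one must control every non-abelian simple $S$ admitting a faithful absolutely irreducible representation of degree $\ell$ over $GF(a)$, with $a$ subfield-minimal, such that a quotient of the corresponding pre-image $Y$ has order divisible by $\Phi_{2m}^{\ast}(p) \cdot u^{h}/(u^{h},3)$ and contains the normal subgroup $Q$ with $\Phi_{2m}^{\ast}(p) \mid [X:Q]$. The same centre-vs-quotient dichotomy applied to $Q \cap Y$ either forces $u^{h} \mid |\mathrm{Out}(S)|$, which is immediately absurd for $u^{h} = p^{m}+2 \geq 5$, or forces the primitive divisor $\Phi_{2m}^{\ast}(p)$ to live inside $Q$ against $\Phi_{2m}^{\ast}(p) \mid [X:Q]$. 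The delicate point is to eliminate the finite list of sporadic and cross-characteristic candidates for $S$: here I would combine the Landazuri--Seitz lower bounds on the minimal degrees of faithful cross-characteristic representations with the extremely restrictive Diophantine identity $u^{h} = p^{m}+2$ and the primitive-divisor constraint, closing out every remaining case and thereby completing the proof.
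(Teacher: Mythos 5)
Your global architecture is the paper's: assume $\Phi_{2m}^{\ast}(p)>1$, run $X^{\ast}=X\cap GL_{\ell}(a)$ through the trichotomy (i)--(iii), derive a contradiction in each case, and conclude $(p^{m},u^{h})=(3,5),(7,9)$ via Lemma \ref{Phistar}(3). Your case (ii) is exactly the paper's ($11$ fails to divide $2^{8}\cdot 3\cdot 5$), and in case (i) your first branch is essentially the paper's argument; note that your second branch is in fact vacuous, since $Q\cap Y\leq Z(Y)$ together with perfectness of $Y$, the three-subgroups lemma and absolute irreducibility forces $Q\leq Z(GL_{\ell}(a))$, contradicting Lemma \ref{Uredu}(2) outright --- this is why the paper passes directly to $Y\leq Q$. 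Be aware also that, as literally written, ``$\Phi_{2m}^{\ast}(p)$ divides $\vert Q\vert$ as well as $[X:Q]$'' is not a contradiction; what does the work is $\Phi_{2m}^{\ast}(p)\mid [X:Q]$ with $[X:Q]$ dividing $(a-1)\vert \mathrm{Out}(\bar{Y})\vert (2m/\ell)$, which no primitive divisor of $p^{2m}-1$ can satisfy by \cite[Proposition 5.2.15]{KL} since $\ell>1$.

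The genuine gap is case (iii), which is where the paper's proof does its real work. Your dichotomy does not close it. In the branch $Y\leq Q$, the contradiction with $\Phi_{2m}^{\ast}(p)\mid[X:Q]$ requires a primitive divisor $w$ of $p^{2m}-1$ not to divide $(a-1)\vert\mathrm{Out}(S)\vert(2m/\ell)$; since $w$ can a priori divide $\vert\mathrm{Out}(S)\vert$ (field automorphisms can contribute arbitrary primes), this cannot be concluded without first pinning down $S$. Symmetrically, ``$u^{h}\mid\vert\mathrm{Out}(S)\vert$, immediately absurd for $u^{h}\geq 5$'' overstates: the correct obstruction is the quantity $\varrho=\frac{a^{\ell/2}+2}{\left(a^{\ell/2}+2,\,3(a-1)\vert\mathrm{Aut}(S)\vert\right)}$ having to divide $\vert S\vert$, and outer automorphism groups of order $\geq 5$ certainly exist. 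The paper resolves all of this by invoking \cite[Theorem 3.1]{BP} a second time: because $\Phi_{2m}^{\ast}(p)$ divides $\vert X^{\ast}\vert$, the nearly simple candidates form an explicit finite list --- $A_{s}$ on the fully deleted permutation module with $p^{m}\in\{3^{2},3^{3},5^{3}\}$, the $(\ell,a)=(4,7)$ case with $Z_{2}.A_{7}$, $J_{2}$ with $(\ell,a)=(6,5)$, the cross-characteristic Lie-type groups of Table \ref{T1}, and defining characteristic forcing $p=2$ against $p$ odd --- and each entry is killed by the arithmetic test $\varrho\mid\vert S\vert$. Your proposal defers precisely this elimination (``here I would combine the Landazuri--Seitz lower bounds\dots''), and the tools you name do not cover the alternating-socle deleted-module candidates (for which $\ell$ is minimal, so degree bounds exclude nothing) nor the defining-characteristic case. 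The decisive finite-case analysis is thus asserted rather than performed, so the proof is incomplete at its crux.
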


\begin{proof}
Assume that (i) occurs. Then $X \leq (Z \circ X^{\ast}).Out(\bar{X^{\ast}})$, with $\bar{X^{\ast}}= X^{\ast}/(X^{\ast} \cap Z)$, where $Z=Z(GL_{\ell}(a))$. It follows that $X^{\ast} \trianglelefteq Q $ by Lemma \ref{Uredu}(2), since $Q \trianglelefteq X$. Then $\Phi _{2m}^{\ast }(p) \mid (a-1)\left \vert Out(X^{\ast}) \right \vert$, since $\Phi _{2m}^{\ast}(p) \mid \left[ X:Q\right]$ by Remark \ref{rem}(3), and hence $\Phi _{2m}^{\ast}(p) \mid (p^{2m/\ell}-1)2m$. However this impossible by \cite[Proposition 5.2.15]{KL}, since $\ell >1$ by Lemma \ref{Uredu}(1). 

Case (ii) is ruled out, since $a^{\ell/2}+2=11$ does not divide the order of $X$

Assume that case (iii) occurs. Suppose that $S\cong A_{s }$, $s \geq 5$ and that $V_{2}$ is the fully
deleted permutation module for $A_{s }$. Then $a=p$, $n=2m$, $A_{s }\trianglelefteq X^{\ast} \leq S_{s}\times Z$, where $Z$ is the center of $GL_{2m}(p)$, and either $s=2m+1$ or $s=2m+2$ according to whether $p$ does not divide or does divide $s $, respectively, by \cite[Lemma 5.3.4]{KL}.
Moreover $p^{m}=3^{2},3^{3},5^{3}$ by \cite[Theorem 3.1]{BP}, since $p$ is odd, and hence $p^{m}+2=11,29,127$ respectively. However, such values of $p^{m}+2$ do not divide $\left\vert X\right\vert $ and hence they are ruled out (see Remark \ref{rem}).

Assume that $S\cong A_{s}$, $s \geq 5$ and that $V_{2}$ is not the fully deleted permutation module for $A_{s}$. Then $(\ell,a)=(4,7)$, $Z_{2}.A_{7}\trianglelefteq H\leq
Z_{2}.S_{7}\times Z_{3}$ and $V_{2}=V_{4}(7)$ by \cite[Theorem 3.1]{BP}, since $p$ is odd. However it is ruled out, since $7^{2}+2$ does not divide the order of $X$.

Assume that $S$ is sporadic. Then $S\cong J_{2}$ and $(\ell,a)=(6,5)$ by \cite[Theorem 3.1]{BP}, since $p$ is odd. However $a^{\ell/2}+2=127$ does not divide the order of $X$ and hence this case is excluded.

Assume that $S$ is a Lie type simple group in characteristic $%
p^{\prime }$. Then $S$ is given in \cite[Theorem 3.1]{BP} and recorded in Table \ref{T1}. Since $\frac{a^{\ell/2}+2}{(a^{\ell/2}+2,3)}\mid \left\vert X\right\vert $ and since $%
S\trianglelefteq X^{\ast}/\left( X^{\ast}\cap Z\right) \leq Aut(S)$, it follows that $$%
\varrho =\frac{a^{\ell/2}+2}{\left( a^{\ell/2}+2,3(a-1)\left\vert Aut(S)\right\vert
\right) }$$ must divide the order of $S$. However, the order of $S$ is divisible by the corresponding $\varrho $ in none of the cases listed in Table \ref{T1}. Hence all the groups in Table \ref{T1} are excluded.
\begin{table}\label{T1}
\caption{\small{Admissible $S$ and corresponding $\varrho$}}
\centering
\normalsize
\begin{tabular}{|l|l|l|l|}
\hline
$S$ & $\ell$ & $a$ & $\varrho $ \\ \hline
$PSL_{2}(7)$ & $6$ & $3$ & $29$ \\ \hline 
              & $6$ & $5$ & $127$ \\ \hline
              & $3$ & $9$ & $29$  \\ \hline
              & $3$ & $5^{2}$ & $127$  \\ \hline
$PSL_{2}(13)$ & $6$ & $3$ & $29$ \\ \hline
$PSL_{3}(2^{2})$ & $6$ & $3$ & $29$ \\ \hline
$PSU_{3}(3)$ & $6$ & $5$ & $127$ \\ \hline
\end{tabular}%
\end{table}

Assume that $S$ is a Lie type simple group in characteristic $p$. Then $p=2$ by \cite[Theorem 3.1]{BP}, whereas $p$ must be odd, and hence no cases arise. Thus $\Phi _{2m}^{\ast }(p)=1$ and hence $(p^{m},u^{h})= (3,5),(7,9)$ by Lemma \ref{Phistar}(3).
\end{proof}

\begin{theorem}\label{pupicchia}
If $G^{\Sigma}$ is of affine type, then  $\mathcal{D}$ is isomorphic to the $2$-$(45,12,3)$ design constructed in \cite[Construction 4.2]{P}. 
\end{theorem}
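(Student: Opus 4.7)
The plan is to invoke Theorem \ref{sevdah}, which has already reduced matters to the two parameter cases $(p^m, u^h) = (3, 5)$ and $(p^m, u^h) = (7, 9)$. In the first case, $\mathcal{D}$ is a symmetric $2$-$(45, 12, 3)$ design admitting a flag-transitive, point-imprimitive automorphism group, and by Praeger's uniqueness result in \cite{P}, such a design is isomorphic to the one of \cite[Construction 4.2]{P}. This gives the desired conclusion directly, so the bulk of the work lies in excluding the remaining case.

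In the second case, $\mathcal{D}$ would be a symmetric $2$-$(441, 56, 7)$ design, $|\Sigma| = 9$, $|\Delta_i| = 49$, and $\mathcal{D}_i$ a flag-transitive $2$-$(49, 7, 7/\theta)$ design with $\theta \in \{1, 7\}$. Since $p^m = 7 > 3$ and $p$ is odd, the lists in Theorem \ref{Monty} collapse: only (2.a.i) (the Desarguesian plane $AG_2(7)$, $\theta = 7$, $G_{x_i} \leq \Gamma L_2(7)$) and (2.b) (the subspace design with $G_{x_i} \leq \Gamma L_1(7^2)$) are available. By Proposition \ref{dec}, only case (1) applies, so $H = G/C$ is an irreducible subgroup of $GL_2(7) \wr Z_2$ preserving a decomposition $V = V_1 \oplus V_2$ with $\dim V_j = 2$.

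To derive the contradiction, I would combine the divisibility constraints of Lemmas \ref{mamma} and \ref{quotient} applied to $\Delta' \in \Sigma'$: the index $[G_{\Delta'}^{\Delta'} : G_{x_i}^{\Delta'}] = 9$ and $\theta (p^m+2)(p^m+1) = 72\theta$ must divide $|G_{\Delta'}^{\Delta'}|$, while $G_{\Delta'}^{\Delta'}$ is one of the affine $2$-transitive groups of degree $9$ (with socle $E_9$ and point stabilizer a transitive subgroup of $GL_2(3)$). Using Lemma \ref{quotient}(2), this forces $G_{x_i}$ to contain a subgroup mapping onto $G_{x_i}^{\Delta'}$, whose structure can then be confronted with the explicit $G_{x_i} \leq \Gamma L_2(7)$ or $\Gamma L_1(49)$ from Theorem \ref{Monty}. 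A comparison of the order of $H$ as a subgroup of $GL_2(7) \wr Z_2$ (roughly $|GL_2(7)|^2 \cdot 2$) against the order of $G^{\Sigma}$ dividing $|H|$ (with $9 \cdot |GL_2(3)|$ as an upper bound) and the required $G_{x_i}^{\Delta}$-structure forces a numerical incompatibility.

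The main obstacle is precisely this $(7, 9)$ case: because $\Phi_{2m}^{\ast}(p) = 1$ here, the primitive prime divisor machinery so effective in Theorem \ref{sevdah} gives no further traction, and one must instead rely on finer arithmetic using Lemmas \ref{mamma} and \ref{quotient} together with the explicit list of flag-transitive $2$-$(49, 7, 7/\theta)$ designs. A secondary difficulty is ensuring that the faithful action of $G$ on the $2$-$(441, 56, 7)$ design is consistent with a $2$-transitive affine action of $G^{\Sigma}$ of degree $9$; ruling this out is what completes the reduction to $(p^m, u^h) = (3,5)$ and hence to the Praeger design.
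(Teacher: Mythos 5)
Your handling of $(p^{m},u^{h})=(3,5)$ matches the paper, but your plan for excluding $(7,9)$ has a fatal structural flaw: you invoke Lemma \ref{quotient}, Lemma \ref{mamma} and the partition $\Sigma^{\prime}$, yet all of these belong to Section 4's standing hypothesis that $C_{G}(V)=V\times U$ as in case (2) of Proposition \ref{happens} --- precisely the situation that Theorem \ref{centraliz} has already refuted. In the setting of Theorem \ref{pupicchia} we are in case (1), $C_{G}(V)\leq G(\Sigma)$; there is no second partition $\Sigma^{\prime}$ (the pre-image $U$ of $Soc(G^{\Sigma})$ now contains $G(\Sigma)\geq V$ and is point-transitive), so the quantities $[G_{\Delta^{\prime}}^{\Delta^{\prime}}:G_{x_{i}}^{\Delta^{\prime}}]=9$ and ``$72\theta$ divides $\left\vert G_{\Delta^{\prime}}^{\Delta^{\prime}}\right\vert$'' are simply undefined. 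The same defect affects your appeal to Proposition \ref{dec}: it was proved under the standing assumption $\Phi_{2m}^{\ast}(p)>1$, whereas for $p^{m}=7$ one has $7^{2}-1=48=2^{4}\cdot 3$ with no primitive divisor ($7$ is a Mersenne prime), i.e.\ $\Phi_{2}^{\ast}(7)=1$ --- which is exactly why $(7,9)$ survived Theorem \ref{sevdah} in the first place. So you cannot conclude $H\leq GL_{2}(7)\wr Z_{2}$ that way. Finally, even granting that containment, your endgame is only a gesture: $\left\vert AGL_{1}(9)\right\vert=72$ divides the orders of many subgroups of $GL_{2}(7)\wr Z_{2}$ (of order $2\cdot 2016^{2}$), so no ``numerical incompatibility'' follows from the order comparisons you sketch.

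What the paper actually does for $(7,9)$ is a seven-step structural argument using only tools valid in branch (1). Corollary \ref{glue} --- the always-available analogue of your Lemma \ref{quotient}(2) --- combined with Foulser's classification \cite{F1} of flag-transitive groups of $AG_{2}(7)$ and the list of affine $2$-transitive groups of degree $9$ from \cite{Ka85} forces $G^{\Sigma}\cong AGL_{1}(9)$, $G_{x_{i}}^{\Delta_{i}}\cong Z_{3^{j}}\times Z_{16}$ and $G(\Sigma)_{x_{i}}^{\Delta_{i}}\cong Z_{3^{j}}\times Z_{2}$; from this one bounds $G(\Delta_{i})$, deduces that $G$ is solvable, and shows (ruling out $t=0,1$ by inspecting solvable subgroups of $PSL_{2+t}(7)$) that $H$ is a solvable irreducible subgroup of $GL_{4}(7)$ with $C=V$ of order $7^{4}$; then $G=V{:}(Q{:}Z_{16})$ with $Q$ an abelian Sylow $3$-subgroup, and irreducibility together with the imprimitivity decomposition $V=V_{1}\oplus V_{2}$ --- here \emph{derived} from the structure of $Q$, not imported from Proposition \ref{dec} --- collides with Lemma \ref{if}(2) via $Q(V_{1})\cap Q(V_{2})$ and the impossibility of $(Z_{3}\times Z_{3}){:}Z_{8}\leq GL_{2}(7)$. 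None of this analysis is present in, or recoverable from, your sketch, so the exclusion of $(7,9)$ --- the entire content of the theorem beyond quoting \cite{P} --- remains unproved.
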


\begin{proof}[Proof of Theorem \ref{t1}]
It follows from Theorem \ref{sevdah} that $(p^{m},u^{h})=(3,5)$ or $(7,9)$, and the assertion follows in the former case by \cite[Corollary 4.2]{P}. Hence, in order to complete the proof we need to rule out $(p^{m},u^{h})=(7,9)$. We are going to prove this in a series of steps.  
\begin{enumerate}
\item[(1).] \textbf{Let $\Delta_{i} \in \Sigma$ and $x_{i} \in \Delta_{i}$. Then $G^{\Sigma }\cong AGL_{1}(9)$, $G_{x_{i}}^{\Delta
_{i}}\cong Z_{3^{j}}\times
Z_{16}$ and $G(\Sigma )_{x_{i}}^{\Delta _{i}}\cong Z_{3^{j}}\times Z_{2}$
where $j=0,1$.}
\end{enumerate}

$G_{\Delta _{i}}^{\Sigma }\cong Z_{8},Q_{8},SD_{16},SL_{2}(3),GL_{2}(3)$ by \cite[Section 2, (B)]{Ka85}, since $G^{\Sigma}$ is an affine group acting $2$-transitively on $\Sigma$ by Proposition \ref{boom}. On the other hand, since $\mathcal{%
D}_{i}\cong AG_{2}(7)$ by Theorem \ref{Monty}, since $m=1$, it follows from \cite[Theorem 1' and Table II]{F1}
that either $SL_{2}(7)\trianglelefteq G_{x_{i}}^{\Delta
_{i}}\leq GL_{2}(7)$, or $ G_{x_{i}}^{\Delta
_{i}}\leq \Gamma L_{1}(7^{2})$ or $ G_{x_{i}}^{\Delta
_{i}}\cong SL_{2}(3).Z_{2}$. Moreover, if $G_{x_{i}}^{\Delta
_{i}}\leq \Gamma L_{1}(7^{2})$, it is not
difficult to see that $G_{x_{i}}^{\Delta
_{i}}\cong Z_{3^{j}}\times Z_{16},Z_{3^{j}}\times Q_{16},Z_{3^{j}}\times
SD_{32}$, with $j=0,1$. Using (\ref{isom}) in Corollary \ref{glue}, with $G_{\Delta
_{i}}^{\Sigma }\cong Z_{8},Q_{8},SD_{16},SL_{2}(3),GL_{2}(3)$, we see that
the unique possibilities are $G_{\Delta _{i}}^{\Sigma }\cong Z_{8}$ and
either $G_{x_{i}}^{\Delta
_{i}}\cong Z_{16}$
and $G(\Sigma )_{x_{i}}^{\Delta _{i}}\cong Z_{2}$, or $G_{x_{i}}^{\Delta
_{i}}\cong Z_{48}$ and $G(\Sigma
)_{x_{i}}^{\Delta _{i}}\cong Z_{8}$. Thus $G^{\Sigma }\cong AGL_{1}(9)$, $G_{x_{i}}^{\Delta
_{i}}\cong Z_{3^{j}}\times
Z_{16}$ and $G(\Sigma )_{x_{i}}^{\Delta _{i}}\cong Z_{3^{j}}\times Z_{2}$
with $j=0,1$.
\begin{enumerate}
\item[(2).] \textbf{$G(\Delta _{i})\leq E_{7^{2}}:(Z_{3^{j}}\times Z_{2})$ for each $i=1,...,9$.}
\end{enumerate}
Since $G(\Delta
_{i})\trianglelefteq G(\Sigma )$ and $G(\Delta _{i})\cap G(\Delta _{s})=1$ for each $s\in \left\{
1,...,9\right\} $, with $s\neq i$, by Lemma \ref{if}(2), it follows that $G(\Delta _{i})$ is
isomorphic to a normal subgroup of $G(\Sigma )^{\Delta _{s}}$. On the other
hand $G(\Sigma )^{\Delta _{s}}\cong E_{7^{2}}:Z_{3^{j}}\times Z_{2}$ with $%
j=0,1$, since $G(\Sigma )=G(\Sigma )_{x_{s}}V$ by Proposition \ref{qcDv}(1)(3) and since $G(\Sigma )_{x_{s}}^{\Delta _{s}}\cong Z_{3^{j}}\times Z_{2}$ with $j=0,1$ by (1). Thus $G(\Delta _{i})\leq E_{7^{2}}:Z_{3^{j}}\times Z_{2}$.
\begin{enumerate}
\item[(3).] \textbf{$G$ is solvable.}
\end{enumerate}
It follows from (2) that $G(\Delta_{i})$ is solvable. Thus $G(\Sigma )_{x_{i}}$ is solvable, since $G(\Sigma )_{x_{i}}^{\Delta _{i}}\cong Z_{3^{j}}\times Z_{2}$ by (1), and hence $G(\Sigma)$ is solvable since $G(\Sigma )=G(\Sigma )_{x_{i}}V$. Therefore $G$ is solvable, since $G^{\Sigma }\cong AGL_{1}(9)$.
\begin{enumerate}
\item[(4).] \textbf{$H$ is a solvable irreducible subgroup of $GL_{4}(7)$.}
\end{enumerate}
Recall that $H$ is an irreducible subgroup of $GL_{2+t}(7)$, with $t\leq 2$, by Proposition \ref{irre}, and let $R$ be the pre-image of $H\cap SL_{2+t}(7)$ in $G$. Then $%
R\vartriangleleft G$. Therefore $R^{\Sigma }\vartriangleleft G^{\Sigma }$
and hence either $R^{\Sigma }=1$ or $E_{9} \trianglelefteq R^{\Sigma }$, since $G^{\Sigma }\cong AGL_{1}(9)$. The former implies $R\leq G(\Sigma )$ and hence $9\mid \left\vert G/R\right\vert 
$. Then $9$ divides the index of  $%
SL_{2+t}(7)$ in $GL_{2+t}(7)$, since $G/R \cong H/(H\cap SL_{2+t}(7))$, which is a contradiction. Thus $E_{9} \trianglelefteq R^{\Sigma }$ and hence a quotient group of $H\cap SL_{2+t}(7)$ contains a normal subgroup isomorphic to $E_{9}$, since $C$ is a $7$-group by Proposition \ref{qcDv}(2) and Theorem \ref{centraliz}.\\
Let $M$ be the pre-image of $H\cap Z(GL_{2+t}(7))$ in $G$. Clearly $%
M\vartriangleleft G$. Therefore $M^{\Sigma }\vartriangleleft G^{\Sigma }$
and hence either $M^{\Sigma }=1$ or $9\mid \left\vert M^{\Sigma }\right\vert 
$, since $G^{\Sigma }\cong AGL_{1}(9)$. The latter implies $9\mid \left\vert
M/C\right\vert $, since $C$ is a $7$-group, whereas $%
M/C=H\cap Z(GL_{2+t}(V))\leq Z_{6}$. Thus $M^{\Sigma }=1$ and hence $M\leq
G(\Sigma )$.

Set $A=(H\cap SL_{2+t}(7))/(H\cap Z(SL_{2+t}(7)))$, then $A$ is isomorphic to a solvable
subgroup of $PSL_{2+t}(7)$. Moreover, a quotient group of $A$ contains $E_{9}$ as a normal subgroup, since $A \cong R/M$, $E_{9} \trianglelefteq R^{\Sigma }$ and $M\leq G(\Sigma)$. Thus $t \neq 0$.

Assume that $t=1$. Since $Z_{16} \leq G_{x_{i}}^{\Delta_{i}}$, then $H$ contains $2$-elements of order at least $16$ and hence $A$ contains elements of order at least $8$. Hence $A$ is isomorphic to a solvable subgroup of $PSL_{3}(7)$ of order divisible by $72$. Then $A \cong E_{9}:Q_{8}$ by \cite{At}, whereas $A$ contains elements of order at least $8$. Thus $t \neq 1$ and hence the claim follows from (3) and from Proposition \ref{irre}.
\begin{enumerate}
\item[(5).] \textbf{$G=C:(Q:J)$, where $\left\vert Q \right \vert=3^{2+j}$, with $0\leq j \leq 1$, and $J \cong Z_{16}$.}
\end{enumerate}
Let $S$ be a Sylow $7$-subgroup of $G$ containing $C$. Since $G^{\Sigma
}\cong AGL_{1}(9)$, it follows that $S\leq G(\Sigma )$. Since $G_{\Delta
_{i}}^{\Delta _{i}}\cong E_{7^{2}}:(Z_{3^{j}} \times Z_{16})$ and $G(\Delta _{i})\leq
E_{7^{2}}:(Z_{3^{j}} \times Z_{2})$ by (1) and (2), it follows that $\left\vert S\right\vert \leq 7^{4}$. On the other hand, $ 7^{4} \leq \left\vert C\right\vert \leq \left\vert S\right\vert$ by (4) and by Proposition \ref{qcDv}(3). Hence $S=C \trianglelefteq G$. Moreover, $\left \vert H \right \vert = 2^{4+e}\cdot 3^{2+f}$, where $e \leq 1$ and $j\leq f \leq 2$, again by (1) and (2). Then $G=C:K$, where $K$ is a group of order $2^{4+e}\cdot 3^{2+f}$ by \cite[Theorem 6.2.1(i)]{Go}.\\
Since $K^{\Sigma} \cong AGL_{1}(9)$, it results $\left \vert K(\Sigma) \right \vert =2^{1+e}\cdot 3^{f}$, with $e \leq 1$ and $j\leq f \leq 2$. Let $S$ be a Sylow $w$-subgroup of $K(\Sigma)$, where $w \in \{2,3\}$. If either $e=1$ or $f=2$, assume that $w$ is $2$ or $3$ respectively. Then $Z_{w} \leq S(\Delta_{i})$ for each $i \in \{1,...,9\}$, since $\left \vert S \right\vert =w^{2}$, and since $G(\Sigma )_{x_{i}}^{\Delta _{i}}\cong Z_{3^{j}}\times Z_{2}$, with $j=0,1$, for each $i \in \{1,...,9\}$ by (1). Then $S(\Delta_{i}) \cap S(\Delta_{i'}) \neq 1$ for some $i,i' \in \{1,...,9\}$, with $i \neq i'$, since the number of cyclic subgroup of $Q(\Sigma)$ is $w+1$, which is at most $4$,  and $\left \vert \Sigma \right \vert =9$. Since it contradicts Lemma \ref{if}(2), this case is excluded. Thus $e=0$, $f \leq 1$ and hence $\left \vert K(\Sigma) \right \vert =2\cdot 3^{f}$. From this fact and from $K^{\Sigma} \cong AGL_{1}(9)$, it results that the pre-image $P$ in $K$ of $Z_{3} \times Z_{3}$ is $Q:Z_{2}$, where $Q$ a Sylow $3$-subgroup of $K$. Moreover, the Frattini's argument implies $K=N_{K}(Q)P$ and hence $K=Q:J$, where $J$ is a Sylow $2$-subgroup of $K$. Finally, $J \cong Z_{16}$, since $J$ is of order $16$ and since $G_{x_{i}}^{\Delta_{i}}\cong Z_{3^{j}}\times Z_{16}$ by (1).   
\begin{enumerate}
\item[(6).] \textbf{$Q$ is abelian.}
\end{enumerate}
Suppose the contrary. Then $j=1$ and hence $Q$ is extraspecial. If there is an element $\phi$ in $Q$ of order $9$. Then $Fix(\phi ^{2})\neq 1$, since $V=V_{4}(7)$. Then $K$ preserves $Fix(\phi ^{2})$, since $%
\left\langle \phi ^{2}\right\rangle =Z(Q)$, whereas $K$ acts irreducibly
on $V$ by (4), since $K=G/C=H$. Thus $Q$ is of exponent $3$. Now $Z(Q) \leq K(\Sigma)$, since $K^{\Sigma} \cong AGL_{1}(9)$, therefore $Z(Q)$  preserves each $\Delta_{i}$ in $\Sigma$ and hence it normalizes $V(\Delta_{i})$. Then $Z(Q)$ is a reducible subgroup of $GL_{4}(7)$. Then $Q$ is reducible by \cite[Theorem 3.4.1]{Go}. Then either $V=X_{1}\oplus X_{2}\oplus X_{3}\oplus X_{4}$, where $X_{s}$, $s=1,2,3,4$, is a $Q$-invariant $1$-dimensional subspace of $V$, and $K\leq GL_{1}(7)\wr
S_{4}$, or $V=Y_{1}\oplus Y_{2}$, where $Y_{1}, Y_{2}$ are $Q$-invariant $2$-dimensional subspaces of $V$, and $K\leq GL_{2}(7)\wr Z_{2}$. In each case there is a $Q$-invariant subspace of $V$ fixed pointwise by a non trivial normal subgroup of $Q$, since the order of $Q$ is $3^{3}$. Also such a group contains $Z(Q)$. So $Fix(Z(Q))$ is a $H$-invariant subspace of $V$ of dimension at least $1$, since $Z(Q) \trianglelefteq K$, and we reach a contradiction since $K$ acts irreducibly on $V$. Thus $Q$ is abelian. 
\begin{enumerate}  
\item[(7).] \textbf{The final contradiction.}
\end{enumerate}
$Q$ acts reducibly on $V$ by \cite[Theorem 3.2.3]{Go}, since $Z_{3} \times Z_{3} \leq Q$, hence $K$ acts
transitively on a $Q$-invariant decomposition of $V$ in subspaces of equal
dimension by \cite[Theorem 3.4.1]{Go}, since $K$ acts irreducibly on $V$. Therefore either $K\leq GL_{1}(7)\wr
S_{4}$ or $K\leq GL_{2}(7)\wr Z_{2}$. However the former does not contain
cyclic subgroups of order $16$. Hence, $K\leq GL_{2}(7)\wr Z_{2}$ and $%
Q=\left\langle \alpha \right\rangle \times \left\langle \beta \right\rangle
\times \left\langle \gamma \right\rangle $, where $o(\alpha )=3^{j}$ and $%
o(\beta )=o(\gamma )=3$. Also $\alpha \in Z(K)$, whereas $N_{K}(\left\langle
\delta \right\rangle )\cong Z_{3}:Z_{4}$ for each $\delta \in Q \setminus
\left\langle \alpha \right\rangle $.\\
Let $V=V_{1}\oplus V_{2}$ be the decomposition preserved by $K$. Clearly $J_{V_{1}} \cong Z_{8}$ acts faithfully on $V_{1}$, since $J \cong Z_{16}$ switches $V_{1}$ and $V_{2}$. Also $Q$ preserves $V_{1}$ and $Q(V_{1}) \neq 1$. If $Q(V_{1})$ is of order $9$, then $Q(V_{1})\cap Q(V_{2}) \neq 1$ since $Q \trianglelefteq K$, the order of $Q$ is $3^{2+f}$, with $f \leq 1$, and since $K$ switches $V_{1}$ and $V_{2}$. However, this is impossible since it contradicts Lemma \ref{if}(2). Thus $Q(V_{1})$ is of order $3$ and hence $(Z_{3} \times Z_{3}):Z_{8} \leq K_{V_{1}}^{V_{1}} \leq GL_{2}(7)$, which is also impossible. So this case is ruled out and the proof is completed.
\end{proof}

\bigskip

\subsection{$G^{\Sigma}$ is an almost simple group} In this subsection we assume that $Soc(G^{\Sigma})$ is a non-abelian simple group.
\begin{proposition}
\label{Ao} One of the following holds:
\begin{enumerate}
\item[I.] $p^{m}=4$ and $\mathcal{D}$ is isomorphic to one of the four $2$-$(96,20,4)$ designs constructed in \cite{LPR}.
\item[II.] $p^{m}=9$ and $G^{\Sigma } \cong PSL_{2}(11),M_{11}$.
\item[III.] $p^{m}=16$ and $G^{\Sigma } \cong PSL_{2}(17)$.  
\end{enumerate}
\end{proposition}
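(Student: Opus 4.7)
The plan is to combine the CFSG-based classification of finite $2$-transitive permutation groups (already invoked via \cite[Section~2]{Ka85} in Proposition~\ref{boom}) with the structural constraints obtained in Sections~3--5. Since $G^{\Sigma}$ acts $2$-transitively on $\Sigma$ with $|\Sigma|=p^m+2$ and $\mathrm{Soc}(G^{\Sigma})$ is non-abelian simple, the classification yields a finite list of candidate socles: $A_{p^m+2}$, the projective groups $PSL_{d}(q)$ in the natural action on $(q^d-1)/(q-1)$ points, the Mathieu groups $M_{11},M_{12},M_{22},M_{23},M_{24}$, the unitary groups $PSU_{3}(q)$ on $q^{3}+1$ points, the Suzuki groups $Sz(q)$ on $q^{2}+1$ points, the Ree groups on $q^{3}+1$ points, the symplectic groups $Sp_{2e}(2)$ on $2^{2e-1}\pm 2^{e-1}$ points, and the small-degree exceptions $PSL_{2}(11)$ on $11$ points, $A_{7}$ on $15$ points, $HS$ on $176$ points and $Co_{3}$ on $276$ points. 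Imposing that the degree equal $p^m+2$ with $p^m\ge 3$ a prime power cuts each family down to a short explicit list of pairs $(p^m,\mathrm{Soc}(G^{\Sigma}))$.

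For each surviving candidate I would then use Corollary~\ref{glue}, which asserts that $G^{\Sigma}_{\Delta}$ is a quotient of $G^{\Delta}_{x}$. By Proposition~\ref{affine} and Theorem~\ref{Monty}, $G^{\Delta}_{x}$ is contained in an affine group of dimension $2$ over $GF(p^m)$ or of dimension $1$ over $GF(p^{2m})$, with a handful of sporadic exceptions; in particular it is either solvable or close to $SL_{2}(p^m)$. The point stabilizer of an almost simple $2$-transitive group is by contrast large and typically close to a simple group ($A_{n-1}$ for the alternating action, a maximal parabolic for the linear action, a Mathieu subgroup for the Mathieu action, and so on). Matching the two sides via the quotient condition, together with the divisibility $(p^m+1)(p^m+2)\mid |G^{\Sigma}|$ coming from $2$-transitivity and the primitive-prime-divisor bounds already used in Lemma~\ref{Phistar} and Proposition~\ref{primpart}, eliminates every pair except $p^m\in\{3,4,9,16\}$ with the socles recorded in (I)--(III).

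The case $p^m=3$ is ruled out by \cite[Proposition~5.1]{P}, leaving $p^m=4,9,16$. For $p^m=4$ the socle is forced to be $A_{6}$, and the full classification of flag-transitive, point-imprimitive $2$-$(96,20,4)$ designs in \cite{LPR} identifies $\mathcal{D}$ with one of the four designs constructed there, yielding~(I). The cases $p^m=9$ and $p^m=16$ survive as (II) and (III) respectively, with the possible socles as listed.

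The main obstacle is the elimination of the large infinite families, especially $A_n$ and $PSL_{d}(q)$, where the point stabilizer is very far from affine: here the compatibility forced by Corollary~\ref{glue} and Theorem~\ref{Monty} must be ruled out by a delicate case analysis combining order divisibility via primitive prime divisors (in the style of Section~4), composition-factor considerations, and the explicit stabilizer lists for translation planes and related affine designs from \cite{F1,LiebF,F2,Wa,MF,Mo1,Mo2}. Once this hurdle is cleared, the reduction to the short list $p^m\in\{3,4,9,16\}$ and the identification of conclusion~(I) via \cite{LPR} are comparatively routine.
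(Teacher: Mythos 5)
Your outline follows the same route as the paper's proof: start from Kantor's list \cite[Section 2, (A)]{Ka85} of almost simple $2$-transitive groups of degree $p^{m}+2$, then eliminate candidates by playing Corollary \ref{glue} (that $G_{\Delta_{i}}^{\Sigma}$ is a quotient of $G_{x_{i}}^{\Delta_{i}}$) against the affine structure of $G_{x_{i}}^{\Delta_{i}}$ coming from Proposition \ref{affine} and Theorem \ref{Monty}, together with arithmetic on the degree equations. One remark on your ``main obstacle'': in the paper the eliminations of the infinite families are considerably less delicate than you anticipate. For $Soc(G^{\Sigma})\cong A_{p^{m}+2}$ the quotient condition forces $A_{p^{m}+1}$ to be involved in one of the groups of Theorem \ref{Monty}, which immediately restricts to $p^{m}\leq 5$, and $p^{m}=3$ is removed via solvability by \cite{P}, leaving $p^{m}=4$. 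For $Soc(G^{\Sigma})\cong PSL_{h}(u)$ with $h\geq 3$, the parabolic subgroup $[u^{h-1}]{:}SL_{h-1}(u)$ plainly cannot be a quotient of a subgroup of $\Gamma L_{2}(p^{m})$ or $\Gamma L_{1}(p^{2m})$; for $h=2$ one gets $u^{f}=p^{m}+1$, settled by \cite[B1.1]{Rib}, and the requirement that a Frobenius group of order $(p^{m}+1)\frac{p^{m}}{(p^{m},2)}$ be a quotient of $G_{x_{i}}^{\Delta_{i}}$ forces $G_{x_{i}}^{\Delta_{i}}\leq \Gamma L_{1}(p^{2m})$ and $\frac{p^{m}}{(p^{m},2)}\mid 2m$, i.e.\ $p^{m}=4,16$. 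The unitary, Suzuki, Ree and symplectic degrees die on the equations $u^{i}=p^{m}+1$ and $2^{2n-1}\pm 2^{n-1}=p^{m}+2$, and $A_{7}$ of degree $15$ (so $p^{m}=13$) dies because $PSL_{2}(7)$ would have to be involved in a subgroup of $GL_{2}(13)$.

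The genuine gap is that your reduction stops at the socles, whereas the Proposition pins down $G^{\Sigma}$ itself. In case (III) you must still exclude $G^{\Sigma}\cong PGL_{2}(17)$: if it occurred, a quotient of $G_{x_{i}}^{\Delta_{i}}$ would be a Frobenius group of order $17\cdot 16$, which is impossible because here $G_{x_{i}}^{\Delta_{i}}\leq \Gamma L_{1}(2^{8})$ and $\left\vert \Gamma L_{1}(2^{8})\right\vert =2^{3}\cdot 3\cdot 5\cdot 17$ is not divisible by $16$; this is the same quotient idea you already use, but it is a necessary extra step, not a consequence of the socle computation. Similarly, the exact list $PSL_{2}(11)$, $M_{11}$ in (II) needs the observation (via \cite{At}) that these degree-$11$ actions do not extend to larger almost simple groups. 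Finally, note that conclusion (I) arises twice in the paper's analysis --- from the alternating case with socle $A_{6}$ and from the linear case $h=2$, $u^{f}=5$, where $A_{5}\trianglelefteq G^{\Sigma}\leq S_{5}$ --- and in both instances the identification of $\mathcal{D}$ rests on the classification in \cite{LPR}, as you correctly indicate.
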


\begin{proof}
Suppose that $G^{\Sigma }$ is almost simple and let $S=Soc(G_{\Sigma
}^{\Sigma })$. Since $G^{\Sigma }$ acts $2$-transitively on $\Sigma$, and $\left\vert\Sigma\right\vert=p^{m}+2$, by \cite[Section 2,(A)]{Ka85} one of the following holds:

\begin{enumerate}
\item $S\cong A_{p^{m}+2}$ and $p^{m}\geq 3$;

\item $S\cong PSL_{h}(u)$, with $h\geq 2$, $(h,u)\neq (2,2),(2,3)$ and $%
\frac{u^{h}-1}{u-1}=p^{m}+2$;

\item $S\cong PSU_{3}(u)$, with $u\neq 2$ and $u^{3}+1=p^{m}+2$;

\item $S\cong Sz(2^{2t+1})$, with $t\geq 1$, and $2^{4t+2}+1=p^{m}+2$;

\item $S\cong $ $^{2}G_{2}(3^{2t+1})^{\prime }$, with $t\geq 1$, and $%
3^{6t+3}+1=p^{m}+2$;

\item $S\cong Sp_{2n}(2)$, with $n\geq 3$, and $2^{2n-1}\pm 2^{n-1}=p^{m}+2$;

\item $S\cong PSL_{2}(11),M_{11}$ and $p^{m}=9$;

\item $S\cong A_{7}$ and $p^{m}=13$.
\end{enumerate}

Assume that (1) occurs. Let $\Delta_{i} \in \Sigma $ and $x_{i} \in \Delta_{i}$, then $A_{p^{m}+1}\trianglelefteq G_{\Delta
_{i}}^{\Sigma }\leq S_{p^{m}+1}$ and hence a quotient group of $G_{x _{i}}^{\Delta _{i}}$ contains $A_{p^{m}+1}$ by
Corollary \ref{glue}. On the other hand, by Theorem \ref{Monty} one of the following holds:

\begin{enumerate}

\item[(i)] $G_{x _{i}}^{\Delta _{i}} \leq \Gamma L_{2}(p^{m})$;

\item[(ii)] $G_{x _{i}}^{\Delta _{i}} \leq (Q_{8}\circ
D_{8}).S_{5}$ and $p^{m}=9$;

\item[(iii)] $G_{x _{i}}^{\Delta _{i}} \cong SL_{2}(13)$ and $%
p^{m}=27$;

\item [(iv)] $G_{x _{i}}^{\Delta _{i}} \leq (Z_{p^{m/3}-1}\times
SU_{3}(p^{m/3})).Z_{2m/3}$, with $m \equiv 0 \pmod{3}$.
\item [(v)] $G_{x _{i}}^{\Delta _{i}} \leq \Gamma Sp_{4}(p^{m/2})$, with $m$ even.
\item[(iv)] $Sz(2^{m/2}) \trianglelefteq G_{x _{i}}^{\Delta _{i}}
\leq  \left( Z_{2^{m/2}-1} \times Sz(2^{m/2}) \right).Z_{m/2}$, with $m \equiv 2 \pmod{4}$;

\item[(vii)] $G_{2}(2^{m/3})^{\prime}\trianglelefteq G_{x _{i}}^{\Delta _{i}} \leq \left( Z_{2^{m/3}-1} \times G_{2}(2^{m/3})\right).Z_{m/3}$, with $m
\equiv 0 \pmod{3}$.

\end{enumerate}

Cases (i)--(viii) bring together some of the automorphism groups of the $2$-designs listed in Theorem \ref{Monty}. For instance, $\Gamma L_{2}(p^{m})$ contains $G_{x _{i}}^{\Delta _{i}}$ when this one is as in (2.a.i) or in (2.c.i) of Theorem \ref{Monty}, $(Q_{8}\circ D_{8}).S_{5}$ contains $G_{x _{i}}^{\Delta _{i}}$ when this one is as in (2.b.iii) or in (2.c.vi) and the group in case (vii) contains the groups in (2.c.v) and for $q=2$ (2.c.vii) (the non solvable case) of Theorem \ref{Monty}.

It is easy to check that only groups in (i) for $p^{m}=2,3,4,5$ and in (vi) admit a quotient group containing $A_{p^{m}+1}$ as a normal subgroup. Actually, $p^{m}\neq 2$ by Lemma \ref{c=d}, and $p^{m}\neq 5$ since $A_{6}$ occurs only in (i) for $p^m=9$ and in (vi) for $p^m=4$. If $p^{m}=3$,
then $G$ is solvable by \cite{P} and also this case is ruled out. Thus $p^{m}=4$ and hence the assertion (I) follows from \cite{LPR}.%

Assume that (2) occurs. Thus $[u^{h-1}]:SL_{h-1}(u)\trianglelefteq
G_{\Delta_{i}}^{\Sigma }$ (e.g. see \cite[Proposition 4.1.17(II)]{KL}) and hence a quotient group of $%
G_{x_{i}}^{\Delta_{i}}$ contains $%
[u^{f(h-1)}]:SL_{h-1}(u^{f}) $ as a normal subgroup by Corollary \ref{glue}. This is clearly impossible for $h \geq 3$, hence $h=2$. Then $u^{f}=p^{m}+1$. By \cite[%
B1.1]{Rib} either $(p^{m},u^f)=(2^{3},3^{2})$, or $f=1$, $p=2$ and $u$ is a
Fermat prime, or $m=1$, $u=2$ and $p$ is a Mersenne prime. In each case $G_{\Delta_{i}}^{\Sigma}$ contains a normal Frobenius group of order $(p^{m}+1) \frac{p^{m}}{(p^{m},2)}$, with kernel of order $p^{m}+1$ and complement of order $\frac{p^{m}}{(p^{m},2)}$. Since a quotient group of $G_{x _{i}}^{\Delta _{i}}$ is isomorphic to $G_{\Delta_{i}}^{\Sigma}$ by Corollary \ref{glue}, it follows that only $G_{x _{i}}^{\Delta _{i}}$ as in (i) is admissible. Moreover, either $\mathcal{D}_{i} \cong AG_{2}(p^{m})$, where $p^{m}$ is either $8$, or $2^{2^{e}}$ with $e \geq 1$, or a Fermat prime, or $\mathcal{D}_{i}$ is as in (2.b), or $\mathcal{D}_{i}$ is as in (2.c.i) of Theorem \ref{Monty}. Assume that the former occurs. Then either $G_{x _{i}}^{\Delta _{i}} \leq \Gamma L_{1}(p^{2m})$, or $SL_{2}(p^{m})\trianglelefteq G_{x _{i}}^{\Delta _{i}}$, or $p^{m}=5$ and $SL_{2}(3)\trianglelefteq G_{x _{i}}^{\Delta _{i}}$ by \cite{F1,LiebF}. If $SL_{2}(p^{m})\trianglelefteq G_{x _{i}}^{\Delta _{i}}$, then $(p^{m}+1) \frac{p^{m}}{(p^{m},2)}$ divides $[\Gamma L_{2}(p^{m}):SL_{2}(p^{m})]$, and hence $(p^{m}-1)m$, since a quotient group of $G_{x _{i}}^{\Delta _{i}}$ contains a normal Frobenius group of order $(p^{m}+1) \frac{p^{m}}{(p^{m},2)}$. This is clearly is impossible.  Case (2.c.i) of Theorem \ref{Monty} is ruled out by the previous argument, since $SL_{2}(p^{m})\trianglelefteq G_{x _{i}}^{\Delta _{i}} \leq (Z_{q^{m/2}-1} \circ SL_{2}(p^{m})).Z_{m}$. Finally, case $p^{m}=5$ and $SL_{2}(3)\trianglelefteq G_{x _{i}}^{\Delta _{i}}$ is ruled out similarly. Thus $G_{x _{i}}^{\Delta _{i}} \leq \Gamma L_{1}(p^{2m})$ and hence $(p^{m}+1) \frac{p^{m}}{(p^{m},2)}$ divides the order of $\Gamma L_{1}(p^{2m})$. Then $\frac{p^{m}}{(p^{m},2)} \mid 2m$ and hence $p^{m}=4,16$, since $p^{m}\geq 3$ by Lemma \ref{c=d}. We reach the same conclusion if $\mathcal{D}_{i}$ is either as in (2.b) of Theorem \ref{Monty}, since $G_{x _{i}}^{\Delta _{i}} \leq \Gamma L_{1}(p^{2m})$ in this case. Hence,  $p^{m}=4,16$ in each case. If $p^{m}=4$, then $A_{5} \trianglelefteq G^{\Sigma}\leq S_{5}$ and hence the assertion (I) follows from \cite{LPR}.
 
Assume that $p^{m}=16$. Then $PSL_{2}(17) \trianglelefteq G^{\Sigma}\leq PGL_{2}(17)$. If $G^{\Sigma}\cong PGL_{2}(17)$, then  a quotient group of $ G_{x _{i}}^{\Delta _{i}}$ is isomorphic to to Frobenius group of order $17\cdot 16$. However, this is impossible since  $ G_{x _{i}}^{\Delta _{i}} \leq \Gamma L_{1}(2^{8})$. Thus $G^{\Sigma } \cong PSL_{2}(17)$, which is (III).

Cases (3), (4) and (5) yield an equation of type $u^{i}=p^{m}+1$, with $i=3,2,3$ respectively. Only (4) is admissible with $m=1$ and $u=2$ by \cite[B1.1(2)]{Rib}, however it is ruled out since $u\neq 2$ in (4). Case (6) cannot occur, since $2^{2n-1}\pm 2^{n-1}=p^{m}+2$ with $n\geq 3$ has no solutions. In (7), $G^{\Sigma}\cong PSL_{2}(11),M_{11}$ by \cite{At} and hence (II) holds. 

Finally, assume that (8) occurs. Then $G^{\Sigma}\cong A_{7}$ by \cite{At}. Moreover $\mathcal{D}%
_{i}\cong AG_{2}(13)$ and $G_{x _{i}}^{\Delta _{i}}\leq GL_{2}(13)$ by Theorem \ref{Monty}. On the other hand, a quotient
group of $G_{x _{i}}^{\Delta _{i}}$ contains $%
PSL_{2}(7)$ by Corollary \ref{glue}, since $PSL_{2}(7)\trianglelefteq
G_{\Delta _{i}}^{\Sigma }$, being $\left \vert \Sigma \right\vert =15$, and we reach a contradiction. This completes the
proof.
\end{proof}

\begin{theorem}\label{Aelo}
If $G^{\Sigma }$ is almost simple, then $p^{m}=4$ and $\mathcal{D}$ is isomorphic to one of the four $2$-$(96,20,4)$ designs constructed in \cite{LPR}.
\end{theorem}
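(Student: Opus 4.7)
The plan is to rule out cases (II) and (III) of Proposition \ref{Ao} so that only case (I) survives. In both cases the socle $Soc(G^{\Sigma})$ is non-abelian simple, so by Corollary \ref{glue} the point stabilizer $G_{\Delta_{i}}^{\Sigma}$ must appear as a quotient of $G_{x_{i}}^{\Delta_{i}}$. Concretely I would record these stabilizers as $A_{5}$ or $M_{10} \cong A_{6}.2$ when $G^{\Sigma}\cong PSL_{2}(11)$ or $M_{11}$ respectively, and as the Frobenius group $Z_{17}{:}Z_{8}$ when $G^{\Sigma}\cong PSL_{2}(17)$. This requirement would then be combined with the classification of $\mathcal{D}_{i}$ and its flag-transitive automorphism group furnished by Theorem \ref{Monty}, specialised to $\lambda = 9$ or $\lambda = 16$.

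For case (II) with $\lambda = 9$, Theorem \ref{Monty} leaves four possibilities for $\mathcal{D}_{i}$: $AG_{2}(9)$, the Hall plane of order $9$, a subspace $2$-$(81,9,3^{2-t})$ design, and the exceptional $2$-$(3^{4},3^{2},3)$ design of (2c.v). The subspace option is ruled out immediately because $\Gamma L_{1}(81)$ is solvable and so admits no $A_{5}$ (let alone $M_{10}$) quotient. Option (2c.v) has $G_{x_{i}}^{\Delta_{i}}\leq (Z_{2}.S_{5}^{-}){:}Z_{2}$, whose only non-abelian simple section is $A_{5}$, excluding $M_{10}$; the surviving $A_{5}$ possibility is then eliminated by pulling back an element of order $11$ of $G^{\Sigma}$ to $G$ and analysing its fixed space on $V=V_{4+t}(3)$ via Lemma \ref{div}. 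The $AG_{2}(9)$ and Hall-plane cases I would dispose of by working through the lists of admissible translation complements in \cite{F1,F2}, using the numerical constraints on $\theta$ from Lemma \ref{mamma}.

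For case (III) with $\lambda = 16$, the possibilities are $AG_{2}(16)$, a subspace design with $G_{x_{i}}^{\Delta_{i}}\leq \Gamma L_{1}(2^{8})$, and the $2$-$(256,16,4)$ design of (2c.i) with $SL_{2}(16)\trianglelefteq G_{0}^{\Delta_{i}}\leq \Gamma L_{2}(16)$ (the Lüneburg, $Sz$- and $G_{2}$-type options are excluded by the congruence conditions on $m$). In the subspace subcase I would use that inside $\Gamma L_{1}(2^{8})\cong Z_{255}{:}Z_{8}$ the normalizer of a Sylow $17$-subgroup has structure incompatible with the kernel of the surjection onto $Z_{17}{:}Z_{8}$ required by Corollary \ref{glue}. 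In the remaining two subcases I would lift $Soc(G^{\Sigma})=PSL_{2}(17)$ to a normal subgroup $N$ of $G$; Theorem \ref{centraliz} forces $N\cap C$ to be a $2$-group, so $PSL_{2}(17)$ is a section of $H=G/C$, and I would then apply Aschbacher's theorem to the irreducible subgroup $H$ of $GL_{8+t}(2)$ exactly as in Proposition \ref{dec}, invoking the low-dimensional lists of \cite{BP} to forbid $PSL_{2}(17)$ as a section of any geometric or nearly-simple class compatible with the prescribed flag-transitive affine action on each $\Delta_{i}$.

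The main obstacle is expected to be the $AG_{2}(16)$ subcase of (III): since $SL_{2}(16)\leq GL_{2}(16)$ contains a Singer-type cyclic torus of order $17$ and the field automorphism of $GF(16)$ supplies a natural cyclic group of order $4$, the required Frobenius quotient $Z_{17}{:}Z_{8}$ appears a priori plausible. Excluding it cleanly will require matching the block-intersection rigidity of Section 2 (in particular Lemmas \ref{if}(2) and \ref{mamma}) against the detailed subgroup structure of $\Gamma L_{2}(16)$, ensuring that no simultaneous realisation of the $PSL_{2}(17)$-quotient on $\Sigma$ and the $SL_{2}(16)$-type action on each $\Delta_{i}$ is possible.
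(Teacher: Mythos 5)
You have correctly identified the skeleton (eliminate cases (II) and (III) of Proposition \ref{Ao} so that only (I) survives), but there are two genuine gaps. The serious one is in case (III). Your plan to apply Aschbacher's theorem and the lists of \cite{BP} ``to forbid $PSL_{2}(17)$ as a section'' cannot succeed, because $PSL_{2}(17)$ genuinely occurs as an irreducible subgroup both of $GL_{8}(2)$ (the case $t=0$) and of $GL_{16}(2)$ (the case $t=8$, in the nearly simple class): no subgroup-structure theorem rules it out. The paper disposes of $t=0$ not by representation theory but by a flag-transitivity count: $\left\vert G\right\vert =2^{12}\cdot 3^{2}\cdot 17$ and $v=2^{9}\cdot 3^{2}$ force $\left\vert G_{x}\right\vert =2^{3}\cdot 17$, which is not divisible by $k=2^{4}\cdot 17$. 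For $t=8$ one has $C=V$ by Proposition \ref{qcDv}(3), and the paper needs an explicit geometric contradiction, obtained with the aid of \textsf{GAP} \cite{GAP}: a Sylow $17$-subgroup $Q$ of $G$ has exactly two invariant proper subspaces $V_{1},V_{2}$ of $V=V_{16}(2)$, the orbits $V_{1}^{H},V_{2}^{H}$ have length $18$ with cross-intersections of size $2^{3}$; identifying $V_{x}=V(\Delta)$ and the subspace $V_{B}$ for the unique $Q$-fixed block $B$ with $\left\{V_{1},V_{2}\right\}$, one computes $\left\vert V_{B}\cap V(\Delta^{\prime })\right\vert =2^{4}$ for $\Delta^{\prime }\neq \Delta$ (since $\left\vert B\cap \Delta^{\prime }\right\vert =2^{4}$ and $\left\vert V_{B}\right\vert =2^{8}$), contradicting $2^{3}$. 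Your outline contains no substitute for either of these two steps, and the $t=8$ configuration survives everything you propose.

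The second gap is your repeated appeal to Lemma \ref{mamma}: that lemma is proved under the standing hypothesis of Section 4 that $C_{G}(V)=V\times U$, and its part (2) is a statement about the second partition $\Sigma^{\prime }$ of Proposition \ref{happens}(2). Theorem \ref{centraliz} shows precisely that this configuration never occurs, so in the setting of the present theorem there is no $\Sigma^{\prime }$, no $U$, and no divisibility constraint $\theta (p^{m}+2)(p^{m}+1)\mid \left\vert G_{\Delta^{\prime }}^{\Delta^{\prime }}\right\vert $ available; both your treatment of the $AG_{2}(9)$/Hall-plane subcases and your proposed resolution of the $AG_{2}(16)$ ``obstacle'' rest on it. (That obstacle is in any event moot: the proof of Proposition \ref{Ao} already forces $G_{x_{i}}^{\Delta_{i}}\leq \Gamma L_{1}(2^{8})$ in case (III).) Relatedly, in case (II) Corollary \ref{glue} only makes $A_{5}$ (resp.\ $M_{10}$) a \emph{quotient} of $G_{x_{i}}^{\Delta_{i}}$, and this alone does not exclude, say, $SL_{2}(5)\leq \Gamma L_{2}(9)$, which surjects onto $A_{5}$. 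The paper closes this by first pinning down $H$ via the tables of \cite{BHRD} and the modular atlas \cite{AtMod} (irreducibility on $V$ holds only for $t=1$), whence $G(\Sigma)=C$ and the isomorphism (\ref{isom}) of Corollary \ref{glue} upgrades ``quotient'' to $G_{x}^{\Delta}\cong G_{\Delta}^{\Sigma}$; only then does the containment test against $\Gamma L_{2}(9)$ and $\left(D_{8}\circ Q_{8}\right).S_{5}$ become decisive (e.g.\ a perfect $A_{5}$ would lie in $SL_{2}(9)$, which has a unique involution). Your order-$11$ pull-back idea via Lemma \ref{div} is in the right spirit for one subcase, but without the $G(\Sigma)=C$ step your case (II) argument is not airtight either.
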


\begin{proof}
Assume that Case (II) of Proposition \ref{Ao} occurs. Then $H$ is an irreducible subgroup of $GL_{8+t}(2)$, with $t\leq 8$, by Lemma \ref{irre}. Moreover, a quotient group of $H$ is isomorphic either to $PSL_{2}(11)$ or to $M_{11}$, since $C \leq G(\Sigma)$ by Theorem \ref{centraliz}. From \cite[Tables 8.18--8.19, 8.25--8.26, 8.35--8.36, 8.44--8.45]{BHRD}, it follows that either $H=G^{\Sigma }$
and $t=1,2$, or $H\cong SL_{2}(11)<Z_{2}.M_{12}$ for $t=2$. However, the
action of $H$ on $V_{4+t}(3)$ is irreducible only for $t=1$ by \cite{AtMod}. Therefore $%
G(\Sigma )=C$ and hence $G(\Sigma)_{x}^{\Delta}=1$ for $\Delta \in \Sigma$ and $x \in \Delta$. It follows that $G^{\Sigma}_{\Delta} \cong G_{x}^{\Delta}$ by (\ref{isom}) of Corollary \ref{glue}. Thus $G(\Sigma)_{x}^{\Delta}$ is isomorphic either to $A_{5}$ or to $P\Sigma L_{2}(9)$ according to whether $H$ is isomorphic to $PSL_{2}(11)$ or $M_{11}$
respectively by \cite{At}. On the other hand, $G_{x}^{\Delta}$ is contained in one of the groups $\Gamma L_{2}(9)$, $GSp_{4}(3)$, or $\left(D_{8}\circ Q_{8}\right) .S_{5}$ according to whether either one of (2.a.i) or (2.c.i), or (2.c.iii), or one of (2.a.iii) or (2.c.vi) occurs, respectively, by Theorem \ref{Monty}, since $p^{m}=9$ and $G_{x}^{\Delta}$ is non-solvable. However, none of these groups contains $A_{5}$ or $P\Sigma L_{2}(9)$ (see \cite[Tables 8.13--8.13]{BHRD}) and hence Case (II) is ruled out.

Assume that Case (III) occurs. Then $H$ is an irreducible subgroup of $GL_{8+t}(2)$, with $t\leq 8$, by Lemma \ref{irre}. Moreover, a quotient group of $H$ is isomorphic to $PSL_{2}(17)$, since $C \leq G(\Sigma)$ by Theorem \ref{centraliz}. If $t<8$, then $t=0$ and $H \cong PSL_{2}(17)$ by \cite[Theorem 3.1]{BP}, since $\Phi _{8}^{\ast }(2)=17$ divides the order of $H$. Thus $\left\vert G \right\vert=2^{12}\cdot 3^{2}\cdot 17$ and hence $\left\vert G_{x} \right\vert=2^{3}\cdot 17$ since $v=2^{9}\cdot 3^{2}$. Then this case is excluded since $k=2^{4}\cdot 17$ does not divide the order of $G_{x}$. Therefore $t=8$ and hence $C=V$ by Proposition \ref{qcDv}(3). It is easy to verify that $H$ is not a geometric subgroup of $GL_{16}(2)$ by using \cite[Section 4]{KL}. Thus $H$ is a nearly simple subgroup of $GL_{16}(2)$ and hence $H \cong PSL_{2}(17)$ by \cite{AtMod}.

Let $Q$ be a Sylow $17$-subgroup $G$. Simple computations with the aid of \textsf{GAP} \cite{GAP} show that $Q$ preserves a decomposition of $V=V_{1}\oplus V_{2}$, where $V_{1}$ and $V_{2}$ are the unique $Q$-invariant proper subspaces of $V$. Moreover, $V_{1}^{H}$ and $V_{2}^{H}$ are two distinct orbits each of length $18$, and $\left \vert V_{2} \cap V_{1}^{\eta} \right \vert=2^{3}$ for each $\eta \in H \setminus H_{V_{1}}$ and $\left \vert V_{1} \cap V_{2}^{\sigma} \right \vert=2^{3}$ for each $\sigma \in H \setminus H_{V_{2}}$.

The group $Q$ fixes a point $x$ of $\mathcal{D}$, since $v=2^{17}\cdot 3^{2}$. Thus $Q$ preserves the unique element $\Delta$ of $\Sigma$ containing $x$ and hence normalizes $V(\Delta)$, being $V_{x}=V(\Delta)$. Then $V_{x}$ is either $V_{1}$ or $V_{2}$, as $V_{1}$ and $V_{2}$ are the unique $Q$-invariant proper subspaces of $V$. Dualizing, $Q$ preserves a block $B$ of $\mathcal{D}$ and hence also $V_{B}$ is either $V_{1}$ or $V_{2}$. Actually, $(V_{x},V_{B})$ is either $(V_{1},V_{2})$ or $(V_{2},V_{1})$, since $[G_{B}:G_{B,x}]=2^{4}\cdot 17$ and since $G_{x}/V_{x}\cong G_{B}/V_{B}\cong F_{136}$. In particular, $x$ and $B$ are the unique point and block of $\mathcal{D}$ fixed by $Q$ respectively. Moreover, $\left \vert B \cap \Delta \right \vert =0$ and $\left \vert B \cap \Delta \right \vert= 2^{4}$ for each $\Delta^{\prime} \in \Sigma \setminus \{\Delta\}$, since $Q$ acts regularly on $\Sigma \setminus \{\Delta\}$.  

Assume that $(V_{x},V_{B})=(V_{1},V_{2})$. Then $V_{1}^{H}=\{V(\Delta^{\prime}): \Delta^{\prime} \in \Sigma \}$ and hence $\left \vert V_{B} \cap V(\Delta^{\prime}) \right \vert =2^{3}$ for each $\Delta^{\prime} \in \Sigma \setminus \{\Delta\}$. On the contrary, $V_{B}=V_{B \cap \Delta^{\prime}}$ and hence $\left \vert V_{B} \cap V(\Delta^{\prime}) \right \vert =2^{4}$ for each $ \Delta^{\prime} \in \Sigma \setminus \{\Delta\}$, since $V_{B}$ preserves each element of $\Sigma \setminus \{\Delta\}$ and since $\left \vert V_{B} \right \vert =2^{8}$ and $\left \vert B \cap \Delta^{\prime} \right \vert =2^{4}$. So, we obtain a contradiction and hence this case is excluded. The case $(V_{x},V_{B})=(V_{2},V_{1})$ is ruled out similarly, and the proof is thus completed.
\end{proof}

\section{The case where $\mathcal{D}$ is of type 2}
In this section we assume that $\mathcal{D}$ is of type 2. Hence $\mathcal{D}$ is a symmetric $2$-$((\lambda +6)\frac{\lambda ^{2}+4\lambda -1}{4}%
,\lambda \frac{\lambda +5}{2},\lambda )$ design, with $\lambda
\equiv 1,3\pmod{6}$ admitting a flag-transitive automorphism group $G$ preserving a partition $\Sigma$ of the point set of $\mathcal{D}$ in $\frac{\lambda ^{2}+4\lambda -1}{4}$ classes each of size $\lambda+6$. Then $\mathcal{%
D}_{i}$ is a $2$-$(\lambda +6,3,\lambda /\theta )$ design, with $\theta \mid
\lambda $, admitting $G_{\Delta _{i}}^{\Delta _{i}}$ as a flag-transitive, point-primitive
automorphism group for each $i=1,...,\frac{\lambda ^{2}+4\lambda -1}{4}$ by Theorem \ref{PZM}. Our aim is to prove the following result and hence completing the proof of Theorem \ref{main}.

\bigskip

\begin{theorem}
\label{t2} If $\mathcal{D}$ is of type 2, then $\mathcal{D}$ is isomorphic to the $2$-$(45,12,3)$
design constructed in \cite[Construction 4.2]{P}.
\end{theorem}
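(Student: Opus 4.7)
The plan is to parallel the analysis of Section 3 (the type 1 case), with Theorem \ref{tre} of the Appendix replacing Theorem \ref{Monty}. The goal is to show that $\lambda = 3$ is forced; since for $\lambda = 3$ the parameters of a type 2 design coincide with those of a type 1 design (both give $v = 45$, $k = 12$, $c = 9$, $d = 5$), the uniqueness statement in \cite[Corollary 4.2]{P} will then yield the conclusion.

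First, I would apply Theorem \ref{tre} to the flag-transitive, point-primitive $2$-$(\lambda+6, 3, \lambda/\theta)$ design $\mathcal{D}_i$. The results of Section 2 transfer verbatim to the type 2 setting (they require only $k_{0} \geq 3$), so Lemmas \ref{if}, \ref{prim}, \ref{Ordine} and Proposition \ref{qcDv} (applied to a minimal normal subgroup $V \leq G(\Sigma)$) give the structure of $G(\Sigma)$ and the embedding of $G_{\Delta_i}^{\Delta_i}$ in terms of the socle of $G_{\Delta_i}^{\Delta_i}$ read off from Theorem \ref{tre}.

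Next, for each $\lambda \geq 7$ arising from Theorem \ref{tre}, I would derive a contradiction. Although $G^{\Sigma}$ need not be $2$-transitive (each block meets exactly $\lambda(\lambda+5)/6$ of the $d = (\lambda^{2}+4\lambda-1)/4$ classes of $\Sigma$ and is disjoint from the remaining $(\lambda-1)(\lambda+3)/12$), the stabiliser $G_{B}$ acts transitively on each of these two subsets, providing substantial structural information on $G^{\Sigma}$. In the affine cases of Theorem \ref{tre}, the Aschbacher-type analysis of $G/C_{G}(V) \leq GL(V)$ as in Propositions \ref{dec}--\ref{sevdah}, together with the Pillai-type arithmetic exemplified by Lemma \ref{div}, should rule out the possibilities; in the almost simple cases, I would use the fixed-point bound of \cite[Corollary 3.7]{La} together with the lower bound on fixed points of non-trivial elements of $G(\Delta_{i})$ obtained from an analogue of Lemma \ref{NoFTR}.

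For the remaining case $\lambda = 3$, the flag-transitive, point-imprimitive symmetric $2$-$(45, 12, 3)$ design is unique by \cite[Corollary 4.2]{P}, proving the theorem. The main obstacle is the cascade of cases in Theorem \ref{tre}: each admissible $\lambda$ will produce a distinct pair $(\mathcal{D}_{i}, G_{\Delta_{i}}^{\Delta_{i}})$ that requires its own tailored arithmetic or structural argument, analogous to the lengthy case analysis in Section 3 (especially Theorems \ref{sevdah}, \ref{pupicchia} and \ref{Aelo}) for the type 1 setting.
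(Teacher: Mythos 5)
Your high-level frame (classify $\mathcal{D}_{i}$ via Theorem \ref{tre}, force $\lambda=3$, then invoke Praeger's uniqueness result) matches the paper, but the proposed execution has a genuine gap: the Section 3 machinery you plan to port does not transfer. Lemma \ref{if} and Proposition \ref{qcDv} are not ``results of Section 2''; they live in Section 3 and rest on Proposition \ref{boom}, whose proof uses $k=\lambda(\lambda+1)$ and $|\Sigma|=\lambda+2$ so that each block misses exactly one class of $\Sigma$ — precisely the feature that fails for type 2, as you yourself note when observing that $G^{\Sigma}$ need not be $2$-transitive. Without $2$-transitivity on $\Sigma$ there is no analogue of Lemma \ref{if}, hence no Proposition \ref{qcDv}, and the entire Aschbacher-type analysis of $G/C_{G}(V)$ (Propositions \ref{dec}--\ref{sevdah}) has no starting point. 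Your plan for the almost simple cases (fixed-point bounds from \cite[Corollary 3.7]{La} plus an analogue of Lemma \ref{NoFTR}) is likewise not carried out and is left as ``tailored arithmetic,'' which is exactly where the difficulty sits.

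The paper closes these cases by three ideas absent from your proposal, none of which involves $G^{\Sigma}$ or Aschbacher's theorem. First (Lemma \ref{ASQuasiP}), if $G_{\Delta_{i}}^{\Delta_{i}}$ is almost simple then $G$ is quasiprimitive: otherwise $\Sigma$ is the orbit decomposition under a minimal normal $N$, the induced $2$-transitive subdesign $\mathcal{D}_{i}^{\prime}$ is non-symmetric, forcing every block-$N$-orbit to have length $>\lambda+6$ and hence fewer block-orbits than point-orbits, contradicting \cite[Theorem 3.3]{La}. Second (Proposition \ref{dobro}), since $d=\frac{\lambda^{2}+4\lambda-1}{4}$ is odd, $K_{\Delta_{i}}$ contains a Sylow $2$-subgroup of $K\cong PSL_{h}(q)$ and so lies in a maximal parabolic by Tits' Lemma, whence $\qbin{h}{t}$ divides $d$; Zsigmondy's theorem then eliminates all linear cases (and with them the $2$-$(31,3,25)$ case), leaving only $\mathcal{D}_{i}\cong AG_{h}(3)$ with $\lambda=\theta=3^{h}-6$. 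Third — and this is the step your proposal most conspicuously lacks — the affine case is finished by an elementary overlap count, not by linear-group analysis: since $\theta=\lambda$, the $\lambda$ blocks through $x_{i}\in\Delta_{i}$ and $x_{j}\in\Delta_{j}$ spawn $\lambda^{2}$ distinct blocks through $x_{i}$, so $\lambda^{2}\leq r=\lambda\frac{\lambda+5}{2}$, forcing $\lambda\leq 5$ and hence $\lambda=3$, $h=2$; the conclusion then follows from \cite[Corollary 1.2]{P} (your citation of \cite[Construction 4.2]{P} identifies the design, but the uniqueness statement used is Corollary 1.2). As written, your proposal would stall at each of these three points, so it cannot be completed along the route you describe without importing essentially all of the paper's actual arguments.
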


\bigskip

Our starting point is the following theorem which classifies $\mathcal{D}_{i}$. It is an application of Theorem \ref{tre} whose statement and proof are the content of the Appendix of the present paper.

\bigskip

\begin{theorem}
\label{help}Let $\mathcal{D}_{i}$ be a $2$-$(\lambda+6,3,\lambda /\theta )$-design,
with $\lambda\equiv 1,3\pmod{6}$ and $\lambda \geq 3$, admitting a
flag-transitive automorphism group $G_{\Delta _{i}}^{\Delta _{i}}$. Then $%
G_{\Delta _{i}}^{\Delta _{i}}$ acts point-$2$-transitively on $\mathcal{D}%
_{i}$, and one of the following holds:

\begin{enumerate}
\item $\mathcal{D}_{i}$ is a $2$-$\left( \frac{q^{h}-1}{q-1},3,q-1\right) $
design, $q$ is even, $\frac{q^{h}-1}{q-1}\equiv 0,1\pmod{3}$, $q-1\mid h-6$, 
$\theta =\frac{q^{h}-6q+5}{\left( q-1\right) ^{2}}$ and one of the following holds:

\begin{enumerate}
\item $PSL_{h}(q)\trianglelefteq G_{\Delta _{i}}^{\Delta _{i}}\leq P\Gamma
L_{h}(q)$.

\item $G_{\Delta _{i}}^{\Delta _{i}}\cong A_{7}$ and $(h,q)=(4,2)$.
\end{enumerate}

\item $\mathcal{D}_{i}$ is a $2$-$\left(31,3,25\right) $ design, $\theta =1$ and $PSL_{3}(5)\trianglelefteq
G_{\Delta _{i}}^{\Delta _{i}}\leq PGL_{3}(5)$.

\item $\mathcal{D}_{i}\cong AG_{h}(3)$, $h \geq 2$, $\lambda =\theta=3^{h}-6 $ and $G_{\Delta _{i}}^{\Delta _{i}}$
is of affine type.
\end{enumerate}
\end{theorem}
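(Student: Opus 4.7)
The plan is to first upgrade flag-transitivity to $2$-transitivity on points, and then invoke the classification of $2$-transitive permutation groups. Write $H = G_{\Delta_i}^{\Delta_i}$, $v = \lambda + 6$ and $\mu = \lambda/\theta$. Since every block has exactly three points, $H_B$ acts transitively on the $3$ points of $B$, hence transitively on the three $2$-subsets of $B$. Combining this with block-transitivity of $H$ and the fact that every pair of distinct points lies in at least $\mu \geq 1$ blocks shows $H$ is transitive on all $2$-subsets of $\Delta_i$; together with point-transitivity, this forces $H$ to be $2$-transitive on $\Delta_i$, which is the first assertion of the theorem.

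By the CFSG-based classification, $\mathrm{Soc}(H)$ is either an elementary abelian $p$-group $V$ (affine type) or a non-abelian simple group $S$. The hypothesis $\lambda \equiv 1, 3 \pmod{6}$ gives $v \equiv 1, 3 \pmod{6}$ (so $v$ is odd), and combined with the usual integrality conditions $2 \mid \mu(v-1)$ and $6 \mid \mu v(v-1)$, this already eliminates many families. For the almost simple alternative, I would traverse the $2$-transitive list. The main case is $\mathrm{PSL}_h(q)$ on $\mathrm{PG}_{h-1}(q)$: the parity of $v = (q^h-1)/(q-1)$ forces $q$ even; the divisibility $\mu \mid |H_{xy}|$ together with the structure of the line-stabilizer then yields $\mu = q-1$, $\theta = (q^h-6q+5)/(q-1)^2$, and $q-1 \mid h-6$, which is case (1a). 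The exceptional $2$-transitive actions surviving the parity test — namely $A_7$ acting on $15$ points inside $\mathrm{PSL}_4(2)$ (giving case (1b) from a non-line orbit of $3$-subsets) and $\mathrm{PSL}_3(5)$ on $31$ points (giving the $2$-$(31,3,25)$ design with $\theta=1$ of case (2)) — must then be verified to actually support a flag-transitive design with the prescribed parameters. In the affine case, $v = p^h \equiv 1, 3 \pmod 6$ combined with the requirement that $H$ act flag-transitively on a block-size-$3$ design and that $V$ act regularly on points, forces $p = 3$ and $\mathcal{D}_i \cong AG_h(3)$, the classical Steiner triple system, yielding $\mu = 1$ and $\theta = \lambda$, which is case (3).

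The main obstacle will be the case-by-case analysis inside the almost simple family: ruling out each remaining $2$-transitive series (e.g.\ $\mathrm{PSU}_3(q)$, $\mathrm{Sz}(q)$, ${}^2G_2(q)$, sporadic actions) and pinpointing precisely which surviving group-action pairs support a flag-transitive $2$-$(v,3,\mu)$ design with the correct arithmetic. I expect this to rely on Zsigmondy-type primitive-prime-divisor estimates to exclude spurious numerical coincidences, together with explicit knowledge of the pair-stabilizers in each $2$-transitive socle and the constraint $\theta \mid \lambda$ coupling $\mu$ to $\lambda$. A secondary obstacle is the affine case for $p \neq 3$: although the parity/residue conditions eliminate most primes, one must separately exclude small even $p^h$ by using the orbit structure of $H_0$ on the ``third point'' of a flag, showing that no non-Steiner design of block size $3$ can be assembled in an $H$-invariant way.
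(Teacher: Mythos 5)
Your opening step contains a genuine gap. From flag-transitivity with $k=3$ you correctly get that $H=G_{\Delta_i}^{\Delta_i}$ is transitive on unordered pairs of points (the image of $H_B$ in $\mathrm{Sym}(B)\cong S_{3}$ is transitive, hence transitive on $2$-subsets of $B$, and block-transitivity plus $\mu\geq 1$ spreads this to all pairs). But $2$-homogeneity together with point-transitivity does \emph{not} force $2$-transitivity: by Kantor's classification of $2$-homogeneous groups, the exceptions are subgroups of $A\Gamma L_{1}(q)$ with $q\equiv 3\pmod 4$, and this case is not vacuous here, since $v=\lambda+6$ with $\lambda\equiv 1,3\pmod 6$ can well be $\equiv 3\pmod 4$ (e.g.\ $\lambda=9$, $v=15$). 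The paper confronts exactly this residual alternative: Lemma \ref{PP} invokes \cite[Corollary 4.6]{Ka69} (since $(v-1,k-1)=2$) to get the dichotomy ``$2$-transitive, or primitive of rank $3$ with both subdegrees $(v-1)/2$ and $r/\lambda=(v-1)/2$,'' and then Proposition \ref{tre2tr} spends a substantial argument eliminating the rank-$3$ branch via the O'Nan--Scott theorem and the classifications of rank-$3$ groups of product, almost simple and affine type --- including a delicate analysis of $A_{7}\leq G\leq S_{7}$ on $21$ points, and, in the solvable affine cases of Foulser--Kallaher, a proof that $-1\in G_{0}$, which (using that $\lambda$ is odd) forces a fixed block $\{0,x,-x\}$ and yields $r=(p^{d}-1)/4$, contradicting $r=\lambda(p^{d}-1)/2$. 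None of this work appears in your outline, and it cannot be skipped: it is the bridge from flag-transitivity to the CFSG list of $2$-transitive groups on which everything else you propose depends.

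Beyond this, your sketch of the $2$-transitive analysis does follow the paper's route (traverse \cite[Section 2, (A)]{Ka85}, split the $PSL_h(q)$ case according to whether a block is collinear or a triangle, use arithmetic of the form $\lambda\mid v-6$ and primitive-divisor estimates), but two details are off. First, in case (1b) the blocks \emph{are} lines of $PG_{3}(2)$: the paper shows that for $A_{7}$ the stabilizer data ($G_{x,y}\cong A_{4}$, $\lambda\mid 9$, an $E_{4}\leq G(B)$ fixing only collinear points) forces collinearity, so $\mathcal{D}_i\cong PG_{3}(2)$; your parenthetical ``non-line orbit of $3$-subsets'' is the opposite of what happens (the triangle orbit is what produces case (2) for $(h,q)=(3,5)$). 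Second, in the affine case $v=p^{h}$ is odd because $v\equiv 1,3\pmod 6$, so there is no ``even $p^{h}$'' subcase to exclude; the actual difficulty, handled in Theorem \ref{treAff}, is that $p\equiv 1\pmod 6$ is a priori possible, and ruling it out requires showing that a block not contained in a $1$-dimensional $GF(p)$-subspace forces $-1\notin G_{0}$, hence $SL_{n}(q)\trianglelefteq G_{0}$ with $n$ odd, and then deriving contradictions from Gaussian-binomial divisibilities such as $\qbin{n}{2}\mu=\frac{q^{n}-1}{2}\lambda$, before concluding $p=3$ and $\mathcal{D}_i\cong AG_{h}(3)$.
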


\bigskip

Recall that a group is called \emph{quasiprimitive} if each of its non-trivial normal subgroups is transitive. More information on quasiprimitive groups can be found in \cite{PS}.

\bigskip
\begin{lemma}
\label{ASQuasiP}If $G_{\Delta _{i}}^{\Delta _{i}}$ is almost simple then $G$
is quasiprimitive.
\end{lemma}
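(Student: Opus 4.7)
The plan is to argue by contradiction: assume $G$ is not quasiprimitive, and let $N$ be a minimal normal subgroup of $G$ that acts intransitively on $\mathcal{P}$.

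First I apply Lemma~\ref{Ordine} to $N$. Since $\mathcal{D}$ is of type~2 we have $c=\lambda+6$, so the parametric condition $c=\lambda+2$ required by alternative~(3) of Lemma~\ref{Ordine} fails; alternative~(2) contradicts intransitivity of $N$, so alternative~(1) must hold and $\Sigma$ is the $N$-orbit decomposition of $\mathcal{P}$. In particular $N\leq G(\Sigma)$ and $N$ is transitive on each $\Delta_i$.

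Next, I would pin down the structure of $N$. The subgroup $N^{\Delta_i}$ is nontrivial and normal in the almost simple group $G_{\Delta_i}^{\Delta_i}$, hence contains its simple socle $S_i$ and is nonabelian; thus $N$ is nonabelian, and being minimal normal, $N\cong T^{\ell}$ for some nonabelian simple group $T$ and some $\ell\geq 1$. A nontrivial quotient of $T^{\ell}$ has the form $T^{\ell_i}$ with $1\leq \ell_i\leq \ell$, and its normal simple subgroups are the coordinate factors, so $S_i\cong T$. The embedding $N^{\Delta_i}\cong T^{\ell_i}\trianglelefteq G_{\Delta_i}^{\Delta_i}\leq \mathrm{Aut}(T)$, together with the solvability of $\mathrm{Out}(T)$ (Schreier's conjecture), forces $T^{\ell_i}\leq \mathrm{Inn}(T)\cong T$, so $\ell_i=1$ and $N^{\Delta_i}=S_i$ for each $i$.

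To conclude, I would use a centralizer argument. By Theorem~\ref{help} each $S_i$ acts $2$-transitively on $\Delta_i$ with a maximal, self-normalizing point stabilizer, so $C_{\mathrm{Sym}(\Delta_i)}(S_i)=1$; therefore $C_G(N)$ fixes every $\Delta_i$ pointwise, and faithfulness of $G$ on $\mathcal{P}=\bigcup_i\Delta_i$ yields $C_G(N)=1$. Conjugation thus embeds $G$ into $\mathrm{Aut}(N)\cong\mathrm{Aut}(T)\wr S_{\ell}$. On the other hand flag-transitivity gives $|G|\geq vr=cd\cdot\lambda(\lambda+5)/2$, and combining this lower bound with the divisibility $\ell\mid d$ (which comes from the $G$-equivariant surjection $\Sigma\to\{1,\dots,\ell\}$ recording which coordinate factor of $N$ is faithfully represented on each $\Delta_i$) and the explicit parameters of Theorem~\ref{help}, one reaches a numerical contradiction.

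The main obstacle is this final numerical step, because the parameter $\ell$ is \emph{a priori} free. The sporadic cases $S_i\cong A_7$ (with $\lambda=9$, $c=15$, $d=29$) and $S_i\cong PSL_3(5)$ (with $\lambda=25$, $c=31$, $d=181$) are immediately disposed of by direct arithmetic, since $vr$ already exceeds $|\mathrm{Aut}(T)|\cdot \ell!$ for every $\ell\mid d$ allowed by $|T|^{\ell}\geq c$. The infinite family $S_i\cong PSL_h(q)$ with $q$ even is the most delicate: one has to exploit that $vr=cd\cdot\lambda(\lambda+5)/2$ grows like $c^{3}$ while $|\mathrm{Aut}(T)\wr S_{\ell}|$ is only polynomially bounded in $c$ once $\ell$ is pinned down by the two constraints $\ell\mid d$ and $|T|^{\ell}/c\in\mathbb{Z}$. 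Together these will rule out every admissible triple $(T,\ell,q,h)$ and complete the contradiction.
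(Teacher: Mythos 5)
Your opening is sound and matches the paper: for a type~2 design $c=\lambda+6\neq\lambda+2$, so alternative (3) of Lemma~\ref{Ordine} is unavailable, and an intransitive minimal normal subgroup $N$ forces $\Sigma$ to be the $N$-orbit decomposition; your identification $N^{\Delta_i}=S_i\cong T$ via normal-subgroup structure and Schreier is also fine. But the endgame contains a genuine, fatal gap. Embedding $G$ into $\Aut(T)\wr S_{\ell}$ gives an \emph{upper} bound on $\left\vert G\right\vert$ that \emph{grows} with $\ell$, while flag-transitivity gives the fixed lower bound $vr$; a contradiction requires the upper bound to fall below $vr$, which happens only for small $\ell$. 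Your only upper control on $\ell$ is $\ell\mid d$ (the condition $\left\vert T\right\vert^{\ell}\geq c$ excludes nothing, being a lower-bound constraint satisfied by every $\ell$), so the case $\ell=d$ survives: already in your ``immediately disposed of'' case $T\cong A_{7}$ one has $vr=435\cdot 63=27405$, while for $\ell=29$ the quantity $\left\vert \Aut(T)\right\vert^{\ell}\,\ell!$ (and even your $\left\vert \Aut(T)\right\vert\cdot \ell!$, with $29!\approx 8.8\times 10^{30}$) is astronomically larger, so no arithmetic contradiction exists; the same failure occurs for $PSL_{3}(5)$ with $\ell=181$ and throughout the $PSL_{h}(q)$ family. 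The configuration $N\cong T^{d}$, one simple factor acting $2$-transitively on each class and trivially elsewhere, simply cannot be excluded by comparing $\left\vert G\right\vert$ with $\left\vert \Aut(T)\wr S_{\ell}\right\vert$. A secondary gap: your deduction $C_{G}(N)=1$ tacitly assumes $C_{G}(N)$ stabilizes each $\Delta_i$, but a centralizing element need only permute the $N$-orbits and may interchange two classes having the same ``active'' coordinate factor; this is repairable (e.g.\ via a minimal normal subgroup of $G$ inside $C_{G}(N)$), but not by the sentence given.

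The paper closes the proof with a counting argument that is uniform in $\ell$ and needs almost none of your structure theory: since $N_{\Delta_i}^{\Delta_i}$ contains the socle, it is point-$2$-transitive on $\Delta_i$ by Theorem~\ref{help}, so $\mathcal{D}_i'=\left( \Delta_i,\left( B\cap\Delta_i\right)^{N_{\Delta_i}^{\Delta_i}}\right)$ is a flag-transitive $2$-$(\lambda+6,3,\lambda')$ design; if it were symmetric then $(\lambda+5)\lambda'=6$ would force $\lambda=1$, against $\lambda\geq 3$, so by Fisher's inequality $\mathcal{D}_i'$ has more than $\lambda+6$ blocks, whence every block-$N$-orbit of $\mathcal{D}$ has length exceeding $\lambda+6=c$. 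Then $N$ has fewer than $d$ block-orbits but exactly $d$ point-orbits, contradicting \cite[Theorem 3.3]{La}, which states that an automorphism group of a symmetric design has equally many point- and block-orbits. Some device of this kind --- exploiting the symmetry of $\mathcal{D}$ itself rather than order bounds --- is exactly what your argument is missing.
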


\begin{proof}
Suppose the contrary. Then there is a minimal normal subgroup of $G$ such
that the $G$-invariant partition $\Sigma $ is the orbit decomposition of the point set of $\mathcal{D}$ under $N$ by Lemma %
\ref{Ordine}. Then $N_{\Delta _{i}}^{\Delta _{i}}$ is a normal subgroup of $%
G_{\Delta _{i}}^{\Delta _{i}}$ acting transitively on $\Delta _{i}$ for each 
$i=1,...,\frac{\lambda ^{2}+4\lambda -1}{4}$. Then $Soc(G_{\Delta
_{i}}^{\Delta _{i}})\trianglelefteq N_{\Delta _{i}}^{\Delta
_{i}}\trianglelefteq G_{\Delta _{i}}^{\Delta _{i}}$ by \cite[Theorem
4.3B(iii)]{DM}, since $Soc(G_{\Delta _{i}}^{\Delta _{i}})$ is non-abelian simple.
Then $N_{\Delta _{i}}^{\Delta _{i}}$ acts point $2$-transitively on $%
\mathcal{D}_{i}$ by Theorem \ref{help}, and hence $\mathcal{D}_{i}^{\prime
}=\left( \Delta _{i},\left( B\cap \Delta _{i}\right) ^{N_{\Delta
_{i}}^{\Delta _{i}}}\right) $, where $B$ is any block of $\mathcal{D}$ such that $B \cap \Delta_{i} \neq \varnothing$, is
a flag-transitive, point $2$-transitive $2$-$(\lambda+6,3,\lambda ^{\prime })$
subdesign of $\mathcal{D}_{i}$, with $\lambda ^{\prime }\mid \frac{\lambda}{\theta} 
$. If $\mathcal{D}_{i}^{\prime }$ is symmetric, then $\lambda +6=7$ and $\lambda ^{\prime }=\lambda =1$, since $b=\frac{(\lambda+6)(\lambda+5)}{6}\lambda ^{\prime }$, whereas $\lambda \geq 3$ by Lemma \ref{c=d}. Thus $\mathcal{D}_{i}^{\prime }$ is non-symmetric and hence $\left\vert \left(
B\cap \Delta _{i}\right) ^{N_{\Delta _{i}}^{\Delta _{i}}}\right\vert >\lambda+6$.

Since $N_{B}\leq N_{B\cap \Delta _{i}}$, being $\Delta _{i}$ a $N$-orbit, it
follows that $\frac{\left\vert N_{B\cap \Delta _{i}}\right\vert }{\left\vert
N_{B}\right\vert }$ is an integer, and hence 
\begin{equation*}
\left\vert B^{N}\right\vert =\frac{\left\vert N\right\vert }{\left\vert
N_{B}\right\vert }=\frac{\left\vert N_{\Delta _{i}}\right\vert }{\left\vert
N_{B\cap \Delta _{i}}\right\vert }\cdot \frac{\left\vert
N_{B\cap \Delta _{i}}\right\vert }{\left\vert N_{B}\right\vert }=\frac{%
\left\vert N_{\Delta _{i}}^{\Delta _{i}}\right\vert }{\left\vert \left(
N_{\Delta _{i}}^{\Delta _{i}}\right) _{B\cap \Delta _{i}}\right\vert }\cdot 
\frac{\left\vert N_{B\cap \Delta _{i}}\right\vert }{\left\vert
N_{B}\right\vert }>(\lambda+6)\frac{\left\vert N_{B\cap \Delta _{i}}\right\vert }{%
\left\vert N_{B}\right\vert }\text{.}
\end{equation*}%
Therefore $\left\vert B^{N}\right\vert >\lambda+6$. Since $N\trianglelefteq G$ and $G
$ acts block-transitively, the block set of $\mathcal{D}$ is partitioned into $N$-orbits of
equal length greater than $\lambda+6$. Then the number of block-$N$-orbits is strictly
less than $\frac{\lambda ^{2}+4\lambda -1}{4}$, as $b=v=(\lambda+6)\cdot \frac{\lambda ^{2}+4\lambda -1}{4}$, but this contradicts \cite[Theorem 3.3]{La}, since the number of point-$N$-orbits is $\frac{\lambda ^{2}+4\lambda -1}{4}$. Thus the lemma's statement holds.
\end{proof}

\begin{proposition}
\label{dobro}$\mathcal{D}_{i}\cong AG_{h}(3)$, $h \geq 2$, $G_{\Delta _{i}}^{\Delta _{i}}
$ is of affine type and $\lambda =\theta =3^{h}-6$.
\end{proposition}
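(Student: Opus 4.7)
The plan is to invoke Theorem \ref{help}, which leaves three alternatives for $\mathcal{D}_{i}$, and to eliminate the two in which $G_{\Delta_{i}}^{\Delta_{i}}$ is almost simple (cases (1) and (2)); the remaining affine alternative is exactly the conclusion of the proposition.

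First I would observe that in cases (1) and (2) of Theorem \ref{help} the group $G_{\Delta_{i}}^{\Delta_{i}}$ is almost simple, so Lemma \ref{ASQuasiP} applies and $G$ is quasiprimitive on $\mathcal{P}$; in particular, every non-trivial normal subgroup of $G$ is point-transitive. From this I would deduce that $G(\Sigma)=1$: any minimal normal subgroup of $G$ contained in $G(\Sigma)$ would have its orbits inside the $\Delta_{j}$'s and thus could not cover $\mathcal{P}$ since $d>1$. Consequently $G\cong G^{\Sigma}$ acts faithfully on $\Sigma$ of degree $d=\frac{\lambda^{2}+4\lambda-1}{4}$, with $G_{\Delta_{i}}$ a subgroup of index $d$ admitting $G_{\Delta_{i}}^{\Delta_{i}}$ as a quotient since $G(\Delta_{i})$ lies in the kernel of the action on $\Delta_{i}$.

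Next I would combine this faithful action with flag-transitivity. The product $vk=(\lambda+6)d\cdot\lambda(\lambda+5)/2$ must divide $|G|$, while $|G|$ divides $d!$; moreover $Soc(G_{\Delta_{i}}^{\Delta_{i}})$ must be a composition factor of $G$. In case (1b) these constraints apply to the finite parameters $(v,k,\lambda)=(435,63,9)$ with $d=29$ and $A_{7}$-structure on the blocks of imprimitivity, and in case (2) to $(v,k,\lambda)=(5611,375,25)$ with $d=181$ and $PSL_{3}(5)$-structure; I expect both to be dispatched by direct inspection of subgroup lattices using ATLAS data, since the embedding $G\leq S_{d}$ with $d$ so small relative to $v$ is extremely restrictive.

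The main obstacle will be case (1a), i.e.\ the infinite family $PSL_{h}(q)\trianglelefteq G_{\Delta_{i}}^{\Delta_{i}}\leq P\Gamma L_{h}(q)$ with $q$ even, $\lambda+6=(q^{h}-1)/(q-1)$ and $q-1\mid h-6$. Here I would split according to $q=2$ (the infinite family $\lambda=2^{h}-7$ with $h\geq 4$ and $\theta=\lambda$) and $q>2$; in each subcase I would compare the primitive prime divisors of $q^{h}-1$ appearing in $v$ or $k$ with the prime divisors of $|P\Gamma L_{h}(q)|$, using Zsigmondy-type estimates of the flavour already exploited in Section 4 together with the faithful embedding $G\hookrightarrow S_{d}$, to force a contradiction or reduce to a handful of small pairs $(h,q)$ that can be checked directly.
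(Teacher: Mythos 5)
Your opening reduction coincides with the paper's: by Theorem \ref{help} only cases (1) and (2) need to be excluded, and there $G_{\Delta_i}^{\Delta_i}$ is almost simple, so Lemma \ref{ASQuasiP} makes $G$ quasiprimitive; your further deduction that $G(\Sigma)=1$ is correct. Your treatment of the two sporadic cases (1b) and (2) is workable, essentially because $d=29$ and $d=181$ are primes: the paper disposes of them even faster by noting that a minimal normal subgroup $N$ is point-transitive, hence transitive on $\Sigma$, so $d\mid\left\vert N\right\vert$, while $N\cong Soc(G_{\Delta_1}^{\Delta_1})^{e}$ and $29\nmid\left\vert A_{7}\right\vert^{e}$, $181\nmid\left\vert PSL_{3}(5)\right\vert^{e}$. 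One caution: your assertion that $Soc(G_{\Delta_i}^{\Delta_i})$ ``must be a composition factor of $G$'' is not automatic — composition factors of sections of $G$ need not be composition factors of $G$. It becomes true here only through the chain $Soc(G_{\Delta_i}^{\Delta_i})\trianglelefteq N_{\Delta_i}^{\Delta_i}\trianglelefteq G_{\Delta_i}^{\Delta_i}$, obtained from Lemma \ref{prim} and \cite[Theorem 4.3B(iii)]{DM}, which in fact yields the much stronger structural fact $N\cong PSL_{h}(q)^{e}$ that the paper exploits.

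The genuine gap is case (1a), which is the heart of the proposition, and the mechanism you propose for it does not function. A primitive prime divisor $u$ of $q^{h}-1$ divides $v=\frac{q^{h}-1}{q-1}\,d$, but it also divides $\left\vert PSL_{h}(q)\right\vert$ by construction, so comparing it with the primes of $\left\vert P\Gamma L_{h}(q)\right\vert$ is circular (and $u\nmid k$, since $k=\lambda\frac{\lambda+5}{2}\equiv 3\pmod{u}$); moreover $G$ itself is not globally tied to $P\Gamma L_{h}(q)$ — only the induced groups $G_{\Delta_i}^{\Delta_i}$ are — and for this infinite family $d\sim\lambda^{2}/4$ is so large that $vk\mid\left\vert G\right\vert\mid d!$ is essentially vacuous. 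The paper's decisive steps, absent from your sketch, are: (i) since $\lambda\equiv 1,3\pmod{6}$ is odd, $d=\frac{\lambda^{2}+4\lambda-1}{4}$ is \emph{odd}, so the stabilizer in a simple factor $K\cong PSL_{h}(q)$ of a class $\Delta_i$ contains a full Sylow $2$-subgroup of $K$ and is proper; (ii) Tits's Lemma \cite[Theorem 1.6]{Sei} then forces $K_{\Delta_i}$ into a maximal parabolic subgroup, so some Gaussian coefficient $\qbin{h}{t}$, $1\leq t\leq h/2$, divides $d=\left(\frac{q^{h}-1}{q-1}\right)^{2}-8\left(\frac{q^{h}-1}{q-1}\right)+11$; (iii) any primitive prime divisor $u$ of $q^{h}-1$ divides $\qbin{h}{t}$ and hence divides $11$, so $u=11$ and $fh\mid 10$ by \cite[Proposition 5.2.15(ii)]{KL}, leaving only $(q,h)=(2,10),(4,5)$, which fail the divisibility, while the Zsigmondy exception $(q,h)=(2,6)$ fails as well. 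Without (i)–(iii), or a substitute for them, your split into $q=2$ and $q>2$ has no engine to terminate the infinite family.
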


\begin{proof}
Assume that $Soc(G_{\Delta _{i}}^{\Delta _{i}})$ is non-abelian simple and
let $N$ be any minimal normal subgroup of $G$. The transitivity of $G$ on $\Sigma$ implies that $Soc(G_{\Delta _{1}}^{\Delta _{1}}) \cong Soc(G_{\Delta _{i}}^{\Delta _{i}})$ for each $i=1,...,d$, where $d=\frac{\lambda ^{2}+4\lambda -1}{4}$. Also, $Soc(G_{\Delta
_{i}}^{\Delta _{i}})\trianglelefteq N_{\Delta _{i}}^{\Delta
_{i}}\trianglelefteq G_{\Delta _{i}}^{\Delta _{i}}$ by Lemma \ref{prim}
and by \cite[ Theorem 4.3B(iii)]{DM}, since $N$ acts point-transitively on $%
\mathcal{D}$ by Lemma \ref{ASQuasiP}. Thus $N\cong Soc(G_{\Delta _{1}}^{\Delta
_{1}})^{e}$ for some $e\geq 1$, since $N$ is a minimal normal subgroup of $G$. Moreover, $Soc(G_{\Delta _{1}}^{\Delta
_{1}})\cong PSL_{h}(q)$ with $q$ even, by Theorem \ref{help}, since $d\mid \left\vert N\right\vert $ by Lemma $\ref{ASQuasiP}$. 

Let $K$ be a normal subgroup of $N$ isomorphic to $PSL_{h}(q)$. Then 
$\Sigma $ is partitioned into $K$-orbits of the equal length, say $\mu $, as $N$ is transitive on $\Sigma $. Note that $d=\frac{\lambda ^{2}-1}{4}+\lambda $ is odd, hence $K_{\Delta _{i}}$ contains a Sylow $2$-subgroup of $K$. Moreover, $K_{\Delta _{i}}\neq K$ by Lemma \ref{ASQuasiP}. Then $K_{\Delta _{i}}$ lies in a \ maximal
parabolic subgroup of $K$ by Tits'Lemma (see \cite[Theorem 1.6]{Sei}).
Thus $\qbin{h}{t}$ , with $1\leq t\leq h/2$, divides $d$ and hence 
\begin{equation}
\qbin{h}{t}\mid \left( \frac{q^{h}-1}{q-1}\right) ^{2}-8\left( 
\frac{q^{h}-1}{q-1}\right) +11.  \label{rid}
\end{equation}%
Assume that $q^{h}-1$ contains a $p$-primitive divisor $u$. Then $u$ divides 
$\qbin{h}{t}$ and hence $u=11$. Then $fh\mid 10$ by \cite[Proposition 5.2.15(ii)]{KL}, where $q=p^{f}$. Then $(q,h)=(2,10)$ or $(4,5)$ again by Theorem \ref{help}. However, none of these pairs fulfills (%
\ref{rid}). So $q^{h}-1$ does not have primitive prime divisors and hence $%
(q,h)=(2,6)$ by Zsigmondy's Theorem, since $q$ is even, $h \geq 2$ and $(q,h)\neq (2,2)$. However, $(q,h)=(2,6)$ does not
fulfill (\ref{rid}). Thus $\mathcal{D}_{i}\cong AG_{h}(3)$, $h \geq 2$, $G_{\Delta
_{i}}^{\Delta _{i}}$ is of affine type and $\lambda =\theta =3^{h}-6$ by
Theorem \ref{help}.
\end{proof}

\bigskip

\begin{proof}[Proof of Theorem \ref{t2}]
$\mathcal{D}_{i}\cong AG_{h}(3)$, $h \geq 2$, $G_{\Delta _{i}}^{\Delta _{i}}$ is of
affine type and $\lambda =\theta =3^{h}-6$ by Proposition \ref{dobro}. Let $%
x_{i}\in \Delta _{i}$ and $x_{j}\in \Delta _{j}$, with $i\neq j$. Then $%
x_{i}\neq x_{j}$ and hence there are $\lambda $ distinct blocks, say $%
B_{1},...,B_{\lambda }$, incident with them. For each $t=1,...,\lambda $,
let $B_{t}^{(s)}$, where $s=1,...,\lambda $, be the distinct
blocks of $\mathcal{D}$ overlapping with $B_{t}$ on $\Delta _{i}$ including 
$B_{t}$, since $\theta =\lambda $. It is clear that $B_{t_{1}}^{(s_{1})}=B_{t_{2}}^{(s_{2})}$ if, and only if, $t_{1}=t_{2}$ and $s_{1}=s_{2}$. Then $B_{t}^{(s)}$, where $t,s=1,...,\lambda$, are $\lambda^{2}$ distinct blocks of $\mathcal{D}$ incident with $x_{i}$. Thus $\lambda ^{2}\leq r$ and hence $%
3\leq \lambda \leq \allowbreak \frac{\lambda +5}{2}$. Therefore, $\lambda
=\theta =3$ and $h=2$, as $\lambda =\theta =3^{h}-6$ and $h \geq 2$. The assertion now
follows from \cite[Corollary 1.2]{P}.
\end{proof}

\bigskip

Now, Theorem \ref{main} follows from Theorems \ref{t1} and \ref{t2}. 

\section{Appendix}

The aim of this section is to prove the following classification theorem for
flag-transitive $2$-$(v,3,\lambda )$ designs with $v\equiv 1,3\pmod{6}$ and $%
\lambda \mid v-6$. An application of this result is Theorem \ref{help}, which is central in classifying symmetric $2$-designs of type 2.

\bigskip

\begin{theorem}
\label{tre}Let $\mathcal{D}$ be a $2$-$(v,3,\lambda )$-design, with $v\equiv
1,3\pmod{6}$ and $\lambda \mid v-6$, admitting a flag-transitive
automorphism group $G$. Then $G$ acts point-$2$-transitively on $\mathcal{D}$%
, and one of the following holds:

\begin{enumerate}
\item $\mathcal{D}$ is a $2$-$\left( \frac{q^{h}-1}{q-1},3,q-1\right) $
design, $q$ even, $\frac{q^{h}-1}{q-1}\equiv 0,1\pmod{3}$, $q-1\mid h-6$, and one of he following holds:

\begin{enumerate}
\item $PSL_{h}(q)\trianglelefteq G\leq P\Gamma L_{h}(q)$.

\item $G\cong A_{7}$ and $(h,q)=(4,2)$.
\end{enumerate}

\item $\mathcal{D}$ is a $2$-$\left( 31,3,25 \right) $ design and $PSL_{3}(5)\trianglelefteq G\leq
PGL_{3}(5)$.

\item $\mathcal{D}\cong AG_{h}(3)$, $h \geq 2$, and $G$ is of affine type.
\end{enumerate}
\end{theorem}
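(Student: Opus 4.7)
The plan is to first establish that $G$ acts point-$2$-transitively on $\mathcal{D}$, then invoke the CFSG-based classification of $2$-transitive permutation groups, and finally eliminate every socle except those appearing in (1)--(3) using the parameter constraints $v \equiv 1,3 \pmod{6}$, $\lambda \mid v-6$, and flag-transitivity. For the first step, I would observe that since $k=3$, flag-transitivity of $G$ yields transitivity of $G_x$ on the $r = \lambda(v-1)/2$ blocks through a point $x$, and $G_{x,B}$ has at most two orbits on the $2$-set $B \setminus \{x\}$; a standard counting argument then shows $G_x$ has at most two orbits on $\mathcal{P}\setminus\{x\}$. In the rank-$3$ case the two orbits would have equal length $(v-1)/2$, and each block through $x$ would meet each of them in exactly one point. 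I would rule this out by combining the divisibility $(v-1)/2 \mid |G_x|$ with $r \mid |G_x|$, $|G_{x,B}| = |G_x|/r$, and $\lambda \mid v-6$, which together force $v$ into a small finite list that can be checked directly.

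Once $2$-transitivity is in hand, CFSG splits the analysis into the affine and almost simple cases. In the affine case, $v = p^n$ and since $v \equiv 1,3 \pmod{6}$, $p$ is odd. Using Hering's classification of affine $2$-transitive stabilizers (see \cite{LiebF}), I would enumerate the candidates $G_0 \leq \Gamma L_n(p)$, note that the blocks through $0$ are $G_0$-orbits of $2$-subsets of $\mathbb{F}_p^n \setminus \{0\}$, and check $r = \lambda(p^n-1)/2 \mid |G_0|$ together with $\lambda \mid p^n - 6$. All cases except $p = 3$ with $\mathcal{D} \cong \AG_h(3)$ (the Steiner triple system of lines of $\AG_h(3)$, with $\lambda = 1$) should fail, yielding case (3).

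For the almost simple case, $\mathrm{Soc}(G)$ is one of the classical, sporadic, or exceptional non-abelian simple groups admitting a $2$-transitive action (the Cameron--Kantor list). The natural candidate is $\mathrm{Soc}(G) = PSL_h(q)$ on the $v = (q^h-1)/(q-1)$ points of $\PG(h-1,q)$, which has two orbits on unordered $3$-subsets: collinear triples (giving $\lambda = q-1$) and non-collinear triples (with a larger $\lambda$). Imposing $\lambda \mid v-6$ and $v \equiv 1,3\pmod 6$ forces $q$ even and $q - 1 \mid h - 6$ in the collinear case, giving case (1a); the exceptional $A_7 < PSL_4(2)\cong A_8$ on $15$ points yields case (1b); and the non-collinear construction produces the sporadic solution $(h,q) = (3,5)$ with $v=31$ and $\lambda=25$, giving case (2). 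All remaining $2$-transitive socles ($A_v$, $PSU_3(q)$, $Sz(q)$, ${}^2G_2(q)$, $Sp_{2m}(2)$, the Mathieu groups, $HS$, $Co_3$, small-degree $PSL_2(q)$, etc.) are excluded by enumerating their orbits on unordered $3$-subsets and verifying that no orbit yields a $\lambda$ with $\lambda \mid v-6$. This last step is the main obstacle: distinguishing the numerical near-coincidences (e.g., $PSU_3(q)$ on $q^3+1$ points, or the small Mathieu actions) from the genuine examples requires explicit orbit-on-triples computations, typically via character tables or direct enumeration, and the condition $\lambda \mid v-6$ is the key numerical lever that ultimately prevents an unbounded family of sporadic solutions.
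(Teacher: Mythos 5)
Your overall skeleton --- establish point-$2$-transitivity, then run through the CFSG list of $2$-transitive groups --- matches the paper's, but your first step has a genuine gap. After reducing to the rank-$3$ alternative (which, as in the paper, follows from Kantor's theorem since $(v-1,k-1)\leq 2$), you propose to eliminate rank $3$ by combining $(v-1)/2 \mid \left\vert G_{x}\right\vert$, $r\mid \left\vert G_{x}\right\vert$, $\left\vert G_{x,B}\right\vert =\left\vert G_{x}\right\vert /r$ and $\lambda \mid v-6$ into a ``small finite list'' of $v$. These conditions are essentially vacuous: in any flag-transitive design $r\mid \left\vert G_{x}\right\vert$ holds automatically, and in the rank-$3$ case one gets $r=\lambda (v-1)/2$, so nothing bounds $v$. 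There are infinite families of primitive rank-$3$ groups with two subdegrees both equal to $(v-1)/2$ (for instance the solvable affine examples of Foulser and Foulser--Kallaher, of Paley type), and no purely numerical bookkeeping excludes a flag-transitive $2$-$(v,3,\lambda)$ design on them. The paper instead invokes the full classification of primitive rank-$3$ groups (O'Nan--Scott, then Bannai, Kantor--Liebler and Liebeck--Saxl in the almost simple case, and Foulser, Foulser--Kallaher and Liebeck in the affine case) and exploits a parity argument your proposal never uses: $v$ odd and $\lambda \mid v-6$ force $\lambda$ odd; one then shows $-1\in G_{0}$ in every surviving affine candidate, so $-1$ fixes at least one of the $\lambda$ blocks through $\pm x$, forcing $B=\left\{ 0,x,-x\right\}$ and $r=(p^{d}-1)/4$, contradicting $r=\lambda (p^{d}-1)/2$; the degree-$21$ case with socle $A_{7}$ needs a separate stabilizer computation. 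None of this is recoverable from the divisibilities you list.

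A second, lesser shortfall is in the affine $2$-transitive case: checking $r\mid \left\vert G_{0}\right\vert$ against Hering's list cannot eliminate the large candidates (e.g. $SL_{n}(q)\trianglelefteq G_{0}$, $Sp_{n}(q)$, $G_{2}(q)^{\prime}$), whose stabilizer orders are far too big for divisibility to bite. The paper's treatment is geometric: if the block $B$ through $0$ lies in no $1$-dimensional $GF(p)$-subspace, the same parity trick shows $-1\notin G_{0}$, which by Kantor's list forces $SL_{n}(q)\trianglelefteq G_{0}$ with $n$ odd, and the remaining configurations are killed by a Gaussian-coefficient count (pinning $n=3$) and by involutions inducing $-1$ on a line or plane, again using that $\lambda$ and the number $\mu$ of blocks in a plane are odd; only then does $B\subseteq \left\langle u\right\rangle _{GF(p)}$ give $S_{3}\leq AGL_{1}(p)$, hence $p=3$ and $\mathcal{D}\cong AG_{h}(3)$. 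Your almost simple analysis (collinear versus non-collinear triples in $PG_{h-1}(q)$, the $A_{7}<A_{8}$ degree-$15$ case, exclusion of $PSU_{3}$) does track the paper's, but note that the $PSU_{3}(2^{2m+1})$ exclusion is not a generic ``orbit enumeration'': the paper shows via the Hermitian unital's stabilizer structure that every $G_{x,y}$-orbit off the line $xy$ has length divisible by $2^{4m+2}-1$, whence $2^{4m+2}-1\mid \lambda$, incompatible with $\lambda \mid 2^{6m+3}-5$.
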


\bigskip 

As it can been deduced from the proof following propositions, the $2$-designs in (1) and (2) do exist. Indeed, their point set is that of $PG_{h-1}(q)$, where $h=3$ in (2), and their block sets consist of all $3$-subsets of
collinear points and non-collinear points of $PG_{h-1}(q)$ respectively. 

\bigskip 

\begin{lemma}
\label{PP}If $\mathcal{D}$ is a $2$-$(v,3,\lambda )$ design, with $v\equiv 1,3\pmod{6}$ and $%
\lambda \mid v-6$, admitting a flag-transitive automorphism group $G$, then one
the following holds:

\begin{enumerate}
\item $G$ acts point-$2$-transitively on $\mathcal{D}$.

\item $G$ is a point-primitive rank $3$ automorphism group of $\mathcal{D}$. For any point $x$ of $\mathcal{D}$ the set $\mathcal{P}- \left\lbrace
x \right\rbrace$ is split into two $G_{x}$-orbits each of length $(v-1)/2$. Moreover $r/\lambda=(v-1)/2$. 

\end{enumerate}
\end{lemma}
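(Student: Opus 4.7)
The plan is to exploit flag-transitivity to control the action of $G_x$ on $\mathcal{P}\setminus\{x\}$, using that the block size $k=3$ forces every block through $x$ to split off a pair. First, $G$ is point-transitive (any two points lie in a common block and $G$ is flag-transitive), and $G_x$ is transitive on the $r=\lambda(v-1)/2$ blocks through $x$; equivalently, $G_x$ is transitive on the family of unordered pairs $\{y,z\}=B\setminus\{x\}$ with $B\ni x$.

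The core dichotomy is whether, for such a block $B=\{x,y,z\}$, the points $y$ and $z$ lie in the same $G_x$-orbit on $\mathcal{P}\setminus\{x\}$ or in distinct ones; by the transitivity of $G_x$ on blocks through $x$, the answer is the same for every $B\ni x$. If $y,z$ lie in a common $G_x$-orbit $\Delta$, then since every point of $\mathcal{P}\setminus\{x\}$ is on some block through $x$ (as $\lambda\geq 1$), every point of $\mathcal{P}\setminus\{x\}$ already lies in $\Delta$; thus $G_x$ is transitive on $\mathcal{P}\setminus\{x\}$, giving (1). Otherwise $y\in\Delta_1$, $z\in\Delta_2$ for two distinct $G_x$-orbits, and the same argument forces $\mathcal{P}\setminus\{x\}=\Delta_1\cup\Delta_2$ (any third orbit would contain points never covered by a block through $x$), so $G$ has rank $3$. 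Double-counting pairs $(w,B)$ with $w\in B\cap\Delta_i$ and $x\in B$ yields $\lambda|\Delta_i|=r$ for both $i=1,2$, hence $|\Delta_1|=|\Delta_2|=r/\lambda=(v-1)/2$.

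It remains to verify point-primitivity of $G$ in the rank $3$ case. Any non-trivial block of imprimitivity $\Gamma$ containing $x$ must be a union of $G_x$-orbits, so $\Gamma=\{x\}\cup\Delta_i$ for some $i\in\{1,2\}$, giving $|\Gamma|=(v+1)/2$. Since $|\Gamma|$ must divide $v$, this forces $(v+1)\mid 2v$, hence $(v+1)\mid 2$, which is impossible. The argument is entirely elementary; the main care lies in stating the dichotomy crisply and, in the rank $3$ case, justifying cleanly that a third $G_x$-orbit cannot exist. The hypotheses $v\equiv 1,3\pmod 6$ and $\lambda\mid v-6$ play no role here and will only enter the subsequent classification.
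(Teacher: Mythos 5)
Your proof is correct, and it takes a genuinely more self-contained route than the paper. The paper disposes of the dichotomy in one line: since $(v-1,k-1)\leq 2$, it invokes Kantor's result \cite[Corollary 4.6]{Ka69} to conclude that $G$ is either point-$2$-transitive or point-primitive of rank $3$ with two $G_{x}$-orbits of length $(v-1)/2$, and then the only thing it actually proves is $r/\lambda=(v-1)/2$, via the tactical-configuration relation $\left\vert y_{j}^{G_{x}}\right\vert \lambda = r\left\vert B\cap y_{j}^{G_{x}}\right\vert$ from \cite[1.2.6]{Demb} --- which is exactly your double count of pairs $(w,B)$ with $w\in B\cap\Delta_{i}$, $x\in B$. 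You instead re-prove Kantor's input from scratch for $k=3$: transitivity of $G_{x}$ on the blocks through $x$ forces the ``same orbit vs.\ two orbits'' dichotomy to be uniform, which bounds the rank by $3$ and yields the equal subdegrees, and you then obtain primitivity directly by noting that a non-trivial block of imprimitivity through $x$ is $G_{x}$-invariant, hence of the form $\{x\}\cup\Delta_{i}$ of size $(v+1)/2$, which cannot divide $v$. All steps check out (in particular, in the rank-$3$ case $\left\vert B\cap\Delta_{i}\right\vert=1$ is forced since the two intersection numbers are positive and sum to $k-1=2$, so the subdegrees are automatically equal and $(v-1)/2$ is an integer without appeal to $v\equiv 1,3\pmod 6$). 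What each approach buys: yours is elementary, avoids the external citation, and makes transparent your correct observation that the hypotheses $v\equiv 1,3\pmod 6$ and $\lambda\mid v-6$ are not used in this lemma; the paper's version is shorter and defers the permutation-group content to a standard reference, at the cost of hiding where primitivity and the rank bound come from.
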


\begin{proof}
Since $(v-1,k-1)\leq 2$, either (1) or the first part of (2) holds by \cite[Corollary 4.6]{Ka69}. Hence, it remains to prove $r/\lambda=(v-1)/2$.\\
Assume that the first part of (2) occurs. Let $y_{j}^{G_{x}}$, $j=1,2$, be the two $G_{x}$-orbits partitioning $\mathcal{P}- \left\lbrace
x \right\rbrace$ and let $B$ any block of $\mathcal{D}$ incident with $x$. Then $\left\vert B\cap y_{j}^{G_{x}}\right\vert =1$ for each $j=1,2$, since $G$ is flag-transitive and $k=3$. On the other hand, $(y_{j}^{G_{x}},B^{G_{x}})$ is a tactical
configuration for each $j=1,2$ by \cite[1.2.6]{Demb}. Thus $\left\vert
y_{j}^{G_{x}}\right\vert \lambda =r\left\vert B\cap y_{j}^{G_{x}}\right\vert $ and hence $r/\lambda=(v-1)/2$, since $\left\vert y_{j}^{G_{x}}\right\vert=(v-1)/2$ and $\left\vert B\cap y_{j}^{G_{x}}\right\vert =1$ for each $j=1,2$.
\end{proof}

\bigskip

\begin{proposition}
\label{tre2tr}$G$ acts point-$2$-transitively on $\mathcal{D}$.
\end{proposition}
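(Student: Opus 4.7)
The plan is to rule out alternative (2) of Lemma \ref{PP}, so I suppose for contradiction that $G$ is point-primitive of rank $3$ with two non-trivial $G_{x}$-suborbits $\Delta_{1},\Delta_{2}$ of equal size $(v-1)/2$. My first step is to apply the standard tactical configuration argument of \cite[1.2.6]{Demb} to the pair $(\Delta_{i},B^{G_{x}})$, where $B$ is a block through $x$: both $\Delta_{i}$ and $B^{G_{x}}$ are $G_{x}$-orbits (the latter by flag-transitivity), so the intersection size $|B\cap \Delta_{i}|$ is a constant $n_{i}$ with $n_{1}+n_{2}=k-1=2$. Since $n_{i}=0$ would force $\lambda=0$, we must have $n_{1}=n_{2}=1$: every block through $x$ has the form $\{x,a,b\}$ with $a\in \Delta_{1}$ and $b\in \Delta_{2}$.

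Next I would distinguish two cases, depending on whether the two non-trivial $G$-orbitals $O_{1},O_{2}$ on $\mathcal{P}\times \mathcal{P}$ are both self-paired (Case A) or paired to each other (Case B). Write $\Delta_{i}^{y}$ for the analogue of $\Delta_{i}$ at a point $y$. Case A can be ruled out by a short symmetry argument: self-pairing yields $x\in \Delta_{1}^{a}$ and $x\in \Delta_{2}^{b}$; the $(1,1)$ structure at the vertex $a$ then forces $b\in \Delta_{2}^{a}$; and self-pairing again gives $a\in \Delta_{2}^{b}$. Thus both $x$ and $a$ lie in $\Delta_{2}^{b}$, contradicting the $(1,1)$ structure applied at the vertex $b$.

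In Case B, the orbital $O_{1}$ is a $G$-invariant, $G$-arc-transitive regular tournament on $\mathcal{P}$ whose directed $3$-cycles include every block of $\mathcal{D}$ with a consistent orientation; moreover, the rank $3$ hypothesis makes the tournament doubly regular, so $v\equiv 3\pmod{4}$ and any two distinct vertices share exactly $(v-3)/4$ common out-neighbours. Counting directed paths $a\to c\to x$ then shows that any pair of points lies in exactly $(v+1)/4$ cyclic triples of the tournament, giving the strong bound $\lambda \leq (v+1)/4$. Combining this with $\lambda \mid v-6$ and $v\equiv 1,3\pmod{6}$, the extremal case yields $v+1\mid 4(v-6)$, hence $v+1\mid 28$, which leaves only a short explicit list of admissible $(v,\lambda)$; for the non-extremal sub-case I would invoke the classification of arc-transitive doubly regular tournaments (essentially Paley tournaments, with automorphism groups inside $\mathrm{A}\Gamma\mathrm{L}_{1}(q)$) to show that $|G|$ is too small to meet the flag-count $3b=\lambda v(v-1)/2$. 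Each residual parameter triple is then eliminated by a direct order computation.

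The main obstacle I anticipate is precisely Case B: Case A collapses by a clean structural contradiction, but Case B requires one to interlock the quite restrictive structure of arc-transitive doubly regular tournaments with the flag-transitivity of $G$ and the arithmetic conditions $\lambda \mid v-6$ and $v\equiv 1,3\pmod{6}$, so as to eliminate every remaining possibility and conclude that $G$ is in fact point-$2$-transitive.
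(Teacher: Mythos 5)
Your orbital-pairing dichotomy is a genuinely different route from the paper's, which never looks at pairings at all: the paper feeds alternative (2) of Lemma \ref{PP} into the O'Nan--Scott theorem and the full classification of primitive rank $3$ groups, ruling out product type because its subdegrees $2(n_{0}-1)$ and $(n_{0}-1)^{2}$ are unequal, reducing almost simple type to $A_{7}\trianglelefteq G\leq S_{7}$ on $21$ points (killed by a stabilizer computation), and reducing affine type to the Foulser--Kallaher candidates, which it eliminates by showing $-1\in G_{0}$ and comparing two computations of $r$. Your Case A is correct and entirely self-contained --- a clean elementary step the paper does not have --- and the structural facts in Case B all check out: transitivity on arcs makes the number of common out-neighbours constant over all pairs, forcing the value $(v-3)/4$ and $v\equiv 3\pmod{4}$; the $(1,1)$ intersection pattern propagated to all three vertices of a block shows every block is a directed $3$-cycle, giving $\lambda\leq (v+1)/4$; and the extremal case does reduce to $v+1\mid 28$, i.e.\ $(v,\lambda)=(27,7)$, which is eliminable by an order count once $G$ is known to be affine of one-dimensional type.

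The genuine gap is the endgame of Case B. There is no citable ``classification of arc-transitive doubly regular tournaments'': doubly regular tournaments are equivalent to skew-Hadamard matrices, and even assuming arc-transitivity no such classification is available off the shelf. What \emph{is} available is the classical fact that the automorphism group of a tournament has odd order, whence $G$ is solvable by Feit--Thompson, and then Foulser \cite{F} and Foulser--Kallaher \cite{FK} place $G_{0}$ inside $\Gamma L_{1}(p^{d})$ --- that is, you are funnelled straight back into case (i) of the paper's affine analysis, with the hard residual work still to do. Worse, your proposed finishing blow (``$|G|$ too small to meet the flag count'') cannot work in the non-extremal sub-case, because sharply flag-transitive configurations meet the count with equality: the Frobenius group of order $21$ acting on $PG_{2}(2)$ is point-primitive of rank $3$ with subdegrees $3,3$, its nontrivial orbitals are paired to each other (the Paley tournament on $7$ points), and $(v,\lambda)=(7,1)$ satisfies $v\equiv 1\pmod{6}$, $\lambda\mid v-6$ and $\lambda<(v+1)/4$, while the number of flags equals $21=|G|$; Netto-type one-dimensional examples with $\lambda=1$ behave the same way. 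So no divisibility count of the kind you sketch can dispose of these candidates; one needs either the restriction $\lambda\geq 3$ (which holds in the paper's application by Lemma \ref{c=d} but is not among the hypotheses here), or a structural elimination like the paper's, which removes the $\Gamma L_{1}$ candidates by producing $-1\in G_{0}$ (via the constraints of \cite[Theorem 3.10]{FK} and the argument of \cite[Lemma 63]{BFM}) and hence a block of the form $\{0,x,-x\}$ --- an argument intrinsically unavailable inside your Case B, where $G$ contains no involutions at all. This is precisely where your plan stalls, and it is also the subtlest point of the paper's own proof.
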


\begin{proof}
Assume that (2) of Lemma \ref{PP} holds. By the O'nan-Scott Theorem (e.g. see \cite{DM}), one of the following holds:

\begin{enumerate}
\item $S\times S\trianglelefteq G\leq S\wr Z_{2}$, where $Soc(S)$ is a $2$%
-transitive non-abelian simple group of degree $n_{0}$, $n_{0}\geq 5$, and $v=n_{0}^{2}$.

\item $Soc(G)$ is non-abelian simple.

\item $Soc(G)$ is an elementary abelian $p$-group for some prime $p$.
\end{enumerate}

The groups in (1) are classified in \cite{C}, those in (2) are determined
in \cite{Ba} when the socle is alternating, in \cite{KaLibl3} when the socle
is classical, in \cite{LS} when the socle is an exceptional group of Lie
type or a sporadic group. Finally, the groups in (3) are classified in \cite%
{F, FK, Lieb2}.

In (1) the $G_{x}$-orbits on $\mathcal{P}-\left\{ x\right\} $ have length $%
2(n_{0}-1)$ and $(n_{0}-1)^{2}$, and these are distinct as $n_{0}\geq 5$, so
this case cannot occur. It is straightforward to check that $%
A_{7}\trianglelefteq G\leq S_{7}$ and $v =21$ is the unique admissible
case arising from (2) (see \cite{Ba,KaLibl3,LS}. An overview of
the subdegrees for $G$ is provided in \cite{Dev}). Then $\mathcal{D}$ is a $2
$-$(21,3,\lambda )$, with $\lambda \mid 15$. Actually $\lambda =3$, since $%
r=10\lambda $ must divide the order of $G$, and since $\lambda =1$ is ruled
out in \cite{Del}. By \cite{At}, if $x$ is any point of $\mathcal{D}$, either $G_{x}\cong S_{5}$ or $G_{x}\cong S_{5}\times Z_{2}$ according to
whether $G\cong A_{7}$ or $S_{7}$ respectively. Hence either $\left\vert
G_{x,B}\right\vert =4$ or $\left\vert G_{x,B}\right\vert =8$, respectively, where $B$ is
any block of $\mathcal{D}$ incident with $x$. Moreover, $G_{x,B}=G(B)$,
since $B$ contains exactly one point from each $G_{x}$-orbit. However, this
is impossible, since $G_{x,y}\cong S_{3}$ or $S_{3}\times Z_{2}$ for any
point of $y$ distinct from $x$ according to
whether $G\cong A_{7}$ or $S_{7}$ respectively.

Finally, assume that $G$ is as in (3). Then the admissible groups listed below
arise by \cite[Tables 12--14]{Lieb2}, by \cite[Theorem 1.1 and
subsequent Remark]{F} and by \cite[Corollary 1.3 and Theorems 3.10 and 5.3]{FK}:

\begin{enumerate}
\item[(i).] $G_{0}\leq \Gamma L_{1}(p^{d})$;

\item[(ii).] $G_{0}\leq N_{GL_{2}(7)}(Q_{8})$ and the non-trivial $G_{0}$-orbits
have length $24$;

\item[(iii).] $G_{0}\leq N_{GL_{2}(23)}(Q_{8})$ and the non-trivial $G_{0}$-orbits
have length $264$;

\item[(iv).] $G_{0}\leq N_{GL_{2}(47)}(Q_{8})$ and the non-trivial $G_{0}$-orbits
have length $1104$;

\item[(v).] $SL_{2}(5)\trianglelefteq G_{0}$ acting on $V_{4}(3)$ and the non-trivial $G_{0}$-orbits have length $40$.
\end{enumerate}

Clearly, $-1\in G_{0}$ in (v). Also in (ii)--(iv) $-1\in G_{0}$, since $4\mid
\left\vert G_{0}\cap SL_{h}(q)\right\vert $ for $(h,q)=(2,7)$ or $(2,23)$,
and since the Sylow $2$-subgroups of $SL_{2}(q)$ for $q$ odd are generalized quaternion groups with $-1$
as their unique involution. Finally, if $G_{0}$ is as in (i), then $%
G_{0}=\left\langle \bar{\omega}^{t},\bar{\omega}^{e}\bar{\alpha} ^{s}\right\rangle 
$, where $\omega$ is a primitive element of $GF(q)$, $\bar{\omega}:x\rightarrow\omega x$, $\bar{\alpha}:x\rightarrow x^{p}$, and where $t,e,s$ satisfy the constraints given in \cite[Theorem 3.10]{FK}.
Moreover $p$ is odd, as $v=p^{d}$ and $(v-1)/2$ is an integer. Arguing as in \cite[Lemma 63]{BFM}, we
see that $-1\in G_{0}$. Thus $-1\in G_{0}$ in each case. \\
Let $x\in \mathcal{D}$, $x\neq 0$, then $-1$ preserves the $\lambda $
blocks of $\mathcal{D}$ incident with $\pm x$. Moreover, $-1$ preserves at
least one of them, as $\lambda $ is odd. Therefore, $B=\left\{ 0,x,-x\right\} $ and hence $\left\vert G_{0,B}\right\vert =2\left\vert
G_{0,x}\right\vert $. Thus $r=\left[ G_{0}:G_{0,B}\right] =\frac{\left\vert
G_{0}\right\vert }{2\left\vert G_{0,x}\right\vert }=\frac{p^{d}-1}{4}$, with $p^{d} \equiv 1 \pmod{4}$,
whereas $r=\frac{p^{d}-1}{2}\lambda $. Thus (i)--(v) are excluded, and hence $G$ acts point-$2$-transitively on $\mathcal{D}$ by Lemma \ref{PP}. 
\end{proof}

\bigskip

\begin{theorem}
\label{treAS}If $G$ is almost simple, then one of the following holds:

\begin{enumerate}
\item $\mathcal{D}$ is a $2$-$\left( \frac{q^{h}-1}{q-1},3,q-1\right) $
design, $q$ is even, $\frac{q^{h}-1}{q-1}\equiv 0,1\pmod{3}$, $q-1\mid h-6$,
and one of he following holds:

\begin{enumerate}
\item $PSL_{h}(q)\trianglelefteq G\leq P\Gamma L_{h}(q)$;

\item $G\cong A_{7}$ and $(h,q)=(4,2)$.
\end{enumerate}

\item $\mathcal{D}$ is a $2$-$\left( 31,3,25\right) $ design and $PSL_{3}(5)%
\trianglelefteq G\leq PGL_{3}(5)$.
\end{enumerate}
\end{theorem}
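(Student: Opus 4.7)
The strategy is to invoke the classification of finite almost simple $2$-transitive permutation groups. By Proposition~\ref{tre2tr}, $G$ acts $2$-transitively on $\mathcal{P}$, so since $G$ is almost simple, $T = \mathrm{Soc}(G)$ belongs to the well-known list of non-abelian simple $2$-transitive socles. The first reduction is numerical: $v \equiv 1, 3 \pmod 6$ means $v$ is odd and $v \not\equiv 2 \pmod 3$. Routine checks on the degree formulas eliminate the Suzuki socles $Sz(q)$ (degree $\equiv 2 \pmod 3$), the Ree socles ${}^2G_2(q)$ and the groups $Sp_{2n}(2)$ (both with even degrees), together with all the sporadic and small candidates such as $M_{11}, M_{12}, M_{22}, M_{23}, M_{24}, HS, Co_3$, and $PSL_2(11)$ on $11$ (each failing one of the two congruences). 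What survives is the natural action of $A_n$ with $n \equiv 1, 3 \pmod 6$, the projective action of $PSL_d(q)$, the unitary action of $PSU_3(q)$ on a Hermitian unital, and the sporadic $2$-transitive action of $A_7$ on the $15$ points of $PG_3(2)$.

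The highly transitive cases are dispatched quickly. Both the natural $A_n$ action (for $n \geq 5$) and $PSL_2(q)$ on $q+1$ points are $3$-transitive, so the unique $G$-orbit on unordered triples is $\binom{\mathcal{P}}{3}$ itself, forcing $\lambda = v - 2$; the divisibility $\lambda \mid v - 6$ then becomes $(v-2) \mid 4$, which admits no $v \geq 5$ with $v \equiv 1, 3 \pmod 6$. For $PSL_d(q)$ with $d \geq 3$, $G$ has exactly two orbits on unordered triples, the \emph{collinear} and \emph{non-collinear} ones, and the block set must equal one of these. In the collinear case $\lambda = q - 1$, and, using $v \equiv d \pmod{q-1}$, the condition $\lambda \mid v - 6$ simplifies to $(q-1) \mid (d-6)$; the parity of $v$ then forces $q$ even (because $q$ odd together with $(q-1) \mid (d-6)$ would make $d$ even and hence $v$ even). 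This gives case (1a), and the exceptional $2$-transitive embedding $A_7 < A_8 \cong PSL_4(2)$ on $15$ points yields case (1b). In the non-collinear case $\lambda = v - q - 1$, and $\lambda \mid v - 6$ reduces to $\lambda \mid q - 5$; a short size argument then forces $(d, q) = (3, 5)$ and produces case (2).

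The main obstacle is to eliminate the remaining family $PSU_3(q)$ with $q$ a power of $2$ satisfying $q \equiv -1 \pmod 3$, so that $v = q^3 + 1 \equiv 3 \pmod 6$ is admissible (these are $q \in \{8, 32, 128, \ldots\}$). I will combine flag-transitivity, which forces $r = \lambda(v-1)/2$ to divide $|G_x| = q^3(q^2-1)/\gcd(3, q+1)$, with the arithmetic constraint $\lambda \mid v - 6 = q^3 - 5$: together these yield $\lambda \mid \gcd\bigl(2|G_x|/(v-1),\, q^3 - 5\bigr)$, which drastically cuts down the possibilities for $\lambda$. I will then use the explicit subdegree structure of $G_{x,y}$ on the Hermitian unital (with $|G_{x,y}| = (q^2-1)/\gcd(3,q+1)$, and its orbits on $\mathcal{P} \setminus \{x, y\}$ governed by the secant/off-secant dichotomy) to show that no admissible $\lambda$ is actually realised as a $G_{x,y}$-suborbit length. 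A uniform computation along these lines rules out the unitary family and completes the proof.
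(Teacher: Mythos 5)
Your overall route is the same as the paper's: reduce via Proposition~\ref{tre2tr} to the almost simple $2$-transitive groups of \cite[Section 2, (A)]{Ka85}, filter by $v\equiv 1,3\pmod{6}$, kill the $3$-transitive actions by $\lambda=v-2\nmid v-6$, split the $PSL_h(q)$ case into the collinear and non-collinear block orbits, and attack $PSU_3(2^{2m+1})$ through the geometry of the Hermitian unital. Your reformulations are mostly sound and sometimes cleaner: $v\equiv h\pmod{q-1}$ giving $q-1\mid h-6$, the parity argument for $q$ even, and the observation $v-6=\lambda+(q-5)$ in the non-collinear case, which converts $\lambda\mid v-6$ into $\lambda\mid q-5$. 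Your planned unitary elimination also names the right mechanism — the secant/triangle dichotomy, with the cyclic group $Soc(G)_{x,y}$ of order $q^2-1$ acting semiregularly off the secant line, forcing $q^2-1\mid\lambda$ in the triangle case, against $\lambda\mid q^3-5$; in the secant case $\lambda=q-1$ and $q-1\mid q^3-5$ forces $q-1\mid 4$. This is exactly the paper's argument (your extra constraint $r\mid|G_x|$ is harmless but unnecessary), though as written it remains a promissory note rather than a proof.

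There are, however, two concrete gaps. First, in the non-collinear $PSL_h(q)$ case your claim that ``a short size argument then forces $(d,q)=(3,5)$'' does not go through: the bound $\lambda\geq q^2>|q-5|$ only eliminates $q\neq 5$, while at $q=5$ the condition $\lambda\mid q-5$ is vacuous, because there $\lambda=v-q-1=v-6$ and $\lambda\mid v-6$ holds identically. Thus every odd $h\geq 3$ survives all the stated hypotheses: for instance $(h,q)=(5,5)$ gives the flag-transitive $2$-$(781,3,775)$ design of non-collinear triples of $PG_4(5)$, with $781\equiv 1\pmod{6}$ and $775\mid 781-6$. Nothing in your argument (nor, it should be said, in the paper's own one-line deduction ``hence $h=3$ and $q=5$'' from the displayed divisibility) excludes these; pinning down $h=3$ requires an ingredient beyond $\lambda\mid v-6$, so this step cannot be dismissed as a short size argument. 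Second, the $A_7$ case cannot be settled by merely invoking the embedding $A_7<A_8\cong PSL_4(2)$ on the $15$ points of $PG_3(2)$: with $\lambda\mid 9$ you must also exclude block orbits consisting of \emph{triangles} of $PG_3(2)$ (a priori $\lambda=3$, i.e.\ a $105$-block orbit, survives the obvious divisibility $r=7\lambda\mid|G_x|=168$). The paper does this with a genuine argument: since $\lambda$ is odd, the Sylow $2$-subgroup $T\cong E_4$ of $G_{x,y}\cong A_4$ fixes some block $B$ through $x,y$, and computing $\Fix(T)$ via $N_{G_x}(T)\cong S_4$ shows the two fixed points off $x$ are collinear with $x$, so every block is a line and $\mathcal{D}\cong PG_3(2)$. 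Without this step (or an equivalent), case (1b) is asserted rather than proved.
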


\begin{proof}
Assume that $G$ is an almost simple, point-$2$-transitive automorphism group
of $\mathcal{D}$. Then $G$ is listed in \cite[Section 2, (A)]{Ka85}. Moreover, $v\equiv 1,3%
\pmod{6}$. Thus, one of the following holds:

\begin{enumerate}
\item[(i).] $Soc(G)\cong A_{v}$ and $\lambda \mid v-6$, $v\equiv 1,3\pmod{6}$;

\item[(ii).] $Soc(G)\cong PSL_{h}(q)$, $h \geq 2$, $\frac{%
q^{h}-1}{q-1}\equiv 1,3\pmod{6}$ and $(h,q)\neq (2,2)$, and $\lambda \mid \frac{q^{h}-1}{q-1}-6$;

\item[(iii).] $Soc(G)\cong PSU_{3}(2^{2m+1})$, $m>0$, and $\lambda \mid 2^{6m+3}-5$;

\item[(iv).] $Soc(G)\cong A_{7}$ and $\lambda  \mid 9$.
\end{enumerate}

If $G$ acts point-$3$-transitively on $\mathcal{D}$, then $v-2= \lambda \leq v-6$, and we reach a contradiction. Thus $G$ cannot act point-$3$-transitively on $\mathcal{D}$ and hence (i) is ruled out.

Assume that (ii) holds. If $h=2$, then the point-$2$-transitive actions of $G$ on $\mathcal{D}$ and on $PG_{1}(q)$ are equivalent. Hence, we may identify the point set of $\mathcal{D}$ with that of $PG_{1}(q)$. Also $q$ is even, since $v=q+1$ and $v \equiv 1,3 \pmod{6}$. Then $G$ acts point-$3$-transitively on $\mathcal{D}$, which is not the case. Thus $h>2$ and hence $Soc(G)$ has two $2$-transitive
permutation representations of degree $\frac{q^{h}-1}{q-1}$. These are the one on the set of points of $PG_{h-1}(q)$, and the other on the set of hyperplanes of $PG_{h-1}(q)$. The two conjugacy classes in $PSL_{h}(q)$ (resp. in $P \Gamma L_{h}(q)$) of the point-stabilizers and hyperplane-stabilizers are fused by a polarity of $PG_{h-1}(q)$. Thus, we may identify the point set of $\mathcal{D}$ with that of $PG_{h-1}(q)$.

Assume that $B=\left\{ x,y,z\right\} $ is contained in a line $\ell $ of $%
PG_{h-1}(q)$. Then each block of $\mathcal{D}$ is contained in a unique line
of $PG_{h-1}(q)$, being $G$ transitive one the block set of $\mathcal{D}$
as well as on the line set of $PG_{h-1}(q)$. Moreover, $\lambda =q-1$, since\ $%
G_{\ell }^{\ell }\cong PGL_{2}(q)$ acts $3$-transitively on $\ell $. Since $%
\lambda \mid v-6$ and since $v=\frac{q^{h}-1}{q-1}$, it follows that 
\begin{equation*}
\left( q-1\right) \mid \sum_{i=0}^{h-1}(q^{i}-1)+h-6\text{.}
\end{equation*}%
Therefore, $q-1\mid h-6$. If $q$ is odd then $h$ is even, whereas $\frac{%
q^{h}-1}{q-1}$ is odd. Thus $q$ is even and (1a) follows.

Assume that $B=\left\{ x,y,z\right\} $ consists of non-collinear points of $PG_{h-1}(q)$. Then there is a plane containing $B$, say $\pi $. Let $\ell ^{\prime
}$ be the line of $PG_{h-1}(q)$ containing $x$ and $y$, and let $\mathcal{F}$ be the plane pencil with axis $\ell ^{\prime }$. Then the blocks of $\mathcal{D}$
containing $x,y$ are triangles of $PG_{h-1}(q)$ with the third vertex contained in a (unique) plane
of $\mathcal{F}$. By \cite[Proposition 4.1.17(II)]{KL}, $\left[
q^{2(h-2)}\right] :SL_{h-2}(q)\leq G_{x,y}\leq G_{\ell ^{\prime }}$, where $%
\left[ q^{2(h-2)}\right] $ fixes $\ell ^{\prime }$ pointwise and induces the
translation group on the affine plane $\pi^{\ell ^{\prime }}$ for each $\pi \in \mathcal{F}$, and $SL_{h-2}(q)$ fixes $%
\ell ^{\prime }$ pointwise and permutes transitively the elements of $%
\mathcal{F}$. Therefore $G_{x,y}$ acts
transitively one the set of all triangles of $PG_{h-1}(q)$ having $x$, $y$ as two of the
three vertices, hence $$\lambda =q^{2}\qbin {h-2}{3-2}=q^{2}\frac{%
q^{h-2}-1}{q-1}.$$ It follows from $\lambda \mid v-6$ that%
\begin{equation*}
q^{2}\frac{q^{h-2}-1}{q-1}\mid \frac{q^{h}-1}{q-1}-6
\end{equation*}%
and hence $h=3$ and $q=5$. Thus we obtain (2).

Assume that (iii) holds. Then the action of $G$ on the
point set of $\mathcal{D}$ and that on the point set of the Hermitian
unital $\mathcal{H}(2^{2m+1})$ are equivalent. Thus we may identify the point set of $\mathcal{D}$ with that of $\mathcal{H}(2^{2m+1})$.\\ 
 If $B=\left\{ x,y,z\right\} $
is contained in a line $\ell ^{\prime }$ of $\mathcal{H}(2^{2m+1})$, then
each block of $\mathcal{D}$ is contained in a unique line of $\mathcal{H}%
(2^{2m+1})$, being $G$ transitive on the block set of $\mathcal{D}$ as well
as on the line set of $\mathcal{H}(2^{2m+1})$. Thus $\lambda =2^{2m+1}-1$, since $Soc(G)_{\ell ^{\prime }}^{\ell ^{\prime }}\cong
PGL_{2}(q)$ acts $3$-transitively on $\ell ^{\prime }$. So $
2^{2m+1}-1 \mid 2^{6m+3}-5$, as $\lambda \mid v-6$, which is a contradiction.

Assume that $B=\left\{ x,y,z\right\} $ consists of points of $\mathcal{H}(2^{2m+1})$ in a triangular
configuration. Note that $Soc(G)_{x}=Q:C$, where $Q$ is a Sylow $2$-subgroup of $Soc(G)$ acting regularly on $\mathcal{H}(2^{2m+1})\setminus \{x\}$, and $C$ is a cyclic group of order $2^{2(2m+1)}-1$. If $a$ is any line of $\mathcal{H}(2^{2m+1})$ incident with $x$, then $Z(Q)$ preserves $a$ and acts regularly on $a \setminus \{x\}$. Also, $Q/Z(Q):C$ is a Frobenius group of order $2^{2(2m+1)}(2^{2(2m+1)}-1)$ acting $2$-transitively on the set of lines of $\mathcal{H}(2^{2m+1})$ incident with $x$ (e.g. see \cite[Satz II.10.12]{Hup}). Thus $C=Soc(G)_{x,y}$ acts semiregularly on $\mathcal{H}(2^{2m+1})\setminus s$, where $s$ is the line incident with $x,y$, and hence each $G_{x,y}$-orbit in $\mathcal{H}(2^{2m+1})\setminus s$ is of length a multiple $2^{4m+2}-1$. It follows that the number of blocks of $\mathcal{D}$ incident with $x,y$ is\
a multiple of $2^{4m+2}-1$. Therefore $2^{4m+2}-1 \mid \lambda$, and hence $2^{4m+2}-1 \mid 2^{6m+3}-5$, as $\lambda \mid v-6$ and $v=2^{6m+3}+1$, which is a contradiction. So, (iii) cannot occur.   

In (iv) the action of $Soc(G)\cong A_{7}$ on the point set of $\mathcal{D%
}$ is equivalent to one of the two $2$-transitive permutation representations of degree $15$, namely the ones on the set of points and on the set of planes of $PG_{3}(2)$ respectively. Arguing as in (1), we may identify the points $\mathcal{D}$ with the points of $PG_{3}(2)$. By \cite{AtMod}, $%
G\cong A_{7}$, $G_{x}\cong SL_{3}(2)$ and $%
G_{x,y}\cong A_{4}$. Thus $T\leq G(B)$, where $T\cong E_{4}$ and $B$ is a
block of $\mathcal{D}$ incident with the points $x,y$, since $\lambda \mid 9$. Since $%
N_{G_{x}}(T)\cong S_{4}$ and $G_{x,y}\cong A_{4}$, it follows that $T$ fixes exactly two points on $%
PG_{3}(2) \setminus\{x\}$ and these are necessarily collinear with $x$ (clearly $y$ is one of these). Thus $B$ is a line of $PG_{3}(2)$ and hence $\mathcal{D}\cong
PG_{3}(2)$, which is (1b).
\end{proof}

\bigskip

\begin{theorem}
\label{treAff}If $G$ is of affine type, then $\mathcal{D}\cong AG_{h}(3)$.
\end{theorem}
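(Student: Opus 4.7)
Proposition \ref{tre2tr} gives that $G$ acts $2$-transitively on the point set. Since $G$ is of affine type, identify the points with a vector space $V=V_{d}(p)$ so that $G=V \rtimes G_{0}$, with $V=\mathrm{Soc}(G)$ acting by translations and $G_{0} \leq \Gamma L_{d}(p)$ transitive on $V\setminus\{0\}$. Because $v=p^{d}\equiv 1,3\pmod{6}$, $v$ is odd, hence so is $p$, and $v-6$ is odd; in particular $\lambda$, being a divisor of $v-6$, is odd. The plan is to show first that the scalar involution $-I$ lies in $G_{0}$, then to deduce that every three-term arithmetic progression in $V$ is a block of $\mathcal{D}$, and finally to count these to force $p=3$ and identify $\mathcal{D}$ with $AG_{d}(3)$.

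The main obstacle is establishing $-I\in G_{0}$. For this I would appeal to Hering's classification of $2$-transitive affine groups. In odd characteristic the possibilities split into: $(a)$ $G_{0}\leq \Gamma L_{1}(p^{d})$; $(b)$ $SL_{a}(q)\trianglelefteq G_{0}$ with $p^{d}=q^{a}$ and $a\geq 2$; $(c)$ $Sp_{2a}(q)\trianglelefteq G_{0}$ with $p^{d}=q^{2a}$; $(d)$ $SL_{2}(5)\trianglelefteq G_{0}$ with $p^{d}=3^{4}$; $(e)$ $SL_{2}(13)\trianglelefteq G_{0}$ with $p^{d}=3^{6}$; and $(f)$ a short list of extraspecial-normalizer configurations in small dimension. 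In $(b)$ with $a$ even and in $(c)$, $(d)$, $(e)$, the element $-I$ is the central involution of the relevant quasi-simple normal subgroup and hence lies in $G_{0}$ automatically. The remaining subcases ($(a)$, $(b)$ with $a$ odd, and $(f)$) are to be dispatched by combining the integrality $r=\lambda(v-1)/2\mid |G_{0}|$ with $\lambda\mid v-6$ to force the scalar $-I$ into $G_{0}$; the strategy mirrors the one used in the proof of Proposition \ref{tre2tr}(3).

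Once $-I\in G_{0}$, the geometric consequence is immediate. Fix $x\in V\setminus\{0\}$; since $p$ is odd, $-x\neq x$. The involution $-I$ stabilizes the pair $\{x,-x\}$ as a set, hence permutes the $\lambda$ blocks of $\mathcal{D}$ through $\{x,-x\}$. Since $\lambda$ is odd, at least one such block $B=\{x,-x,y\}$ is fixed by $-I$; then $-y\in\{x,-x,y\}$, and the options $-y=\pm x$ would force $y=\mp x\in B$, contradicting $|B|=3$. Hence $-y=y$, so $y=0$, and $\{0,x,-x\}$ is a block for every nonzero $x$. Translation invariance ($V\trianglelefteq G$) then makes every triple $\{a,a+w,a-w\}$ with $a\in V$, $w\in V\setminus\{0\}$ a block, and block-transitivity of $G$ together with transitivity of $G_{0}$ on $V\setminus\{0\}$ shows conversely that every block belongs to the $G$-orbit of $\{0,x_{0},-x_{0}\}$ and hence has precisely this form. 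Thus the block set of $\mathcal{D}$ is exactly the collection of all three-term arithmetic progressions in $V$.

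Finally, suppose for contradiction that $p\neq 3$. Then each three-term arithmetic progression $\{a-w,a,a+w\}$ determines its midpoint $a=(x+y+z)/3$ uniquely from its underlying set $\{x,y,z\}$, and $w$ is determined up to sign; hence $b=v(v-1)/2$ and
\[
\lambda=\frac{b\binom{k}{2}}{\binom{v}{2}}=3.
\]
The divisibility $3=\lambda\mid v-6=p^{d}-6$ then forces $3\mid p^{d}$, contradicting $p\neq 3$. Therefore $p=3$, in which case $\{a-w,a,a+w\}=\{a,a+w,a+2w\}$ is precisely a line of $AG_{d}(3)$, and $\mathcal{D}$ is identified with $AG_{d}(3)$, completing the proof.
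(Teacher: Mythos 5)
Your overall architecture (establish $-I\in G_{0}$, deduce that the blocks are exactly the three-term arithmetic progressions in $V$, then count to force $p=3$) is sound, and once $-I\in G_{0}$ your AP argument and the count $b=v(v-1)/2$, $\lambda=3$ for $p\neq 3$ are correct — in fact a slicker endgame than the paper's. The fatal problem is your claim that $-I\in G_{0}$ can be forced in \emph{all} of Hering's cases. When $SL_{a}(q)\trianglelefteq G_{0}$ with $a$ odd, the scalar $-I$ has determinant $(-1)^{a}=-1$, so $-I\notin SL_{a}(q)$; and since $SL_{a}(q)$ is already transitive on $V\setminus\{0\}$, the group $G_{0}=SL_{a}(q)$ itself is an admissible point stabilizer containing no scalar involution. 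No divisibility condition of the form $r=\lambda(v-1)/2\mid |G_{0}|$ can change this: the obstruction is structural, not arithmetic, so your proposed strategy "mirroring Proposition \ref{tre2tr}" cannot close this case. (Your case $(a)$ is salvageable: a transitive subgroup of $\Gamma L_{1}(p^{d})$ with $p$ odd does contain $-1$, but this comes from the structure of such subgroups — the paper invokes \cite[Lemma 3.12]{BF} — not from $r\mid |G_{0}|$; and in the extraspecial cases the normal subgroup $R$ itself supplies $-I$, as you say.)

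This missing case is precisely where the paper does most of its work. There, assuming no block lies in a $1$-dimensional $GF(p)$-subspace, one deduces $-1\notin G_{0}$ and hence $SL_{n}(q)\trianglelefteq G_{0}$ with $n\geq 3$ odd, and then substitutes for the absent global $-I$ elements inducing $-1$ on suitable subspaces: if a block lies in a $GF(q)$-line $\left\langle u\right\rangle$, a $2$-element of $G_{0,\left\langle u\right\rangle}$ inducing $-1$ on that line (available because $AGL_{1}(q)$ is induced on it and $q$ is odd) must fix one of the $\lambda$ (odd) blocks through $\pm u$, producing a block $\{0,u,-u\}$; if instead blocks span $2$-dimensional $GF(q)$-subspaces, the flag count $\qbin{n}{2}\mu=\frac{q^{n}-1}{2}\lambda$ combined with $\lambda\mid q^{n}-6$ forces $n=3$ and $\mu$ odd, and an involution of $SL_{3}(q)$ inducing $-1$ on the plane again yields a block $\{0,y,-y\}$ — a contradiction in both subcases. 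Only then do all blocks lie in $1$-dimensional $GF(p)$-subspaces, and $S_{3}\hookrightarrow AGL_{1}(p)$ gives $p=3$. You need an argument of this kind (or some replacement) for the odd-dimensional $SL$ case; without it your proof does not go through.
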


\begin{proof}
Assume that $Soc(G)$ is an elementary abelian $p$-group for some prime $p$ and denote it by $T$. Since $G$ acts point-$2$-transitively on $\mathcal{D}$, we may identify the point set of $\mathcal{D}$ with a $h$-dimensional $GF(p)$-space $V$ in a way that $T$ is the translation group of $V$ and $G=T:G_{0}$, with $G_{0}$ acting irreducibly on $V$.\\For each divisor $n$ of $h$ the group $\Gamma L_{n}(p^{h/n})$ has a natural irreducible action on $V$. As $G_{0}$ acts irreducibly on $V$, we may choose $n$
to be minimal such that $G_{0}\leq \Gamma L_{n}(p^{h/n})$ in this action and
write $q=p^{h/n}$. Also $q$ is odd, since $v=q^{n}-6$ and $v \equiv 1,3 \pmod{6}$. \\
Let $B$ be any block of $\mathcal{D}$ incident with $0$. Assume that $B$ is not contained in any $1$-dimensional $GF(p)$-subspace of 
$V$. Suppose that $-1\in G_{0}$. If $x$ is a point of $ \mathcal{D}$ and $x\neq 0$, it
follows that $-1$ preserves the $\lambda $ blocks incident with $\pm x$.
Actually, $-1$ preserves at least one of them since $\lambda $ is odd. Thus $%
B=\left\{ 0,x,-x\right\} \subseteq \left\langle x\right\rangle _{GF(p)}$, as $k=3$,
but this contradicts our assumption. Therefore $-1\notin G_{0}$, and hence $%
SL_{n}(q)\trianglelefteq G_{0}$, with $n$ odd and $n \geq 3$, by \cite[Section 2, (B)]{Ka85} and by \cite[Lemma 3.12]{BF}. 

Assume that $B\subseteq \left\langle u\right\rangle _{GF(q)}$ for some non-zero vector $u$ of $V$. Then any block of $\mathcal{D}$ is
contained in a unique line of $AG_{n}(q)$, since $G$ acts block-transitively on $\mathcal{D}$ and transitively on the line set of $AG_{n}(q)$. In particular the $\lambda$ blocks of $\mathcal{D}$ incident with $\pm u$ are contained in $\left\langle u\right\rangle _{GF(q)}$. Since $%
AGL_{1}(q) \trianglelefteq G_{\left\langle u\right\rangle _{GF(q)}}^{\left\langle u\right\rangle _{GF(q)}}$ and $q$ is odd, there is a $2$%
-element $\phi$ in $G_{0,\left\langle u\right\rangle _{GF(q)}}$ inducing $-1$
on $\left\langle u\right\rangle _{GF(q)}$. Therefore, $\phi $ preserves at
least one of the $\lambda $ blocks incident with $\pm u$, say $B^{\prime}$, since $\lambda$ is odd. Hence $B^{\prime}=\left\{ 0,u,-u\right\} $, as $B^{\prime}\subseteq \left\langle u\right\rangle _{GF(q)}$. So, $B$ is contained in a $1$-dimensional $GF(p)$-subspace of $V$, since $G$ acts-flag-transitively on $\mathcal{D}$, which is contrary to our assumption.\\
Assume that $B$ is not contained in any $1$-dimensional $GF(q)$-subspace of $V$. Nevertheless, $B\subset \pi $ where $\pi$ is a $2$-dimensional $GF(q)$-subspace of $V$. Thus $%
r=\qbin{n}{2}\mu$, where $\mu$ denotes the number of blocks of $\mathcal{D}$ contained in any $2$-dimensional $GF(q)$-subspace of $V$, since $G$ acts flag-transitively on $\mathcal{D}$ and since $SL_{n}(q)\trianglelefteq G_{0}$ acts
transitively on the set of $2$-dimensional $GF(q)$-subspaces of $V$. Therefore 
\begin{equation} \label{miumiu}
\qbin{n}{2}\mu=\frac{%
q^{n}-1}{2}\lambda, 
\end{equation}
and hence 
\begin{equation*}
\frac{\left( q^{n}-1\right) \left( q^{n-1}-1\right) }{\left( q^{2}-1\right)
\left( q-1\right) }\mid \frac{\left( q^{n}-1\right) \left( q^{n}-6\right) }{2%
},
\end{equation*}%
since $\lambda \mid q^{n}-6$. Thus $2\left( q^{n-1}-1\right) \mid \left( q^{2}-1\right) \left(
q-1\right)(q-6)$ and hence $n = 3$, since $n$ is odd and $n \geq 3$. Now substituting $n=3$ in (\ref{miumiu}), it results $\lambda=\frac{2\mu}{q-1}$. So, $\mu$ is odd as $\lambda \mid q^{n}-6$ and $q$ is odd.\\
Lastly, $SL_{3}(q)$ contains an involution $\sigma$ inducing $-1$ on $\pi$. Then $\sigma$ preserves at least one of the $\mu$ blocks of $\mathcal{D}$ contained in $\pi$, say $B^{\prime\prime}$, since $\mu$ is odd. Then $B^{\prime\prime}=\{0,y,-y \}$ for some non-zero $y$ in $\pi$, since $k=3$ and $\sigma$ induces $-1$ on $\pi$. Hence $B=\{0,y^{\beta},-y^{\beta}\}$ for some $\beta \in G_{0}$, since $G$ is flag-transitive on $\mathcal{D}$, but this contradicts our assumption.\\
Assume that $B\subseteq \left\langle u\right\rangle _{GF(p)}$ for some non-zero
vector $u$ of $V$. Then $G_{B} \leq G_{\left\langle u\right\rangle}$ and hence $S_{3} \cong (G_{B})^{\left\langle u\right\rangle}\leq G_{\left\langle u\right\rangle }^{\left\langle u\right\rangle }\cong AGL_{1}(p)$, since $G$ acts $2$-transitively on $V$. Thus $p=3$, $B=\left\langle u\right\rangle _{GF(3)}$ and hence the assertion follows.
\end{proof}

\bigskip

Now, Theorem \ref{tre} follows from Proposition \ref{tre2tr} and
Theorems \ref{treAS} and \ref{treAff}. Finally, Theorem \ref{t2} follows from Theorem \ref{tre}. 

\bigskip

\end{document}